\definecolor{labelkey}{gray}{.85}
\def\itm#1{\rm ({#1})} 
\def\itmit#1{\itm{\it #1\,}} 
\def\abc{\itmit{\alph{*}}}
\def\cA{\mathcal{A}}
\def\cD{\mathcal{D}}
\def\Ee{\mathop{{}\mathbb{E}}}
\def\cG{\mathcal{G}}
\def\cH{\mathcal{H}}
\def\cP{\mathcal{P}}
\def\cQ{\mathcal{Q}}
\def\Rphi{R_\phi}
\newcommand*{\charef}[1]{\mathrm{\uppercase{\StrChar{#1}{1}}\ref{#1}}}
\newcommand*{\subref}[1]{_{\charef{#1}}}
\setlist{itemsep=3pt,parsep=0pt,topsep=2pt,partopsep=0pt}  
\setlist{leftmargin=*,itemindent=0pt} 
\newcommand{\By}[2]{\overset{\mbox{\tiny{#1}}}{#2}} 
\newcommand{\ByRef}[2]{   \By{\eqref{#1}}{#2} }
\newcommand{\gByRef}[1]{  \ByRef{#1}{>} }
\newcommand{\nnREALS}{\mathbb{R}_{\ge0}} 
\newcommand{\NATS}{\mathbb{N}}
\newcommand*{\oct}[2]{O^{#1}(#2)}
\renewcommand{\subset}{\subseteq}
\newcommand{\eps}{\varepsilon}
\newcommand{\ind}[2]{#1[#2]}
\newcommand{\indicator}[1]{\mathbbm{1}_{#1}}
\DeclarePairedDelimiter\abs{\lvert}{\rvert}
\DeclareMathOperator\dom{Dom}
\DeclareMathOperator\im{Im}
\DeclareMathOperator\vdeg{vdeg}
\newcommand{\Gam}{\Gamma}
\newcommand{\gam}{\gamma}
\newcommand{\tz}{^{(0)}}
\newcommand{\tk}{^{(k)}}
\newcommand{\tl}{^{(\ell)}}
\newcommand{\tlp}{^{(\ell')}}
\newcommand{\tw}{^{(\omega)}}
\newcommand{\Uconc}{U_{\mathrm{conc}}}
\newcommand{\Ureg}{U_{\mathrm{reg}}}
\newcommand{\tX}{\tilde{X}}
\newcommand{\tY}{\tilde{Y}}
\newcommand{\tW}{\tilde{W}}
\renewcommand{\vec}[1]{\mathbf{#1}}
\newcommand{\EMAIL}[1]{  \textit{E-mail}: \texttt{#1}}
\DeclarePairedDelimiter\sqbracks{\lbrack}{\rbrack}
\DeclarePairedDelimiterX\sqbracksvert[2]{\lbrack}{\rbrack}{#1 \delimsize\vert #2}
\DeclareDocumentCommand \Ex {somo}
{	
	\mathbb{E}%
	\IfNoValueTF{#4}%
		{\IfBooleanTF{#1}%
			{\sqbracks*{#3}}%
			{\IfNoValueTF{#2}%
				{\sqbracks{#3}}%
				{\sqbracks[#2]{#3}}
			}%
		}%
		{\IfBooleanTF{#1}%
			{\sqbracksvert*{#3}{#4}}%
			{\IfNoValueTF{#2}%
				{\sqbracksvert{#3}{#4}}%
				{\sqbracksvert[#2]{#3}{#4}}%
			}%
		}%
}
\DeclarePairedDelimiterXPP\vnorm[2]{}{\lVert}{\rVert}{_{#2}}{#1}
\newcommand*{\boverline}[1]{{\overbracket[0.5pt][-0.5pt]{#1}}}
\newcommand*{\cC}{\mathcal{C}}
\newcommand*{\ocC}{\boverline{\mathcal{C}}}
\newcommand*{\tcC}{\widetilde{\mathcal{C}}}
\newtheorem{theorem}{Theorem}
\newtheorem{lemma}[theorem]{Lemma}
\newtheorem{claim}{Claim}
\newtheorem{proposition}[theorem]{Proposition}
\newtheorem{corollary}[theorem]{Corollary}
\newtheorem*{fact*}{Fact}
\theoremstyle{definition}
\newtheorem{definition}[theorem]{Definition}
\newtheorem*{definition*}{Definition}
\theoremstyle{remark}
\newcommand{\oldqed}{}
\def\endofFact{\scalebox{.6}{$\Box$}}
\newenvironment{claimproof}[1][Proof]{
  \renewcommand{\oldqed}{\qedsymbol}
  \renewcommand{\qedsymbol}{\endofFact}
  \begin{proof}[#1]
}{
  \end{proof}
  \renewcommand{\qedsymbol}{\oldqed}
} 
\title{Regularity inheritance in hypergraphs}
  \author[P. Allen]{Peter Allen*}
  \author[E. Davies]{Ewan Davies\dag}
  \author[J. Skokan]{Jozef Skokan*\ddag}
  \thanks{%
    *
    Department of Mathematics, London School of Economics, Houghton Street,
    London WC2A 2AE, UK
    \\\EMAIL{p.d.allen|j.skokan@lse.ac.uk}}
 \thanks{%
    \dag{}
	Simons Institute for the Theory of Computing, 121 Calvin Lab \#2190, UC Berkeley, Berkeley, CA 94720, USA
    \\\EMAIL{maths@ewandavies.org}}
 \thanks{%
\ddag{}
Department of Mathematics, University of Illinois at Urbana-Champaign, 1409 W. Green Street, Urbana, IL 61801, USA}
 \thanks{PA was partially supported by the EPSRC, grant number EP/P032125/1.}
 \thanks{ED was partially supported by the ERC, grant number 339109}
 \thanks{JS was partially supported by the National Science Foundation, grant number DMS-1500121.}
\date{\today}
\begin{document}
\begin{abstract}
 We give a new approach to handling hypergraph regularity. This approach allows for vertex-by-vertex embedding into regular partitions of hypergraphs, and generalises to regular partitions of sparse hypergraphs. We also prove a corresponding sparse hypergraph regularity lemma.
\end{abstract}

\maketitle


\section{Introduction}

The regularity method is a rich topic in extremal combinatorics which has some remarkable applications. 
Its roots lie in Szemerédi's proof that dense subsets of the natural numbers contain arbitrarily long arithmetic progressions~\cite{szemeredi1975sets}, and since then many more applications have been found. 
The method consists of a \emph{regularity lemma} which states that any large structure can be decomposed into pieces which have random-like behaviour, and a \emph{counting lemma} which states that a random-like piece has approximately the same number of small substructures as an analogous genuinely random piece has in expectation. 
This paper primarily concerns the counting lemma in the setting of sparse hypergraphs.

With a precise formulation of the aforementioned lemmas for graphs one can prove the \emph{triangle removal lemma}, which states that graphs on $n$ vertices that contain at most $o(n^3)$ triangles may be made triangle-free by removing at most $o(n^2)$ edges. 
From this result one can deduce Roth's theorem~\cite{roth1953}, that dense subsets of the natural numbers contain an arithmetic progression of length three.

There are at least three natural and highly fruitful directions in which to generalise the above results: to larger subgraphs than the triangle, to hypergraphs, and to sparse host graphs. 
A counting lemma gives sufficient pseudorandomness conditions for the existence of a (large number of) triangles in an $n$-vertex graph, and one could generalise this to larger subgraphs (in particular those whose size grows with $n$), such as a collection of $n/3$ vertex-disjoint triangles or the square of a Hamilton cycle. 
A key result along this line of thought is the blow-up lemma of Koml{\'o}s, S{\'a}rk{\"o}zy, and Szemer{\'e}di~\cite{KSSblow}. 
Another direction is to generalise the regularity and counting lemmas to hypergraphs, e.g. to prove a hypergraph removal lemma. 
As observed by Solymosi~\cite{solymosi2004}, a suitable hypergraph version of the above triangle removal lemma implies a multidimensional generalisation of Szemerédi's theorem.
A third direction relates to sparse host graphs. 
Going in this direction, one can prove a \emph{relative removal lemma} for hypergraphs which are a subgraph of a sparse, highly pseudorandom \emph{majorising hypergraph}, and with it prove the Green--Tao theorem~\cite{GTprimes} which states that dense subsets of the primes contain arbitrarily long arithmetic progressions.
This was recently done by Conlon, Fox, and Zhao~\cite{CFZrelative}, whose methods require weaker pseudorandomness properties of the primes than were originally required by Green and Tao.
Combinations of these generalisations have also been developed, such as the hypergraph blow-up lemma of Keevash~\cite{keevash2011hypergraph}, and the blow-up lemma for sparse graphs of Allen, Böttcher, Hàn, Kohayakawa and Person~\cite{ABHKPblow}. 

The purpose of this paper is to develop an embedding method which generalises the standard one for graphs in a combination of all three directions. 
That is, we develop a tool for counting in regular subgraphs of sparse, pseudorandom hypergraphs that can be used to embed bounded-degree hypergraphs whose size grows with the number of vertices of the host hypergraph. 
The main advantage of our approach is that it constructs an embedding vertex-by-vertex in a way that generalises a well-studied approach to the counting lemma (and related embedding results) in graphs. 

In order to have applications of our embedding method, we also state and prove a sparse hypergraph regularity lemma. This is not especially novel, but to the best of our knowledge it was not explicitly in the literature (though it was well known how to prove such a thing).

\subsection{Regularity in graphs, sparse graphs, and hypergraphs}

In graphs there are several different notions of regularity, and when the graph in question is \emph{dense}, i.e.\ has $n$ vertices and $\Omega(n^2)$ edges, it is easy to show that many of these notions are essentially equivalent~\cite{T87random,Tpseudo,CGWquasi}. 
The original definition of regularity by Szemerédi~\cite{Sreg} for a pair $(X, Y)$ of disjoint sets of vertices in a graph roughly states that for large enough subsets $X'\subset X$ and $Y'\subset Y$, the density of the bipartite subgraphs induced by $(X,Y)$ and by $(X',Y')$ are approximately equal. 
An application of the Cauchy--Schwarz inequality shows that this is equivalent to containing approximately the minimum possible number of four-cycles crossing $(X, Y)$ given this density. 
These definitions can be extended to sparse graphs and hypergraphs, but the equivalence between different forms is a much more complex subject in these settings. 
In this paper we use a definition of regularity that is a generalisation of four-cycle minimality known as \emph{octahedron minimality}.

One way of extending the large and influential body of work on dense, regular graphs to sparse graphs (those with $n$ vertices and $o(n^2)$ edges) is to consider a suitably well-behaved \emph{majorising graph} $\Gam$ and study subgraphs $G\subset\Gam$ which are in some way regular relative to $\Gam$. 
Given proper definitions of these concepts, one obtains the dense setting by taking $\Gam$ to be the complete graph. 
Though the behaviour of $\Gam$ and of $G$ relative to $\Gam$ are both types of pseudorandomness, from now on we use the term \emph{regularity} to refer to how edges of graphs (and hypergraphs) $G$ are distributed inside a majorising graph (or hypergraph) $\Gam$, and reserve the term \emph{pseudorandomness} for the behaviour of $\Gam$. 

For graphs, a pseudorandomness condition known as \emph{jumbledness}, which controls the number of edges between pairs of sets in the graph, is somewhat standard. Jumbledness is quite a strong condition, demanding control over edges between very small sets of vertices. For combinatorial applications, one can usually obtain such control. However, it has recently been observed that for many applications in number theory (in which one wants to work with a graph derived from a number theoretic object, such as the set of primes) jumbledness either fails to be true or at best requires the assumption of commonly believed but unproved conjectures. This motivates the use of a weaker notion of pseudorandomness in terms of small subgraph counts, also known as linear forms conditions, which follow from jumbledness and which one also can unconditionally obtain in the number theoretic applications. Briefly, this notion of pseudorandomness asserts that when we count small subgraphs in $\Gam$, we obtain the same answer as if $\Gam$ were truly random, up to a small constant relative error.

We should point out that qualitatively, one cannot further weaken the pseudorandomness assumption: having a counting lemma for $G\subset\Gam$ in particular implies that the number of small subgraphs in $\Gam$ is as one would expect in a truly random structure. Quantitatively the story is different: in order to count subgraphs of $G$ of a given size, we need to assume counting of rather larger subgraphs in $\Gam$ and with much smaller error terms. We do not believe our quantative bounds are optimal, and it would be interesting to improve them. We made no attempt to optimise our proof, preferring clarity: we do not see any reason to believe that the proof strategy can give optimal quantative bounds.

The first main contribution of this paper is to define another notion of pseudorandomness for hypergraphs (and more general objects) $\Gam$ called \emph{typically hereditary counting} (THC). 
We show that for a hypergraph $H$, counting conditions which depend on the maximum degree of $H$, but not on the number of vertices of $H$, imply THC strong enough for embedding $H$ into $\Gam$ and into regular subgraphs of $\Gam$. 
We also show that THC holds with high probability in random hypergraphs which are not too sparse. 

As mentioned above, there is a standard strategy which allows one to embed a (potentially large) bounded degree graph $H$ into a regular partition of a dense graph $G$. This strategy does not directly generalise to sparse graphs, or to hypergraphs of uniformity greater than $2$ whether dense or not. However a consequence of our results is that in all of these settings one obtains the THC property, and this does allow for a simple generalisation of the standard graph strategy, as we illustrate in Theorem~\ref{thm:THCexample}. This is particularly valuable because the THC property has only one error parameter $\eps$, which does not change during the embedding process. In contrast the use of hypergraph regularity produces at least two error parameters (with very different sizes) and the error parameters tend to proliferate during the embedding process.


Given a rough understanding of our regularity and pseudorandomness concepts, a useful model to bear in mind is that a graph $G$ composed of unions of regular pairs shares many properties with a random model where for each pair, edges between the sets appear independently at random with probability equal to the density of the pair. 
A counting lemma generally shows that the number of copies of some small subgraph $H$ in $G$ is close to the expected such number in this analogous random model, and we will informally refer to the `expected number' of copies of $H$ in regular graphs $G$ even though the quantity is entirely deterministic. 
When considering a graph $G$ which is a subgraph of a pseudorandom majorising graph $\Gam$, the analogous random model is to let each edge of $\Gam$ be present in $G$ independently at random with a probability related to the relative density of $G$ in $\Gam$.

We give a hypergraph counting lemma (Theorem~\ref{thm:counting}) in the setting sketched above which is slightly stronger than the standard dense hypergraph counting lemma, and is comparable to what one obtains for sparse graphs with the methods of Conlon, Fox, and Zhao~\cite{CFZrelative}. 
Importantly, our methods also allow us to prove a \emph{one-sided counting lemma} (or embedding result) for bounded-degree hypergraphs $H$, where the required pseudorandomness of $\Gam$ does not grow with the number of vertices of $H$, see Section~\ref{sec:countandembed}.

To prove these counting and embedding results our main technique is \emph{regularity inheritance}, which has recently been successfully applied to embedding problems in sparse graphs~\cite{GKRSinherit,CFZextremal,ABSSregularity,ABHKPblow}. 
The basic idea is that given a pseudorandom graph $\Gam$ and regular subgraph $G$, a typical vertex should have the property that its neighbourhood in $\Gam$ is similarly pseudorandom, and its neighbourhood in $G$ is 
a similarly regular subgraph. 
We give a form of this inheritance in hypergraphs where the pseudorandomness of $\Gam$ is controlled by small subgraph counts (which THC-graphs are able to satisfy), and for our octahedron-minimality version of regularity. 

An outline of how to prove counting and embedding results given suitable regularity inheritance lemmas is quite straightforward. 
One assumes pseudorandomness and regularity hypotheses that imply, via regularity inheritance, that similar conditions hold in the neighbourhood of a typical vertex and, by iterating this, construct an embedding vertex-by-vertex. 
To make this work in hypergraphs is technically difficult because usual forms of (strong) hypergraph regularity require one to consider \emph{regular complexes} where edges of size $k$ are regular with respect to edges of size $(k-1)$, but the error parameter in this regularity is much larger than the density of the $(k-1)$-edges.

The second main contribution of this paper is a formal definition of certain pseudorandomness and regularity conditions which we call a \emph{good partial embedding} (GPE) and several lemmas which prove that the above sketch can be rigorously applied to embed hypergraphs with this definition. It may not be clear from this sketch what the difference between GPE and THC is (and why we are not doing the same thing twice). We will discuss this in the concluding remarks.

\subsection{Related work}

Our methods can be considered as an extension to hypergraphs of certain ideas present in~\cite{ABSSregularity,ABHKPblow} for sparse graphs. 
Amongst other things, in these papers the authors develop regularity inheritance lemmas and apply them to control good partial embeddings in sparse graphs. 
The main idea is to develop techniques for working with graphs $G\subset\Gam$ where the error parameter $\eps$ in a regularity condition for $G$ relative to $\Gam$ is much larger than the overall density of $\Gam$. 
It is natural to draw on such techniques when working with hypergraphs because, even in the case of dense hypergraphs, one is forced to consider situations where regularity error parameters are larger than overall densities. 

It may be useful to compare our setting to that of Conlon, Fox, and Zhao~\cite{CFZrelative}, where the pseudorandomness of the majorising hypergraph $\Gam$ is measured by linear forms conditions,
which state that the number of certain subgraphs of $\Gam$ is close to their expectation,
and the regularity condition for subgraphs $G\subset\Gam$ is known as \emph{weak regularity}. 
In~\cite{CFZrelative} they prove that to count copies of some $H$ in a weak-regular $G\subset\Gam$ it suffices to be able to count subgraphs of at most $2v(H)$ vertices in $\Gam$. 
There are two important differences between our methods and those of Conlon, Fox, and Zhao.
Firstly, if one only needs lower bounds on the number of copies of $H$, our pseudorandomness condition on $\Gam$ follows from counting conditions that are given in terms of the maximum degree of $H$ and do not grow with $v(H)$. 
Secondly, we work with an octahedron-minimality regularity that is much stronger than their weak regularity. 
Though it is significantly more technical to work with, the extra strength it has can be desirable in applications. 

That our pseudorandomness conditions for embedding some $H$ do not grow with $v(H)$ means we can find embeddings of bounded-degree $H$ into suitable $G$ where $v(H)$ grows with $v(G)$.
This setting is well-studied in graphs and hypergraphs, leading notably to the aforementioned sparse blow-up lemma~\cite{ABHKPblow} for graphs, and the hypergraph blow-up lemma of Keevash~\cite{keevash2011hypergraph}.
Keevash's result allows one to embed $n$-vertex hypergraphs $H$ of bounded degree in suitably regular $n$-vertex $G$, but works only in the dense case (where $\Gam$ is the complete graph), and requires a regularity condition that results from the \emph{regular approximation lemma}~\cite{RSreg2} which has a rather different flavour than the octahedron-minimality definition which we use. 
Broadly speaking, our choice of regularity results in a setting that resembles sparse regular graphs, and our methods are accordingly inspired by that setting, while regular approximation allows one to work in a setting that resembles dense regular graphs and select techniques accordingly. 
There is a great deal of technical difficulty in making this precise, however. 

We expect that the methods developed in this paper will lead to a hypergraph blow-up lemma which avoids the technicalities of regular approximation and which applies also in the sparse setting. This is work in progress.

\subsection{Organisation}

The rest of this paper is outlined as follows. 
In the next section we state our main results after giving the necessary preliminary definitions. 
Section~\ref{sec:sketch} contains a detailed sketch of a counting lemma for dense graphs, embellished by comparisons to the significantly more general and technical setting of sparse hypergraphs. These comparisons motivate many of the rather technical concepts defined in this paper.
In Section~\ref{sec:Gacount} we deal entirely with the majorising hypergraph $\Gam$ and prove that our pseudorandomness property THC follows from certain counting conditions, and that it typically holds in suitable $\Gam$ obtained from a random hypergraph. 
In Section~\ref{sec:reg} we state and prove a sparse hypergraph regularity lemma, and explain why its output matches what we need for our methods here.
We then deduce counting and embedding results for sparse hypergraphs of a rather standard form from our more general results for good partial embeddings. 
In Section~\ref{sec:count} we prove the more general results for counting and embedding in good partial embeddings.
In Section~\ref{sec:CS} we state and prove a range of results related to the Cauchy--Schwarz inequality that we use throughout the paper, and in Section~\ref{sec:inherit} we apply these results to prove Lemma~\ref{lem:k-inherit}.
In the final section we give some concluding remarks and prove a sample application, Theorem~\ref{thm:THCexample}, of our methods.

\section{Main concepts and results}\label{sec:main}

Before we can state our main results we need some definitions, and here we give a word of warning that our usage of the term `hypergraph' differs slightly from what is standard. 
The usual definition of a \emph{hypergraph} consists of a vertex set $V$ and edge set $E$ containing subsets of $V$. 
Normally one is interested in \emph{$k$-uniform hypergraphs} which are hypergraphs with the additional condition that $E$ only contains subsets of $V$ of size exactly $k$.
When considering hypergraph regularity, one is often forced to consider \emph{$k$-complexes} which correspond to a union of $\ell$-uniform hypergraphs for $\ell\in[k]$ on the same vertex set with the additional property that the edge set $E$ of a complex is down-closed: if $f\in E$ and $e\subset f$ then $e\in E$.
We prefer to give alternative definitions to better separate the roles of complexes and hypergraphs in our methods.

\subsection{Complexes, weighted hypergraphs, and homomorphisms}

The main topics of this paper are counting and embedding in hypergraphs, and here we give precise definitions of these terms, and of the phrase `number of copies' that we used informally in the introduction.
We are primarily interested in finding homomorphisms (which we will define) from a complex $H$ (as above) to a complex $\cG$ that in applications is usually in some way `inspired by' a uniform hypergraph. 
For this reason we exclusively use \emph{complex} to refer to the object $H$ whose vertices form the domain of the homomorphism, and \emph{hypergraph} to refer to the `host graph' $\cG$ whose vertices form the image of the homomorphism. 
The above definition of complex is standard, and if for some $k\ge 1$, the complex $H$ contains no edges of size greater than $k$, we say $H$ is a $k$-complex (we do not insist that $H$ contains edges of size exactly $k$). 

Contrasting with usual `uniform' usage, our definition of hypergraph allows for edges of each size from $0$ upwards, and we actually allow \emph{weighted hypergraphs}, but we are not interested in weights on $H$, and so do not refer to weighted complexes.
It is convenient to avoid the assumption that our (weighted) hypergraphs $\cG$ are down-closed\footnote{It is not entirely obvious what the weighted generalisation of down-closed should be, which is one good reason for avoiding the notion.}. 
As we will see when we come to the definition of a homomorphism, an edge of $\cG$ whose subsets are not all contained in $\cG$ cannot play a role in any homomorphisms from $H$ to $\cG$, but it will nevertheless be convenient in the proof to allow such edges.
 
Given a vertex set $V$, a \emph{weighted hypergraph} is a function from the power set of $V$ to the non-negative reals. 
We think of a normal, unweighted hypergraph as being equivalent to its characteristic function, but remind the reader that the more usual setting for the hypergraph regularity method is to embed $k$-complexes into $k$-complexes, so this characteristic function may be nonzero on edges of sizes $0,1,\dotsc,k$. 
Though it may seem odd to care about the weight of the empty edge, and in applications it will often simply be $1$ at the start of the proof, it turns out to be useful during the proof.
The extra generality of weights turns out not to complicate our methods, and to rather simplify the notation. 
It is not essential to our approach; if one starts with unweighted hypergraphs, the functions appearing throughout will take only values $\{0,1\}$; that is, they are unweighted hypergraphs.
We use the letter $\Gam$ and calligraphic letters $\cG$, $\cH$ for weighted hypergraphs, and the corresponding lower case letters $\gam$, $g$, and $h$ for the weight functions. 

A \emph{homomorphism} $\phi$ from a complex $H$ to a weighted hypergraph $\cG$ is a map $\phi:V(H)\to V(\cG)$ such that $\abs[\big]{\phi(e)}=\abs{e}$ for each $e\in H$, and the \emph{weight} of $\phi$ is
\[
\cG(\phi) := \prod_{e\in H}g\big(\phi(e)\big)\,.
\]
Note that this product does run over $e=\emptyset$ and edges of size $1$ in $H$.
If $\cG$ is an unweighted hypergraph, then the weight of $\phi$ is either $0$ or $1$, taking the latter value if and only if $\phi(e)$ is an edge of $\cH$ (in the usual unweighted sense) for each $e\in F$, including edges of size one (vertices), and the empty edge (the reason for including $e=\emptyset$ becomes clear later). 
In other words, this is if and only if $\phi$ is a homomorphism according to the usual unweighted definition from $H$ to $\cG$. 
We will be interested in summing the weights of homomorphisms, which is thus equivalent for unweighted hypergraphs to counting homomorphisms by the usual definition. 
Slightly abusing terminology for the sake of avoiding unwieldy phrases, we will talk about `counting homomorphisms' or `the number of homomorphisms' when what we really mean is `the sum of weights of homomorphisms'.

Bearing in mind that our weighted hypergraphs are inspired by $k$-uniform hypergraphs, we wish to consider weighted hypergraphs which contain edges of size $0,1,\dotsc, k$, but not of size $k+1$. 
If the weight function is to generalise the indicator function for edges in the unweighted setting then we should say that $\cG$ is a $k$-graph to mean that $g(e)=0$ for any edge $e$ of size at least $k+1$. 
We prefer an alternative definition for convenience of notation. 
If one is interested in weights in $\cG$ of edges up to size $k$, one can ask for a homomorphism from a $k$-complex into $\cG$, which naturally excludes any edges of size at least $k+1$. 

It is more convenient for our purposes to say that $\cG$ is a \emph{$k$-graph} to mean that $g(e)=1$ for all edges $e$ of size at least $k+1$, so that such edges do not affect the weight of any homomorphisms into $\cG$. 
This affords a certain amount of flexibility in the homomorphism counting methods we develop. 
For example, let $H$ be a $(k+1)$-simplex (the down-closure of a single edge of size $k+1$), and $H'$ be obtained from $H$ by removing the edge of size $k+1$.
If $\cG$ is a $k$-graph then homomorphisms from $H$ and $H'$ to $\cG$ receive the same weight and we do not need to distinguish between them. 

We are usually not interested in counting general homomorphisms from $H$ to $\cG$; for simplicity we reduce to a \emph{partite setting} where we have identified special image sets in $V(\cG)$ for each vertex of $H$. 
More formally, for the partite setting we will have a complex $H$ on vertex set $X$, a $k$-graph $\cG$ on vertex set $V$, a partition of $X$ into disjoint sets $\{X_j\}_{j\in J}$ indexed by $J$, and a partition of $V$ into disjoint sets $\{V_j\}_{j\in J}$ indexed by $J$.
The sets $X_j$ and $V_j$ are called \emph{parts}. 
We say a set of vertices (e.g.\ in $X$) is \emph{crossing}, or \emph{partite} if it contains at most one vertex from each part. 
As a shorthand, we say that $H$, $\cG$ are $J$-partite to mean we have this setting; partitions of $V(H)$ and $V(\cG)$ indexed by $J$. 
If only the number of indices matters, we sometimes write e.g.~$k$-partite to mean $J$-partite for some set $J$ of size $k$.

Given this partite setting, a \emph{partite homomorphism} from $H$ to $\cG$ is a homomorphism from $H$ to $\cG$ that maps each $X_j$ into $V_j$. 
That is, given an index set $J$ and partitions of $X$ and $V$ indexed by $J$, we consider special homomorphisms from $X$ to $V$ that `respect' the partition. 
Given $x\in X_j$ we sometimes write $V_x$ for the part $V_j$ into which we intend to embed $x$; and for a crossing subset $e$ of $X$ we write $V_e=\prod_{x\in e}V_x$ for the collection of crossing $\abs{e}$-sets with vertices in $\bigcup_{x\in e}V_x$. 

Given the partite, weighted setup above we write $\cG(H)$ for the expected weight of a uniformly random partite homomorphism from $H$ to $\cG$, that is, the normalised sum over all partite homomorphisms $\phi$ from $H$ to $\cG$ of the weight of $\phi$,
\[
\cG(H) := \Ex[\Big]{\prod_{e\in H}g\big(\phi(e)\big)} = \Big(\prod_{j\in J}\abs{V_j}^{-\abs{X_j}}\Big)\sum_{\phi}\prod_{e\in H}g\big(\phi(e)\big)\,.
\]
If $\cG$ is constant on the sets $V_e$ for crossing $e\subset V(H)$, then we obtain $\cG(H)=\cG(\phi)$ for any partite homomorphism $\phi:H\to\cG$, and a counting lemma states that $\cG(H)$ is close to this `expected value' where the constants taken on the sets $V_e$ are (close to) the density of $\cG$ on the appropriate $V_e$.

In this paper we will primarily work with partite homomorphisms which map exactly one vertex of $H$ into each part of $\cG$. 
We reduce the general setting to this one-vertex-per-part setting by the following somewhat standard `copying process'.

\begin{definition}[Standard construction]\label{def:standard}
Given an index set $J$, a $k$-complex $H$ with vertex set $X$ partitioned into $\{X_j\}_{j\in J}$ and a $k$-graph $\cG$ with vertex set $V$ partitioned into $\{V_j\}_{j\in J}$, the \emph{standard construction} is as follows. 
Let $\cG'$ be an $X$-partite $k$-graph with vertex sets $\{V'_x\}_{x\in X}$ where for each $x\in X$, the set $V'_x$ is a copy of the set $V_j$ such that $x\in X_j$, and where for each set $f\subset V(H)$ and each edge $e\in V'_f$ we define
 \[g'(e):=\begin{cases} 1 & \text{if $f\not\in H$}\,,\\ g(e') & \text{if $f\in H$}\,,\end{cases}\]
 where $e'$ is the natural projection of $e$ to $V(\cG)$.
\end{definition}

This construction defines a new $k$-graph $\cG'$ (together with a partition of its vertices indexed by $X$) whose vertices are all copies of vertices in $\cG$, with weights given precisely so that $J$-partite homomorphism counts from $H$ to $\cG$ correspond to $X$-partite homomorphism counts from $H$ to $\cG'$. 
One is forced to consider $H$ as $J$-partite for the former counts, and $X$-partite (with parts of size $1$) for the latter. 
That is, for $f\not\in H$ the edges $V'_f$ all have weight one in $\cG'$, so for each $\phi:V(H)\to V(\cG')$ we have
\[\cG(\phi)=\prod_{f\in H}g\big(\phi(f)\big)=\prod_{f\subset V(H)}g'\big(\phi(f)\big)=\cG'(\phi)\,,\]
where we abuse notation by identifying $\phi$ with its natural projection onto $V(\cG)$.

\subsection{Density and link graphs}

The notation for weighted hypergraphs gives us a rather compact way of expressing the `expected number' of homomorphisms from $H$ to $\cG$ in the partite setting. 
If $H$ and $\cG$ are $J$-partite, and for $f\subset J$ we have constants $d(f)$ which represent the average weight $\cG$ gives to edges in $V_f$, we can reuse the notation for $k$-graphs to represent the product of densities that form the expected value of $\cG(\phi)$. 
More formally, we have a $k$-graph $\cD$ on vertex set $J$ whose weight function $d$ maps $f\mapsto d(f)$. 
If $\cG$ is constant on each $V_f$, then (trivially) we have $\cG(H)=\cD(H)$. 
More generally, if $\cG$ is not constant, but the edges are well-distributed (in a sense we will make precise later) and the density on each $V_f$ is about $d(f)$, we will say $\cD$ is \emph{a density graph} for $\cG$. Note that we do not insist that densities are given exactly by $\cD$ (we allow a small error which we will specify later) and hence $\cD$ is not given uniquely by $\cG$. 
This turns out to be convenient for notation. 
Our definition of weighted hypergraph, in which the empty set is given a weight, is not always convenient. 
We will see that we cannot necessarily keep control of the weight of the empty set, and as a result we have to scale explicitly by it in many formulae. 
We will usually have $\cG$ and $\cD$ as above, except that $g(\emptyset)\ne d(\emptyset)$ and so we take care to scale by these values. 
In the model situation where $\cG$ is constant on each $V_f$ we would have $\cG(H)/g(\emptyset)=\cD(H)/d(\emptyset)$. 
This rather formal interpretation of our notation does serve a purpose, we will use $g(\emptyset)$ for keeping track of the embedded weight in a partial embedding, for which we would otherwise have to invent further notation.
The present choice of notation also avoids frequently having to explicitly exclude $\emptyset$ as a subset of some index set throughout the argument.

Before stating our results we also require a definition of the \emph{link graph} of a vertex $v$ in a weighted hypergraph $\cG$, which corresponds to our notion of the neighbourhood of $v$ in $\cG$. 
Let $J$ be an index set, $i\in J$, and let $\cG$ be a hypergraph with vertex sets $\{V_j\}_{j\in J}$. 
For a vertex $v\in V_i$, let $\cG_{v}$ be the graph on $\{V_j\}_{j\in J\setminus\{i\}}$ with weight function $g_v$ defined as follows. 
For $f\subset J\setminus\{i\}$ and $e\in V_f$, we set
\[
g_v(e) := g(e)\cdot g(v,e)\,.
\]
Note that we write $g(v,e)$ for the more cumbersome $g\big(\{v\}\cup e\big)$ and we do allow $e=\emptyset$ in this definition. 
The point of this definition is that a partite homomorphism from a complex $H$ to $\cG$ which maps vertex $i\in V(H)$ to $v\in V(G)$ corresponds (in terms of weight) to a homomorphism from $H-i$ to $\cG_v$.

In the weighted setting we are required to replace the notion of the size of a set of vertices with the sum of the weights of the vertices, and for convenience we work with the following normalised version of this idea. 
We are primarily interested in showing that a large fraction of a part $V_j$ in some partite $k$-graph $\cG$ has some `good' property. 
Given a subset $U\subset V_j$ we write $\vnorm{U}{\cG}:=\Ex{\indicator{v\in U}g(v)}$, where the expectation is over a uniform choice of $v\in V_j$, so that $\vnorm{V_j}{\cG}$ is the average weight of a vertex in $V_j$. 
Now if $U$ is the set of vertices satisfying some property, a statement of the form $\vnorm{U}{\cG}\ge (1-\eps)\vnorm{V_j}{\cG}$ for a small $\eps>0$ is the weighted generalisation of `a large fraction of the vertices in $V_j$ have the property'.

\subsection{Pseudorandomness for the majorising hypergraph}

Our first main result is to define the pseudorandomness condition typically hereditary counting (Definition~\ref{def:THC}), and show that it follows from certain counting conditions (Theorem~\ref{thm:GaTHC}). 
The definition and theorem are important in their own right for the following reason. 
Given a bounded-degree complex $H$, Theorem~\ref{thm:GaTHC} gives sufficient counting conditions for a good lower bound on the number of homomorphisms from $H$ into $\Gam$. 
Since we do not have a matching upper bound, this result is known as a \emph{one-sided counting lemma}.
Note that we have not made any attempt to optimise the dependence on $H$ of these counting conditions, the key innovation is that for bounded-degree $H$ the size of the graphs appearing in the conditions is bounded (i.e.\ does not grow with $v(H)$). 


\begin{definition}[Typically hereditary counting (THC)]\label{def:THC}
 Given $k\ge 1$, a vertex set $J$ endowed with a linear order, and a density $k$-graph $\cP$ on $J$, we say the $J$-partite $k$-graph $\Gam$ is an \emph{$(\eta,c^*)$-THC graph} if the following two properties hold.
 \begin{enumerate}[label=\textup{(THC\arabic*)}]
  \item\label{thc:count} For each $J$-partite $k$-complex $R$ with at most $4$ vertices in each part and at most $c^*$ vertices in total, we have
   \[\Gam(R)=\big(1\pm v(R)\eta\big)\tfrac{\gam(\emptyset)}{p(\emptyset)}\cP(R)\,.\]
  \item\label{thc:hered} If $|J|\ge 2$ and $x$ is the first vertex of $J$, there is a set $V_x'\subset V_x$ with $\vnorm{V_x'}{\Gam}\ge(1-\eta)\vnorm{V_x}{\Gam}$ such that for each $v\in V_x'$ the graph $\Gam_v$ is an $(\eta,c^*)$-THC graph on $J\setminus\{x\}$ with density graph $\cP_x$.
 \end{enumerate}
\end{definition}

Roughly, THC means that we can count accurately copies of small complexes (and the count corresponds to the expected number) and that this property is typically hereditary in the sense that for most vertices $v$ we can count in the link $\Gam_v$, and we can count in typical links of $\Gam_v$, and so on. The important point separating this definition from simply `we can count all small subgraphs accurately' is that we may take links a large (depending on the number of vertices of $\Gam$) number of times.

It is immediate that when $\Gam$ is the complete $J$-partite $k$-graph (that is, it assigns weight $1$ to all $J$-partite edges) then for any $c^*$ it is a $(0,c^*)$-THC graph, with density graph $\cP$ being the complete $k$-graph on $J$ (and the ordering on $J$ is irrelevant). This is the setting we obtain (from the standard construction) when we are interested in embedding a $k$-complex $H$ on $J$ into a dense partite $k$-graph $\cG$, which we think of as a relatively dense subgraph of the complete $J$-partite $k$-graph on $V(\cG)$.

More importantly for this paper, the following result shows that counting conditions in $\Gam$ of a type found frequently in the literature suffice for $\Gam$ to be the majorising hypergraph in our upcoming counting and embedding results (stated in Section~\ref{sec:countandembed}). 

\begin{theorem}\label{thm:GaTHC}
For all $\Delta,\,k\ge 2$, $c^*\ge \Delta+2$, and $0<\eta'<1/2$, there exists $\eta_0>0$ such that whenever $0<\eta<\eta_0$ the following holds. 

Let $J$ be a finite set and $H$ be a $J$-partite $k$-complex on $J$ with $\Delta(H^{(2)}) \le \Delta$. 
Suppose that $\Gam$ is a $J$-partite $k$-graph in vertex sets $\{V_j\}_{j\in J}$ which is identically $1$ on any $V_e$ such that $e\notin H$, and $\cP$ is a density graph on $J$ such that for all $J$-partite $k$-complexes $F$ on at most $(\Delta+2)c^*$ vertices we have 
\[
\Gam(F) = (1\pm\eta)\tfrac{\gam(\emptyset)}{p(\emptyset)}\cP(F)\,.
\]
Then $\Gam$ is an $(\eta', c^*)$-THC graph. 
\end{theorem}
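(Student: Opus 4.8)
The plan is to prove the statement by induction on $|J|$, with $\eta_0$ a function of $\Delta,k,c^*,\eta'$ alone to be fixed along the way; throughout we have $\eta<\eta_0<\eta'$. When $|J|=1$ only property (THC1) is relevant, and it is immediate: a $k$-complex $R$ with at most $4$ vertices per part and at most $c^*$ vertices in total has at most $(\Delta+2)c^*$ vertices, so the hypothesis gives $\Gamma(R)=(1\pm\eta)\tfrac{\gamma(\emptyset)}{p(\emptyset)}\cP(R)$, and $\eta\le\eta'\le v(R)\eta'$. The same one line verifies (THC1) in the inductive step, so the work is entirely in (THC2).

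For (THC2), let $x$ be the first vertex of $J$, put $J':=J\setminus\{x\}$ with the induced order and $H':=H-x$. Since $H$ is down-closed, for every $v\in V_x$ the link $\Gamma_v$ is identically $1$ off $H'$, one has $\Delta\big((H')^{(2)}\big)\le\Delta$, and $\cP_x$ (with $p_x(f)=p(f)\,p(\{x\}\cup f)$) is a density graph on $J'$. By the induction hypothesis applied to $J'$, a given $v$ makes $\Gamma_v$ an $(\eta',c^*)$-THC graph with density graph $\cP_x$ provided $\Gamma_v$ satisfies the hypothesis of the theorem on $J'$ (with $\cP_x$, and with counting error $<\eta_0$ in place of $\eta$); and tracing what verifying this recursively calls for — at each level only accurate counts of $\le c^*$-vertex, $\le4$-per-part complexes, whose lifts back to $\Gamma$ involve, thanks to $\Delta(H^{(2)})\le\Delta$, at most a further $\Delta$ parts' worth of vertices — one sees that all $\Gamma$-counts ultimately invoked concern complexes on at most $(\Delta+2)c^*$ vertices, hence are supplied by the hypothesis. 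So it suffices to produce $V_x'\subset V_x$ with $\vnorm{V_x'}{\Gamma}\ge(1-\eta')\vnorm{V_x}{\Gamma}$ on which this inheritance holds.

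Constructing $V_x'$ is the heart of the matter, in two steps. First, a reduction: since $\Gamma$ counts all complexes on at most $(\Delta+2)c^*$ vertices with multiplicative error $\eta$, it is approximately multiplicative over disjoint unions of small complexes and, as usual, behaves on such complexes like a constant-density weighted hypergraph; combined with $\Delta(H^{(2)})\le\Delta$ — so $x$ is $H$-adjacent to at most $\Delta$ parts — this lets one express the relevant $\Gamma_v$-counts, up to a relative error that is $\eta$ times a constant depending on $\Delta,k,c^*$, through $\gamma(v)$, $v$-independent products of densities that the hypothesis pins down, and finitely many bounded ``local'' link-quantities $\Gamma_v(e^{\circ})$, where $e^{\circ}$ runs over a bounded family of complexes supported near $x$. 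For each such $e^{\circ}$ a second-moment argument finishes the job: the $\gamma$-weighted average of $\Gamma_v(e^{\circ})$ over $v\in V_x$ is $\tfrac{\gamma(\emptyset)}{p(\emptyset)}$ times the $\Gamma$-count of the bounded complex got by joining a fresh $V_x$-vertex to $e^{\circ}$ along $H$-edges, while the $\gamma$-weighted average of $\Gamma_v(e^{\circ})^2$, corrected for the empty and singleton edges at that vertex which would be double-counted, is a $\Gamma$-count of the corresponding complex doubled at that vertex — again bounded, hence again given by the hypothesis; since the matching $\cP$-identity ``doubled equals first squared, normalised'' holds exactly, the variance of $v\mapsto\Gamma_v(e^{\circ})$ is at most $\eta$ times a constant depending on $\Delta,k,c^*$ times the square of its mean, and Chebyshev bounds the $\gamma$-weight of the bad set for $e^{\circ}$. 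A union bound over the boundedly many $e^{\circ}$, and then a choice first of tolerances and then of $\eta_0$ small in terms of $\Delta,k,c^*,\eta'$ alone — legitimate because every complex appearing has bounded size — yields $V_x'$ with the required weight and with the relative error in every recursively invoked instance of (THC1) at most $v(R)\eta'$.

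The hard part will be making this last paragraph rigorous: carrying out the reduction to bounded local quantities and the second-moment computation with honest bookkeeping of the empty-edge weights $\gamma(\emptyset)$, the vertex weights $\gamma(v)$, and the normalisers $\gamma_v(\emptyset)/p_x(\emptyset)$; verifying that ``accurate counting of complexes on at most $(\Delta+2)c^*$ vertices'' really does cover all the (bounded) lifts, joinings and doublings that arise — which is what fixes the constants $4$ and $(\Delta+2)c^*$; and, above all, checking that the bounded-degree hypothesis makes the relative error lost at each link a \emph{fixed} function of $\Delta,k,c^*$, so that it does not accumulate over the up to $|J|-1$ links unfolded by the recursive definition of THC. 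This last point is exactly why the counting conditions in the hypothesis may be required of complexes of bounded size only.
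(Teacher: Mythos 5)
There is a genuine gap, and it sits exactly at the point you flag as ``the hard part''. Your induction on $|J|$ needs, at each step, that for all but an $\eta'$-fraction (in weight) of $v\in V_x$ the link $\Gamma_v$ satisfies \emph{the hypothesis of the theorem}, i.e.\ counts of all complexes on at most $(\Delta+2)c^*$ vertices accurate to a relative error below $\eta_0$. But the second-moment/Chebyshev argument you sketch cannot deliver this. From moment estimates with relative error $\eta$ one gets a trade-off of the form ``accuracy $\tau$ in the link, off a bad set of weight $O(\eta/\tau^2)$'' (compare Corollary~\ref{cor:ECSconc}, which gives accuracy $\sim\eta^{1/4}$ off a bad set of weight $\sim\eta^{1/4}$); insisting that the bad set have weight at most the fixed $\eta'$ forces $\tau\gtrsim\sqrt{\eta/\eta'}$, which is far larger than $\eta$ and in particular larger than $\eta_0$ when $\eta$ is close to $\eta_0$. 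So the counting accuracy available in a typical link is not ``$\eta$ plus a fixed loss'' but roughly a square (or fourth) root of it, and iterating gives accuracy about $\eta^{2^{-t}}$ after $t$ links. Since the recursion in the definition of THC unfolds up to $|J|-1$ links and $\eta_0$ must be chosen before $J$ (it may depend only on $\Delta,k,c^*,\eta'$), this degradation destroys the induction once $|J|$ exceeds roughly $\log\log(1/\eta_0)$; your assertion that the loss per link is a fixed function of $\Delta,k,c^*$ that does not accumulate is exactly what fails for the argument you describe.

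The correct intuition --- that by bounded degree any single count at any depth is a rooted count in $\Gamma$ of a complex on at most $(\Delta+2)c^*$ vertices, and is only ``damaged'' by the boundedly many embedding steps that touch its neighbourhood --- is right, but exploiting it requires carrying through the induction a stronger invariant than ``the theorem's hypothesis holds in the link''. This is what the paper's proof builds: counting places, sets of interest, fail sets, and decorations by bad sets $B^Y$ satisfying the safety conditions \ref{safe:count}--\ref{safe:notbig}, where the budget $\delta^{s(Y)}2^{-p(Y)}$ is designed so that each link-taking step that affects a given count consumes one unit of $s(Y)$ or $p(Y)$ (both bounded by constants depending on $\Delta,c^*$), while the counting threshold $\eps$ in the definition of a fail set stays the same at every level. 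Lemma~\ref{lem:safe} shows safety is inherited by the links of all but a small weight of vertices with \emph{unchanged} parameters, which is what permits unbounded depth; note also the additional obstruction the paper must handle there (exceptionally many large fail sets sharing one small subset, dealt with by passing to $\bar{B}^Y$), which a level-by-level Markov/Chebyshev argument does not see. Without this machinery, or an equivalent replacement for it, your proposal does not yield the theorem.
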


The combination of Definition~\ref{def:THC} and Theorem~\ref{thm:GaTHC} have two major implications for this paper. 
Firstly they give a linear-forms type condition for a hypergraph $\Gam$ to be sufficiently pseudorandom for the GPE techniques that allow us to prove embedding and counting lemmas (for regular $\cG\subset\Gam$) which are discussed in later subsections. 
Secondly, given such a counting lemma (or similar results from the literature) one can verify that the subgraph $\cG$ itself satisfies THC via Theorem~\ref{thm:GaTHC}, and obtain one-sided counting in $\cG$ directly from THC. 
In the final section of this paper we discuss the merits of these approaches and motivate the presence of both of them.

We also show that THC holds with high probability when $\Gam$ is a random hypergraph (Lemma~\ref{lem:randomTHC}), which allows for the methods of this paper to be applied in subgraphs of random hypergraphs. One way of 
achieving this would be to verify that the conditions of Theorem~\ref{thm:GaTHC} hold in suitable random hypergraphs, but we prefer to give a direct verification as it yields a better dependence of the probability on the structure of $\Gam$, and shows that the THC property can be tractable in a direct manner. 

We view a random $k$-uniform hypergraph on $n$ vertices as a $k$-graph that is complete (i.e.\ weight $1$) on edges of size at most $k-1$, and for which weights of $k$-edges are independent Bernoulli random variables (taking values in $\{0,1\}$) with probability $p$. 
Let $\Gam=G\tk(n,p)$ be this $k$-graph.
For a finite set $J$, let $H$ be a $J$-partite $k$-complex on a vertex set $X$, and let $V(\Gam)$ be partitioned into $\{V_j\}_{j\in J}$. Suppose that $X$ comes with a linear order.
By the standard construction we obtain vertex sets $\{V_x'\}_{x\in X}$, and an $X$-partite $k$-graph $\Gam'$. Note that, as observed after Definition~\ref{def:standard}, partite homomorphism counts in $\Gam$ and in $\Gam'$ are in correspondence. In particular, the counting property~\ref{thc:count} is equivalent to asking for the same bounds on homomorphism counts in $\Gam$. Furthermore, if we embed an initial segment of $X$ and update $\Gam'$ by taking links of all the embedded vertices, then~\ref{thc:count} in the link graph is the same as asking for a count of \emph{rooted homomorphisms} in $\Gam$. 
There is a slight subtlety here, namely that if we embed two vertices of $X$ to (automatically) different vertices of $\Gam'$ which correspond to one vertex of $\Gam$, then the complex we count in a link of $\Gam'$ and that which we count rooted in $\Gam$ are not quite the same.

We would like to know that in this setup $\Gam'$ is well-behaved enough to apply the main results of this paper, which we might expect to be true provided $p$ is not too small. 
We prove that for $c^*\in\NATS$ and $\eta>0$, provided $p$ and the parts $V_j$ are large enough, with high probability $\Gam'$ is an $(\eta,c^*)$-THC graph.

To state the requirements on $p$ formally we give a definition of degeneracy.
Suppose that $X$ is equipped with a fixed ordering, and let
\[
\deg_k(H):=\max_{e\in H}\abs[\big]{\{f\in H\tk: e\subset f,\,f\setminus e \text{ precedes } e\}}\,,
\]
where $f\setminus e$ precedes $e$ if and only if each vertex of $f\setminus e$ comes before every vertex of $e$ in the order on $X$. 
Then, when embedding vertices in order, part-way through the process an edge $e$ can be the set of unembedded vertices for at most $\deg_k(H)$ edges of size $k$ in $F$.
We make no attempt to optimise the dependence of $p$ on the relevant parameters.

\begin{lemma}\label{lem:randomTHC}
Let $\eta>0$ be a real number, $c^*,\,\Delta,\, d,\,k\in\NATS$, and $J$ be a finite set.  
Suppose that $H$ is a $J$-partite $k$-complex of maximum degree $\Delta$ and degeneracy $\deg_k(H)\le d$ with vertex set $X$ equipped with some fixed ordering.
For some fixed $0<\eps<1$, let $\Gam=G\tk(n,p)$ be a random $k$-graph where $\min\big\{p^{4^kc^*d},\,p^{4^k\Delta+d}\big\}\ge (2\log n) n^{\eps-1}$. 
Suppose also that $(1-\eta)^\Delta \ge 1/2$ and $\abs{X}\le n$. 
Then with probability at least $1-o(1)$ the following holds.

For any partition $\{V_j\}_{j\in J}$ of $V(\Gam)$ into parts of size at least $n_0=n/\log n$, 
writing $\Gam'$ for the $X$-partite graph obtained by the standard construction to $H$, $\Gam$, and $\{V_j\}_{j\in J}$, we have that $\Gam'$ is an $(\eta, c^*)$-THC graph with density graph $\cQ$ that gives weight $p$ to edges of $H\tk$ and weight $1$ elsewhere.
%
%
\end{lemma}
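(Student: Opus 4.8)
The plan is to peel off the recursive structure of Definition~\ref{def:THC} and reduce the statement \enquote{$\Gamma'$ is $(\eta,c^*)$-THC} to a family of concentration statements about \emph{rooted homomorphism counts} in the random $k$-graph $\Gamma=G\tk(n,p)$, all handled by a single union bound over polynomially many events. Two observations make the reduction manageable. First, since $X$ carries a fixed linear order, the only vertex sequences along which~\ref{thc:hered} is ever tested are images of \emph{initial segments} $x_1<\dots<x_\ell$ of $X$, of which there are at most $\abs{X}+1\le n+1$. Second, by the standard construction (Definition~\ref{def:standard}), counting a complex $R$ (with at most $4$ vertices per part and at most $c^*$ vertices in total) in an iterated link $\Gamma'_{v_1,\dots,v_\ell}$ equals $\gamma'_{v_1,\dots,v_\ell}(\emptyset)$ times a normalised weighted count of homomorphisms of the rooted complex $R^{+}$ --- obtained from $R$ by adjoining those root positions that lie in a common $k$-edge of $H$ with $V(R)$, together with all such edges and their down-closure --- where each homomorphism is weighted by the product of the $\{0,1\}$ weights $\Gamma$ assigns to those $k$-edges. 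The number of relevant roots is $O_{c^*,\Delta,k}(1)$, and if any $k$-edge of $H$ inside $\{x_1,\dots,x_\ell\}$ receives weight $0$ then $\gamma'_{v_1,\dots,v_\ell}(\emptyset)=0$, all link counts vanish, and the graph (and every further link of it) is trivially $(\eta,c^*)$-THC. Hence it suffices to control, over all $\le n+1$ segment lengths, all valid $R$, and all $\le n^{O_{c^*,\Delta,k,d}(1)}$ assignments of the relevant roots into $V(\Gamma)$, whether the corresponding normalised count lies within a small factor of its expectation --- along with the analogous statement for the vertex-weights $\vnorm{V_{x_{\ell+1}}}{\Gamma'_{v_1,\dots,v_\ell}}$, which unwinds to a normalised count of the complex consisting of $x_{\ell+1}$ and the at most $\deg_k(H)$ edges of $H$ in which $x_{\ell+1}$ is the last vertex.

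Next I would pin down the \enquote{expectation} side and define the good sets. For a relevant-root assignment whose images are distinct in $V(\Gamma)$, the expectation of the normalised count is exactly the prediction $\tfrac{\gamma'_{v_1,\dots,v_\ell}(\emptyset)}{q_{v_1,\dots,v_\ell}(\emptyset)}\,\cQ_{v_1,\dots,v_\ell}(R)$ of~\ref{thc:count}, with $\cQ_{v_1,\dots,v_\ell}$ the corresponding iterated link of $\cQ$; the terms in which $\phi$ or the roots collide are smaller by a factor $O_{c^*}(\log n/n)$ and are absorbed into the error. This is precisely the \enquote{slight subtlety} that the complex counted in a link of $\Gamma'$ and the one counted rooted in $\Gamma$ differ; it is harmless because $n_0=n/\log n$ is large, but it does force us, when building the good set $V'_{x_{\ell+1}}\subset V_{x_{\ell+1}}$, to discard the $O_{c^*,\Delta,k}(1)$ vertices colliding with an earlier relevant root. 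Since $\vnorm{V_{x_{\ell+1}}}{\Gamma'_{v_1,\dots,v_\ell}}$ will be shown to be at least roughly $p^{d}\ge(2\log n)n^{\eps-1}$, which dwarfs $O(\log n/n)$, discarding these vertices costs at most an $\eta$-fraction of the weight, as~\ref{thc:hered} permits.

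The heart of the proof is the concentration of each normalised count around its expectation. Each is a polynomial of bounded degree in the independent $\mathrm{Bernoulli}(p)$ weights of the $k$-edges of $\Gamma$, the degree being the number of $k$-edges of $H$ spanned by the relevant rooted complex; using $\Delta(H\tk)\le\Delta$, $\deg_k(H)\le d$, and the $4$-per-part bound, this degree is at most $4^kc^*d$ for the complexes $R$ themselves and at most $4^k\Delta+d$ for the auxiliary complexes governing vertex-weights and single-vertex links. The hypothesis $\min\{p^{4^kc^*d},\,p^{4^k\Delta+d}\}\ge(2\log n)n^{\eps-1}$ together with $\abs{V_j}\ge n_0$ guarantees that every such expected count exceeds a fixed power of $n^{\eps}$ up to polylogarithmic factors, so a concentration inequality for low-degree polynomials of independent random variables (a Kim--Vu-type bound, or a bounded-differences martingale obtained by revealing $k$-edges in groups) yields a two-sided deviation of at most a small constant factor, with failure probability $n^{-\omega(1)}$, for each event; a union bound over the $n^{O_{c^*,\Delta,k,d}(1)}$ events leaves a good event $\mathcal{E}$ of probability $1-o(1)$. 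On $\mathcal{E}$ one then verifies that $\Gamma'$ is $(\eta,c^*)$-THC by downward induction on the length $\ell$ of the embedded initial segment: the case $\ell=\abs{X}$ is trivial; property~\ref{thc:count} for $\Gamma'_{v_1,\dots,v_\ell}$ follows from the concentration together with the expectation computation; and property~\ref{thc:hered} follows by taking $V'_{x_{\ell+1}}$ to consist of the non-colliding vertices of full weight together with all vertices of zero weight --- which has measure at least $(1-\eta)\vnorm{V_{x_{\ell+1}}}{\Gamma'_{v_1,\dots,v_\ell}}$ --- and invoking the inductive hypothesis at length $\ell+1$, whose density graph is exactly the link of $\cQ$ required by~\ref{thc:hered}.

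I expect the main obstacle to be this concentration step in the regime of small $p$: the relevant counts have expectations that are only polynomially (not constantly) large in $n$, so one must push a polynomial concentration inequality to the edge of its usefulness, and the somewhat fiddly exponent bookkeeping leading to $4^kc^*d$ and $4^k\Delta+d$ is precisely what keeps every such expectation growing with $n$. The reduction carried out in the first two paragraphs is conceptually routine but notationally heavy, chiefly because of the interplay between the standard construction, the link operation, and the loss of control over $\gamma'(\emptyset)$.
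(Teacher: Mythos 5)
Your reduction of the THC property to concentration of rooted, partite homomorphism counts in $\Gam$ is sound and close in spirit to the paper's argument (which makes the same link-to-rooted-count translation and also discards colliding vertices); folding the vertex-weight edges into the rooted complex $R^{+}$, so that each event is indexed by only the boundedly many relevant roots, is a legitimate variant that would even spare you the paper's bookkeeping with the weight-one subsets $U_z$ and the sequentially maintained event $\cA_Y$. The genuine gap is quantitative and sits exactly where you place the union bound. Lemma~\ref{lem:randomTHC} asserts a single event of probability $1-o(1)$ on which the conclusion holds \emph{for every} partition $\{V_j\}_{j\in J}$ into parts of size at least $n/\log n$; there are $e^{\Theta(n)}$ such partitions, and your events --- normalised partite counts, i.e.\ sums over $\psi(z)\in V_{x_z}$ --- genuinely depend on the partition (already for a single rooted edge the event concerns the edge count between the roots and an arbitrary set $V_j$ of size $n/\log n$). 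Hence a per-event failure probability of $n^{-\omega(1)}$ together with a union bound over polynomially many events only yields ``for each fixed partition, with high probability'', which is strictly weaker than the statement. This is precisely why the paper proves Claim~\ref{claim:Nconc} with failure probability $\exp\big(-\Omega(n^{1+\eps})\big)$, via Janson's inequality (Theorem~\ref{thm:janson}) applied to the injective part of the count, and then spends that exponent on union bounds over the $e^{O(n)}$ choices of the sets $U_z$, the embedding steps, and the $e^{O(n)}$ partitions.

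The repair fits inside your framework: replace the Kim--Vu/bounded-differences step by the same Janson-type estimate applied to the injective part of each rooted count of $R^{+}$ (non-injective maps absorbed as you propose, just as in the paper), which under the stated hypotheses on $p$ gives failure probability $\exp\big(-\Omega(n^{1+\eps}/\mathrm{polylog}\, n)\big)$ per event, and then add the missing union bound over partitions (it suffices to union over the at most $c^*$ parts actually occurring in each event, but even the crude $e^{O(n)}$ bound over all partitions is beaten). With that change your relevant-roots indexing does close the downward induction --- indeed, on the corrected good event every non-colliding vertex of full weight has a THC link, so your choice of $V'_{x_{\ell+1}}$ works. As written, however, the claimed $n^{-\omega(1)}$ failure probabilities and polynomial union bound do not deliver the quantifier order in the lemma.
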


\subsection{Regularity} 

As mentioned in the introduction, a dense bipartite graph is regular if and only if the number of copies of $C_4$ it contains is close to minimal for that density. To generalise this to hypergraphs, we need to define the \emph{octahedron graph}. We will need several related graphs later, so we give the general definition.

Given a vector $\vec{a}$ with $k$ nonnegative integer entries, we define $\oct{k}{\vec{a}}$ to be the $k$-partite complex whose $j$th part has $\vec{a}_j$ vertices, and which contains all crossing $i$-edges for each $1\le i\le k$. 
Let $\vec{1}^k$ and $\vec{2}^k$ denote the $k$-vectors all of whose entries are respectively $1$ and $2$. Then $\oct{k}{\vec{1}^k}$ is the complex generated by down-closure of a single $k$-uniform edge, while $\oct{k}{\vec{2}^k}$ is `the octahedron'. Note that $\oct{2}{\vec{2}^2}$ is the down-closure of the $2$-graph $C_4$.
Later we will also require notation for two copies of $\oct{k}{1,\vec{a}}$ which share the same first vertex but are otherwise disjoint, for which we write\footnote{The `$+2$' in $+2\oct{k}{\vec{a}}$ is supposed to represent adding a common `tail' to two disjoint copies of $\oct{k}{\vec{a}}$.} $+2\oct{k}{\vec{a}}$. 

We are now in a position to define regularity for hypergraphs. Even when we are working in the `dense case', that is, we are thinking of $\cG$ as a relatively dense subgraph of the complete hypergraph (as opposed to some much sparser `majorising hypergraph'), we will often need to introduce a graph $\Gam$ which is not complete and of which $\cG$ is a relatively dense subgraph. The reader should always think of $\Gam$ as being a hypergraph whose good behaviour we have already established (and we are trying to show that $\cG$ is also well behaved).

\begin{definition}[Regularity of hypergraphs]\label{def:reg}
Given $k\ge 1$ and nonnegative real numbers $\eps$, $d$, let $\cG$ and $\Gam$ be $k$-partite hypergraphs on the same vertex parts. Suppose that for each $e$ with $|e|<k$ we have $g(e)=\gam(e)$, and suppose that for each $e$ with $|e|=k$ we have $g(e)\le\gam(e)$. Then we say that $\cG$ is \emph{$(\eps,d)$-regular} with respect to $\Gam$ if
\begin{align}
\cG\big(\oct{k}{\vec1^k}\big)&=(d\pm\eps)\Gam\big(\oct{k}{\vec{1}^k}\big) &\text{and}&& \cG\big(\oct{k}{\vec2^k}\big)&\le\big(d^{2^k}+\eps\big)\Gam\big(\oct{k}{\vec2^k}\big)\,.
\end{align}
We say that $\cG$ is \emph{$\eps$-regular} with respect to $\Gam$ to mean that the corresponding $(\eps, d)$-regularity statement holds with $d= \cG(\oct{k}{\vec{1}^k})/\Gam(\oct{k}{\vec{1}^k})$.
\end{definition}

Note that in this definition we do not specify the octahedron density of $\cG$ but only give an upper bound. The definition is only useful for graphs $\Gam$ such that a matching lower bound holds for all $\cG$, which we will see (Corollary~\ref{cor:relCSoctlow}) is the case when $\Gam$ is sufficiently well behaved.

Regularity for $k$-graphs is not usually discussed for $k=1$, but we use the notion as a shorthand for relative density in this paper. 
The definition makes sense when $k=1$, but only the first part of the assertion, that $\cG$ has density close to $d$ with respect to $\Gam$, is important. For any $1$-graph $\cG$ on a vertex set $V$, we have 
\[
\cG\big(\oct{1}{\vec2^1}\big) = \Ex{g(u)g(v)}[u,v\in V] = \Ex{g(v)}[v\in V]^2 = \cG\big(\oct{1}{\vec1^1}\big)^2\,,
\] 
and so imposing the upper bound on octahedron count is superfluous, as essentially the same upper bound (the change in $\eps$ being unimportant) follows from the density.

\subsection{Counting and embedding results}\label{sec:countandembed}

We can now state counting and embedding lemmas of a rather standard type that follow from our methods.
In the dense case, that is when $\Gam$ is a complete $J$-partite $k$-graph (for which THC is trivial), our counting lemma (Theorem~\ref{thm:counting}) is more or less the same as that given in~\cite{NRScount}. 
The notion of regularity used there is that of the regularity lemma in~\cite{RSreg}, which is slightly stronger than the octahedron minimality we use (see \cite{DHNRcharacterising} for the 3-uniform case). 
The embedding lemma, Theorem~\ref{thm:embedding}, is (as far as we know) not found in this form in the literature, but it does follow fairly easily from~\cite{NRScount}.
A related but rather harder statement is found in~\cite{CFKO}. 
In the sparse case, our Theorem~\ref{thm:counting} essentially follows from the results of~\cite{NRScount} and of~\cite{CFZrelative}, though again it is not explicitly stated. 
We would like to stress that the main novelty here is that our proofs proceed via a vertex-by-vertex embedding. 
As is standard in this context, we write e.g.\ $0< \eta_0 \ll d_1,\dotsc,d_k$ to mean that there is an increasing function $f$ such that the argument is valid for $0 < \eta_0 \le f(d_1,\dotsc,d_k)$.

Finally, we will say $\Gam$, with density graph $\cP$, is a $(\eta_0,c^*)$-THC graph for $H$ (where both $\Gam$ and $H$ are $J$-partite) if applying the standard construction to $\Gam$ and its density graph $\cP$ yields a $(\eta_0,c^*)$-THC graph.

\begin{theorem}[Counting lemma for sparse hypergraphs]\label{thm:counting}
For all $k\ge 2$, finite sets $J$, and $J$-partite $k$-complexes $H$, given parameters $\eta_k$, $\eta_0$ and $\eps_\ell$, $d_\ell$ for $1\le\ell\le k$ such that  $0<\eta_0\ll d_1,\dotsc,d_k,\,\eta_k$, and for all $\ell$ we have $0<\eps_{\ell}\ll d_{\ell},\dotsc,d_k,\,\eta_k$,
the following holds.

Let $c^*=\max\{2v(H)-1, 4k^2+k\}$. 
Given any $J$-partite weighted $k$-graphs $\cG\subset\Gam$ and density graphs $\cD$, $\cP$, where $\Gam$ is an $\big(\eta_0,c^*\big)$-THC graph for $H$ with density graph $\cP$, and where for each $e\subset J$ of size $1\le\ell\le k$, the graph $\ind{\cG}{V_e}$ is $\eps_\ell$-regular with relative density $d(e)\ge d_\ell$ with respect to the graph obtained from $\ind{\cG}{V_e}$ by replacing layer $\ell$ with $\Gam$, we have
\[
\cG(H) = \big(1\pm v(H)\eta_k\big)\tfrac{g(\emptyset)}{d(\emptyset)p(\emptyset)}\cD(H)\cP(H)\,. 
\]
\end{theorem}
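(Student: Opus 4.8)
The plan is to run a vertex-by-vertex embedding — that is, to instantiate the good-partial-embedding lemmas of Section~\ref{sec:count} at the empty partial embedding — so what is left to do here is the set-up and the accounting. First I would apply the standard construction (Definition~\ref{def:standard}) simultaneously to $H$, $\cG$, $\Gam$, $\cD$, $\cP$, extending $\cD$ and $\cG$ by natural projection in exactly the way $\cP$ and $\Gam$ are extended, to reduce to the one-vertex-per-part case on vertex sets $\{V_x'\}_{x\in X}$ with $X=V(H)$ carrying the linear order the THC hypothesis uses. By the discussion following Definition~\ref{def:standard} this changes none of $\cG(H)$, $\cD(H)$, $\cP(H)$, nor the empty-edge weights; the inclusion $\cG'\subset\Gam'$ persists; the $\eps_{|f|}$-regularity of each $\ind{\cG'}{V'_f}$ with relative density $d(f)\ge d_{|f|}$ persists since for $f\in H$ (the only edges that matter) $\ind{\cG'}{V'_f}$ is a faithful copy of $\ind{\cG}{V_e}$ for the projection $e$ of $f$; and $\Gam'$ is an $(\eta_0,c^*)$-THC graph with density graph $\cP'$ by the very definition of ``THC graph for $H$''. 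So I may assume $H$ is $X$-partite with singleton parts.

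Next I would write $\cG(H)=\Exp_{v_1}\Exp_{v_2}\cdots\Exp_{v_n}\big[\prod_{e\in H}g(\phi(e))\big]$, the images $v_i\in V'_{x_i}$ being chosen in the order $x_1<\dots<x_n$, and push this through the following one-step statement, applied $n=v(H)$ times. Call a partial embedding $\psi$ of $\{x_1,\dots,x_{i-1}\}$ \emph{good} if its successive links $\cG_\psi\subset\Gam_\psi$ on the remaining parts still satisfy the hypotheses of the theorem up to a controlled weakening of the regularity errors: $\Gam_\psi$ is an $(\eta_0,c^*)$-THC graph with a suitable density graph, and each surviving layer of $\cG_\psi$ is $O(\eps)$-regular with relative density $\ge d_\ell$ with respect to the corresponding layer of $\Gam_\psi$. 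The one-step statement is that, for good $\psi$, all but an $O(\eps+v(H)\eta_0)$-fraction of $v_i\in V'_{x_i}$ — measured in $\vnorm{\cdot}{\Gam_\psi}$-weight — are such that $\psi$ extended by $x_i\mapsto v_i$ is again good, the relative density of each surviving layer gets multiplied by a factor $(1\pm O(\eps))d(e)$ over the edges $e\in H$ whose top vertex is $x_i$, and the passage $\cG_\psi\to\cG_{\psi\cup\{x_i\mapsto v_i\}}$ extracts from $g_\psi(\emptyset)$ exactly the product of $g_\psi$-weights of those edges. The regularity-preservation half is the inheritance content: it comes from Lemma~\ref{lem:k-inherit} together with the Cauchy--Schwarz/octahedron estimates of Section~\ref{sec:CS}, where one bounds the atypical $v_i$ by counting $\oct{k}{\vec{2}^k}$, the tails $+2\oct{k}{\vec{a}}$, and related crossing complexes inside $\Gam_\psi$ — all on at most $c^*$ vertices, which forces both $c^*\ge 2v(H)-1$ (the doubled complexes appearing when $v(H)$ is large) and $c^*\ge 4k^2+k$ (the $k$-dependent complexes, with up to four vertices per part) — and then invokes~\ref{thc:count} and its hereditary form~\ref{thc:hered} to pin down both the octahedron counts and the $\vnorm{\cdot}{\Gam_\psi}$-weight of the good set.

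Iterating down to the empty remaining complex, the bad fractions contribute negligibly because~\ref{thc:count} caps the total $\Gam$-weight they can carry, and the surviving contributions telescope. The $\cG$-side accumulates $g(\emptyset)\prod_{e\in H,\,e\ne\emptyset}\big((1\pm O(\eps))d(e)\big)$, which equals $(1\pm O(v(H)\eps))\tfrac{g(\emptyset)}{d(\emptyset)}\cD(H)$ since $\cD(H)=\prod_{e\in H}d(e)$ in the one-vertex-per-part case; the telescoped $\Gam$-factors (ratios of common-neighbourhood weights, in which the empty-edge weights of the successive links cancel) evaluate by~\ref{thc:count} to $(1\pm O(v(H)\eta_0))\cP(H)/p(\emptyset)$. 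Multiplying and using the hierarchies $\eps_\ell\ll d_\ell,\dots,d_k,\eta_k$ and $\eta_0\ll d_i,\eta_k$ to absorb every accumulated error into $v(H)\eta_k$ yields $\cG(H)=(1\pm v(H)\eta_k)\tfrac{g(\emptyset)}{d(\emptyset)p(\emptyset)}\cD(H)\cP(H)$.

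The main obstacle is the one-step statement: showing that taking the link of a \emph{typical} next vertex preserves \emph{both} the THC pseudorandomness of $\Gam_\psi$ \emph{and} the octahedron-regularity of $\cG_\psi$ relative to it, \emph{uniformly} across all $v(H)$ steps. This is hard for the reason endemic to this area — the regularity error $\eps_\ell$ dwarfs the densities of $\Gam$, so one cannot union-bound over vertices of $\Gam$, and the atypical vertices have to be extracted by second-moment/Cauchy--Schwarz arguments whose inputs are higher-order counts in $\Gam$, which is precisely why the THC hypothesis is asked for complexes of size $\Theta(v(H))$ with up to four vertices per part. A subsidiary but genuinely delicate point is the bookkeeping of the empty-edge weights $g(\emptyset)$, $d(\emptyset)$, $p(\emptyset)$, which, as the paper emphasises, cannot be normalised away step by step and must be carried through the induction. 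Both are encapsulated in the good-partial-embedding lemmas of Section~\ref{sec:count}; the residual content of the present theorem is verifying that the hypotheses as stated are exactly what feeds those lemmas at the outset.
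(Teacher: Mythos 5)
Your top-level route is the paper's: reduce to one vertex per part by the standard construction and then feed the configuration into the good-partial-embedding machinery of Section~\ref{sec:count} (the paper does exactly this via Lemma~\ref{lem:getGPE}, after which the theorem is an immediate corollary of Lemma~\ref{lem:GPEcount}), and your reading of why $c^*\ge 2v(H)-1$ (doubled complexes for the second-moment control of bad vertices) and why THC allows four vertices per part is accurate. The problem is the middle layer. The ``one-step statement'' you propose to iterate maintains only the pair $\cG_\psi\subset\Gam_\psi$, asks that each layer of $\cG_\psi$ stay $O(\eps)$-regular relative to the corresponding layer-replaced graph, and claims the atypical vertices are extracted by octahedron-type counts \emph{inside $\Gam_\psi$} via~\ref{thc:count} and~\ref{thc:hered}. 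That last attribution is where the argument breaks: to inherit regularity of layer $\ell$ one must apply Lemma~\ref{lem:k-inherit} with ambient graph equal to $\cG$'s layers below $\ell$ together with $\Gam$'s layer $\ell$, and its counting hypothesis~\ref{inh:count} must hold in \emph{that mixed graph}. THC of $\Gam_\psi$ does not supply those counts, because the lower layers of $\cG$ are only $\eps_{\ell'}$-regular with $\eps_{\ell'}$ far larger than the densities $d(f)$. The paper's solution is the stack of candidate graphs $\cC^{(0)}\ge\dotsb\ge\cC^{(k)}$ and the intertwined induction of Lemmas~\ref{lem:onestep} and~\ref{lem:GPEcount} on the stack level: counting in $\cC^{(\ell-1)}$, bootstrapped from THC at level~$0$, is what certifies~\ref{inh:count} for the inheritance step at level $\ell$, and the bad sets $B_{\ell'}(x)\setminus B_{\ell'-1}(x)$ are likewise bounded by counting in level $\ell'$ of the stack, not in $\Gam$. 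Your sketch omits this bootstrapping, which is the central idea of the proof rather than a detail ``encapsulated'' elsewhere.

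Two further points in the same direction. First, regularity is not preserved ``up to $O(\eps)$'' per step: each application of Lemma~\ref{lem:k-inherit} degrades $\eps$ to roughly $2^{2^{k+6}}k^3\eps^{1/16}d^{-2^{k+1}}$, so a uniform $O(\eps)$ invariant across all $v(H)$ steps is unavailable; this is exactly why Definition~\ref{def:ensemble} grades the parameters $\eps_{\ell,r,h}$ by the number of hits $\pi_\phi(e)$ and carries $h^*$ and the vertex-degeneracy through the argument. Second, the set-up you describe as residual is Lemma~\ref{lem:getGPE} and is not purely formal: besides constructing the stack and a valid ensemble, one must verify~\ref{gpe:reg} for the initial stack at the $k$-th layer, which the paper does via the slicing/minimality Lemma~\ref{lem:slicing}. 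In short, you have located the right engine, but the invariant you propose to iterate is not the one that engine runs on, and bridging the two (the stack and the induction on its levels) is the substantive content of the paper's proof.
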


\begin{theorem}[Embedding lemma for sparse hypergraphs]\label{thm:embedding}
For all $k\ge2$ and $\Delta\ge1$, given parameters $\eta_k$, $\eta_0$, and $\eps_\ell$, $d_\ell$ for $1\le\ell\le k$ such that $0<\eta_0\ll d_1,\dotsc,d_k,\,\eta_k,\,\Delta$, and for all $\ell$ we have $0<\eps_{\ell}\ll d_{\ell},\dotsc,d_k,\,\eta_k,\,\Delta$, the following holds.

Let $\cG\subset\Gam$ be $J$-partite weighted $k$-graphs with associated density graphs $\cD$, $\cP$, where $\Gam$ is an $(\eta_0,4k^2+k)$-THC graph for $H$ with density graph $\cP$, and where for each $e\subset J$ of size $1\le\ell\le k$, the graph $\ind{\cG}{V_e}$ is $\eps_\ell$-regular with relative density $d(e)\ge d_\ell$ with respect to the graph obtained from $\ind{G}{V_e}$ by replacing layer $\ell$ with $\Gam$.
Then we have
\[
\cG(H) \ge (1-\eta_k)^{v(H)}\tfrac{g(\emptyset)}{d(\emptyset)p(\emptyset)}\cD(H)\cP(H)\,. 
\]
for all $J$-partite $k$-complexes $H$ of maximum degree $\Delta$.
\end{theorem}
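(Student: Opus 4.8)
The plan is to realise this statement as an instance of the good partial embedding (GPE) machinery of Section~\ref{sec:count}, via a greedy vertex-by-vertex embedding. First I would apply the standard construction (Definition~\ref{def:standard}) to $H$, $\cG$, $\Gam$ and the partition $\{V_j\}_{j\in J}$, obtaining $X$-partite weighted $k$-graphs $\cG'\subset\Gam'$ on $v(H)$ parts (copies of the $V_j$). By the remark following Definition~\ref{def:standard} and the convention preceding the statement, $\cG'(H)=\cG(H)$, the graph $\Gam'$ is an $(\eta_0,4k^2+k)$-THC graph with density graph $\cP'$, and the hypothesised layerwise $\eps_\ell$-regularity of $\cG$ relative to the $\Gam$-replacement carries over to $\cG'$ relative to $\Gam'$. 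So it suffices to treat the one-vertex-per-part case: $H$, $\cG$, $\Gam$ are $X$-partite with one vertex of $H$ per part, $X$ carries the linear order underlying the THC hypothesis, and $\cG(\phi)=\prod_{f\in H}g(\phi(f))$.

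The key structural fact is the link recursion $\cG(H)=\Ex[v\in V_x]{\cG_v(H-x)}$, where $x$ is the first vertex of $X$; iterating it over the vertices of $X$ in order expresses $\cG(H)$ as a $v(H)$-fold nested average of iterated link weights. I would set up a GPE \emph{invariant} stating that, after embedding an initial segment $v_1,\dots,v_{i-1}$, the state consisting of $\cG_{v_1\cdots v_{i-1}}$, $\Gam_{v_1\cdots v_{i-1}}$ and the (deterministically updated) density graphs $\cD$, $\cP$ is still a good partial embedding: $\Gam_{v_1\cdots v_{i-1}}$ is a $(\eta_0,4k^2+k)$-THC graph on the remaining parts, and on each small crossing index set the appropriate layer of $\cG_{v_1\cdots v_{i-1}}$ is $\eps_\ell$-regular relative to $\Gam_{v_1\cdots v_{i-1}}$ with the correct relative density (this is the invariant the GPE lemmas of Section~\ref{sec:count} are designed to maintain). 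The hypotheses give exactly this for the empty state, and the remark after Definition~\ref{def:reg} together with Corollary~\ref{cor:relCSoctlow} ensures that THC of $\Gam_{v_1\cdots v_{i-1}}$ supplies the matching \emph{lower} bounds on octahedron counts, so that the (one-sided) Definition~\ref{def:reg} actually controls $\cG$ from below and $\cG(H)$ cannot collapse.

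For the extension step, given the GPE after $v_1,\dots,v_{i-1}$ I want to show that all but an $\eta_k$-fraction (weighted) of $v\in V_{x_i}$ are \emph{good}: the link $\Gam_{v_1\cdots v}$ is again $(\eta_0,4k^2+k)$-THC (immediate from \ref{thc:hered}, which costs only an $\eta_0$-fraction of $v$ and, crucially, does not change the THC parameter); and $\cG_{v_1\cdots v}$ is again layerwise $\eps_\ell$-regular relative to it with the correct densities and correct accumulated weight $g_{v_1\cdots v}(\emptyset)$. The latter is where Lemma~\ref{lem:k-inherit} enters: because $\Gam_{v_1\cdots v_{i-1}}$ is THC, all octahedra and the tailed blow-ups $+2\oct{\cdot}{\cdot}$ that appear in the Cauchy--Schwarz arguments of Section~\ref{sec:CS} have size at most $4k^2+k$ and are counted with relative error $O(\eta_0)$, and Lemma~\ref{lem:k-inherit} then transfers octahedron minimality of $\cG_{v_1\cdots v_{i-1}}$ to $\cG_{v_1\cdots v}$ and pins down the octahedron densities of the link, for all but an $O(\eps_\ell+\eta_0)$-fraction of $v$. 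Since $H$ has maximum degree $\Delta$, on any fixed small crossing index set the weight function of $\cG_{v_1\cdots v}$ depends only on the images of the at most $\Delta$ already-embedded vertices of $H$ that touch that set, so only a bounded ($\Delta$-fold, never $v(H)$-fold) iteration of this inheritance is ever needed and the regularity parameters stay bounded in terms of $\Delta$, $k$. As $\eta_0,\eps_\ell\ll\eta_k,\Delta$, the good set $V'_{x_i}\subset V_{x_i}$ then has $\vnorm{V'_{x_i}}{\cG_{v_1\cdots v_{i-1}}}\ge(1-\eta_k)\vnorm{V_{x_i}}{\cG_{v_1\cdots v_{i-1}}}$ and every good $v$ gives a link weight at least $(1-\eta_k)$ times the value predicted by the updated density graphs. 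Restricting each of the $v(H)$ nested averages to the corresponding good set (legitimate because the maintained GPE guarantees all inner conditional averages are non-negative and correctly sized) multiplies the count by at least $(1-\eta_k)$ each time; unwinding the deterministic evolution of $\cD$, $\cP$ and the scaling by empty-set weights identifies the fully-predicted value with $\tfrac{g(\emptyset)}{d(\emptyset)p(\emptyset)}\cD(H)\cP(H)$, giving the stated bound $(1-\eta_k)^{v(H)}\tfrac{g(\emptyset)}{d(\emptyset)p(\emptyset)}\cD(H)\cP(H)$.

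I expect the main obstacle to be the bookkeeping that keeps the error parameters from proliferating across the $v(H)$ steps. The two sources of danger are handled differently: the pseudorandomness side cannot proliferate at all because THC is preserved under taking links with the \emph{same} parameter $(\eta_0,4k^2+k)$ (this is exactly why we work with THC rather than a hypergraph-regularity hierarchy), while the regularity side of $\cG$ could in principle degrade at each inheritance step, but the bounded-degree hypothesis forces every iterated link whose regularity we ever need to be only a $\le\Delta$-fold inheritance from $\cG$ itself, so the degradation is absorbed once by the $\eps_\ell\ll d_\ell,\dots,d_k,\eta_k,\Delta$ hierarchy. Making all of this airtight — in particular tracking the non-preservation of $g(\emptyset)$ (hence the explicit $\tfrac{g(\emptyset)}{d(\emptyset)p(\emptyset)}$ scaling), handling the shrinking number of parts, and verifying that the exceptional vertex sets at each step really do have weighted measure $O(\eps_\ell+\eta_0)\le\eta_k$ using only counts of complexes on at most $4k^2+k$ vertices — is precisely the content of the GPE lemmas of Section~\ref{sec:count}; here the work is to check that the hypotheses of Theorem~\ref{thm:embedding} feed into those lemmas and that their output yields the displayed inequality.
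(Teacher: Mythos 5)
Your overall skeleton does match the paper's route: apply the standard construction, verify that the trivial partial embedding together with a suitable stack is a $k$-GPE, and then quote the GPE embedding machinery of Section~\ref{sec:count} (in the paper this is Lemma~\ref{lem:getGPE} followed by Lemma~\ref{lem:GPEemb}). But the mechanism you describe for the extension step has a genuine gap: you maintain a \emph{two-level} invariant (layerwise regularity of the current link of $\cG$ relative to the current link of $\Gam$, plus THC of the $\Gam$-link) and claim that the counting hypothesis~\ref{inh:count} of Lemma~\ref{lem:k-inherit} is supplied directly by THC of $\Gam$. This is precisely what does not work, and it is why the paper introduces the stack of candidate graphs $\cC^{(0)},\dots,\cC^{(k)}$ at all. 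In the GPE condition~\ref{gpe:reg}, the level-$\ell$ regularity is with respect to the level-$(\ell-1)$ candidate graph, whose layers below $\ell$ come from $\cG$, not from $\Gam$; the octahedron and $+2$-octahedron counts needed in~\ref{inh:count} must therefore hold in a graph containing $\cG$'s lower layers, where the regularity error $\eps_{\ell'}$ can dwarf the densities $d(f)$ with $|f|<\ell'$. THC of $\Gam$ gives these counts only at the bottom level $\cC^{(0)}$ (the base case $\ell=1$); at higher levels they are obtained from the two-sided counting lemma for $(\ell-1)$-GPEs (Lemma~\ref{lem:GPEcount}), which is why Lemmas~\ref{lem:onestep} and~\ref{lem:GPEcount} are proved by an intertwined induction on the level. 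Moreover, Lemma~\ref{lem:k-inherit} cannot be applied with $\Gam$ (or its links) as the ambient graph and the full $\cG$ as the subgraph: hypothesis~\ref{inh:GGam} requires the two graphs to agree except on two specified edge sets, whereas $\cG$ and $\Gam$ differ on every layer. So an argument run literally as you describe — one application of inheritance per vertex, with pseudorandomness always read off from $\Gam$ — fails; the level-by-level bootstrap is a missing idea, not bookkeeping.

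Two smaller points. First, your claim that verifying the initial GPE from the theorem's hypotheses is routine is not quite right: for edges of size $k$ one must show a subgraph of $\Gam$ is regular with respect to a graph whose lower layers are themselves only regular, and the paper needs Lemma~\ref{lem:slicing} for this. Second, your "only a $\Delta$-fold inheritance is ever needed" heuristic is the right idea, but the correct bookkeeping is via the number of hits $\pi_\phi(e)$, bounded by the vertex-degeneracy of $H$, which is what the graded regularity parameters $\eps_{\ell,r,h}$ in the valid ensemble are for; the degradation is per edge and per level, not a single absorption into $\eps_\ell\ll\Delta$. If you replace your two-level invariant by the paper's stack and route the counting through Lemma~\ref{lem:GPEcount} at the level below, the rest of your plan (discarding bad vertices for the one-sided bound, tracking $g(\emptyset)$, unwinding the density updates) is sound and coincides with the paper's proof of Lemma~\ref{lem:GPEemb}.
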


\subsection{Regularity inheritance}

As stated in the introduction, we prove counting and embedding results via regularity inheritance. 
For sparse graphs, a regularity inheritance lemma states that, given vertex sets $X$, $Y$, and $Z$ such that on each pair we have a regular subgraph of a sufficiently well-behaved majorising graph, neighbourhoods of vertices $z\in Z$ on one or two sides of the pair $(X,Y)$ typically induce another regular subgraph of the majorising graph.
The cases `one side' and `two sides' (see~\cite{CFZextremal,ABSSregularity}) are usually stated as separate lemmas, and the quantitative requirement for `well-behaved' are a little different. 
In this paper, we will not try to optimise this quantitative requirement and so state one lemma which covers all cases.

In addition to a regularity inheritance lemma one usually needs to make use of the (trivial) observation that given a regular pair $(X,Y)$ in a graph $G$, if $Y'$ is a subset of $Y$ which is not too small then most vertices in $X$ have about the expected neighbourhood in $Y'$ (see Section~\ref{sec:sketch}). Another way of phrasing this is to define a partite weighted graph $\cG$ on $X\cup Y$, with weights on the crossing $2$-edges corresponding to edges of $G$ and weights on the vertices of $Y$ being the characteristic function of $Y'$; then for most $v\in X$ the link $1$-graph $\cG_v$ has about the expected density (recall that regularity is trivial for $1$-graphs). We will need a generalisation of this observation to graphs of higher uniformity, where we will need not only that the link graph typically has the right density but also that it is typically regular. It is convenient to state this too as part of our general regularity inheritance lemma.

Informally, the idea is the following. If $\cG\subset\Gam$ are $\{0,\dots,k\}$-partite weighted graphs, which are equal on all edges except those in $V_{[k]}$ and $V_{\{0,\dots,k\}}$, and we have that $\cG[V_{[k]}]$ and $\cG[V_{\{0,\dots,k\}}]$ are respectively $(\eps,d)$-regular and $(\eps,d')$-regular with respect to $\Gam$, and $\Gam$ is sufficiently well-behaved, then for most $v\in V_0$ the graph $\cG_v$ is $(\eps',dd')$-regular with respect to $\Gam_v$, where $\eps'$ is not too much larger than $\eps$.

Following the general setting of the results outlined so far, our notion of `well-behaved' for $\Gam$ is given in terms of small subgraph counts, exactly as one might have in typical links of a THC-graph.
In the statement of the lemma we use notation $\cH\tl$ to mean the $k$-graph which gives the same weight as $\cH$ to edges of size $\ell$ but weight $1$ to all other crossing edges, and $\cH\cdot\cH'$ to mean the $k$-graph whose weight function is the pointwise product $h\cdot h'$. 
Recall that $+2\oct{k}{\vec a}$ represents two copies of $\oct{k}{1,\vec{a}}$ which share the first vertex, but are otherwise disjoint.

\begin{lemma}\label{lem:k-inherit}
For all $k\ge 1$ and $\eps',\, d_0 > 0$, provided $\eps,\, \eta>0$ are small enough that 
\[
\min\{\eps', 2^{-k}\} \ge 2^{2^{k+6}}k^3\big(\eps^{1/16}+\eta^{1/32}\big)d_0^{-2^{k+1}}\,,
\]
the following holds for all $\cP$ and all $d,\,d'\ge d_0$.

Let $\{V_j\}_{0\le j\le k}$ be vertex sets, and $\cP$ be a density graph on $\{0,\dotsc,k\}$.
Let $\cG\le\Gam$ be $(k+1)$-partite $(k+1)$-graphs on $V_0,\dotsc,V_k$ with such that 
\begin{enumerate}[label=\textup{(INH\arabic*)}]
\item\label{inh:count} for all complexes $R$ of the form $+2\oct{k}{\vec a}$ or $\oct{k+1}{\vec b}$, where $\vec a\in\{0,1,2\}^k$ and $\vec{b}\in\{0,1,2\}^{k+1}$, we have 
\[ \Gam(R) = (1\pm \eta)\tfrac{\gam(\emptyset)}{p(\emptyset)}\cP(R)\,, \]
\item\label{inh:GGam} $\cG$ gives the same weight as $\Gam$ to every edge except those of size $k+1$ and those in $V_{[k]}$,
\item\label{inh:regJ} $\cG^{(k+1)}\cdot\Gam^{(\le k)}$ is $(\eps,  d')$-regular with respect to $\Gam$,
\item\label{inh:regf} $\ind{\cG}{V_1,\dotsc,V_k}$ is $(\eps, d)$-regular with respect to $\ind{\Gam}{V_1,\dotsc,V_k}$. 
\end{enumerate}
Then there exists a set $V_0'\subset V_0$ with $\vnorm{V_0'}{\Gam}\ge (1-\eps')\vnorm{V_0}{\Gam}$ such that for every $v\in V_0'$ the graph $\cG_v$ is $(\eps', dd')$-regular with respect to $\Gam_v$. 
\end{lemma}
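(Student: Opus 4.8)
The plan is to verify, for all but an $\eps'$-fraction (measured by $\vnorm{\cdot}{\Gamma}$) of vertices $v\in V_0$, the two inequalities defining $(\eps',dd')$-regularity of $\cG_v$ with respect to $\Gamma_v$, namely $\cG_v\big(\oct{k}{\vec{1}^{k}}\big)=(dd'\pm\eps')\,\Gamma_v\big(\oct{k}{\vec{1}^{k}}\big)$ and $\cG_v\big(\oct{k}{\vec{2}^{k}}\big)\le\big((dd')^{2^k}+\eps'\big)\,\Gamma_v\big(\oct{k}{\vec{2}^{k}}\big)$. By \ref{inh:GGam} the graphs $\cG_v$ and $\Gamma_v$ agree on every crossing edge of size below $k$, satisfy $g_v\le\gamma_v$ on the $k$-edges, and have $g_v(\emptyset)=\gamma_v(\emptyset)$, so the statement is well-posed. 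The key identity is that for every $k$-partite complex $R$ on $V_1,\dots,V_k$ one has $\Exp_{v\in V_0}\cG_v(R)=\cG(R^{+})$ and, up to the factor $\gamma_v(\emptyset)$ of the shared tail, $\Exp_v\cG_v(R)^2\doteq\cG(+2R)$, where $R^{+}$ is $R$ with one extra vertex placed in part $0$ and joined so as to form all crossing edges; in particular $R=\oct{k}{\vec{1}^{k}}$ gives $R^{+}=\oct{k+1}{\vec{1}^{k+1}}$ and $R=\oct{k}{\vec{2}^{k}}$ gives $R^{+}=\oct{k+1}{1,\vec{2}^{k}}$, and likewise with $\Gamma$ in place of $\cG$. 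It therefore suffices to (a) evaluate or bound the $\cG$-counts of $\oct{k+1}{\vec{1}^{k+1}}$, $\oct{k+1}{1,\vec{2}^{k}}$, $+2\oct{k}{\vec{1}^{k}}$ and $+2\oct{k}{\vec{2}^{k}}$ against $dd'$, $(dd')^{2^k}$ and the corresponding $\Gamma$-counts; (b) pin those $\Gamma$-counts down; and (c) transfer the resulting first and second moments to individual $v$ via Chebyshev's inequality.

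Step (b) is immediate, since every complex appearing in (a) is of the form $\oct{k+1}{\vec{b}}$ with $\vec{b}\in\{0,1,2\}^{k+1}$ or $+2\oct{k}{\vec{a}}$ with $\vec{a}\in\{0,1,2\}^{k}$, so \ref{inh:count} gives $\Gamma(R^{+})=(1\pm\eta)\tfrac{\gamma(\emptyset)}{p(\emptyset)}\cP(R^{+})$ and $\Gamma(+2R)=(1\pm\eta)\tfrac{\gamma(\emptyset)}{p(\emptyset)}\cP(+2R)$. As $\cP$ is an honest weight function, $\cP(+2R)$ equals $\cP(R^{+})^2$ divided by the $\cP$-weight of the shared tail, so these two estimates force $\mathrm{Var}_v\Gamma_v(R)=O(\eta)\big(\Exp_v\Gamma_v(R)\big)^2$ and hence, by Chebyshev, $\Gamma_v(R)=\big(1\pm O(\eta^{1/3})\big)\Exp_v\Gamma_v(R)$ for all but an $O(\eta^{1/3})$-fraction of $v$.

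Step (a) is where the Cauchy--Schwarz/relative-counting results of Section~\ref{sec:CS} are used. The complex $\oct{k+1}{\vec{1}^{k+1}}$ has exactly two non-trivial edges, the $(k+1)$-edge (whose $\cG$-weight is $(\eps,d')$-regular with respect to $\Gamma$ by \ref{inh:regJ}) and the single transversal $k$-edge lying in $V_{[k]}$ (whose $\cG$-weight is $(\eps,d)$-regular with respect to $\Gamma$ by \ref{inh:regf}); feeding both layers through relative counting yields $\cG\big(\oct{k+1}{\vec{1}^{k+1}}\big)=(dd'\pm\delta)\,\Gamma\big(\oct{k+1}{\vec{1}^{k+1}}\big)$ with $\delta$ a small power of $\eps$ and $\eta$ times a factor $d_0^{-O(2^{k})}$ from dividing out regular densities, and likewise $\cG\big(+2\oct{k}{\vec{1}^{k}}\big)=((dd')^{2}\pm\delta)\,\Gamma\big(+2\oct{k}{\vec{1}^{k}}\big)$. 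The same argument, now using the octahedron \emph{upper} bounds in \ref{inh:regJ} and \ref{inh:regf} and their Cauchy--Schwarz consequences over the $2^{k}$ top $(k+1)$-edges and $2^{k}$ transversal $k$-edges, gives $\cG\big(\oct{k+1}{1,\vec{2}^{k}}\big)\le\big((dd')^{2^k}+\delta\big)\,\Gamma\big(\oct{k+1}{1,\vec{2}^{k}}\big)$ and $\cG\big(+2\oct{k}{\vec{2}^{k}}\big)\le\big((dd')^{2^{k+1}}+\delta\big)\,\Gamma\big(+2\oct{k}{\vec{2}^{k}}\big)$. Combining (a) with (b) gives $\Exp_v\cG_v\big(\oct{k}{\vec{1}^{k}}\big)=(dd'\pm\delta')\Exp_v\Gamma_v\big(\oct{k}{\vec{1}^{k}}\big)$ and $\Exp_v\cG_v\big(\oct{k}{\vec{2}^{k}}\big)\le\big((dd')^{2^k}+\delta'\big)\Exp_v\Gamma_v\big(\oct{k}{\vec{2}^{k}}\big)$, each with variance at most $\delta'$ times the square of the mean, where $\delta'$ is again a small power of $\eps,\eta$ times $d_0^{-O(2^{k})}$. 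Applying Chebyshev to $\cG_v$ and to $\Gamma_v$ and discarding the finitely many bad sets produces $V_0'$ with $\vnorm{V_0'}{\Gamma}\ge(1-\eps')\vnorm{V_0}{\Gamma}$: for $v\in V_0'$ we get $\cG_v\big(\oct{k}{\vec{1}^{k}}\big)=(1\pm\delta'')\Exp_v\cG_v\big(\oct{k}{\vec{1}^{k}}\big)$ and $\Gamma_v\big(\oct{k}{\vec{1}^{k}}\big)=(1\pm\delta'')\Exp_v\Gamma_v\big(\oct{k}{\vec{1}^{k}}\big)>0$, whence the density estimate follows because $dd'\ge d_0^2$; and $\cG_v\big(\oct{k}{\vec{2}^{k}}\big)\le(1+\delta'')\Exp_v\cG_v\big(\oct{k}{\vec{2}^{k}}\big)\le(1+\delta'')\big((dd')^{2^k}+\delta'\big)\Exp_v\Gamma_v\big(\oct{k}{\vec{2}^{k}}\big)\le\big((dd')^{2^k}+\eps'\big)\Gamma_v\big(\oct{k}{\vec{2}^{k}}\big)$, using $\Gamma_v\big(\oct{k}{\vec{2}^{k}}\big)\ge(1-\delta'')\Exp_v\Gamma_v\big(\oct{k}{\vec{2}^{k}}\big)>0$. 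Chasing the constants through the relative-counting lemmas yields exactly the stated constraint relating $\eps,\eta$ to $\eps',d_0,k$.

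The main obstacle is step (a) for the link-type complexes $\oct{k+1}{1,\vec{2}^{k}}$ and $+2\oct{k}{\vec{2}^{k}}$. Two features make this delicate. First, two regular layers — the $k$-edges in $V_{[k]}$ and the $(k+1)$-edges — must be handled simultaneously; this is done either by a two-parameter relative counting estimate or by first replacing the top layer by $d'\,\Gamma^{(k+1)}$ (using \ref{inh:regJ}) and then the $V_{[k]}$-layer by $d\,\Gamma^{(k)}$ (using \ref{inh:regf}), checking that the first substitution leaves enough structure for the second. Second, and more seriously, all $2^{k}$ top edges of $\oct{k+1}{1,\vec{2}^{k}}$ pass through the single part-$0$ vertex, so the top layer does not literally contain a $(k+1)$-dimensional octahedron and \ref{inh:regJ} cannot be applied verbatim; the honest route is to condition on that vertex $v$, use that $\Gamma_v$ is itself well behaved — which is exactly what \ref{inh:count} delivers through its $\oct{k+1}{\vec{b}}$ and $+2\oct{k}{\vec{a}}$ counts — and run the Cauchy--Schwarz argument inside $\Gamma_v$. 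Keeping the error under control through the iterated Cauchy--Schwarz, which replaces the error by a fractional power of itself and forces one to divide by a regular density roughly once per top edge (so that the doubled $(k+1)$-dimensional octahedron with its $2^{k+1}$ top edges produces the $d_0^{-2^{k+1}}$ in the hypothesis), and checking that the final error beats $\eps'$ uniformly for all $d,d'\ge d_0$, is the bulk of the work.
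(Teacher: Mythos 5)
The decisive step in your plan does not go through. Your route to the per-vertex octahedron bound is Chebyshev applied to $v\mapsto\cG_v\big(\oct{k}{\vec2^k}\big)$, which requires a second-moment upper bound of the form $\cG\big({+2}\oct{k}{\vec2^k}\big)\le\big((dd')^{2^{k+1}}+\delta\big)\Gam\big({+2}\oct{k}{\vec2^k}\big)$ (and similarly $\cG\big({+2}\oct{k}{\vec1^k}\big)\le\big((dd')^{2}+\delta\big)\Gam\big({+2}\oct{k}{\vec1^k}\big)$ for the density part). These doubled-link complexes have up to four vertices in each part $V_1,\dots,V_k$; they are not of the form $\oct{k+1}{\vec c}$ with $\vec c\in\{0,1,2\}^{k+1}$, so the relative-counting machinery available here (Corollary~\ref{cor:subregular}, Lemma~\ref{lem:slicing}), which only exploits octahedron minimality and hence only controls complexes with at most two vertices per part, does not produce them; nor do \ref{inh:regJ}--\ref{inh:regf} ``and their Cauchy--Schwarz consequences''. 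Cauchy--Schwarz gives \emph{lower} bounds on such second moments (or pushes you to even larger configurations that \ref{inh:count} does not cover), never the upper bounds you need. Indeed, an upper bound on $\Exp_v\big[\cG_v(\oct{k}{\vec2^k})^2\big]$ with main term $(dd')^{2^{k+1}}$ is essentially equivalent to the concentration statement you are trying to prove, so assuming it as an intermediate ``relative counting'' step begs the question; this is precisely why \ref{inh:count} must \emph{assume} the $+2\oct{k}{\vec a}$ counts for $\Gam$ rather than derive them, and why nothing analogous is available for $\cG$. (A plain Markov bound from the first moment also fails, since $\eps'$ may be far smaller than $(dd')^{2^k}$.)

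The paper's proof avoids second moments of $\cG$-links altogether by a one-sided (defect) argument, and your closing remark about ``conditioning on $v$ and working inside $\Gam_v$'' gestures at its first ingredient without supplying the mechanism. Concretely: \ref{inh:count} together with Corollary~\ref{cor:ECSconc} yields a large set $U\subset V_0$ on which $\Gam_v$ has accurate octahedron counts and is therefore $O(\eta^{1/4})$-minimal; Lemma~\ref{lem:slicing} and Corollary~\ref{cor:subregular} give the aggregate estimates $\cG\big(\oct{k+1}{\vec1^{k+1}}\big)\approx dd'\,\cP(\cdot)$ and $\cG\big(\oct{k+1}{1,\vec2^k}\big)\le(1+o(1))(dd')^{2^k}\cP(\cdot)$ (note these \emph{are} octahedra $\oct{k+1}{\vec c}$, so your concern that \ref{inh:regJ} ``cannot be applied verbatim'' here is unfounded); Corollary~\ref{cor:relCSoctlow} gives, for each $v\in U$, the pointwise lower bound $\cG_v\big(\oct{k}{\vec2^k}\big)\ge(1+O(\eta^{1/4}))^{1-2^k}d_v^{2^k}\Gam_v\big(\oct{k}{\vec2^k}\big)$ with $d_v$ the relative link density. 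Summing this lower bound against the aggregate upper bound controls $\Exp\big[\gam(v)d_v^{2^k}\big]$ from above, whence Corollary~\ref{cor:higherECSconc} concentrates $d_v$ around $dd'$; and a final power-mean/defect comparison shows that the total weight of $v\in U$ whose link octahedron count exceeds $\big(d_v^{2^k}+\eps'\big)\Gam_v\big(\oct{k}{\vec2^k}\big)$ is small, because any such excess would push the average $\cG\big(\oct{k+1}{1,\vec2^k}\big)$ above its upper bound. You would need to replace your moment-plus-Chebyshev scheme by an argument of this one-sided type (or otherwise justify the missing second-moment bounds, which the stated hypotheses do not permit).
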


This is the promised regularity inheritance lemma. 
The quantification of the constants is crucial for the definition of a good partial embedding in the following Section~\ref{subsec:gpe}; in order for a useful counting lemma to follow from our approach one needs to be able to control the regularity error parameters at every step of a vertex-by-vertex embedding, and work with underlying densities much smaller than these errors. Observe that in the statement above, the quantities $d$ and $d'$ are relative densities of parts of $\cG$ with respect to $\Gam$; they need to be large compared to $\eps'$ (and $\eps$) in order for the statement to be interesting, and $\eta$ also needs to be small compared to $\eps'$. But the densities $p(e)$ from $\cP$, which by \ref{inh:count} are approximately the absolute densities in $\Gam$, can be (and in applications usually will be) very small compared to all other quantities. In typical applications $d$, $d'$, $\eps$, $\eps'$, $\eta$ will be constants fixed in a proof and independent of $v(\cG)$, while the $p(e)$ may well tend to zero as $v(\cG)$ grows.

\subsection{Good partial embeddings and counting}\label{subsec:gpe}

When using regularity inheritance to prove counting and embedding results, it is natural to describe one step of the embedding, isolate some common structure from each step, and prove the full result by induction. 
The `common structure' that we define is that of a \emph{good partial embedding} (GPE). 
Theorems~\ref{thm:counting}, and~\ref{thm:embedding} follow from more general results for GPEs.
Here we give motivation and definitions of the necessary ideas, and state these results for GPEs. 
A comparison of our methods to the simplest example of this approach in graphs is given in Section~\ref{sec:sketch}.

We construct homomorphisms from $H$ to $\cG$ vertex-by-vertex and count the contribution to the total number of homomorphisms from each choice of image as the construction proceeds. 
Given a complex $H$ and a $V(H)$-partite setup where we want to find a partite homomorphism from $H$ to a weighted graph $\cG$, we start with a trivial partial embedding $\phi_0$ from $H_0:=H$ to $\cG_0:=\cG$ in which no vertices are embedded. 
Now for each $t=1,\dots,v(H)$ in succession, we choose a vertex $x_t$ of $H_{t-1}$ and a vertex $v_t$ of $V_{x_t}$. 
We set $\phi_t:=\phi_{t-1}\cup\{x_t\to v_t\}$, and $H_t:=H_{t-1}\setminus\{x_t\}$, and we write $\cG_t:=(\cG_{t-1})_{v_t}$, that is we take the link graph. 
The graph $\cG_{v(H)}$ is an empty weighted graph with weight function $g_{v(H)}$: the only edge it contains is the empty set, and its weight is
\[g_{v(H)}(\emptyset)=\prod_{e\subseteq V(H)}g\big(\phi(e)\big)=\cG(\phi)\,.\]
Obviously, in general the final value $\cG(\phi)$ depends on the choices of the $v_t$ made along the way, but in the model case when for each $f\subset V(H)$ the function $g$ is constant, say equal to $d(f)$, on $V_f$ we obtain the same answer which ever choices we make. 
Furthermore, (trivially) at each step $t$, when we are to choose $v_t$ the average weight in $\cG_{t-1}$ of vertices in $V_{x_t}$ depends only on the values $d(f)$ and not on the choices made; and a similar statement is true for the edges in each $V_f$. 

When the (strong) hypergraph regularity lemma is applied to a $k$-uniform subgraph of $\Gam$, one ends up working with a subgraph $\cG$ of $\Gam$ which has the following properties. First, there is a vertex partition $\{V_j\}_{j\in J}$ indexed by $J$ of $V(\cG)=V(\Gam)$. Second, for each $f\subset J$ with $2\le|f|\le k$, the graph $\cG[V_f]$ is $\big(\eps_{|f|},d(f)\big)$-regular with respect to the graph whose weight function is equal to that of $\cG$ on edges of size at most $|f|-1$ and to $\Gam$ on edges of size $|f|$. Here one should think of the edges of $\cG$ of size $k-1$ and less as being output by the regularity lemma, and the $k$-edges as being the subgraph of $\Gam$ which we are regularising. The difficulty is that, while we always have $\eps_{|f|}\ll d(f)$, and indeed $\eps_\ell\ll d(f)$ for any $f$ with $|f|\ge\ell$, it may be the case that $\eps_\ell$ is large compared to the $d(f)$ with $|f|<\ell$. 

The solution to this is to separate counting and embedding into several steps. To begin with, we can count any small hypergraph to high precision in the ambient $\Gamma$ by assumption. We define a hypergraph whose edges are given weight equal to $\Gamma$ on edges of size $3$ and above, but equal to $\cG$ on edges of size two (and one). We can think of this hypergraph as being very regular and dense relative to $\Gam$: the relative density parameters are $d(e)$ for $|e|=2$ which are much larger than the regularity parameter $\eps_2$. Using our regularity inheritance lemma, we show that we can count any small hypergraph to high precision in this new hypergraph. This means we can now think of our new hypergraph as a well-behaved ambient hypergraph, and consider the hypergraph whose edges have weight equal to $\Gamma$ on edges of size $4$ and above, but equal to $\cG$ on edges of size $3$ and below. The same argument shows we can count small hypergraphs to high precision in this hypergraph too, and so on. Our approach thus keeps track of a \emph{stack} of hypergraphs, where we assume that we can count in the bottom \emph{level} $\Gamma$ and inductively bootstrap our way to counting in the top level $\cG$ by using the fact that each level is relatively dense and very regular with respect to the level below.

In general, we may have a more complicated setup because we have embedded some vertices. We begin by definining abstractly the structure we consider, and will then move on to giving the conditions it must satisfy in order that we can work with it. It is convenient to introduce a complex $H$ and a partial embedding of that complex in order to define the update rule; we do not need to specify the graph into which $H$ is partially embedded. 

\begin{definition}[Stack of candidate graphs, update rule]
 Let $k\ge 2$, and suppose that a $k$-complex $H$, a partial embedding $\phi$ of $H$, and disjoint vertex sets $V_x$ for each $x\in V(F)$ which is unembedded (that is, $x\not\in\dom\phi$) are given. Suppose that for each $0\le\ell\le k$ and each $e\subset V(H)\setminus\dom\phi$ we are given a subgraph $\cC^{(\ell)}(e)$ of $V_e$. We write $\cC^{(\ell)}$ for the union of the $\cC^{(\ell)}(e)$; that is, the graph with parts $\{V_x\}_{x\in V(H)\setminus\dom\phi}$ whose weight function is equal to that of $\cC^{(\ell)}(e)$ on $V_e$ for each $e\subset V(H)\setminus\dom\phi$. If $\cC^{(0)}\ge\cC^{(1)}\ge\dotsb\ge\cC^{(k)}$ then we call the collection of $k+1$ graphs a \emph{stack of candidate graphs}, and $\cC^{(\ell)}$ is the \emph{level $\ell$ candidate graph}.
 
 Given $x\in V(H)\setminus\dom\phi$ and $v\in V_x$, we form a stack of candidate graphs corresponding to the partial embedding $\phi\cup\{x\mapsto v\}$ according to the following \emph{update rule}. For each $0\le\ell\le k$, we let $\cC^{(\ell)}_{x\mapsto v}:=\cC^{(\ell)}_v$ be the link graph of $v$ in $\cC^{(\ell)}$. Note that trivially since $\cC^{(\ell)}\le\cC^{(\ell-1)}$ we have $\cC^{(\ell)}_{x\mapsto v}\le \cC^{(\ell-1)}_{x\mapsto v}$ for each $1\le\ell\le k$, so that this indeed gives a stack of candidate graphs.
\end{definition}

It will be important in what follows that we think of each $\cC^{(\ell)}$ both as specifying weights for an ongoing embedding of $H$, and also as a partite graph into which we expect to know the number of embeddings of some (small, not necessarily related to $H$) complex $R$.

We are now in a position to define a \emph{good partial embedding} (GPE). Informally, this is a partial embedding of $H$ together with a stack of candidate graphs, such that for each $1\le\ell\le k$ the graph $\cC^{(\ell)}$ is relatively dense and regular with respect to $\cC^{(\ell-1)}$. We specify the relative density of each $\cC\tl(e)$ explicitly in terms of numbers a density $k$-graph $\cD\tl$ with densities $d\tl(f)\in[0,1]$ for each $1\le\ell\le k$ and $f\subset V(H)$, which we think of as being the relative densities in the trivial GPE. 
We denote by $\cD\tl_\phi$ the density $k$-graph obtained from $\cD\tl$ by repeatedly taking the neighbourhood of vertices $x\in\dom\phi$, so that $\cD\tl_\phi$ gives the `current' relative densities of $\cC\tl$.

We will need a collection of parameters which describe, respectively, the minimum relative densities in each level of the stack (with respect to the level below) at any step of the embedding (denoted $\delta_\ell$), the required accuracy of counting in each level (denoted $\eta_\ell$), and the regularity required in each level. The regularity parameters are somewhat complicated. In general, one should focus on the best- and worst-case regularity; it is necessary to have the other parameters, but one only needs the extra granularity they offer in certain parts of the argument. 
Briefly, when we say $\cC^{(\ell)}(e)$ is $\eps_{\ell,r,h}$-regular, the $\ell$ indicates the level in the stack, $r=|e|$ gives the uniformity, and $h$ is the number of \emph{hits}, that is, how many times in creating $\phi$ we previously degraded the regularity of $\cC^{(\ell)}(e)$. This will turn out to be equal to
\[\pi_{\phi}(e):=\big|\{x\in\dom\phi:\{x\}\cup e'\in H\text{ for some $\emptyset\neq e'\subset e$}\}\big|\,.\] 
The maximum value of $\pi_\phi(e)$ that we could observe in the proof is related to a degeneracy-like property of $H$ which we now define. 
Given a fixed linear order on $V(H)$, write $\vdeg(H)$ for the \emph{vertex-degeneracy} of $H$, which is
\[
\vdeg(H):= \smash[b]{\max_{e\in H}}\abs[\Big]{\big\{x\in V(H) :\text{$x\le y$ for all $y\in e$, and $\{x\}\cup e'\in H$ for some $\emptyset\neq e'\subset e$}\big\}}\,.
\]
The definition of vertex-degeneracy was chosen precisely to make $\pi_{\phi}(e)\le \vdeg(H)$ hold for all unembedded $e\in H$ whenever $\phi$ is a partial embedding of $H$ with $\dom\phi$ an initial segment of $V(H)$.

\begin{definition}[Ensemble of parameters, valid ensemble]\label{def:ensemble}
 Given integers $k$, $c^*$, $h^*$, and $\Delta$, an \emph{ensemble of parameters} is a collection $\delta_1,\dotsc,\delta_k$ of \emph{minimum relative densities}, $\eta_0,\dotsc,\eta_k$ of \emph{counting accuracy parameters}, and $\big(\eps_{\ell,r,h}\big)_{\ell,r\in[k],\,h\in\{0,\dotsc,h^*\}}$ of \emph{regularity parameters}. For each $\ell\in[k]$ we define the \emph{best-case regularity} $\eps_\ell:=\min_{r\in[k],h\in\{0,\dotsc,h^*\}}\eps_{\ell,r,h}$ and the \emph{worst-case regularity} $\eps'_\ell:=\max_{r\in[k],h\in\{0,\dotsc,h^*\}}\eps_{\ell,r,h}$.
 
 An ensemble of parameters is \emph{valid} if the following statements all hold for each $1\le\ell\le k$.
 \begin{enumerate}[label=\textup{(VE\arabic*)}]
  \item\label{ve:worst}
  $\eta_0\ll \delta_1,\dotsc,\delta_\ell,\,\eta_\ell,\,k,\,c^*$ and for all $\ell'\in[\ell]$ we have $\eps'_{\ell'}\ll\delta_{\ell'},\dotsc,\delta_\ell,\, \eta_\ell,\, k,\,c^*,\,\Delta$ such that the following hold: 
  \begin{align}
  \eta_0 &\le \frac{\eta_\ell}{72(k+1)c^*} \prod_{0<\ell''\le\ell}\delta_{\ell''}^{c^*}\,,
\\\eps_{\ell'}' &\le \frac{\eta_\ell\delta_{\ell'}}{72k(k+1)\Delta^2} \prod_{\ell'<\ell''\le\ell} \delta_{\ell''}^{c^*} \,.  \end{align}
  \item\label{ve:reginh} For each $r\in[k]$ and $0\le h\le h^*-1$, we have $\eps_{\ell,r,h}\ll\eps_{\ell,r,h+1},\,\delta_\ell$ small enough for Lemma~\ref{lem:k-inherit} (inheritance and link regularity) with input $\delta_\ell$ and $\eps_{\ell,r,h+1}$. In particular $\eps_{\ell,r,h}$ increases with $h$.
  \item\label{ve:sizeorder} For each $r\in[k-1]$ we have $\eps_{\ell,r+1,h^*}\le\eps_{\ell,r,0}$.
  \item\label{ve:count} The counting accuracy $(4k+1)\eta_{\ell-1}$ is good enough for each application of Lemma~\ref{lem:k-inherit} as above. 
  That is, with inputs $\delta_\ell$ and $\eps_{\ell,r,h}$ for $1\le h\le h^*$ we have $(4k+1)\eta_{\ell-1}$ small enough to apply Lemma~\ref{lem:k-inherit}.
 \end{enumerate}
\end{definition}
By this definition, we always have $\eps_\ell=\eps_{\ell,k,1}$ and $\eps'_\ell=\eps_{\ell,1,h^*}$. It is important to observe that we can obtain a valid ensemble of parameters by starting with $\delta_k$ and $\eta_k$, choosing $\eps_{k,1,h^*}=\eps'_k$ to satisfy
\[
\eps_{k,1,h^*} \le \frac{\eta_k\delta_{k}}{72k(k+1)\Delta^2}\,,
\]
then choosing in order
\[
\eps_{k,1,h^*-1}\gg\dots\gg\eps_{k,1,0}\gg\eps_{k,2,h^*}\gg\dots\gg\eps_{k,2,1}\gg\dots\gg\eps_{k,k,0}=\eps_k\,,
\]
at which point we can calculate the required accuracy of counting $\eta_{k-1}$ and given $\delta_{k-1}$ choose $\eps'_{k-1}$ to match it, and repeat this process down the stack. In particular, this order of choosing constants is compatible with the strong hypergraph regularity lemma (see Section~\ref{sec:reg}), to which we would first input $\eps_k$, be given a $d_{k-1}$ which means we can specify $\delta_{k-1}$, then choose $\eps_{k-1}$, and be able to calculate $\delta_{k-2}$, and so on. 

Given a partial embedding $\phi$ of $H$, a stack of candidate graphs, $1\le\ell\le k$ and $e\subset V(H)\setminus\dom\phi$ with $|e|\ge 1$, we let $\ocC^{(\ell-1)}(e)$ denote the subgraph of $\cC^{(\ell-1)}$ induced by $\bigcup_{x\in e}V_x$. We let $\tcC^{\ell}(e)$ denote the graph obtained from $\ocC^{(\ell-1)}(e)$ by replacing the weights of edges in $V_e$ with the weights of $\cC^{\ell}(e)$. We will always consider regularity of $\tcC^{(\ell)}(e)$ with respect to $\ocC^{(\ell-1)}(e)$. This may seem strange; if we are working with unweighted graphs then there may be edges at all levels of the complex $\ocC^{(\ell-1)}(e)$ which are not in $\ocC^{(\ell)}(e)$, and so we are insisting on a regularity involving some edges of $\cC^{(\ell)}(e)$ which do not contribute to the count of embeddings into $\cC^{(\ell)}$. But it turns out to be necessary.

\begin{definition}[Good partial embedding]\label{def:gpe}
 Given $k\ge 2$, a $k$-complex $H$ of maximum degree $\Delta$, integers $c^*$, $h^*$, and for each $0\le\ell\le k$ a density $k$-graph $\cD\tl$ on $V(H)$, let $\delta_1,\dots,\delta_k$, $\eta_0,\dotsc,\eta_k$, and $\big(\eps_{\ell,r,h}\big)_{\ell,r\in[k],h\in[h^*]}$ be a valid ensemble of parameters. 
 Given $1\le\ell\le k$, we say that a partial embedding $\phi$ of $H$ together with a stack of candidate graphs $\cC^{(0)},\dots,\cC^{(\ell)}$ is an \emph{$\ell$-good partial embedding} ($\ell$-GPE) if
 \begin{enumerate}[label=\textup{(GPE\arabic*)}]
  \item\label{gpe:c0}
   The graph $\cC\tz$ is an $\big(\eta_0,c^*\big)$-THC graph with density graph $\cD\tz_\phi$.
  \item\label{gpe:reg}
  For each $1\le\ell'\le \ell$ and $\emptyset\neq e\subset V(H)\setminus\dom\phi$, the graph $\tcC^{(\ell')}(e)$ is $(\eps,d)$-regular with respect to $\ocC^{(\ell'-1)}(e)$, where
  \[\eps=\eps_{\ell',|e|,\pi_\phi(e)}\quad\text{ and }\quad d=d^{(\ell')}_\phi(e)=\prod_{\substack{f\subset V(H),\\e\subset f,\,f\setminus e\subset\dom\phi}}d^{(\ell')}(f)\,.\]
  \item\label{gpe:dens}
  The parameters $\delta_1,\dotsc,\delta_\ell$ are `global' lower bounds on the relative density terms in the sense that for each $1\le\ell'\le\ell$ and $\emptyset\neq e\subset V(H)\setminus\dom\phi$, we have
  \[
  \delta_{\ell'}\le \prod_{\substack{f\subset V(H),\\ e\subset f}}d\tlp(f)\,.
  \]
 \end{enumerate}
 When we have a $k$-good partial embedding, we will usually simply say \emph{good partial embedding} (GPE).
\end{definition}

If we were told that the trivial partial embedding was good, and that for every $x$ and $v\in V_x$, extending a good partial embedding $\phi$ of $H$ to $\phi\cup\{x\mapsto v\}$ and using the update rule to obtain a new stack of candidate graphs would result in a good partial embedding, then we would rather trivially conclude the desired counting lemma. 
We would simply count the number of ways to complete the embedding: when we come to embed some $x$ to $\cC^{(k)}(x)$ (with respect to the current GPE $\phi$) the density of $\cC^{(k)}(x)$ would be
\[
\prod_{\ell=0}^k d\tl_\phi(x) = \prod_{\ell=0}^k\prod_{\substack{f\subset V(H),\,x\in f, \\ f\setminus\{x\}\subset\dom\phi}}d\tl(f)
\]
up to a relative error which is small provided that for each $\ell$, all the $\eps_{\ell,r,h}$ are small enough compared to the $d\tl(f)$. Since this formula does not depend on a specific $\phi$ but only on $\dom\phi$ (so, on the order we embed the vertices) we conclude that the total weight of embeddings of $H$ is
\[
\tfrac{c\tk(\emptyset)}{\prod\limits_{0\le\ell\le k}d\tl(\emptyset)}\prod_{\ell=0}^k\cD\tl(H) = \tfrac{c\tk(\emptyset)}{\prod\limits_{0\le\ell\le k}d\tl(\emptyset)}\prod_{\ell=0}^k\prod_{f\subset V(H)}d\tl(f)
\]
up to a relative error which is small provided that for each $\ell$, all the $\eps_{\ell,r,h}$ are small enough given the $d\tl(f)$ and $v(H)^{-1}$. 
This is the statement we would like to prove. Of course, it is unrealistic to expect that we always get a good partial embedding when we extend a good partial embedding. However, it is enough if we typically get a good partial embedding, and the next lemma states that this is the case. 

\begin{lemma}[One-step Lemma]\label{lem:onestep} Given $k\ge 2$, a $k$-complex $H$ of maximum degree $\Delta$ and vertex-degeneracy $\vdeg(H)\le \Delta'$, positive integers $c^*$ and $h^*$, a valid ensemble of parameters, a partial embedding $\phi$ and stack of candidate graphs $\cC^{(0)},\dots,\cC^{(k)}$ giving a GPE, let $B_0(x)$ denote the set of vertices $v\in V_x$ such that condition~\ref{gpe:c0} does not hold for the extension $\phi\cup\{x\mapsto v\}$ and the updated candidate graph $\cC^{(0)}_{x\mapsto v}$. For $1\le\ell\le k$, let $B_\ell(x)$ denote the set of vertices $v\in V_x$ such that $\phi\cup\{x\mapsto v\}$ and the updated candidate graphs do not form an $\ell$-GPE. 

Then for every $1\le\ell\le k$ such that $\ell(4k+1)\le c^*$ and $\ell(4k+1)+k\Delta'\le h^*$, we have
\[
\vnorm{B_\ell(x)\setminus B_{\ell-1}(x)}{\cC^{(\ell-1)}(x)} \le k\Delta^2\eps'_\ell\vnorm{V_x}{\cC^{(\ell-1)}(x)}\,.
\]
\end{lemma}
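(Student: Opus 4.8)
The plan is to read off the bound from a single application of the regularity–inheritance lemma (Lemma~\ref{lem:k-inherit}) for each of boundedly many small edges $e$, after first discarding the edges that play no role.

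\emph{Reductions.} Write $\phi':=\phi\cup\{x\mapsto v\}$. Among the conditions defining an $\ell$-GPE, \ref{gpe:c0} concerns only $\cC\tz$ and so holds for $\phi'$ exactly when $v\notin B_0(x)$, while the right-hand side of \ref{gpe:dens} does not involve $\phi$ and the family of edges over which it is asserted only shrinks as $\dom\phi$ grows, so \ref{gpe:dens} (at every level) is automatically inherited. As $B_0(x)\subset B_{\ell-1}(x)$ and \ref{gpe:reg} at levels below $\ell$ is part of being an $(\ell-1)$-GPE, for $v\notin B_{\ell-1}(x)$ all of \ref{gpe:c0}, \ref{gpe:dens}, and \ref{gpe:reg} at levels $<\ell$ hold, so the only possible obstruction to $\phi'$ being an $\ell$-GPE is failure of \ref{gpe:reg} at level exactly $\ell$: some nonempty $e\subset V(H)\setminus\dom\phi'$ has $\tcC\tl_{x\mapsto v}(e)$ not $\big(\eps_{\ell,|e|,\pi_{\phi'}(e)},\,d\tl_{\phi'}(e)\big)$-regular with respect to $\ocC\tlm_{x\mapsto v}(e)$. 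If $e\notin H$, or if $\{x\}\cup e'\notin H$ for every $\emptyset\ne e'\subset e$ (so that $\pi_{\phi'}(e)=\pi_\phi(e)$ and $d\tl_{\phi'}(e)=d\tl_\phi(e)$), then the update rule changes $\tcC\tl(e)$ and $\ocC\tlm(e)$ only by rescaling their common empty-edge weight by $c\tlm(\{v\})$, which does not affect regularity; for such $e$ the required regularity holds for all $v$, inherited from the given GPE. Hence it suffices to bound, for each $e$ in the family $\mathcal{E}(x)$ of edges $e\in H$ with $x\notin e$, $e\subset V(H)\setminus\dom\phi'$ and $e\cap N_H(x)\ne\emptyset$, the $\vnorm{\cdot}{\cC\tlm(x)}$-measure of the set of $v$ at which \ref{gpe:reg} at level $\ell$ fails for that $e$; and $|\mathcal{E}(x)|\le k\Delta^2$, since $x$ has at most $\Delta$ neighbours in $H^{(2)}$ and each of them lies in at most $\Delta$ edges of $H$.

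\emph{The inheritance step.} Fix $e\in\mathcal{E}(x)$ and put $j:=|e|$. Let $\Gamma$ be the restriction of $\cC\tlm$ to the parts indexed by $e\cup\{x\}$ — a $(j{+}1)$-partite graph with $V_x$ in the role of ``$V_0$'' — and let $\cG$ be the graph on these parts agreeing with $\Gamma$ except on $V_e$ and on $V_{e\cup\{x\}}$, where its weights are those of the level-$\ell$ candidate graph $\cC\tl$. A direct check from the update rule gives $\Gamma_v=\ocC\tlm_{x\mapsto v}(e)$ and $\cG_v=\tcC\tl_{x\mapsto v}(e)$ for every $v\in V_x$. Then $\cG\le\Gamma$ satisfies the hypotheses of Lemma~\ref{lem:k-inherit}: \ref{inh:GGam} holds by construction; $\cG^{(j+1)}\cdot\Gamma^{(\le j)}$ equals $\tcC\tl(e\cup\{x\})$ and the subgraph of $\cG$ on the parts indexed by $e$ equals $\tcC\tl(e)$, so \ref{inh:regJ} and \ref{inh:regf} are precisely the level-$\ell$ instances of \ref{gpe:reg} for the edges $e\cup\{x\}$ and $e$, with relative densities $d':=d\tl_\phi(e\cup\{x\})$ and $d:=d\tl_\phi(e)$ (both at least $\delta_\ell$, by \ref{gpe:dens}) and regularity parameter at most $\eps_{\ell,j,\pi_\phi(e)}$ (by \ref{ve:sizeorder} the size-$(j{+}1)$ parameter is no larger); and \ref{inh:count}, for the boundedly many complexes $+2\oct{j}{\vec a}$, $\oct{j+1}{\vec b}$ occurring there, follows from $\cC\tlm$ counting these generalised octahedra to accuracy $(4k+1)\eta_{\ell-1}$, read off on the parts indexed by $e\cup\{x\}$. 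By \ref{ve:reginh} and \ref{ve:count}, the parameters $\delta_\ell$, $\eps_{\ell,j,\pi_\phi(e)}$, $(4k+1)\eta_{\ell-1}$ are small enough for Lemma~\ref{lem:k-inherit} with output regularity $\eps_{\ell,j,\pi_\phi(e)+1}=\eps_{\ell,j,\pi_{\phi'}(e)}$ (and $\pi_\phi(e)+1\le h^*$, in range because $\ell(4k+1)+k\Delta'\le h^*$). The lemma produces $V_x^e\subset V_x$ with $\vnorm{V_x\setminus V_x^e}{\Gamma}\le\eps_{\ell,j,\pi_{\phi'}(e)}\vnorm{V_x}{\Gamma}\le\eps'_\ell\vnorm{V_x}{\Gamma}$ such that for $v\in V_x^e$ the graph $\cG_v=\tcC\tl_{x\mapsto v}(e)$ is $\big(\eps_{\ell,j,\pi_{\phi'}(e)},\,dd'\big)$-regular with respect to $\Gamma_v=\ocC\tlm_{x\mapsto v}(e)$; and since $dd'=d\tl_\phi(e)\,d\tl_\phi(e\cup\{x\})=d\tl_{\phi'}(e)$, this is exactly condition \ref{gpe:reg} for $\phi'$, the edge $e$, and level $\ell$.

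\emph{Conclusion and the main difficulty.} The weight functions of $\Gamma$ and of $\cC\tlm(x)$ coincide on $V_x$, so $\vnorm{\cdot}{\Gamma}$ and $\vnorm{\cdot}{\cC\tlm(x)}$ agree on subsets of $V_x$, and summing the last statement over $e\in\mathcal{E}(x)$ yields
\[
\vnorm{B_\ell(x)\setminus B_{\ell-1}(x)}{\cC\tlm(x)}\ \le\ |\mathcal{E}(x)|\,\eps'_\ell\,\vnorm{V_x}{\cC\tlm(x)}\ \le\ k\Delta^2\eps'_\ell\vnorm{V_x}{\cC\tlm(x)}\,,
\]
which is the assertion. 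The substantive effort is bookkeeping rather than a single hard estimate: one must carefully justify the identifications $\Gamma_v=\ocC\tlm_{x\mapsto v}(e)$, $\cG_v=\tcC\tl_{x\mapsto v}(e)$ and the harmless empty-edge rescaling, run the small case analysis on whether $x$ is joined to $e$ in $H$ and whether $e\cup\{x\}\in H$, and check that the entries of the valid ensemble match exactly what Lemma~\ref{lem:k-inherit} needs (the conditions $\ell(4k+1)\le c^*$ and $\ell(4k+1)+k\Delta'\le h^*$ being what keep the counting reach and the number of permitted regularity degradations sufficient). The one genuinely external ingredient is input \ref{inh:count}, namely that $\cC\tlm$ and its induced subgraphs count the generalised octahedra $+2\oct{\cdot}{\vec a}$, $\oct{\cdot}{\vec b}$ to accuracy $(4k+1)\eta_{\ell-1}$: for $\ell=1$ this is immediate from \ref{gpe:c0} and \ref{thc:count}, while for $\ell\ge2$ it must be established by induction on the level of the stack from the octahedron-minimality regularity in \ref{gpe:reg} together with the Cauchy–Schwarz estimates of Section~\ref{sec:CS}, so that this counting statement is in hand before — or is proved jointly with — the present lemma.
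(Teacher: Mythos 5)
Your overall strategy is the paper's: reduce to failures of \ref{gpe:reg} at level exactly $\ell$ for at most $k\Delta^2$ edges $e$ meeting the neighbourhood of $x$ (conditions \ref{gpe:c0} and \ref{gpe:dens} and the lower levels being covered by $B_{\ell-1}(x)$), and for each such $e$ apply Lemma~\ref{lem:k-inherit} with $\Gamma$ the restriction of $\cC^{(\ell-1)}$ to the parts indexed by $e\cup\{x\}$ and $\cG$ carrying the level-$\ell$ weights on $V_e$ and $V_{e\cup\{x\}}$; your parameter matching via \ref{ve:sizeorder}, \ref{ve:reginh}, \ref{gpe:dens} and the identity $d^{(\ell)}_{\phi\cup\{x\mapsto v\}}(e)=d^{(\ell)}_\phi(e)\,d^{(\ell)}_\phi(e\cup\{x\})$ is also exactly what the paper does. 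The genuine gap is the one item you defer: verifying the counting hypothesis \ref{inh:count} when $\ell\ge 2$. You assert that it "follows from $\cC^{(\ell-1)}$ counting these generalised octahedra to accuracy $(4k+1)\eta_{\ell-1}$, read off on the parts indexed by $e\cup\{x\}$", and then say that this counting statement is to be "established by induction on the level of the stack from the octahedron-minimality regularity in \ref{gpe:reg} together with the Cauchy--Schwarz estimates". That is the entire substance of the lemma for $\ell\ge 2$, and it is not obtained the way you suggest. The paper gets it from the intertwined induction: one invokes Lemma~\ref{lem:GPEcount} at height $\ell-1$ for the current stack restricted to the parts indexed by $e\cup\{x\}$. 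Because the complexes $R$ of the form $+2\oct{|e|}{\vec a}$ and $\oct{|e|+1}{\vec b}$ have up to two vertices per part, there is nothing to "read off": one must first pass through the standard construction and then check that the trivial partial embedding of $R$ into the resulting stack is an $(\ell-1)$-GPE. This forces a \emph{shifted} ensemble of parameters, since the cells of the restricted stack have already suffered up to $\pi_\phi(f)\le\vdeg(H)\le\Delta'$ hits, and one must verify that the shifted ensemble is still valid and that the numerical hypotheses of Lemma~\ref{lem:GPEcount} hold, namely $c^*\ge\max\{2v(R)-1,(\ell-1)(4k+1)\}$ and $h^*-\Delta'\ge(\ell-1)(4k+1)+\vdeg(R)$. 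This is precisely where the assumptions $\ell(4k+1)\le c^*$ and $\ell(4k+1)+k\Delta'\le h^*$ are consumed; in your write-up they are cited only for the innocuous bound $\pi_\phi(e)+1\le h^*$, which is a sign that the real use is missing.

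Note also that a direct derivation of \ref{inh:count} from octahedron-minimality plus the Cauchy--Schwarz estimates of Section~\ref{sec:CS}, without the vertex-by-vertex machinery, would amount to proving a counting lemma in a regime where the regularity errors at level $\ell-1$ are much larger than the densities at lower levels of the stack --- exactly the difficulty the GPE formalism exists to circumvent --- so it cannot be waved through as routine. Within the paper's framework the repair is clean, since the two lemmas are proved by an intertwined induction and Lemma~\ref{lem:GPEcount} for heights below $\ell$ is legitimately available as an induction hypothesis; but the standard-construction-plus-shifted-ensemble verification still has to be carried out, and it is absent from your argument. Your base case $\ell=1$, using \ref{gpe:c0}, \ref{thc:count}, $c^*\ge 4k+1$ and \ref{ve:count}, is correct and agrees with the paper.
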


The point of this collection of bounds on atypical vertices is that if a vertex $v$ is in $B_{\ell}(x)\setminus B_{\ell-1}(x)$ for some $\ell$, then we will be able to upper bound the count of $H$-copies extending $\phi\cup\{x\mapsto v\}$ in terms of the count of those $H$-copies in $\cC^{(\ell-1)}$ (which we show we can estimate accurately). 
This upper bound is bigger than the number we would like to get (the count in $\cC^{(k)}$) by the reciprocal of a product of some $d^{(\ell')}(f)$ terms, for various edges $f$ but only for $\ell'\ge\ell$. In particular, if $v(H)-\abs{\dom\phi}$ is not too large then this product is much larger than $\eps'_\ell$, so that the vertices of $B_{\ell}(x)\setminus B_{\ell-1}(x)$ in total do not contribute much to the overall count.

The corresponding counting lemma is then the following.

\begin{lemma}[Counting Lemma for GPEs]\label{lem:GPEcount} Given $k\ge 2$, positive integers $\Delta$, $c^*$, $h^*$, and a valid ensemble of parameters, let $\phi$ be a partial embedding of a $k$-complex $H$ of maximum degree $\Delta$, and suppose that for some $1\le\ell\le k$, the stack of candidate graphs $\cC\tz,\dots,\cC\tl$ gives an $\ell$-GPE. 
Write $r=v(H)-\abs{\dom\phi}$ and suppose that we have $c^*\ge\max\{2r-1, \ell(4k+1)\}$, $h^*\ge \ell(4k+1)+\vdeg(H)$, and $r\eta_\ell\le 1/2$. 
Then 
\begin{align}
\cC^{(\ell)}(H-\dom\phi)
  &=(1\pm r\eta_\ell)\tfrac{c\tl(\emptyset)}{\prod\limits_{0\le\ell'\le\ell}d_\phi^{(\ell')}(\emptyset)}\prod_{0\le\ell'\le\ell}\cD^{(\ell')}_\phi(H-\dom\phi)\,.
\end{align}
\end{lemma}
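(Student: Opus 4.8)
The plan is to prove Lemma~\ref{lem:GPEcount} by downward induction on $\abs{\dom\phi}$ (equivalently, by induction on $r=v(H)-\abs{\dom\phi}$), using the One-step Lemma to control the atypical images at each step. The base case $r=0$ is immediate: then $H-\dom\phi$ is empty, $\cC^{(\ell)}(H-\dom\phi)=c^{(\ell)}(\emptyset)$, and since $\dom\phi=V(H)$ every product $\prod_{f}d^{(\ell')}(f)$ over which $d^{(\ell')}_\phi(\emptyset)$ ranges is over all edges, so $\prod_{\ell'}\cD^{(\ell')}_\phi(H-\dom\phi)=\prod_{\ell'}d^{(\ell')}_\phi(\emptyset)$ and the two sides agree exactly (the error term is vacuous).

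For the inductive step, fix the first unembedded vertex $x$ (in the linear order on $V(H)$, so that $\dom\phi\cup\{x\}$ remains an initial segment and $\pi_\phi(e)\le\vdeg(H)$ stays valid), and decompose the count over the choice of image $v\in V_x$:
\[
\cC^{(\ell)}(H-\dom\phi) = \Ex[\big]{c^{(\ell)}_v(H-\dom\phi-x)}[v\in V_x]\cdot(\text{normalisation})\,,
\]
where I would more carefully write $\cC^{(\ell)}(H-\dom\phi)$ as an average over $v\in V_x$ of $\cC^{(\ell)}_{v}$ evaluated on $H-\dom\phi-x$, times the weight $g^{(\ell)}(v)$ coming from the size-$1$ edge at $x$. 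Split $V_x$ into three parts: the set $V_x^{\mathrm{good}}$ of $v$ for which $\phi\cup\{x\mapsto v\}$ with the updated stack is an $\ell$-GPE; the sets $B_{\ell'}(x)\setminus B_{\ell'-1}(x)$ for $1\le\ell'\le\ell$; and $B_0(x)$. On $V_x^{\mathrm{good}}$ apply the induction hypothesis (with $r$ decreased by one and $\phi$ replaced by $\phi\cup\{x\mapsto v\}$) to get the right main term with relative error $(r-1)\eta_\ell$; here one uses that the updated density graph satisfies $d^{(\ell')}_{\phi\cup\{x\mapsto v\}}(e)=d^{(\ell')}_\phi(e)\cdot d^{(\ell')}_\phi(\{x\}\cup e)$, so that the product over $v\in V_x^{\mathrm{good}}$ telescopes to reconstruct $\cD^{(\ell')}_\phi(H-\dom\phi)$ up to the contribution of the atypical $v$'s. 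For the atypical $v$'s: if $v\in B_0(x)$ we still have $v\in V_x$ so \ref{gpe:c0} before the step, i.e.\ $\cC^{(0)}$ is THC, which via \ref{thc:count} gives an absolute upper bound on $\cC^{(0)}(H-\dom\phi-x)$, hence on $\cC^{(\ell)}(H-\dom\phi-x)\le\cC^{(0)}(H-\dom\phi-x)$ (monotonicity of the stack, $\cC^{(\ell)}\le\cC^{(0)}$); and $\vnorm{B_0(x)}{\cC^{(0)}(x)}$ is small since $\cC^{(0)}$ is THC (the bound $B_0$ is implicit in \ref{gpe:c0}/\ref{lem:onestep}, but one needs the THC-graph hereditary property \ref{thc:hered} here). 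If $v\in B_{\ell'}(x)\setminus B_{\ell'-1}(x)$ then the updated stack truncated at level $\ell'-1$ is still an $(\ell'-1)$-GPE, so by the induction hypothesis applied at level $\ell'-1$ we can estimate $\cC^{(\ell'-1)}_v(H-\dom\phi-x)$ accurately; and $\cC^{(\ell)}_v\le\cC^{(\ell'-1)}_v$, so this is an upper bound. The point (as the paragraph after the One-step Lemma explains) is that $\cC^{(\ell'-1)}_v(H-\dom\phi-x)$ exceeds the target $\cC^{(\ell)}_v(H-\dom\phi-x)$ by a factor equal to the reciprocal of a product of $d^{(m)}(f)$ terms with $m\ge\ell'$; by \ref{gpe:dens} each such product is at least $\prod_{\ell'\le m\le\ell}\delta_m^{c^*}$ (there are at most $c^*\ge 2r-1$ relevant edges $f$), while the measure of $B_{\ell'}(x)\setminus B_{\ell'-1}(x)$ is at most $k\Delta^2\eps'_{\ell'}$ of $V_x$ by Lemma~\ref{lem:onestep}; and \ref{ve:worst} was designed precisely so that $k\Delta^2\eps'_{\ell'}\prod_{\ell'<m\le\ell}\delta_m^{-c^*}\le\eta_\ell\delta_{\ell'}/(72k(k+1)\Delta^2)\cdot\dots$ — in short, the contribution of each atypical class is bounded by a small fraction of $\eta_\ell$ times the main term. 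Summing the $O(k)$ atypical classes and adding the $(r-1)\eta_\ell\to r\eta_\ell$ slack from the good part gives the claimed total relative error $r\eta_\ell$.

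The main obstacle, and the place where the bookkeeping is genuinely delicate rather than routine, is the bound on the atypical contribution: one must (i) be careful that $\cC^{(\ell'-1)}_v(H-\dom\phi-x)$ really is an upper bound for the quantity one is summing — this uses both $\cC^{(\ell)}\le\cC^{(\ell'-1)}$ and the fact that weights are nonnegative — and (ii) identify exactly which density factors are "missing" when one counts in level $\ell'-1$ versus level $\ell$, so as to see that the deficiency product involves only $d^{(m)}$ with $m\ge\ell'$ and is bounded below by the right power of $\delta_{\ell'},\dots,\delta_\ell$ via \ref{gpe:dens}. Getting the exponent $c^*$ right here is where the hypothesis $c^*\ge 2r-1$ enters (each missing edge $f$ contains $x$ and a nonempty subset of the at-most-$(r-1)$ remaining unembedded vertices, and there are at most... — one counts edges of the complex, bounded using $c^*$), and matching this against the explicit inequalities in \ref{ve:worst} is the crux of why the ensemble of parameters was defined the way it was. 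Everything else — the telescoping of densities, the decomposition of $V_x$, the reduction $\cC^{(\ell)}(\cdot)$ to an average of links — is straightforward once the update rule and the definitions are unwound.
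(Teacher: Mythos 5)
Your overall strategy---induction on $r$, splitting $V_x$ into the good vertices, the classes $B_{\ell'}(x)\setminus B_{\ell'-1}(x)$, and $B_0(x)$, and handling the middle classes by counting one level down the stack (via the lower-level cases of the lemma, Lemma~\ref{lem:onestep}, monotonicity $\cC^{(\ell)}\le\cC^{(\ell'-1)}$, and the missing-density bookkeeping)---is the same as the paper's. The genuine gap is your treatment of $B_0(x)$. You claim that since $\cC\tz$ is an $(\eta_0,c^*)$-THC graph, \ref{thc:count} ``gives an absolute upper bound'' on $\cC^{(0)}_{x\mapsto v}(H-\dom\phi-x)$ for $v\in B_0(x)$. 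It does not: \ref{thc:count} controls counts in $\cC\tz$ itself and, through \ref{thc:hered}, in \emph{typical} links, whereas $v\in B_0(x)$ means precisely that the link $\cC^{(0)}_{x\mapsto v}$ fails to be a THC graph, so no counting inside that link is available. A trivial bound is no substitute: the weights are unbounded in general, and even in the unweighted case the trivial homomorphism bound misses the (possibly tiny) level-$0$ density factors by far more than $\eta_\ell$ times the main term. Knowing only $\vnorm{B_0(x)}{\cC\tz(x)}\le\eta_0\vnorm{V_x}{\cC\tz(x)}$ does not bound $\Ex{\indicator{v\in B_0(x)}\,\cC^{(0)}_{x\mapsto v}(H-\dom\phi-x)}$, since a priori the bad vertices could carry a disproportionate share of the extensions.

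The paper closes exactly this hole with a second-moment argument: it forms the complex $H'$ from two disjoint copies of $H$ identified along $\dom\phi\cup\{x\}$, so that $H'-\dom\phi$ has $r'=2r-1\le c^*$ vertices, uses \ref{gpe:c0} to count both $H-\dom\phi$ and $H'-\dom\phi$ in $\cC\tz$, and feeds these first- and second-moment estimates into Lemma~\ref{lem:ECSdist} with weight $W=\indicator{v\in B_0(x)}$ and $\Ex{WX}\le\eta_0\Ex{X}$, obtaining the bound $10\eta_0c^*\cdot c\tl(\emptyset)d\tz_\phi(x)\xi(0)$ for the $B_0$ contribution. This is also where the hypothesis $c^*\ge 2r-1$ is genuinely used: you attribute it to the missing-density estimate for the classes $B_{\ell'}(x)\setminus B_{\ell'-1}(x)$, but there $\xi(\ell'')\ge\delta_{\ell''}^{\,r-1}$ only needs $c^*\ge r$; the factor $2$ comes from the doubled complex $H'$. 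Apart from this your argument follows the paper's route (the paper also treats $r=1$ directly from the GPE definition rather than through the inductive step, but that is minor).
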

The right-hand side consists of a relative error term and a product of densities, where the $\emptyset$ terms correspond to edges of $F$ which are fully embedded by $\phi$, and the remaining terms correspond to the expected weight of edges not yet fully embedded by $\phi$.  

The proofs of Lemmas~\ref{lem:onestep} and~\ref{lem:GPEcount} are an intertwined induction, which we give in the following Section~\ref{sec:count}. Specifically, to prove Lemma~\ref{lem:onestep} for some $\ell\ge 1$ we assume Lemma~\ref{lem:GPEcount} for $\ell'<\ell$, and to prove Lemma~\ref{lem:GPEcount} for $\ell\ge 1$ we assume Lemma~\ref{lem:onestep} for $\ell'\le\ell$. 
The base case is provided by the observation that the counting conditions we require to prove Lemma~\ref{lem:onestep} for $\ell=1$, in $\cC^{(0)}$, hold because~\ref{gpe:c0} states that $\cC^{(0)}$ is a THC graph.

If one is only interested in a lower bound for the purpose of embedding, our methods are significantly simpler because we trivially have zero as a lower bound for the total weight of embeddings using bad vertices, and one can afford the luxury of ignoring levels below $k$ of the stack. 
Controlling this error is what requires $c^*\ge 2r-1$ in Lemma~\ref{lem:GPEcount}, but we would like to depend less on the global structure of $H$ in an embedding result, stated as Lemma~\ref{lem:GPEemb} below. 

\begin{lemma}[Embedding lemma for GPEs]\label{lem:GPEemb}
Given $k\ge 2$, positive integers $\Delta$, $\Delta'$, $c^*\ge k(4k+1)$, $h^*\ge k(4k+1)+k\Delta'$, and a valid ensemble of parameters, let $\phi$ be a partial embedding of a $k$-complex $H$ of maximum degree $\Delta$ and vertex-degeneracy at most $\Delta'$, and suppose that the stack of candidate graphs $\cC\tz,\dots,\cC\tk$ gives a $k$-GPE. 
Write $r=v(H)-\abs{\dom\phi}$. 

Then we have
\begin{align}
\cC^{(k)}(H-\dom\phi)
  &\ge (1- \eta_k)^r\tfrac{c\tk(\emptyset)}{\prod\limits_{0\le\ell\le k}d_\phi^{(\ell)}(\emptyset)}\prod_{0\le\ell\le k}\cD\tl_\phi(H-\dom\phi)\,.
\end{align}
\end{lemma}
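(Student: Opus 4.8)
The plan is to derive Lemma~\ref{lem:GPEemb} by essentially the same inductive scheme that proves Lemma~\ref{lem:GPEcount}, but throwing away all information below level $k$ and all counting \emph{upper} bounds, keeping only lower bounds. I would induct on $r=v(H)-\abs{\dom\phi}$. For the base case $r=0$ there is nothing to embed and both sides equal $c\tk(\emptyset)/\prod_\ell d\tl_\phi(\emptyset)$ (the product of densities over edges of $H-\dom\phi$ being empty), noting that $d\tl_\phi(\emptyset)=\prod_{f\subset\dom\phi}d\tl(f)$ so the normalising factor is exactly the weight accumulated so far; this matches the definition of $\cC\tk(\emptyset)$ as $\cG(\phi)$ in the trivial final graph.

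For the inductive step, pick the first unembedded vertex $x$ of $H$ (in the fixed degeneracy order), so that $\dom\phi$ stays an initial segment and $\pi_\phi(e)\le\vdeg(H)\le\Delta'$ throughout. Applying Lemma~\ref{lem:onestep} with $\ell=k$ — which is legitimate since $c^*\ge k(4k+1)$ and $h^*\ge k(4k+1)+k\Delta'$ — the set $B(x):=\bigcup_{\ell=1}^k\big(B_\ell(x)\setminus B_{\ell-1}(x)\big)\cup B_0(x)$ of vertices $v$ for which $\phi\cup\{x\mapsto v\}$ with the updated stack fails to be a $k$-GPE has $\vnorm{B(x)}{\cC\tk(x)}$ small: summing the Lemma~\ref{lem:onestep} bounds gives $\vnorm{B(x)}{\cC^{(k-1)}(x)}\le k^2\Delta^2\eps'_k\vnorm{V_x}{\cC^{(k-1)}(x)}$ (and $B_0(x)$ is controlled via THC), and since $\cC\tk(x)$ is $(\eps_k,\cdot)$-regular and $\delta_k$-dense relative to $\cC^{(k-1)}(x)$ one translates this into $\vnorm{B(x)}{\cC\tk(x)}\le \beta\vnorm{V_x}{\cC\tk(x)}$ for some $\beta$ of order $\eps'_k/\delta_k \ll \eta_k$. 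Crucially, here I simply \emph{discard} the bad vertices: their contribution to $\cC\tk(H-\dom\phi)$ is nonnegative, so ignoring them only decreases the count, which is fine for a lower bound. This is exactly why $c^*\ge 2r-1$ is \emph{not} needed (no upper bound on the bad part is required) and why only level $k$ of the stack matters.

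Then I write $\cC\tk(H-\dom\phi)$ as the average over $v\in V_x$ of $g_v(\text{rest})$ weighted appropriately, restrict the average to the good set $V_x\setminus B(x)$, and for each good $v$ apply the induction hypothesis (with $\phi\cup\{x\mapsto v\}$, which is a $k$-GPE, and the parameter count now $r-1$) to get
\[
\cC\tk_{v}\big(H-\dom\phi-x\big)\ \ge\ (1-\eta_k)^{r-1}\tfrac{c\tk_v(\emptyset)}{\prod_{\ell}d\tl_{\phi\cup\{x\mapsto v\}}(\emptyset)}\prod_{\ell}\cD\tl_{\phi\cup\{x\mapsto v\}}(H-\dom\phi-x)\,.
\]
Each factor $\cD\tl_{\phi\cup\{x\mapsto v\}}(H-\dom\phi-x)$ equals $\cD\tl_\phi(H-\dom\phi)$ divided by the density contribution of the edges through $x$, namely $\prod_{f\ni x,\,f\setminus x\subset\dom\phi}d\tl(f)=d\tl_\phi(x)$; and $c\tk_v(\emptyset)/\prod d\tl_{\phi\cup\{x\mapsto v\}}(\emptyset)$ relates back to $c\tk(\emptyset)/\prod d\tl_\phi(\emptyset)$ times the weight $c\tk$ puts on $x$'s edges. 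Averaging the product of the candidate-graph weight at $x$ against this, over the good $v$, the regularity and density hypotheses \ref{gpe:reg}, \ref{gpe:dens} tell us the average of the level-$k$ weight at $x$ over $V_x$ is $(1\pm\eps'_k)\prod_{\ell}d\tl_\phi(x)$, and removing $B(x)$ costs a factor $(1-\beta)$; both $\eps'_k$ and $\beta$ are $\le \eta_k/(2r)$ say, by validity of the ensemble (VE1), so the combined loss is at least $(1-\eta_k)$. Multiplying through yields the claimed $(1-\eta_k)^{r}$.

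The main obstacle is the bookkeeping in this averaging step: one must verify that the product-of-densities on the right-hand side factorises correctly as a vertex $x$ is peeled off — i.e.\ that $\prod_\ell\cD\tl_\phi(H-\dom\phi) = \big(\prod_\ell d\tl_\phi(x)\big)\cdot\prod_\ell\cD\tl_{\phi\cup\{x\mapsto v\}}(H-\dom\phi-x)$ and similarly for the $\emptyset$-normalisations — and that the accumulated relative errors, one factor of $(1-\eta_k)$ per embedded vertex, telescope to exactly $(1-\eta_k)^r$ without the per-step slack $\eps'_k$, $\beta$ spoiling the bound. This is where one uses that $H$ has bounded degree $\Delta$: each $d\tl_\phi(x)$ is a product of at most $\sum_{i}\binom{\Delta}{\,i\,}$-ish many $d\tl(f)$ terms, hence bounded below by $\delta_\ell^{c^*}$-type quantities, so $\eps'_k$ being small relative to these (again \ref{ve:worst}) makes the per-step multiplicative error absorbable into the single $(1-\eta_k)$ factor we are allowed to lose. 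The regularity-error propagation is contained precisely by the $\eps_{\ell,r,h}$ bookkeeping in the ensemble: since we only ever peel the first unembedded vertex, $\pi_\phi(e)$ never exceeds $\vdeg(H)\le\Delta'$, so every regularity parameter invoked has index $h\le\Delta'\le h^*$ and stays in the valid range.
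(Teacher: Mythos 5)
Your approach matches the paper's: induction on $r$, with Lemma~\ref{lem:onestep} controlling the bad vertices and the lower-bound strategy of simply discarding them. However, there is a concrete bookkeeping error in the passage from the one-step bounds to the claimed bound on $\vnorm{B(x)}{\cC^{(k-1)}(x)}$.

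Lemma~\ref{lem:onestep} gives, for each $\ell$, a bound on $\vnorm{B_\ell(x)\setminus B_{\ell-1}(x)}{\cC^{(\ell-1)}(x)}$ in terms of $\vnorm{V_x}{\cC^{(\ell-1)}(x)}$ — note these are at level $\ell-1$, which differs with $\ell$. You cannot simply sum them and conclude $\vnorm{B(x)}{\cC^{(k-1)}(x)}\le k^2\Delta^2\eps'_k\vnorm{V_x}{\cC^{(k-1)}(x)}$: converting a level-$(\ell-1)$ bound to level $k-1$ (or $k$) introduces an extra factor of roughly $\prod_{\ell\le\ell'\le k-1}d^{(\ell')}_\phi(x)^{-1}$, because $\vnorm{V_x}{\cC^{(\ell-1)}(x)}$ can be much larger than $\vnorm{V_x}{\cC^{(k-1)}(x)}$. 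Consequently the later statement that $\vnorm{B(x)}{\cC\tk(x)}\le\beta\vnorm{V_x}{\cC\tk(x)}$ with $\beta$ of order $\eps'_k/\delta_k$ is not justified — the correct bound involves a sum of terms of the form $\eps'_\ell/\prod_{\ell'=\ell}^k\delta_{\ell'}$, one per level, each of which must be (and, by \ref{ve:worst}, is) small. The paper's proof keeps each $B_\ell(x)\setminus B_{\ell-1}(x)$ weighted at its own natural level $\ell-1$ and then assembles a single inequality of the form
\[
\vnorm{V_x\setminus B_k(x)}{\cC\tk(x)}\ \ge\ \Bigg((1-\eta_0)\prod_{\ell'\in[k]}\Big(1-\tfrac{\eps'_{\ell'}}{\delta_{\ell'}}\Big)-\tfrac{2\eta_0}{\prod_{\ell'\in[k]}\delta_{\ell'}}-\sum_{\ell\in[k]}\tfrac{2k\Delta^2\eps'_\ell}{\prod_{\ell'=\ell}^k\delta_{\ell'}}\Bigg)\prod_{0\le\ell\le k}d\tl_\phi(x)\ \ge\ (1-\eta_k)\prod_{0\le\ell\le k}d\tl_\phi(x)\,,
\]
in which the compensating density denominators appear explicitly, term by term. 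Your concluding remark about $\eps'_k$ being absorbable via \ref{ve:worst} points at the right phenomenon, but it is the full collection $\{\eps'_\ell\}_\ell$ and the full products of minimum densities that must be balanced; once this level-by-level accounting is made precise your argument coincides with the paper's.
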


Note that although Lemmas~\ref{lem:GPEcount} and~\ref{lem:GPEemb} only explicitly allow for counting embeddings in a partite graph where one vertex is embedded to each part, it is easy to deduce versions where multiple vertices may be embedded into each part by applying the standard construction at each level of the stack. 
It is trivial to check that for levels $1$ to $k$ the required regularity is carried over, and the homomorphism counts imposed on the bottom level are similarly preserved by the construction.

\section{A sketch of counting in dense graphs}\label{sec:sketch}

The following sketch proves what is perhaps the simplest non-trivial example of a counting lemma, and forms the basis of our methods. 
In a few places we use simple facts about dense, regular graphs that are much more difficult to prove in more general settings. 
Most of the technical work in this paper is dedicated to proving such facts in the setting of sparse hypergraphs. 

Let $X$, $Y$, and $Z$ be disjoint vertex sets in a graph $G$, each of size $n$, such that each pair of sets induces an $\eps$-regular bipartite graph of density $d$ with $\eps<d/2$. 
It is usual for us to use Szemerédi's original definition of regularity, which means here that for subsets $X'\subset Y$ and $Y'\subset Y$ each of size at least $\eps n$, there are $(d\pm\eps)\abs{X'}\abs{Y'}$ edges between $X'$ and $Y'$, and similar for the pairs $(X,Z)$ and $(Y,Z)$.
We sketch a proof that the number of triangles with one vertex in each set is $(d^3\pm \xi)n^3$ with an error $\xi$ polynomial in $\eps$ and $\eps /d$. 

The proof requires standard properties of a regular pair and some standard notation: $\deg(u,S)$ is the number of edges $\{u,v\}\in E(G)$ with $v\in S$, and $N(u)$ is the set $\big\{v \in V(G) : \{u,v\}\in E(G)\big\}$.
We first observe that, by regularity, for all but at most $4\eps n$ vertices $x\in X$ we have both $\deg(x,Y)$ and $\deg(x,Z)$ in the range $(d\pm\eps)n$. 
We also note that any vertex is in at most $n^2$ triangles. 
Another standard consequence of regularity is that for any typical $x\in X$ (that is, with $\deg(x,Y)$ and $\deg(x,Z)$ in the range $(d\pm\eps)n$), the pair $\big(N(x)\cap Y,N(x)\cap Z\big)$ \emph{inherits regularity} and is $\big(\frac{\eps}{d-\eps}, d\big)$-regular. 
We now consider $N(x)\cap Y$ and note that similarly, for all but at most 
\[
\tfrac{2\eps}{d-\eps}\abs{N(x)\cap Y}\le 2\eps\tfrac{d+\eps}{d-\eps}n\le 6\eps n
\]
vertices $y\in N(x)\cap Y$, the vertices $x$ and $y$ have
\[
\big(d\pm\tfrac{\eps}{d-\eps}\big)\abs{N(x)\cap Z}=\big(d\pm\tfrac{\eps}{d-\eps}\big)(d\pm\eps)n
\]
common neighbours in $Z$ (and so are in that many triangles); and the atypical vertices in $N(x)\cap Y$ contribute at most $(d+\eps)n\le n$ triangles each.

Pulling together these bounds there are at least zero and at most $4\eps n^3+6\eps n^3$ triangles using an atypical $x\in X$, and using a typical $x$ but an atypical $y\in Y$ respectively.
We also have the lower bound
\[
(1-4\eps)n\cdot \big(1-\tfrac{2\eps}{d-\eps}\big)(d-\eps)n \cdot \big(d-\tfrac{\eps}{d-\eps}\big)(d-\eps)n\,,
\]
and the upper bound
\[
n\cdot (d+\eps)n \cdot \big(d+\tfrac{\eps}{d-\eps}\big)(d+\eps)n\,,
\]
on the number of triangles using typical $x$ and $y$. 
Given $\eps<d/2$ we can bound $\eps/(d-\eps)\le 2\eps/d$, and hence the above sketch indeed shows that there are $(d^3\pm\xi)n^3$ triangles with $\xi$ polynomial in $\eps$ and $\eps/d$. 

For a more general version where one counts copies of some small graph $H$, one considers embedding $H$ into $G$ one vertex at a time, keeping track at each step of the number of ways to extend the next embedding. 
For the general argument we do two things: argue that most ways of continuing the embedding are `typical', and that `atypical' choices do not contribute much. 
More generally, `typical' simply means that neighbourhoods (and common neighbourhoods) of embedded vertices are about the size one would expect from the densities of the regular pairs, and that most vertices are typical is a simple consequence of regularity; and the atypical choices do not contribute much because they are so few. 

Our methods for sparse hypergraphs follow the same lines as this sketch, but some of the steps are significantly more involved. 
Adapting the sketch to sparse graphs requires some similar modifications (see~\cite{ABHKPblow}), and our terminology is chosen to follow these developments, but hypergraphs present their own technical challenges.

If it is given that $G$ has the THC property, then `every choice of image made so far in the partial embedding is typical' simply means that we always choose a vertex whose link gives a THC graph. By definition there are few vertices which are atypical. This is technically easy to work with (indeed, THC was designed to make this so).

If we are using GPEs, then our definition of a GPE (Definition~\ref{def:gpe}) is what it means for every choice of image made so far in the partial embedding to have been `typical'. The fact that most ways of continuing the embedding are typical is our Lemma~\ref{lem:onestep}, and the control of embeddings using atypical vertices appears in Lemma~\ref{lem:GPEcount}. This requires technically more work---mainly because the definitions are complicated---but has the advantage that one has access to the properties of the majorising graph $\Gam$, which (for example if $\Gam$ is a random graph) can be useful.

We finally point out that, in contrast to the above sketch where inheritance of regularity follows immediately from the definition, in sparse (hyper)graphs inheritance is not automatic. Our regularity inheritance lemma (Lemma~\ref{lem:k-inherit}) requires careful applications of the Cauchy--Schwarz inequality, and is crucial for proving Lemma~\ref{lem:onestep}.

\section{Counting and embedding in \texorpdfstring{$\Gamma$}{Gamma}}\label{sec:Gacount}

In this section we prove Theorem~\ref{thm:GaTHC} and Lemma~\ref{lem:randomTHC}. 
Both results give sufficient conditions for a $k$-graph $\Gam$ to be a THC-graph in a way that is compatible with the hypotheses of our counting and embedding results (Theorems~\ref{thm:counting} and~\ref{thm:embedding}). 

\subsection{Counting implies THC}

To prove Theorem~\ref{thm:GaTHC} we show the following. 
Suppose $J$ is a vertex set with a linear order. For convenience we will usually take $J=[m]$ with the natural order. Suppose $\Gam$ is an $[m]$-partite $k$-graph with density $k$-graph $\cD$. Suppose that $H$ is a $k$-complex on $[m]$ with $\Delta(H^{(2)})\le\Delta$, and that $\Gam$ is identically equal to $1$ on $e$-partite edges for each $e\not\in H$. Suppose that $c^*\ge 2\Delta+2$ and $\eps^*>0$ are given, and that that counts of all small (depending on $c^*$) subgraphs in $\Gam$ match those in $\cD$ to high accuracy (depending on $\eps^*$ and $c^*$). Then $\Gam$ is a $(c^*,\eps^*)$-THC graph.

A difficulty with proving this is that the definition of $(c^*,\eps)$-THC is recursive; it is not easy to verify whether a given graph satisfies the definition. So we will begin by defining a graph with some additional structure which helps us to perform the verification.

We say a set $X\subset [m]$ is a \emph{counting place} if $H^{(2)}[X]$ is a connected graph with at most $c^*$ vertices. We claim that it is enough to know accurate counts in $\Gam$ of small $X$-partite graphs for all counting places $X$. Making this precise, we have
\begin{proposition}\label{prop:countplace}
Given $k$, $c^*$, and $\eps>0$, let $J$ be a vertex set. Suppose that $\Gam$ is a $J$-partite $k$-graph, and $\cD$ is a density $k$-graph on $J$. Suppose that $H$ is a $k$-complex on $J$, suppose that if $e\not\in E(H)$ then $\Gam$ is identically $1$ on $V_e$ and $d(e)=1$, and suppose that for each counting place $X\subset J$ and each $X$-partite $k$-complex $F$ with at most $c^*$ vertices we have
\[\Gam(F)=(1\pm\eps)^{v(F)}\tfrac{\gam(\emptyset)}{d(\emptyset)}\cD(F)\,.\]
Then for each $X\subset J$ with $|X|\le c^*$ and each $X$-partite $k$-complex $F$ with at most $c^*$ vertices we have
\[\Gam(F)=(1\pm\eps)^{v(F)}\tfrac{\gam(\emptyset)}{d(\emptyset)}\cD(F)\,.\]
\end{proposition}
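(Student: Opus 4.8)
The plan is to reduce an arbitrary $F$ to a disjoint union over counting places: decompose $X$ into the connected components of $H^{(2)}[X]$, observe that the homomorphism count factorises \emph{exactly} over these components (using that $H$ is down-closed and that $\gam$, $d$ are identically $1$ off $H$), and then apply the hypothesis componentwise.

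First I would fix $X\subset J$ with $|X|\le c^*$ and an $X$-partite $k$-complex $F$ with at most $c^*$ vertices, and let $X_1,\dotsc,X_t$ be the vertex sets of the connected components of the graph $H^{(2)}[X]$. Since $|X|\le c^*$, each $X_i$ has at most $c^*$ vertices and $H^{(2)}[X_i]$ is connected, so each $X_i$ is a counting place. Let $F_i$ be the subcomplex of $F$ induced on the union of the parts of $F$ indexed by elements of $X_i$; then $F_i$ is an $X_i$-partite $k$-complex with $v(F_i)\le c^*$, and $\sum_{i} v(F_i) = v(F)$.

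Next I would establish the exact factorisations
\[
\Gam(F) = \gam(\emptyset)^{1-t}\prod_{i=1}^{t}\Gam(F_i)\,,\qquad \cD(F) = d(\emptyset)^{1-t}\prod_{i=1}^{t}\cD(F_i)\,.
\]
The key observation is that any edge $e\in F$ with $|e|\ge 2$ that meets the parts of two distinct components $X_i$ projects to a non-edge of $H$: otherwise, since $H$ is down-closed, a pair inside that projection would be an edge of $H^{(2)}[X]$ joining two components. For such $e$ we have $\gam(e')=1=d(e')$ on its projection $e'$, so these edges contribute nothing; every other positive-size edge of $F$ lies within a single $F_i$, and the empty edge contributes one extra factor $\gam(\emptyset)$ (resp.\ $d(\emptyset)$). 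Since the parts indexed by distinct $X_i$ involve disjoint vertex sets of $\Gam$, the corresponding sub-homomorphisms vary independently, and a short computation with the definitions of $\Gam(\cdot)$ and $\cD(\cdot)$ — carefully accounting for the empty edge — yields the two displayed identities.

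Finally, the hypothesis applies to each counting place $X_i$ and complex $F_i$, giving $\Gam(F_i)=(1\pm\eps)^{v(F_i)}\tfrac{\gam(\emptyset)}{d(\emptyset)}\cD(F_i)$. Taking the product over $i$ and using $\sum_i v(F_i)=v(F)$ together with the factorisation of $\cD$ (so that $\prod_i\cD(F_i)=d(\emptyset)^{\,t-1}\cD(F)$), the powers of $\gam(\emptyset)$ and $d(\emptyset)$ telescope and give exactly $\Gam(F)=(1\pm\eps)^{v(F)}\tfrac{\gam(\emptyset)}{d(\emptyset)}\cD(F)$; the degenerate case $t=0$ (i.e.\ $X=\emptyset$) is checked directly. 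I expect the only genuinely delicate point to be the bookkeeping of the empty-edge weights $\gam(\emptyset)$, $d(\emptyset)$ through the factorisation, which must be tracked carefully so that they cancel against the $\tfrac{\gam(\emptyset)}{d(\emptyset)}$ factors coming from the $t$ applications of the hypothesis; everything else is routine.
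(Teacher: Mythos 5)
Your proposal is correct and takes essentially the same route as the paper's proof: decompose $X$ into the connected components of $H^{(2)}[X]$, note each is a counting place, factor $\Gam(F)$ (and $\cD(F)$) exactly over the components using that $\gam$ and $d$ are identically $1$ off $H$ and that $H$ is down-closed, then apply the hypothesis componentwise and multiply. The only difference is cosmetic: you spell out the $\cD$-factorisation, the $t=0$ case, and the reason a cross-component edge projects to a non-edge of $H$, all of which the paper leaves implicit.
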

\begin{proof}
 Given $X$ and $F$, we are done if $X$ is a counting place, so suppose that $X$ is not a counting place. Then $H^{(2)}[X]$ has components on vertex sets $X_1,\dots,X_\ell$, and let for each $i$ the $k$-complex $F_i$ consist of the $X_i$-partite edges of $F$. By definition each $X_i$ is a counting place, so we have for each $i$
 \[\Gam(F_i)=(1\pm\eps)^{v(F_i)}\tfrac{\gam(\emptyset)}{d(\emptyset)}\cD(F_i)\,.\]
 Now since edges of $\Gam$ which are on sets $V_e$ for $e\not\in H$ are identically $1$, and since $H$ is a complex and hence down-closed, we have
 \[\Gam(F)=\gam(\emptyset)^{1-\ell}\prod_{i=1}^\ell\Gam(F_i)=\gam(\emptyset)^{1-\ell}\prod_{i=1}^\ell\Big((1\pm\eps)^{v(F_i)}\tfrac{\gam(\emptyset)}{d(\emptyset)}\cD(F_i)\Big)=(1\pm\eps)^{v(F)}\tfrac{\gam(\emptyset)}{d(\emptyset)}\cD(F)\,,\]
 as desired.
\end{proof}

Given a counting place $X$, and some $t<\min(X)$, we say a set $Y$ is \emph{of interest for $(t,X)$} if $Y$ consists of all vertices $1\le y\le t$ such that there exists $x\in X$ with $xy\in H^{(2)}$. 
We say $Y$ is \emph{of interest} if there exist $(t,X)$ such that $Y$ is of interest for $(t,X)$. 
Given a set $Y$ of interest, we let
\begin{align*}
 s_H(Y)&:=\big|\{z\in[m]:zy\in H^{(2)}\text{ for some }y\in Y \text{ with }y>z\}\big|+|Y|\,,\text{  and}\\
 p_H(Y)&:=\big|\{Y':Y\subsetneq Y'\,,\,Y'\text{ is of interest, and }\max(Y')=\max(Y)\}\big|\,.
\end{align*}
When $H$ is clear from the context we omit it. Finally, given a set $Y$, we say $e=\{e_1,\dots,e_{|Y|}\}\in V_Y$ is a \emph{fail set for $Y$} if there is some $(t,X)$ such that $Y$ is of interest for $(t,X)$ and such that the link graph $\Gam_e$ obtained by taking links of $\Gam$ with successively $e_1,e_2,\dots,e_{|Y|}$, and the similarly defined density graph $\cD_Y$, satisfy
\[\Gam_e(F)\neq(1\pm\eps)\tfrac{\gam_e(\emptyset)}{d_Y(\emptyset)}\cD_Y(F)\,,\]
for some $X$-partite $k$-complex $F$ with at most $c^*$ vertices.

We \emph{decorate} the $[m]$-partite $k$-graph $\Gam$ by choosing (possibly empty) sets $B^Y$ for each $Y$ which is of interest, where each $B^Y$ consists of edges in $V_{Y'}$ for some $Y'$ such that either $1\le |Y'|\le\Delta$ or $Y'=Y$. We will think of the $B^Y$ as \emph{bad sets}; we will say sets in $B^Y$ of size $|Y|$ are \emph{large bad sets} and the rest are \emph{small bad sets}.

We say a decorated graph $\Gam$ with bad sets $B^Y$ for $Y$ of interest, and density graph $\cD$, is \emph{$(H,\eps,\delta,c^*)$-safe} if the following are true. 
\begin{enumerate}[label=(S\arabic*)]
\item\label{safe:count} for each counting place $X$ and each $X$-partite graph $F$ with $v(F)\le c^*$ we have
\[\Gam(F)=(1\pm\eps)\tfrac{\gam(\emptyset)}{d(\emptyset)}\cD(F)\,,\]
\item\label{safe:fail} for each $Y$ of interest and each fail set $e$ for $Y$, there is a set of $B^Y$ contained in $e$, and
\item\label{safe:notbig} for each $Y$ of interest we have
\begin{equation}\label{eq:safe:notbig}
 \sum_{Z\subset Y}\frac{d(\emptyset)\sum_{e\in V_Z\cap B^Y}\prod_{e'\subset e}\gam(e')}{\gam(\emptyset)|V_Z|\prod_{Z'\subset Z}d(Z')}\le\delta^{s(Y)}2^{-p(Y)}\,.
\end{equation}
\end{enumerate}

If $\eps$ and $\delta$ are chosen appropriately, and $c^*\ge 2\Delta(H)+2$, then a $(H,\eps,\delta,c^*)$-safe graph will turn out to be an $(\eps^*,c^*)$-THC graph. The next lemma formalises this.
\begin{lemma}\label{lem:safe}
 Given integers $\Delta$ and $c^*$ such that $c^*\ge \Delta+2$, and $\delta>0$, if $\eps>0$ is sufficiently small the following holds. Given a $k$-complex $H$ on $[m]$ such that $H^{(2)}$ has maximum degree at most $\Delta$, let $\Gam$ be an $[m]$-partite $k$-graph with density graph $\cD$. Suppose that whenever $e\not\in E(H)$ we have $\Gam[V_e]$ identically equal to one. Suppose that for each $Y$ of interest we are given a set $B^Y$, decorating $\Gam$, such that the decorated graph is $(H,\eps,\delta,c^*)$-safe.
 
 Let $H'$ be the $k$-complex on $\{2,3,\dots,m\}$ with edges $E(H')=\{e\in E(H):1\not\in e\}\cup\{e\setminus\{1\}:e\in E(H),1\in e\}$. Given $v\in V_1$, let $\Gam_v$ be the link graph of $v$, and let $\cD_1$ be the link graph of $1$ in $\cD$. For all but at most $10\cdot 2^{(c^*+3)\Delta^{c^*+3}}\delta d(1)|V_1|$ total weight of vertices $v$ in $V_1$, there is a decoration of $\Gam_v$ with respect to which $\Gam_v$ is $(H',\eps,\delta,c^*)$-safe with density graph $\cD_1$.
\end{lemma}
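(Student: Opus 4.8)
The plan is to take a decorated $(H,\eps,\delta,c^*)$-safe graph $\Gam$, fix a vertex $v\in V_1$, and construct a decoration of the link graph $\Gam_v$ making it $(H',\eps,\delta,c^*)$-safe, except for a controlled (small total weight) set of bad $v$. The natural decoration is obtained by \emph{specialising} the bad sets of $\Gam$ at the vertex $1$: if $Y$ is of interest for $H'$ in $\{2,\dots,m\}$, then either $Y$ or $Y\cup\{1\}$ was of interest for $H$ (depending on whether some $x\in X$ has $x1\in H^{(2)}$ for the relevant $(t,X)$). For the $Y$ not adjacent to $1$, we simply take $B^Y_v:=B^Y$ (restricted to the link), and for the $Y$ adjacent to $1$ we take $B^{Y}_v$ to be the sets obtained from those in $B^{Y\cup\{1\}}$ by deleting the coordinate corresponding to vertex $1$ when $v$ lies in that coordinate, plus the large bad sets already in $B^Y$ itself. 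The first task is to check \ref{safe:count} for $\Gam_v$: a counting place $X$ for $H'$ is also a counting place for $H$ (since $H'^{(2)}$ is an induced subgraph plus possibly-merged edges), and accurate counts of $X$-partite complexes in $\Gam_v$ are precisely counts of rooted complexes in $\Gam$, which are themselves counts of $(X\cup\{1\})$-partite complexes in $\Gam$ --- these are controlled by \ref{safe:count} for $\Gam$ as long as $X\cup\{1\}$ is a counting place, i.e.\ has at most $c^*$ vertices and connected $H^{(2)}$. This is where we need $c^*\ge\Delta+2$ and where most $v$ must be good: the set of $v$ for which the count in $\Gam_v$ is \emph{not} $(1\pm\eps)$-accurate (when the parent count in $\Gam$ is $(1\pm\eps)^2$-accurate or so) is exactly a fail set for some singleton-or-small $Y$ of interest, hence is covered by a bad set in $B^Y$, and \ref{safe:notbig} bounds the total weight of such $v$.

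The second task is \ref{safe:fail} for $\Gam_v$: if $e$ is a fail set for some $Y$ (of interest for $H'$) in $\Gam_v$, we must show $e$ contains a set of $B^Y_v$. Expanding the definition, $e$ being a fail set for $Y$ in $\Gam_v$ means $(\Gam_v)_e=\Gam_{v,e}$ fails the count for some $X$-partite $F$; but $\Gam_{v,e}$ is a link of $\Gam$ along the sequence $(v,e_1,\dots,e_{|Y|})$, so $(v,e)$ (suitably interpreted) is a fail set for $Y\cup\{1\}$ (or for $Y$, in the non-adjacent case) in $\Gam$. By \ref{safe:fail} for $\Gam$ there is a set of $B^{Y\cup\{1\}}$ (resp.\ $B^Y$) inside $(v,e)$; if that set avoids the coordinate of vertex $1$ it lies in $e$ and is a small bad set we have placed into $B^Y_v$; if it uses the coordinate of vertex $1$, then it used the value $v$, and our deletion construction put the residual set into $B^Y_v$. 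This is the step that forces the somewhat awkward two-families decoration, and it is the main technical obstacle --- one must carefully track which $Y$'s are of interest before and after removing vertex $1$, and handle the case distinction ($1$ adjacent to $X$ or not) cleanly, because the definition of ``of interest'' is sensitive to the order and to adjacency in $H^{(2)}$. It helps that a set of interest for $H'$ has size bounded in terms of $\Delta$ and that the set of relevant $(t,X)$ is finite.

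The third task is \ref{safe:notbig} for $\Gam_v$, i.e.\ verifying \eqref{eq:safe:notbig} for each $Y$ of interest (for $H'$) with the new bad sets. For a fixed $Y$ and fixed $v$, the left-hand side for $\Gam_v$ is a sum over $Z\subset Y$ of (normalised) weights of bad sets; the bad sets we added are (i) the old large bad sets of $B^Y$, whose contribution is already bounded by the $\Gam$-version of \eqref{eq:safe:notbig}, and (ii) the residuals of sets from $B^{Y\cup\{1\}}$ that passed through $v$. For (ii), when we \emph{average over} $v\in V_1$ --- which is what we are allowed to do, since we only need the conclusion for all but a small total weight of $v$ --- the averaged contribution is precisely (a rescaling of) the $\Gam$-version of \eqref{eq:safe:notbig} for $B^{Y\cup\{1\}}$, because summing $\prod_{e'\subset e}\gam(e')$ over choices of the deleted coordinate reconstructs the corresponding sum for $Y\cup\{1\}$ up to the factor $\gam_v(\emptyset)/\gam(\emptyset)$ and $d_1(\emptyset)/d(\emptyset)$ bookkeeping. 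So for each $Y$ of interest, the set of $v$ for which \eqref{eq:safe:notbig} fails by more than, say, a factor of $10\cdot 2^{(c^*+3)\Delta^{c^*+3}}$ has small $\Gam$-weight by Markov's inequality applied to the averaged bound. Summing the bad-$v$ contributions over the (at most $\Delta^{c^*+3}$-ish many, and each of size controlled) sets $Y$ of interest, over the counting-place failure sets, and over the fail sets for small $Y$, and choosing $\eps$ small enough that $(1\pm\eps)^2$-accuracy in the parent implies $(1\pm\eps)$-accuracy after one link (this is the role of ``$\eps$ sufficiently small''), gives the claimed total exceptional weight $10\cdot 2^{(c^*+3)\Delta^{c^*+3}}\delta\, d(1)\,|V_1|$. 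The only genuinely delicate bookkeeping is in the second task; the first and third are Markov-type averaging arguments once the decoration is set up correctly.
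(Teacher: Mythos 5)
Your decoration (restrict the bad sets of $\Gam$ to the link and add the residuals of bad sets for $Y\cup\{1\}$ passing through $v$) is essentially the right object, and your treatment of \ref{safe:count} and \ref{safe:fail} follows the paper's route modulo routine bookkeeping. The genuine gap is in your argument for \ref{safe:notbig}, which does not survive scrutiny.

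You assert (i) that the old large bad sets of $B^Y$ have their contribution to the $\Gam_v$-version of~\eqref{eq:safe:notbig} ``already bounded by the $\Gam$-version''. This is false. For $e\in B^Y\cap V_Z$, the contribution to the $\Gam_v$-LHS uses the link weights $\gam_v(e')=\gam(e')\gam(v,e')$ and the link densities $d_1(Z')=d(Z')d(Z'\cup\{1\})$, so the term picks up a factor of the form $\prod_{e'\subset e}\gam(v,e')/\prod_{Z'\subset Z}d(Z'\cup\{1\})$, which depends on $v$ and $e$ and can be much larger than $1$ for particular $(v,e)$; it is not controlled by the $\Gam$-bound on $B^Y$. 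The same phenomenon infects your Markov argument in (ii): when you average the $\Gam_v$-LHS over $v$ (with $\gam$-weighting), each bad set $e$ appears multiplied by its normalised extension density $\tY(e)/\prod_{Z'\subset Z}d(Z'\cup\{1\})$, not by $1$. This product does \emph{not} collapse to ``a rescaling of the $\Gam$-version of~\eqref{eq:safe:notbig}'' unless you know these normalised link densities are pointwise close to $1$, which is precisely what one does not have. Simple Markov on the average therefore fails, and the paper instead appeals to the defect Cauchy--Schwarz machinery (Lemma~\ref{lem:ECSdist}), which uses~\ref{safe:count} to control $\Ex{\tX\tY}$ and $\Ex{\tX\tY^2}$ and hence bounds $\Ex{\tW\tX\tY}$ when $\tW\in[0,1]$ is supported on a small-weight set.

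But even the Cauchy--Schwarz route does not go through directly with the raw bad sets, and this is the deeper missing idea. For the large bad sets in $V_Y$ (with $|Y|$ possibly as large as $\Delta c^*$), the relevant $\tW$ in Lemma~\ref{lem:ECSdist} has to be defined on $V_Z$ for the small set $Z$ of $H^{(2)}$-neighbours of $1$ in $Y$, as a \emph{normalised count} of large bad sets extending $e\in V_Z$. That count can exceed $1$ if exceptionally many large bad sets share a common small projection. The paper therefore preprocesses the decoration: it replaces $B^Y$ by $\bar B^Y$, declaring a small set $e$ itself bad whenever the normalised number of large bad sets containing $e$ exceeds $1$, which forces $0\le\tW\le1$ and makes Lemma~\ref{lem:ECSdist} applicable. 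This is exactly the point of the remark immediately after the lemma statement (``we have trouble if exceptionally many fail sets all contain one subset. However we can solve this problem by declaring such a subset to be itself bad\dots''), and your proposal does not engage with it. Without the $\bar B^Y$ step, no amount of bookkeeping in the case analysis (which you rightly flag as delicate) will make \ref{safe:notbig} close.

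Two smaller points: your phrase that counts in $\Gam_v$ ``are themselves counts of $(X\cup\{1\})$-partite complexes in $\Gam$'' conflates rooted counts at a fixed $v$ with the average over $V_1$; and the accuracy for $\Gam_v$ in \ref{safe:count} should remain $(1\pm\eps)$, not degrade to $(1\pm\eps)^2$ — the vertices for which it would degrade are exactly the disallowed ones, whose weight is what one is bounding.
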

The reader might wonder at this point why we do not combine~\ref{safe:fail} and~\ref{safe:notbig} (and avoid having decorations at all) by simply summing over fail sets for $Y$ rather than sets of $B^Y$. The reason is that we are not able to show the required bound~\ref{safe:notbig} typically continues to hold in $\Gam_v$; we have trouble if exceptionally many fail sets all contain one subset. However we can solve this problem by declaring such a subset to be itself bad, as we then do not have to control the fail sets which contain it.
\begin{proof}[Proof of Lemma~\ref{lem:safe}]
Given $\Delta$, $c^*\ge\Delta+2$, and $\delta>0$, we require $\eps>0$ to be small enough that
\[\delta^{\Delta^2c^*}2^{-2c^*\Delta^{c^*+1}}>2\Delta(c^*\Delta)^\Delta(4\Delta+8)\eps\]
 Since any $Y$ of interest in $H$ is a collection of neighbours in $H^{(2)}$ of some counting place $X$, which by definition is a set of size at most $c^*$, it follows that $|Y|\le\Delta c^*$ and that $s(Y)\le\Delta^2 c^*$. Since $X$ is connected in $H^{(2)}$, it follows that any two vertices of $Y$ are at distance at most $c^*+1$ in $H^{(2)}$, so the number of sets of interest containing any given vertex of $H$ is at most $1+\Delta+\dots+\Delta^{c^*+1}\le 2c^*\Delta^{c^*+1}$. In particular $p(Y)\le 2c^*\Delta^{c^*+1}$. It follows that
\[\delta^{s(Y)}2^{-p(Y)}>2\Delta|Y|^\Delta(4\Delta+8)\eps\]
holds for any $Y$ of interest in $H$.

 We begin by altering the sets $B^Y$ in order to avoid exceptionally many large bad sets all containing a small subset. Specifically, suppose $Y$ is of interest, and suppose $1$ is a neighbour in $H^{(2)}$ of at least one vertex of $Y$. We let $\bar{B}^Y$ be obtained as follows. We start with $\bar{B}^Y$ empty. Let $Z=Y\cap N_H^{(2)}(1)$. Whenever $e\in V_Z$ satisfies
\[\frac{1}{|V_{Y\setminus Z}|\prod_{W\subset Y:W\not\subset Z}d(W)}\sum_{\substack{e'\in V_{Y}\cap B^Y\\e\subset e'}}\prod_{f\subset e':f\not\subset e}\gam(f) > 1\]
we add $e$ to $\bar{B}^Y$. We then add all sets in $B^Y$ which are not supersets of any set in $\bar{B}^Y$. By construction, every set of $B^Y$ has a subset in $\bar{B}^Y$, so by~\ref{safe:fail} every fail set for $Y$ has a subset in $\bar{B}^Y$.

 Observe that by~\ref{safe:notbig}, we have
 \begin{equation}\label{eq:barnotbig}
 \sum_{Z\subset Y}\frac{d(\emptyset)\sum_{e\in V_Z\cap \bar{B}^Y}\prod_{e'\subset e}\gam(e)}{\gam(\emptyset)|V_Z|\prod_{Z'\subset Z}d(Z')}\le\delta^{s(Y)}2^{-p(Y)}\,.
\end{equation}
 Given $v\in V_1$ and any $Y$ of interest, let
 \[\bar{B}^Y_v:=\big\{e\in \bar{B}^Y:e\cap V_1=\emptyset\big\}\cup\big\{e\setminus\{v\}:e\in\bar{B}^Y,v\in e\big\}\,.\]
 
 If $v\in V_1$ has $\gam(v)=0$ we say $v$ is disallowed for $Y$. If $\gam(v)>0$, we say that $v$ is \emph{disallowed for $Y$} if either $1\in Y$ or $1y\in H^{(2)}$ and we have
\begin{equation}\label{eq:THC:dis}
 \sum_{\substack{Z\subset Y\\1\not\in Z}}\frac{d(\emptyset)d(1)\sum_{e\in V_Z\cap \bar{B}^Y_v}\prod_{e'\subset e\cup\{v\}}\gam(e')}{\gam(\emptyset)\gam(v)|V_Z|\prod_{Z'\subset Z\cup\{1\}}d(Z')}>\delta^{s(Y)-1}2^{-p(Y)-1}\,.
\end{equation}

 \begin{claim}\label{cl:fewdis} For each $Y$ such that either $1\in Y$ or $1y\in H^{(2)}$ for some $y\in Y$, the set of vertices $v\in V_1$ which are disallowed for $Y$ has total weight at most $10\delta d(1)|V_1|$.
 \end{claim}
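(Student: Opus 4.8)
The plan is a Markov-type estimate over the vertices of $V_1$. Vertices $v\in V_1$ with $\gam(v)=0$ carry no weight, so it is enough to bound the weight of those $v$ with $\gam(v)>0$ for which \eqref{eq:THC:dis} holds. Multiplying \eqref{eq:THC:dis} through by $\gam(v)$ cancels the factor $\gam(v)^{-1}$ on its left-hand side; writing $\Xi$ for the sum over all $v\in V_1$ of $\gam(v)$ times that left-hand side, Markov's inequality reduces the claim to showing $\Xi\le 5\delta^{s(Y)}2^{-p(Y)}d(1)|V_1|$, since dividing by the threshold $\delta^{s(Y)-1}2^{-p(Y)-1}$ then bounds the disallowed weight by $10\delta d(1)|V_1|$. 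Swapping the order of summation,
\[
\Xi\;=\;\sum_{\substack{Z\subset Y,\ 1\notin Z}}\ \frac{d(\emptyset)\,d(1)\,\Sigma_Z}{\gam(\emptyset)\,|V_Z|\,\prod_{Z'\subset Z\cup\{1\}}d(Z')}\,,\qquad\text{where }\ \Sigma_Z:=\sum_{v\in V_1}\ \sum_{e\in V_Z\cap\bar{B}^Y_v}\ \prod_{e'\subset e\cup\{v\}}\gam(e')\,.
\]

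I would then classify the bad sets contributing to each $\Sigma_Z$. A bad set of $\bar{B}^Y$ that already meets the part $V_1$ (possible only when $1\in Y$) contributes with $v$ forced to be its unique vertex in $V_1$, and after the resulting collapse this part of $\Xi$ reduces, term by term, to a bounded multiple of the $\gam_{\cdot}(\emptyset)$-weighted mass of $\bar{B}^Y$ on the relevant parts, controlled directly by \eqref{eq:barnotbig}. For a bad set $e$ disjoint from $V_1$ one has $\prod_{e'\subset e\cup\{v\}}\gam(e')=\gam_e(\emptyset)\,\gam_e(\{v\})$ with $\gam_e(\emptyset)=\prod_{f\subset e}\gam(f)$ and $\gam_e(\{v\})=\prod_{f\subset e}\gam(f\cup\{v\})$ the weight of $v$ in the link $\Gam_e$, so that $\sum_{v\in V_1}\prod_{e'\subset e\cup\{v\}}\gam(e')=\gam_e(\emptyset)\,|V_1|\,\vnorm{V_1}{\Gam_e}$, and everything comes down to the link density $\vnorm{V_1}{\Gam_e}$.

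The crux is that $\vnorm{V_1}{\Gam_e}$ depends on $e$ only through its projection $e_W:=e\cap V_W$ onto the set $W$ of those vertices in the support of $e$ that are neighbours of $1$ in $H^{(2)}$: since $\gam$ is identically $1$ on $V_{f\cup\{1\}}$ whenever $f\cup\{1\}\notin H$, and $H$ is down-closed, only factors $\gam(f\cup\{v\})$ with $f\subset W$ can differ from $1$, so $\gam_e(\{v\})=\prod_{f\subset W}\gam(f\cup\{v\})$. Crucially $|W|\le\Delta(H^{(2)})\le\Delta$, whence $|W\cup\{1\}|\le\Delta+1\le c^*$ and the complex consisting of the simplex on $W$ together with two disjoint cone vertices (one in each of two copies of the part $V_1$, as in Definition~\ref{def:standard}) has at most $\Delta+2\le c^*$ vertices. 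Hence Proposition~\ref{prop:countplace}, whose counting hypothesis is supplied by the safety of $\Gam$, computes, in the measure on crossing $e_W\in V_W$ with each $e_W$ weighted by $\gam_{e_W}(\emptyset)$, both the mean of $\vnorm{V_1}{\Gam_{e_W}}$, which is $(1\pm\eps)^{\bigO(c^*)}\prod_{f\subset W}d(f\cup\{1\})$, and its second moment, which is $(1\pm\eps)^{\bigO(c^*)}$ times the square of that mean. By Chebyshev's inequality all but a $\bigO(c^*\eps)$-fraction of this measure consists of \emph{good} projections, for which $\vnorm{V_1}{\Gam_{e_W}}\le 2\prod_{f\subset W}d(f\cup\{1\})$. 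Substituting this bound for good projections, the product $\prod_{f\subset W}d(f\cup\{1\})$ telescopes against the denominator $\prod_{Z'\subset Z\cup\{1\}}d(Z')$ — using that $d(Z'\cup\{1\})=1$ whenever $Z'\not\subset W$, since then $Z'\cup\{1\}\notin H$ — leaving exactly $2\,d(1)|V_1|$ times the weighted mass of $\bar{B}^Y\cap V_Z$, and summing over $Z$ these masses are bounded by $\delta^{s(Y)}2^{-p(Y)}$ via \eqref{eq:barnotbig}. For the $\bigO(c^*\eps)$-mass of bad projections a Cauchy--Schwarz step against the bounded second moment controls their contribution by $\bigO(\sqrt{c^*\eps})$ times the same density product; since $\delta^{s(Y)}2^{-p(Y)}$ is a constant depending only on $\Delta$, $c^*$ and $\delta$, one may choose $\eps$ small enough (compatibly with ``$\eps$ sufficiently small'' in Lemma~\ref{lem:safe}) that this is also at most $\delta^{s(Y)}2^{-p(Y)}$ times that product.

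The hard part, and the point I expect to be the main obstacle, is that a single small projection $e_W$ can be the restriction of many bad sets $e$ of full size $|Y|$, so that grouping by projection is legitimate only because the $\gam_{e}(\emptyset)$-weight of large bad sets over a fixed projection is itself controlled. This is exactly what the passage from $B^Y$ to the modified decoration $\bar{B}^Y$ supplies: its ``$>1$'' overloading cut guarantees that over every surviving projection the weight of large bad sets extending it is at most its expected value, so that the $\gam_{e_W}(\emptyset)$-weighted measure above is the one to Cauchy--Schwarz against, while overloaded projections are themselves declared bad and hence handled among the first, trivially bounded, class of terms — precisely the device flagged in the remark before the claim. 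Assembling the two classes, and using that there are only boundedly many contributing $Z$ (the support of a small bad set has size at most $\Delta$ and $|Y|\le\Delta c^*$), each per-$Z$ contribution is a bounded multiple of the weighted mass of $\bar{B}^Y\cap V_Z$ plus an error of size $\bigO(\sqrt{c^*\eps})\,\delta^{s(Y)}2^{-p(Y)}d(1)|V_1|$; their sum is governed by the single bound \eqref{eq:barnotbig} and, for $\eps$ small, is at most $5\delta^{s(Y)}2^{-p(Y)}d(1)|V_1|$, the constant $5$ absorbing the $(1\pm\eps)^{\bigO(c^*)}$ counting errors, the good/bad split and the two-copy second-moment factor, which completes the reduction.
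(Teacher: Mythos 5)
Your overall strategy is the same as the paper's. Your Markov reduction is the paper's averaging argument in disguise: the quantity $\Xi$ you define is exactly $d(1)|V_1|$ times the paper's sum \eqref{eq:fewdis:bigsum} (every nonzero term there has $1\in W$), and the paper likewise bounds that sum and compares it with the contribution of disallowed vertices. Your first class (bad sets meeting $V_1$) is the paper's $S_1$, handled by \eqref{eq:barnotbig}; your reduction of large bad sets to their at-most-$\Delta$-sized projections, justified by the overloading cut in $\bar{B}^Y$, is the paper's $[0,1]$-valued weight $\tW$ for $S_L$; and your use of the counting hypothesis for the complexes on $W\cup\{1\}$ and its doubled version (at most $\Delta+2\le c^*$ vertices) matches the paper's use of Proposition~\ref{prop:countplace} for $K_Z$, $K_{Z\cup\{1\}}$, $K_{Z,1,1}$.

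The one step that does not close as written is your concentration argument. You run Chebyshev in the measure on projections $e_W\in V_W$ weighted by $\gam_{e_W}(\emptyset)$, but the Cauchy--Schwarz step for the bad part needs smallness of $\sum_{e\in V_Z,\ e_W\ \mathrm{bad}}\gam_e(\emptyset)$, the weight of \emph{full} tuples in $V_Z$ lying over bad projections. Since $\gam_e(\emptyset)$ contains cross-edges between $W$ and $Z\setminus W$, it does not factor through $e_W$, so a small $\gam_{e_W}(\emptyset)$-mass of bad projections does not prevent the extension weight from concentrating exactly on them; repairing this with a further moment computation for the extension counts would require counting complexes with up to about $2|Z|$ vertices, which is not guaranteed when $c^*=\Delta+2$. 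The clean fix is to perform the good/bad split in the $\gam_e(\emptyset)$-weighted measure on $V_Z$ itself (the fact that $\tY(e)=\vnorm{V_1}{\Gam_e}$ depends only on $e_W$ is then harmless), at which point your argument is precisely an application of Corollary~\ref{cor:ECSconc}, equivalently of Lemma~\ref{lem:ECSdist}, which is what the paper does directly (and with a better error, $O(\eps)$ rather than $O(\sqrt{\eps})$, though either suffices). One further small slip: the overloaded projections added to $\bar{B}^Y$ are small bad sets disjoint from $V_1$, so they fall into your second class, not the ``meets $V_1$'' class; this is harmless, since they are covered by \eqref{eq:barnotbig}, and the only role of the cut is to let you discard the large bad sets lying above them.
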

 \begin{proof}
  We define
  \[S_1:=\big\{e\in\bar{B}^Y:\abs{e\cap V_1}>0\big\}\quad\text{and}\quad S_2:=\big\{e\cup\{v\}:e\in\bar{B}^Y,v\in V_1, e\cap V_1=\emptyset\big\}\,.\]
  We first aim to estimate
  \begin{equation}\label{eq:fewdis:bigsum}
 \sum_{W\subset Y\cup\{1\}}\frac{d(\emptyset)\sum_{e\in V_W\cap (S_1\cup S_2)}\prod_{e'\subset e}\gam(e')}{\gam(\emptyset)|V_W|\prod_{W'\subset W}d(W')}\,.
\end{equation}
 By~\eqref{eq:barnotbig}, the contribution to~\eqref{eq:fewdis:bigsum} made by edges $e\in S_1$ is at most $\delta^{s(Y)}2^{-p(Y)}$. It remains to estimate the contribution made by edges in $S_2$. These edges split up into the edges $S_L$ which come from large bad sets in $\bar{B}^Y$ and the edges $S_Z$ which come from small bad sets contained in $V_Z$ for some $Z\subset Y\setminus\{1\}$ of size at most $\Delta$. For a given $Z\subset Y\setminus\{1\}$ of size at most $\Delta$, we work as follows. First, we define three functions $\tX,\tY,\tW$ from $V_Z$ to $\mathbb{R}^+_0$, as follows. We set
 \[\tX(e):=\prod_{e'\subset e}\gam(e')\,,\quad\tY(e):=\frac{1}{|V_1|}\sum_{v\in V_1}\prod_{e'\subset e}\gam\big(e'\cup\{v\}\big)\quad\text{and}\quad\tW(e)=\begin{cases}1&\text{ if }e\in S_Z\\0&\text{ otherwise}\end{cases}\,.\]
 Letting $e$ be chosen uniformly at random in $V_Z$, these functions become random variables. Observe that $\Ex{\tX}$, $\Ex{\tX\tY}$ and $\Ex{\tX\tY^2}$ are, respectively, equal to $\Gam(K_Z)$, $\Gam(K_{Z\cup\{1\}})$ and $\Gam(K_{Z,1,1})$, where the $k$-complex $K_{Z,1,1}$ is obtained from $K_{Z\cup\{1\}}$ by duplicating the vertex $1$ (with the $(Z\cup\{1\})$-partition in which both copies of $1$ are assigned to part $1$). Since these graphs have at most $\Delta+2$ vertices, by Proposition~\ref{prop:countplace} we have
 \begin{align*}
  \Ex{\tX}&=(1\pm\eps)^{|Z|}\tfrac{\gam(\emptyset)}{d(\emptyset)}\prod_{Z'\subset Z}d(Z')\,,\\
  \Ex{\tX\tY}&=(1\pm\eps)^{|Z|+1}\tfrac{\gam(\emptyset)}{d(\emptyset)}\prod_{Z'\subset Z}d(Z')d(Z'\cup\{1\})\,,\quad\text{and}\\
  \Ex{\tX\tY^2}&=(1\pm\eps)^{|Z|+2}\tfrac{\gam(\emptyset)}{d(\emptyset)}\prod_{Z'\subset Z}d(Z')d(Z'\cup\{1\})^2\,.
 \end{align*}
 Thus the conditions of Lemma~\ref{lem:ECSdist} are met, with $d_{\subref{lem:ECSdist}}:=\prod_{Z'\subset Z}d(Z'\cup\{1\})$ and with $\eps_{\subref{lem:ECSdist}}:=(2\Delta+4)\eps$. We conclude from Lemma~\ref{lem:ECSdist} that
  \[\Ex{\tW\tX\tY}=\Big(1-\eps_{\subref{lem:ECSdist}}\pm 2\sqrt{\tfrac{\eps_{\subref{lem:ECSdist}}\Ex{\tX}}{\Ex{\tW\tX}}}\Big)\cdot d_{\subref{lem:ECSdist}}\Ex{\tW\tX}\le d_{\subref{lem:ECSdist}}\Ex{\tW\tX}+2\eps_{\subref{lem:ECSdist}}d_{\subref{lem:ECSdist}}\Ex{\tX}\,.\]
  Substituting back the definitions, and using the above estimate for $\Ex{\tX}$,  we have
  \[\frac{\sum_{e\in S_Z}\prod_{e'\subset e}\gam(e')}{|V_{Z\cup\{1\}}|}\le \prod_{Z'\subset Z}d(Z'\cup\{1\})\cdot\sum_{e\in S_Z}\frac{\prod_{e'\subset e}\gam(e')}{|V_Z|}+(4\Delta+8)\eps\tfrac{\gam(\emptyset)}{d(\emptyset)}\prod_{Z'\subset Z}d(Z')d(Z'\cup\{1\})\,,\]
  and rearranging we get
  \[\ \frac{d(\emptyset)\sum_{e\in S_Z}\prod_{e'\subset e}\gam(e')}{\gam(\emptyset)|V_{Z\cup\{1\}}|\prod_{Z'\subset Z\cup\{1\}}d(Z')}\le \frac{d(\emptyset)\sum_{e\in V_Z\cap\bar{B}^Y}\prod_{e'\subset e}\gam(e')}{\gam(\emptyset)|V_Z|\prod_{Z'\subset Z}d(Z')}+(4\Delta+8)\eps\,.\]
  
  We can obtain a similar estimate for the large bad sets. Note that $S_L$ is non-empty only if $Y$ does not contain $\{1\}$. We let $Z$ be the neighbours in $H^{(2)}$ of $1$. We define $\tX$ and $\tY$ exactly as above (with this $Z$), and for $e\in V_Z$ set
  \[\tW(e):=\frac{1}{|V_{Y\setminus Z}|\prod_{W\subset Y:W\not\subset Z}d(W)}\sum_{\substack{e'\in V_{Y}\cap B^Y\\e\subset e'}}\prod_{f\subset e':f\not\subset e}\gam(f) \,.\]
  By definition of $\bar{B}^Y$, we have $0\le\tW(e)\le 1$ for each $e\in V_Z$, so that (by exactly the same calculation as above) we can apply Lemma~\ref{lem:ECSdist} to estimate $\Ex{\tW\tX\tY}$. Observe that $d(Z')=\gam(e')=1$ if $e\in V_{Z'}$ and $Z'$ is a subset of $Y\cup\{1\}$ which is not contained in either $Z\cup\{1\}$ or $Y$, because such a $Z'$ must contain $1$ and some $y\in Y\setminus Z$ which is not in $H^{(2)}$, so that $Z\not\in H$. By the same calculation as above, and using this observation, we obtain
  \[\ \frac{d(\emptyset)\sum_{e\in S_L}\prod_{e'\subset e}\gam(e')}{\gam(\emptyset)|V_{Y\cup\{1\}}|\prod_{Z'\subset Y\cup\{1\}}d(Z')}\le \frac{d(\emptyset)\sum_{e\in V_Y\cap\bar{B}^Y}\prod_{e'\subset e}\gam(e')}{\gam(\emptyset)|V_Y|\prod_{Z'\subset Y\cup\{1\}}d(Z')}+(4\Delta+8)\eps\,.\]
  
  Now summing these bounds for all the $S_Z$ and for $S_L$, we have
  \[\sum_{Z\subset Y}\frac{d(\emptyset)\sum_{e\in S_Z}\prod_{e'\subset e}\gam(e')}{\gam(\emptyset)|V_{Z\cup\{1\}}|\prod_{Z'\subset Z\cup\{1\}}d(Z')}\le \sum_{Z\subset Y}\frac{d(\emptyset)\sum_{e\in V_Z\cap\bar{B}^Y}\prod_{e'\subset e}\gam(e')}{\gam(\emptyset)|V_Z|\prod_{Z'\subset Z}d(Z')}+2\Delta|Y|^{\Delta}(4\Delta+8)\eps\,,\]
  where the edges of $S_L$ are considered in the term $Z=Y$ and where we take $S_Z=\emptyset$ whenever $|Z|=0$ or $\Delta<|Z|<|Y|$ (so that the sum runs over less than $2\Delta|Y|^{\Delta}$ non-zero terms). Using~\eqref{eq:barnotbig}, we can substitute $\delta^{s(Y)}2^{-p(Y)}$ as an upper bound for the sum on the right hand side, and we have $S_Z=V_{Z\cup\{1\}}\cap S_2$, so we obtain
  \[\sum_{Z\subset Y}\frac{d(\emptyset)\sum_{e\in V_{Z\cup\{1\}}\cap S_2}\prod_{e'\subset e}\gam(e')}{\gam(\emptyset)|V_{Z\cup\{1\}}|\prod_{Z'\subset Z\cup\{1\}}d(Z')}\le \delta^{s(Y)}2^{-p(Y)}+2\Delta|Y|^{\Delta}(4\Delta+8)\eps\,.\]
  Putting this together with the already calculated contribution to~\eqref{eq:fewdis:bigsum} from $S_1$, we get
  \[
 \sum_{W\subset Y\cup\{1\}}\frac{d(\emptyset)\sum_{e\in V_W\cap (S_1\cup S_2)}\prod_{e'\subset e}\gam(e')}{\gam(\emptyset)|V_W|\prod_{W'\subset W}d(W')}\le 2\delta^{s(Y)}2^{-p(Y)}+2\Delta|Y|^{\Delta}(4\Delta+8)\eps\le 3\delta^{s(Y)}2^{-p(Y)}\,,
\]
 where the final inequality is by choice of $\eps$.
 \smallskip
 On the other hand, we can provide a lower bound on~\eqref{eq:fewdis:bigsum} by considering disallowed vertices for $Y$. By~\eqref{eq:THC:dis}, if $v\in V_1$ is disallowed for $Y$ then we have
 \[\sum_{\substack{Z\subset Y\\1\not\in Z}}\frac{d(\emptyset)\sum_{e\in V_Z\cap \bar{B}^Y_v}\prod_{e'\subset e\cup\{v\}}\gam(e')}{\gam(\emptyset)|V_{Z\cup\{1\}}|\prod_{Z'\subset Z\cup\{1\}}d(Z')}>\tfrac{\gam(v)}{d(1)|V_1|}\delta^{s(Y)-1}2^{-p(Y)-1}\,.\]
 and hence
 \[\sum_{\substack{W\subset Y\cup \{1\}\\1\in W}}\frac{d(\emptyset)\sum_{e\in V_{W\setminus\{1\}}\cap \bar{B}^Y_v}\prod_{e'\subset e\cup\{v\}}\gam(e')}{\gam(\emptyset)|V_{W}|\prod_{W'\subset W}d(W')}>\tfrac{\gam(v)}{d(1)|V_1|}\delta^{s(Y)-1}2^{-p(Y)-1}\,.\]
 Now for a given $W\subset Y\cup\{1\}$ such that $1\in W$, if $e\in V_W\setminus\{1\}$ then $e$ is in $\bar{B}^Y_v$ if and only if $e\cup\{v\}$ is in $S_1\cup S_2$. So the left hand side of the last equation is part of~\eqref{eq:fewdis:bigsum}, and furthermore as $v$ varies over $V_1$ we obtain all the terms in~\eqref{eq:fewdis:bigsum} exactly once. In particular, if the total weight of vertices in $V_1$ which are disallowed for $Y$ exceeds $10\delta d(1)|V_1|$, then summing over the disallowed vertices we get
 \[\tfrac{10\delta d(1)|V_1|}{d(1)|V_1|}\delta^{s(Y)-1}2^{-p(Y)-1}<3\delta^{s(Y)}2^{-p(Y)}\,,\]
 which is a contradiction. We conclude that the total weight of vertices in $V_1$ which are disallowed for $Y$ is at most $10\delta d(1)|V_1|$, as desired.
 \end{proof}
 
 Now suppose $v\in V_1$ is not disallowed for any $Y$ of interest in $H$. Let $Y\subset V(H')$ be of interest in $H'$. Observe that at least one of $Y$ and $Y\cup\{1\}$ must be of interest in $H$. We define
 \[\hat{B}^{Y}:=\begin{cases} \bar{B}^{Y}_v\quad&\text{if $Y$ is of interest in $H$ and $Y\cup\{1\}$ is not}\,,\\
  \bar{B}^{Y\cup\{1\}}_v\quad&\text{if $Y\cup\{1\}$ is of interest in $H$ and $Y$ is not}\,,\text{ and}\\
  \bar{B}^{Y}_v\cup\bar{B}^{Y\cup\{1\}}_v\quad&\text{if $Y$ and $Y\cup\{1\}$ are of interest in $H$}\,.\end{cases}\]
 We claim that $\Gam_v$, with density graph $\cD_1$, and decorations $\hat{B}^Y$ for each $Y\subset V(H')$ of interest in $H'$, is $(H',\eps,\delta,c^*)$-safe. To verify~\ref{safe:count}, let $X$ be a counting place in $H'$. By definition $X$ is also a counting place in $H$. If $\{1\}$ is not of interest for $(1,X)$, then $\Gam_v[V_X]=\Gam[V_X]$ and $\cD_1[X]=\cD[X]$, so~\ref{safe:count} holds for $X$. If $\{1\}$ is of interest for $(1,X)$, then considering the (only) $Z=\emptyset$ term of~\eqref{eq:THC:dis}, we see that if $\emptyset\in \hat{B}^{\{1\}}_v$ then the (only) term on the left hand side of~\eqref{eq:THC:dis} evaluates to $1$. Since $p(\{1\})=0$, we have $\delta^{s(\{1\})}2^{-p(\{1\})-1}\le\tfrac12<1$, which contradicts our assumption that $v$ is not disallowed for $\{1\}$. It follows that $\emptyset\not\in \hat{B}^{\{1\}}_v$, so $\{v\}\not\in\bar{B}^{\{1\}}$. In particular $\{v\}$ is not a fail set for $\{1\}$, so by definition~\ref{safe:count} holds for $X$.
 
 To check~\ref{safe:fail}, let $Y$ be of interest in $H'$ and let $e$ be a fail set in $\Gam_v$ for $Y$. Then either $\{v\}\cup e$ is a fail set in $\Gam$ for $\{1\}\cup Y$ which is of interest in $H$, or $e$ is a fail set in $\Gam$ for $Y$ which is of interest in $H$, or both. In any case, either $\bar{B}^{\{1\}\cup Y}$ contains a subset of $\{v\}\cup e$, in which case $\hat{B}^Y$ contains a subset of $e$, or $\bar{B}^Y$ contains a subset of $e$, in which case the same subset is in $\hat{B}^Y$.

 Finally, consider~\ref{safe:notbig}. Given $Y$ which is of interest in $H'$, there are three cases to consider.
 
 To begin with, suppose $Y$ is of interest in $H$ but $Y\cup\{1\}$ is not. If $1$ is not adjacent to any member of $Y$, then neither side of~\eqref{eq:safe:notbig} changes when we change $\Gam$ to $\Gam_v$ and $\cD$ to $\cD_1$ (Note that the quantities $\gam(\emptyset)$ and $d(\emptyset)$, which are not the same as $\gam_v(\emptyset)$ and $d_1(\emptyset)$ respectively, both cancel out), so we are done. If $1$ is adjacent to some vertices in $Y$, then by~\eqref{eq:THC:dis}, since $v$ is not disallowed,~\eqref{eq:safe:notbig} holds with a factor of $2$ to spare: the left hand side is exactly the left hand side of~\eqref{eq:THC:dis}, while on the right hand side we have $s_H(Y)=s_{H'}(Y)+1$ and $p_H(Y)=p_{H'}(Y)$.
 
 Now suppose $\{1\}\cup Y$ is of interest in $H$ but $Y$ is not. Then we have $s_{H'}(Y)=s_H(Y)-1$ and $p_{H'}(Y)=p_H(Y)$, so as above, by~\eqref{eq:THC:dis} we see that~\eqref{eq:safe:notbig} holds with a factor of $2$ to spare.
 
Finally, suppose both $Y$ and $\{1\}\cup Y$ are of interest in $H$. Then $p_{H'}(Y)=p_H(Y)-1$, so whether or not $1$ is adjacent to a member of $Y$, the contribution of sets in $\bar{B}^Y$ to the left hand side of~\eqref{eq:safe:notbig} is by~\eqref{eq:THC:dis} at most half of the right hand side bound. Furthermore, exactly as above, the contribution of the sets in $\bar{B}^{Y\cup\{1\}}$ is at most half of the right hand side; so~\eqref{eq:safe:notbig} holds as desired.
 
 To complete the proof, we just need to show that the total weight of disallowed vertices in $V_1$ is small. Given $Y$ which is of interest in $H$, there are two possibilities. First, $1$ is neither in $Y$ nor adjacent to any member of $Y$ in $H^{(2)}$. In this case only vertices $v\in V_1$ with $\gam(v)=0$ are disallowed for $Y$, so the total weight of vertices in $V_1$ disallowed for $Y$ is $0$. Second, $1$ is either in or adjacent to $Y$. Since $Y$ is of interest, in particular there is some counting place $X$ such that $Y$ is of interest for $\big(\max(Y),X\big)$. Now $X$ is connected in $H^{(2)}$ and has at most $c^*$ vertices, so it has diameter at most $c^*-1$. All members of $Y$ are adjacent to at least one member of $X$ in $H^{(2)}$, and $1$ is either in $Y$ or adjacent in $H^{(2)}$ to at least one member of $Y$. It follows that all members of $Y$ are at distance at most $c^*+3$ in $H^{(2)}$ from $1$. There are at most $(c^*+3)\Delta^{c^*+3}$ such vertices, so the number of possibilities for $Y$ is at most $2^{(c^*+3)\Delta^{c^*+3}}$. By Claim~\ref{cl:fewdis}, for each such $Y$ the total weight of disallowed vertices is at most $10\delta d(1)|V_1|$, giving the claimed bound.
\end{proof}

With Lemma~\ref{lem:safe} it is now easy to prove Theorem~\ref{thm:GaTHC}. We simply need to show that if $\Gam$ is a graph in which we can count (rather larger than $c^*$-vertex) graphs to sufficiently high accuracy, then there is a decoration of $\Gam$ which is safe. The decoration we use is trivial---for each $Y$ of interest we let $B^Y$ be the fail sets for $Y$. The only condition which is not trivially satisfied is~\ref{safe:notbig}, and we show that this condition holds by another application of the Cauchy--Schwarz inequality.

\begin{proof}[Proof of Theorem~\ref{thm:GaTHC}]
It suffices to give a decoration for $\Gam$ which yields a $(H,\eps, \delta, c^*)$-safe graph where
\begin{enumerate}
\item
$\eps\le\eta'$ so that \ref{safe:count} gives the required counting for \ref{thc:count} in any such safe graph,
\item $\delta^{\Delta^2c^*}2^{-2c^*\Delta^{c^*+1}}>2\Delta(c^*\Delta)^\Delta(4\Delta+8)\eps$
so that we may apply Lemma~\ref{lem:safe}, and
\item
$20\cdot 2^{(c^*+3)\Delta^{c^*+3}}\delta \le \eta'$
so that the lemma guarantees that the total weight of vertices in $V_1$ whose link we do not know how to decorate to form another $(H,\eps, \delta, c^*)$-safe graph is at most a fraction $\eta'$ of the total weight in $V_1$.
\end{enumerate}
To verify the third condition note that with $\eps<\eta'$ and~\ref{safe:count} we have
$(1-\eta')d(1) \le \vnorm{V_1}{\Gam}$, and hence it suffices to have 
\[
10\cdot 2^{(c^*+3)\Delta^{c^*+3}}\delta d(1)\abs{V_1} \le \frac{\eta'}{2}d(1)\abs{V_1} < \eta'(1-\eta')d(1)\abs{V_1} \le \eta'\vnorm{V_1}{\Gam}\cdot |V_1|\,.
\]
It is easy to see that this relationship between the constants is possible to satisfy; we have $\delta\ll\eta',\,\Delta,\,c^*$ so can pick $\delta$ small enough first, and then $\eps\ll\delta,\,\eta',\,\Delta,\,c^*$ can be chosen small enough. 
For the remainder of the proof, fix $\delta$ and $\eps$ such that the above inequalities hold.
We also state now that taking $\eta$ small enough that $3\eta^{14}\le \eps$ and 
\[
2^{4+\Delta^2+2^{1+c^*}}(c^*)^{c^*} \eta^{1/4} \le \delta^{\Delta^2c^*}2^{-2c^*\Delta^{c^*+1}}
\]
suffices.

We now construct the decoration. 
The statement of the theorem gives us a density graph, hence we need only supply the sets $B^Y$ for each $Y$ of interest. 
We show that the trivial decoration suffices, i.e.\ for each $Y\subset J$ of interest, let $B^Y\subset V_Y$ be the fail sets for $Y$. 
Given this decoration, the fact that $\eta\le\eps$ and the hypotheses of the theorem give conditions~\ref{safe:count} and~\ref{safe:fail} immediately; it remains to check that~\ref{safe:notbig} holds.  

Given our trivial definition of $B^Y$, note that the only $Z\subset Y$ which can contribute to the sum in~\eqref{eq:safe:notbig} is $Y$ itself, hence it suffices to show that for each $Y$ of interest, we have
\begin{equation}\label{eq:safe:notbig:trivial}
 \frac{d(\emptyset)\sum_{e\in B^Y}\prod_{e'\subset e}\gam(e')}{\gam(\emptyset)|V_Y|\prod_{Z'\subset Y}d(Z')}\le\delta^{s(Y)}2^{-p(Y)}\,.
\end{equation}

It is a simple application of Corollary~\ref{cor:ECSconc} to bound the contribution to the left-hand side of~\eqref{eq:safe:notbig:trivial} that arises from all fail sets for a particular $X$ and $F$, which we now give.

Given a fixed counting place $X$, $t<\min(X)$, a set $Y$ of interest for $(t,X)$, and an $X$-partite $k$-complex $F$ on at most $c^*$ vertices, let $B^{Y,X,F}\subset B^Y$ be the fail sets of $Y$ that fail because 
\[
\Gam_e(F)\neq(1\pm\eta)\tfrac{\gam_e(\emptyset)}{d_Y(\emptyset)}\cD_Y(F)
\]
holds. 
Then $B^Y=\bigcup_{X,F}B^{Y,X,F}$ where the union is over all $X$ such that there exists $t<\min(X)$ such that $Y$ is of interest for $(t,X)$ and all $X$-partite $k$-complexes $F$ on at most $c^*$ vertices. 
Let $\Gam^{Y,X,F}$ be equal to $\Gam$ except on $V_Y$ where we set $\gam^{Y,X,F}(e)=\gam(e)\indicator{e\in B^{Y,X,F}}$. 
Now the left-hand side of~\eqref{eq:safe:notbig:trivial} is equal to 
\begin{equation}\label{eq:lhs:XFsum}
\sum_{X,F}\frac{d(\emptyset)}{\gam(\emptyset)} \frac{\Gam^{Y,X,F}(H[Y])}{\cD(H[Y])}\,,
\end{equation}
and we bound each term in the sum.

Consider the $k$-complex $2F$ formed of two vertex-disjoint copies of $F$, and let $\pi:V(2F)\to X$ be the projection from vertices of $2F$ to their part.
We can view $F$ as a subgraph of $2F$ hence $\pi$ also serves as the analogous projection for $F$, and we also extend $\pi$ to be the identity function on $Y$.
Let $F'$ and $F''$ be obtained from $F$ and $2F$ respectively by adding the vertices $Y$ and any edge $f$ such that $\pi(f)\in H[X\cup Y]$. 
Note that $v(F')\le v(F'')\le 2c^*+\abs{Y} \le (\Delta+2)c^*$.
Define the functions $\tX,\tY:V_Y\to \nnREALS$ by
\begin{align*}
  \tX(x_Y)&= \prod_{e\subset Y}\gam(x_e)  \,,\quad\text{and}\\
  \tY(x_Y)&=\Ex[\Big]{\prod_{e\in F,\, e\not\subset Y}\gam(x_e)}[x_j\in V_{\pi(j)} \text{ for } j\in V(F)\setminus Y]\,,
\end{align*}
which become random variables when $x_Y\in V_Y$ is chosen uniformly at random, and let $d\subref{cor:ECSconc} := \cD(F')/\cD(H[Y])$. 
Then by the hypotheses of the theorem we have
\begin{align*}
  \Ex{\tX}      &= \Gam(H[Y]) = (1\pm\eta)\cD(H[Y])\,,\\
  \Ex{\tX\tY}   &= \Gam(F')   = (1\pm\eta)\cD(F') = (1\pm 4\eta)d\subref{cor:ECSconc}\cdot\Ex{\tX}\,,\\
  \Ex{\tX\tY^2} &= \Gam(F'')  = (1\pm\eta)\cD(F'') = (1\pm 4\eta)d\subref{cor:ECSconc}^2\cdot\Ex{\tX}\,,
\end{align*}
and hence by Corollary~\ref{cor:ECSconc} with $\eps\subref{cor:ECSconc}:= 4\eta$, the random variable $\tW$ which indicates the event $\tY = (1\pm 3\eta^{1/4})d\subref{cor:ECSconc}$ satisfies $\Ex{\tW\tX} \ge (1-6\eta^{1/4})\Ex{\tX}$.
Considering the complementary event, and rewriting this in terms of $\Gam$ and $\cD$ via the above estimates, we have
\[
\Ex{(1-\tW)\tX} \le 9\eta^{1/4}\cD(H[Y])\,.
\]
But observe that $x_Y\in B^{Y,X,F}$ if and only if 
\[
\Gam_{x_Y}(F)\neq(1\pm\eps)\frac{\gam_{x_Y}(\emptyset)}{d_Y(\emptyset)}\cD_Y(F)\,,
\]
which occurs only if $\tW=0$ since $\tY = \Gam_{x_Y}(F)/\gam_{x_Y}(\emptyset)$, $d\subref{cor:ECSconc}=\cD(F')/\cD(H[Y]) = \cD_Y(F)/d_Y(\emptyset)$, and $3\eta^{1/4}<\eps$. 
We conclude that 
\[
\frac{d(\emptyset)}{\gam(\emptyset)} \frac{\Gam^{Y,X,F}(H[Y])}{\cD(H[Y])} \le 9\eta^{1/4}\,,
\]
which gives a bound on each term in the sum~\eqref{eq:lhs:XFsum}.
To bound the number of terms, we use that $X$ is a counting place, $Y$ is a set of neighbours of $X$, and $F$ is an $X$-partite $k$-complex on at most $c^*$ vertices. 
These facts give the following crude estimates. 
Firstly, for any $Y$ of interest $\abs{Y}\le\Delta c^*$, and there are at most $2^{\Delta^2c^*}$ sets $X$ for which there exists $t$ such that $Y$ is of interest to $(t,X)$. 
Secondly, any such $X$ has size at most $c^*$ and $F$ has at most $c^*$ vertices, so there are at most $(c^*)^{c^*}$ ways of choosing the vertex partition of $F$ indexed by $X$, and at most $2^{2^{c^*}}$ choices for the edges of $F$.
Then there are at most $2^{\Delta^2c^*}\cdot(c^*)^{c^*}\cdot2^{2^{c^*}}\le 2^{\Delta^2+2^{1+c^*}}(c^*)^{c^*}$ terms in the sum~\eqref{eq:lhs:XFsum}, and our calculations show that the trivial decoration we consider has
\[
 \sum_{Z\subset Y}\frac{d(\emptyset)\sum_{e\in V_Z\cap B^Y}\prod_{e'\subset e}\gam(e')}{\gam(\emptyset)|V_Z|\prod_{Z'\subset Z}d(Z')}\le 2^{4+\Delta^2+2^{1+c^*}}(c^*)^{c^*} \eta^{1/4}\,,
\]
and $\eta$ was chosen small enough that this gives~\ref{safe:notbig}, because (as in the proof of Lemma~\ref{lem:safe}) we have $s(Y)\le\Delta^2c^*$ and $p(Y)\le 2c^*\Delta^{c^*+1}$.
\end{proof}

\subsection{Random hypergraphs have THC}

We now turn to the proof of Lemma~\ref{lem:randomTHC}, where we recall that $\Gam$ is a random $k$-uniform hypergraph and we obtain $\Gam'$ from $\Gam$ by partitioning $V(\Gam)$ into $\abs{J}$ parts and applying the standard construction with a $J$-partite $k$-complex $H$ of maximum degree $\Delta$. 
To show that $\Gam'$ is a THC-graph involves showing that counts of complexes $R$ in $\Gam'$ and in related $k$-graphs obtained by taking links are close to their expectation, and such counts will correspond to counts of weighted homomorphism-like objects in $\Gam$. 
A difficulty arises here because $\Gam'$ may contain multiple copies of a single edge of $\Gam$ and so the $k$-edge weights in $\Gam'$ are not necessarily independent Bernoulli random variables. 
In order to avoid trying to deal with $\Gam$ and $\Gam'$ simultaneously, we first state and prove the required property of $\Gam$, which is rather technical.

Let $Y$ be an initial segment of $X$ and $\phi:Y\to V(\Gam)$ be a partite map. 
Let $Z$ be a vertex set disjoint from $Y$, equipped with a map $\rho$ that associates each $z\in Z$ to some $\rho(z)\in X\setminus Y$. 
For convenience we extend $\rho$ to be the identity map on $Y$. 
Let $R$ be a $J$-partite $k$-complex on $Z$, and let $\Rphi$ be the hypergraph with vertex set $\im\phi\cup Z$, and edge set
\[
E(\Rphi) := E(R)\cup\{f\subset \dom\phi\cup Z : f\cap Z\ne\emptyset,\, \rho(f)\in H \}\,.
\]
We view $\Rphi$ as $J$-partite in the following way. 
Each vertex in $\im\phi$ is in $V_j$ for some $j\in J$, which naturally gives an association to the index $j$, and vertices in $Z$ are related to indices $j$ through the map $\rho:Z\to X$ and the partition of $X$ into parts indexed by $J$. 
We write $V_z$ for the $V_j$ to which $z\in Z$ is associated in this way.

Then the homomorphism-like objects we consider in $\Gam$ are partite maps $\psi$ from $\Rphi$ to $\Gam$, where we insist that $\psi$ extends the identity map on $\im\phi$. 
This definition is rather difficult to parse, but a certain amount of complexity is necessary to deal with the case that $\phi$ is not injective. 
In any case, the idea is that $\psi$ signifies a copy of $R_\phi$ in $\Gam$ `rooted' at some fixed vertices specified by $\im\phi$. 
We are interested in weighting such $\psi$ according to the subset $\Rphi^{(\ge2)}\subset\Rphi$ of edges of size at least $2$, preferring to deal separately with the empty set (which has weight $1$ in this setup) and vertex weights. 
For $z\in Z$, let $U_z\subset V_z$ be a set of exactly $n_1:= n p^d/(2\log n)$ vertices. 
We define 
\begin{equation}\label{eq:Ndef}
N(\phi, R, U_Z) := \sum_{\psi}\prod_{e\in\Rphi^{(\ge2)}}\gam\big(\psi(e)\big)\,,
\end{equation}
where the sum is over all maps $\psi:\im\phi\cup Z\to V(\Gam)$ such that $\psi(w)=w$ for any $w\in\im\phi$ and $\psi(z)\in U_z$ for all $z\in Z$. 
Note that with $Y=\phi=\emptyset$ we have $\Rphi=R$, and since $\Gam$ is complete on edges of size at most $1$, $n_1^{-\abs{Z}}N(\phi,R,U_Z)$ is then the partite count of copies of $R$ in $\Gam$ that lie on $U_Z$.

The main probabilistic tool we require for counting in $\Gam'$ is a statement that for any suitable $R$, the count $N(\phi, R, U_Z)$ is close to its expectation with very high probability. 
It turns out that we are interesting in $R$ of the following form. 
Given $Y$, let $H^4$ be the complex $H$ with each vertex blown up into $4$ copies. 
A \emph{suitable} $R$ is any subcomplex of $H^4$ on at most $c^*$ vertices which uses no copies of vertices in $Y$. 
Considering suitable $R$ is what requires us to work with $4^k\Delta$ and $4^kd$ in what follows.

\begin{claim}\label{claim:Nconc}
Consider the setup of Lemma~\ref{lem:randomTHC} and a suitable $R$ (according to the above definitions). 
Then with probability at least $1-\exp\big(-O(n^{1+\eps})\big)$ we have
\begin{equation}\label{eq:Nconc}
N(\phi, R, U_Z) = \big(1\pm\tfrac{1}{\log n}\big)n_1^{\abs{Z}}\prod_{e\in \Rphi^{(\ge2)}}q(e)
\,.
\end{equation}
\end{claim}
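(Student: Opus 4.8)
The plan is to compare $N := N(\phi,R,U_Z)$ with its expectation, to compute $\Exp N$ (which will match the right-hand side of \eqref{eq:Nconc} up to a $1\pm o(1/\log n)$ factor), and then to show that $N$ concentrates around $\Exp N$ by revealing the random $k$-edges of $\Gam$ in stages dictated by the order on $X$. Since $\Gam$ is complete on edges of size at most $k-1$, only the $k$-edges of $\Rphi$ matter in \eqref{eq:Ndef}, and as $R$ is suitable every such edge projects into $H\tk$, so $\prod_{e\in\Rphi^{(\ge2)}}q(e) = p^{e_k}$ with $e_k := \abs{\Rphi^{(k)}}$; from $R\subset H^4$, $\Delta(H)\le\Delta$ and $\abs{Z}\le c^*$ one sees $e_k = O_{k,c^*,\Delta}(1)$. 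Of the $n_1^{\abs{Z}}$ maps $\psi$ in \eqref{eq:Ndef}, those that are injective with image disjoint from $\im\phi$ (take the $U_z$ disjoint from $\im\phi$) form a $1-O(1/n_1)$ fraction, and such a $\psi$ has pairwise distinct $k$-edge images so contributes exactly $p^{e_k}$ in expectation, whereas the remaining $O(\abs{Z}^2 n_1^{\abs{Z}-1})$ maps each have expected weight at most $p^{e_k-4^kd}$ (a collapse saves at most the $\le 4^kd$ $k$-edges through one vertex). The hypotheses on $p$ give $n_1 p^{4^kd}\ge n p^{4^kc^*d}/(2\log n)\ge n^\eps$, so both corrections are $o(1/\log n)$ in relative size and $\Exp N = (1\pm o(1/\log n))\,n_1^{\abs{Z}}p^{e_k}$; it remains to show $N = (1\pm\tfrac1{2\log n})\Exp N$ off the asserted event.

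For the concentration, order $Z = \{z_1,\dots,z_m\}$ so that $\rho(z_1)\le\cdots\le\rho(z_m)$ in the order on $X$ (breaking ties within parts), and split $\Rphi^{(k)} = \bigcup_{t=1}^{m}\mathcal E_t$, where $\mathcal E_t$ is the set of $k$-edges of $\Rphi$ whose largest $Z$-vertex is $z_t$ --- a genuine partition since every $k$-edge of $\Rphi$ meets $Z$. With $\psi = \mathrm{id}_{\im\phi}\cup\{z_s\mapsto v_s\}$ we have
\begin{equation}\label{eq:greedyN}
N = \sum_{v_1\in U_{z_1}} c_1(v_1)\sum_{v_2\in U_{z_2}} c_2(v_1,v_2)\cdots\sum_{v_m\in U_{z_m}} c_m(v_1,\dots,v_m)\,,\qquad c_t(v_1,\dots,v_t) := \prod_{e\in\mathcal E_t}\gam\big(\psi(e)\big)\,,
\end{equation}
with $c_t$ depending only on $v_1,\dots,v_t$. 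Here $\deg_k(H)\le d$ enters as follows: for an injective $\psi$, each $e\in\mathcal E_t$ has $\psi(e) = \{v_t\}\cup T_e$ with $T_e\subset\im\phi\cup\{v_1,\dots,v_{t-1}\}$ determined by the prefix; the number of such $e$ is at most $4^kd$ (this is $\deg_k$ evaluated on the singleton $\{\rho(z_t)\}$, up to the bounded number of ways to lift to copies in $H^4$); and for distinct admissible $v_t$ the edge-sets $\{\{v_t\}\cup T_e : e\in\mathcal E_t\}$ are pairwise disjoint and disjoint from every edge supported on $\im\phi\cup U_{z_1}\cup\cdots\cup U_{z_{t-1}}$.

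Hence, conditioning on all edges supported on $\im\phi\cup U_{z_1}\cup\cdots\cup U_{z_{t-1}}$ and on a \emph{surviving} prefix $(v_1,\dots,v_{t-1})$ --- one with $c_1 = \dots = c_{t-1} = 1$ --- the values $c_t(v_1,\dots,v_{t-1},v_t)$ are mutually independent as $v_t$ ranges over the $\ge n_1-c^*$ admissible choices, each a product of at most $4^kd$ independent $\mathrm{Bernoulli}(p)$ variables of mean $p^{\abs{\mathcal E_t}}$. A Chernoff bound gives $\sum_{v_t} c_t(v_1,\dots,v_t) = (1\pm n^{-\eps/3})\,n_1 p^{\abs{\mathcal E_t}}$ off an event of probability $\exp(-\Omega(n^{\eps/3}))$, since the conditional mean is at least $(n_1-c^*)p^{4^kd}\ge n^\eps/2$. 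A union bound over the $\le\sum_{t\le m}n_1^{t-1}\le n^{c^*}$ pairs (level, surviving prefix) makes all these estimates hold simultaneously, and substituting them into \eqref{eq:greedyN} from the innermost sum outward --- using $\sum_t\abs{\mathcal E_t} = e_k$, $m\le c^*$, and folding the non-injective maps into the error --- yields $N = (1\pm\tfrac1{2\log n})\Exp N$, which is \eqref{eq:Nconc}; the lower bounds on $p$ together with the crude size of the union bound supply the probability. (One could instead note that $N$ is a polynomial of bounded degree $e_k$ with $0/1$ coefficients in the independent edge-indicators of $\Gam$ and invoke a polynomial concentration inequality, but the staged revelation is more transparent and matches the rest of the paper.)

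\emph{The main obstacle} is the non-injectivity bookkeeping flagged after the statement of Lemma~\ref{lem:randomTHC}: when several vertices of $Z$ lie in a common part the sets $U_z$ coincide, several disjointness assertions above degrade, and the complex one actually counts upon taking links in the standard-construction graph $\Gam'$ differs slightly from $\Rphi$, so one must verify both that these collapsed configurations are genuinely lower order and that the revelation can still be staged so that the per-level conditional independence survives. Checking that the exponents $4^kc^*d$ and $4^k\Delta+d$ in the hypothesis on $p$ are exactly the right ones --- so that every conditional mean met in the argument is at least a constant times $n^\eps$ and the losses along the $m\le c^*$ levels compound to only $1\pm o(1/\log n)$ --- is the accompanying quantitative chore.
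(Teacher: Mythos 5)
Your expectation computation and the reduction of non-injective maps to a lower-order error are essentially sound (and parallel what the paper does), but your concentration mechanism cannot deliver the probability bound that the claim asserts and that is actually needed. In your staged revelation, each level-$t$ estimate is a Chernoff bound for a sum of at most $n_1$ conditionally independent $0/1$-products whose conditional mean is only of order $n_1p^{4^kd}\approx n^{\eps}$; with relative error $n^{-\eps/3}$ this gives failure probability $\exp\big(-\Omega(n^{\eps/3})\big)$ per level, and no union bound over prefixes can improve the exponent. Since $\eps<1$, this is far weaker than the claimed $1-\exp\big(-O(n^{1+\eps})\big)$, and the strength of that bound is not cosmetic: in the proof of Lemma~\ref{lem:randomTHC} the claim is union-bounded over the $e^{O(n)}$ choices of the collections $U_Z$ and again over the $e^{O(n)}$ vertex partitions, so a failure probability of $\exp(-n^{\eps/3})$ (or even $\exp(-n^{\eps})$) is useless there. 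The paper avoids this by not conditioning vertex-by-vertex at all: after reducing to injective $\psi$ by induction on $\abs{Z}$, it treats $N^*=\sum_\psi \mathbf{X}^*_\psi$ as a single sum of $\approx n_1^{\abs{Z}}$ partly dependent Bernoulli variables (two terms are dependent only if they share at least two $Z$-vertices, so the dependency degree is $O(n_1^{\abs{Z}-2})$) and applies Janson's large-deviation inequality (Theorem~\ref{thm:janson}), whose exponent is of order $\delta^2\abs{\Psi}p^*/\Delta_1^*\gtrsim n^2p^{4^kd\abs{Z}}/\mathrm{polylog}\,n=\Omega(n^{1+\eps})$. Any scheme that exposes one vertex of $Z$ at a time and relies on fluctuations of a single-level sum over $n_1$ choices forfeits exactly the factor $n_1$ (and more) in the exponent that Janson's inequality retains.

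A secondary point: the issues you flag as ``the main obstacle'' (coinciding parts $U_{z_s}=U_{z_t}$, edges of $c_t$ possibly lying among the already-revealed edges, and coinciding edge images when $v_{t}'\in T_e$) are not merely bookkeeping for your argument --- they threaten the very conditional independence your per-level Chernoff step rests on --- whereas in the paper's route they are dealt with once and for all by the inductive identification-of-vertices step before Janson is invoked. So even setting the probability exponent aside, the proposal as written leaves its key independence claim unverified in precisely the degenerate configurations that the standard construction forces you to allow.
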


\begin{claimproof}
Formally, we proceed by induction on $\abs{Z}$. The claim is trivial if $\abs{Z}\le 1$, as the product over $E_2(\Rphi)$ is empty. 

If $\abs{Z}\ge 2$, note that it suffices to consider injective maps $\psi$ in~\eqref{eq:Ndef}. 
Any non-injective partite map $\psi':\im\phi\cup Z\to V(\Gam)$ of the form considered in~\eqref{eq:Ndef} is an injective partite map into $V(\Gam)$ from the complex $R'$ on a vertex set $Z'$ formed from $R$ by identifying any vertices of $z$ with the same images under $\psi'$. 
Applying the claim to $R'$ (which is on fewer vertices), we see that with probability at least $1-\exp\big(-O(n^{1+\eps})\big)$, these non-injective maps contribute an amount at most twice expectation of $N(\psi,R', U_{Z'})$. 
Comparing the expectations of $N(\psi,R, U_{Z})$ and $N(\psi,R', U_{Z'})$, identifying a pair $z,z'$ of vertices in $Z$ `costs' a factor $n_1$ but can gain a factor up to $p^{-4^k\Delta}$ since the edges involving $z$, of which there are at most $4^k\Delta$, are now coupled in $\Gam$ with those containing $z'$. 
Then the assumptions on $n$ and $p$ imply that the contribution to $N(\phi,R,U_Z)$ from non-injective homomorphisms is at most a factor $O(n^{-\eps})$ times the expected contribution from injective homomorphisms. 
Write $N^*$ for the contribution to $N(\phi, R, U_Z)$ from injective $\psi$, noting that the above argument shows that $N(\phi,R, U_Z)=\big(1\pm O(n^{-\eps})\big)N^*$.

For each injective $\psi$, the term $\mathbf{X}_\psi^*:=\prod_{e\in \Rphi^{(\ge2)}}\gam\big(\psi(e)\big)$ appearing in $N^*$ is a product of independent Bernoulli random variables with probabilities given by $q(e)$. 
The $\mathbf{X}_\psi^*$ themselves are therefore `partly dependent' Bernoulli random variables, each with the same probability $p^*=\prod_{e\in \Rphi^{(\ge2)}}q(e)$. 
Since we consider only the edges $\Rphi^{(\ge2)}$, if $\mathbf{X}_\psi^*$ and $\mathbf{X}_{\psi'}^*$ are dependent it must be because they agree on at least two vertices of $Z$. 
Then each $\mathbf{X}_\psi^*$ can be dependent on at most $\binom{\abs{Z}}{2}n_1^{\abs{Z}-2}$ other variables $\mathbf{X}_{\psi'}^*$.
We apply a theorem of Janson~\cite[Corollary 2.6]{Jlargedeviations} which bounds the probability of large deviations in sums of partly dependent random variables. 

\begin{theorem}[Janson~\cite{Jlargedeviations}]\label{thm:janson}
Let $\Psi$ be an index set, and $N^*=\sum_{\psi\in\Psi}\mathbf{X}^*_\psi$, such that each $\mathbf{X}^*_\psi$ is a Bernoulli random variable with probability $p^*\in(0,1)$. 
Let $\Delta_1^*$ be one more than the maximum degree of the graph on vertex set $\Psi$ such that $\psi$ and $\psi'$ are adjacent if and only if $\mathbf{X}_\psi^*$ and $\mathbf{X}_{\psi'}^*$ are dependent. 
Then for any $\delta>0$,
\begin{equation}\label{eq:janson}
\Pr\big[N^* = (1\pm\delta)\Ee N^*\big]\ge 1 - 2\exp\Bigg(-\frac{3\delta^2\abs{\Psi}p^*\big(1-\Delta_1^*/\abs{\Psi}\big)}{8\Delta_1^*}\Bigg)\,.
\end{equation}
\end{theorem}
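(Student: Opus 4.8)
This is precisely Corollary~2.6 of Janson~\cite{Jlargedeviations}, so strictly nothing new needs to be proved; for completeness I indicate the standard \emph{equitable colouring $+$ Chernoff} argument by which it is obtained. First colour the dependency graph: by hypothesis the graph $\Gamma$ on $\Psi$ in which $\psi\sim\psi'$ whenever $\mathbf X^*_\psi$ and $\mathbf X^*_{\psi'}$ are dependent has maximum degree at most $\Delta_1^*-1$, so by the Hajnal--Szemer\'edi theorem it admits an equitable proper colouring with $\Delta_1^*$ colours. Write $\Psi=\Psi_1\dcup\dotsb\dcup\Psi_{\Delta_1^*}$ for the colour classes, so that $\lfloor|\Psi|/\Delta_1^*\rfloor\le|\Psi_j|\le\lceil|\Psi|/\Delta_1^*\rceil$ for each $j$, and since no edge of $\Gamma$ lies inside a class, for each $j$ the variables $\{\mathbf X^*_\psi:\psi\in\Psi_j\}$ are mutually independent $\mathrm{Bernoulli}(p^*)$ variables.

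Set $N^*_j:=\sum_{\psi\in\Psi_j}\mathbf X^*_\psi$ and $\mu:=\Ee N^*=p^*|\Psi|$, so $\Ee N_j^*=p^*|\Psi_j|$ and $\min_j\Ee N_j^*\ge p^*\lfloor|\Psi|/\Delta_1^*\rfloor\ge\tfrac{\mu}{\Delta_1^*}\bigl(1-\tfrac{\Delta_1^*}{|\Psi|}\bigr)$. Because $\sum_j\Ee N_j^*=\mu$, the event $N^*\ge(1+\delta)\mu$ forces $N_j^*\ge(1+\delta)\Ee N_j^*$ for some $j$, and likewise $N^*\le(1-\delta)\mu$ forces $N_j^*\le(1-\delta)\Ee N_j^*$ for some $j$. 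Each $N_j^*$ is a sum of independent indicators, so the scalar Chernoff bounds apply within each class: from $\Ee e^{s\mathbf X}=1+p^*(e^s-1)\le\exp(p^*(e^s-1))$ together with the elementary inequalities $(1+\delta)\ln(1+\delta)-\delta\ge\delta^2/3$ and $\delta-(1-\delta)\ln(1-\delta)\ge\delta^2/2$ one gets $\Pr\bigl[N_j^*\ge(1+\delta)\Ee N_j^*\bigr]\le\exp\bigl(-\tfrac13\delta^2\Ee N_j^*\bigr)$ and $\Pr\bigl[N_j^*\le(1-\delta)\Ee N_j^*\bigr]\le\exp\bigl(-\tfrac12\delta^2\Ee N_j^*\bigr)$. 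A union bound over the $\Delta_1^*$ colour classes and the two tails, then substitution of the lower bound on $\min_j\Ee N_j^*$ and a routine tidying of the constant — absorbing the polynomial prefactor $\Delta_1^*$ into the exponent in the range where the asserted bound is nontrivial, which is why the clean constant $1/3$ relaxes to $3/8$ — yields exactly \eqref{eq:janson}. An equivalent, prefactor-free route replaces the union over classes by the generalised H\"older inequality $\Ee e^{\lambda N^*}\le\prod_j\bigl(\Ee e^{\lambda\Delta_1^* N_j^*}\bigr)^{1/\Delta_1^*}\le\exp\bigl(\tfrac{\mu}{\Delta_1^*}(e^{\lambda\Delta_1^*}-1)\bigr)$, valid for all $\lambda\in\REALS$, after which one optimises $\lambda$ in Markov's inequality as in the classical Chernoff derivation.

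There is no hard step here: the single non-elementary ingredient is the existence of an equitable $\Delta_1^*$-colouring (Hajnal--Szemer\'edi), and all else is the one-variable Chernoff computation. The only point demanding care is the constant together with the $\bigl(1-\Delta_1^*/|\Psi|\bigr)$ factor: one genuinely needs an \emph{equitable} colouring rather than merely a proper one, so that every colour class carries at least a $\bigl(1-\Delta_1^*/|\Psi|\bigr)$ fraction of its fair share $\mu/\Delta_1^*$ of the expectation, and one must verify the two logarithmic inequalities above on the relevant range of $\delta$. Since the theorem is applied in this paper only with $\delta$ fixed and $p^*,|\Psi|$ in a regime where $\Delta_1^*/|\Psi|=o(1)$, any clean Chernoff constant would suffice in our use of it, but the stated form is what Janson's argument delivers.
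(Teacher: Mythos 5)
The paper does not actually prove this statement: it is quoted verbatim as Corollary~2.6 of Janson~\cite{Jlargedeviations} and used as a black box, so your opening observation that nothing new needs proving is exactly the paper's own position. The sketch you append, however, goes wrong at the last step of its primary route. A union bound over the $\Delta_1^*$ colour classes leaves a prefactor $2\Delta_1^*$ in front of $\exp\bigl(-\tfrac13\delta^2\min_j\Ee N_j^*\bigr)$, and you claim to absorb this prefactor into the exponent, ``which is why the clean constant $1/3$ relaxes to $3/8$''. But $3/8>1/3$: absorbing a prefactor can only \emph{worsen} the constant in the exponent, never upgrade it from $1/3$ to $3/8$, so this step cannot be made to work as described. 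The constant $3/8$ in fact comes from your second, parenthetical route: the generalised H\"older bound $\Ee e^{\lambda N^*}\le\exp\bigl(\tfrac{\mu}{\Delta_1^*}(e^{\lambda\Delta_1^*}-1)\bigr)$, followed by optimising $\lambda$, gives $\Pr[N^*\ge(1+\delta)\mu]\le\exp\bigl(-\tfrac{\mu}{\Delta_1^*}\bigl((1+\delta)\ln(1+\delta)-\delta\bigr)\bigr)$ with no prefactor, no equitability, and hence no need for Hajnal--Szemer\'edi (a greedy proper colouring suffices); one then uses $(1+\delta)\ln(1+\delta)-\delta\ge 3\delta^2/8$, which holds for $0<\delta\le 1$ but not for all $\delta$ --- so the statement's ``for any $\delta>0$'' is slightly too generous, harmlessly so since the paper only invokes it with $\delta=1/(2\log n)$. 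That route, not the union bound, is the one to present as the proof.

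Two smaller points. First, a proper colouring of a graph recording \emph{pairwise} dependence does not in general make the variables inside a colour class \emph{mutually} independent, which is what both of your routes require; Janson's hypothesis is the stronger standard dependency-graph condition. The statement in the paper shares this looseness, and in the application (Claim~\ref{claim:Nconc}) the $\mathbf{X}^*_\psi$ within an independent set are products over pairwise disjoint sets of independent edge indicators, so mutual independence does hold there --- but this should be said rather than asserted. Second, the factor $\bigl(1-\Delta_1^*/\abs{\Psi}\bigr)$ is an artefact of bounding $\lfloor\abs{\Psi}/\Delta_1^*\rfloor$ from below in the union-bound route; the H\"older route does not need it, so its presence in the statement is a harmless weakening rather than something your argument must reproduce.
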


In the setup above, we have $n_1^{\abs{Z}}(1-\abs{Z}/n_1)^{\abs{Z}} \le \abs{\Psi} \le n_1^{\abs{Z}}$ and $\Delta_1^*\le \abs{Z}^2 n_1^{\abs{Z}-2}$. 
Since $\abs{Z}\le c^*$ is bounded by a constant, this means $\abs{\Psi}=\big(1\pm O(n_1^{-1})\big)n_1^{\abs{Z}}$,
\begin{align}
\frac{\abs{\Psi}}{\Delta_1^*} &=\Omega(n_1^2)\,,&
&\text{and}
&\frac{\Delta_1^*}{\abs{\Psi}} & =O(n_1^{-2})\,.
\end{align}
Moreover, we know that $p^*\le p^{(\abs{Z}-1)4^kd}$ since embedding the first vertex of $Z$ is `free', and each remaining vertex can be the last vertex of at most $4^kd$ edges which occur with probability $p$ each. 
Then for $\delta=1/(2\log n)$, the exponent on the right-hand side of~\eqref{eq:janson} is 
\[
-\Omega\big(p^{4^kd\abs{Z}}n^2\big) = -\Omega\big(n^{1+\eps}\big)\,,
\]
by the assumptions on $n$ and $p$. 
The claim follows since the event that \eqref{eq:Nconc} that we wish to control occurs with probability $1$ provided $N^*=(1\pm\delta)\Ee N^*$ and $n$ is large enough. 
We have 
\[
\Ee N^* = \abs{\Psi}\prod_{e\in \Rphi^{(\ge2)}}q(e) = \big(1\pm O(n_1^{-1})\big)n_1^{\abs{Z}}\prod_{e\in \Rphi^{(\ge2)}}q(e)\,,
\] 
and hence for large enough $n$, with probability at least $1-\exp\big(-O(n^{1+\eps})\big)$,
\[
N(\phi,R)=\big(1\pm O(n^{-\eps})\big)N^* = \big(1\pm O(n^{-\eps})\big)\Big(1\pm \tfrac{2}{\log n}\Big)\Ee N^* = \big(1\pm \tfrac{1}{\log n}\big)n_1^{\abs{Z}}\prod_{e\in \Rphi^{(\ge2)}}q(e)\,.\qedhere
\]
\end{claimproof}

With the main probabilistic argument complete, we can now apply Claim~\ref{claim:Nconc} to the problem of showing $\Gam'$ is an $(\eta,c^*)$-THC graph.

\begin{proof}[Proof of Lemma~\ref{lem:randomTHC}]

We start with a sketch of the proof. 
Given a fixed partition $\{V_j\}_{j\in J}$, to verify $\Gam'$ is an $(\eta,c^*)$-THC graph we must count suitable $X$-partite complexes $R$ in graphs obtained from $\Gam'$ by embedding vertices of $H$. 
We are not required to consider arbitrary embeddings, at each step we are permitted by~\ref{thc:hered} to avoid a `bad set' of potential images, which we will exploit in due course. 

At first, no vertex of $H$ has been embedded and we count $R$ in $\Gam'$, which by the standard construction is the same as counting $R$ in $\Gam$.
By Claim~\ref{claim:Nconc} with $Y=\phi=\emptyset$ and a union bound over suitable $R$, with high probability we have the required accurate counts of $R$ in $\Gam$.
These counts are accurate enough to imply deterministically that there is a small `bad set' which, if avoided, allows us to embed the next vertex $x$ and continue the argument with `well-behaved' vertex weights in $\Gam'_x$. 

When some initial segment $Y$ of $X$ has been embedded, say by a map $\phi':Y\to V(\Gam')$, 
we always have an associated map $\phi:Y\to V(\Gam)$ obtained by identifying the copies of parts $V_j$ made in the standard construction. 
Write $\Gam'_{\phi'}$ for the $k$-graph obtained from $\Gam'$ by taking the link of vertices in $\im\phi'$.
By construction, the required counts of complexes $R$ in $\Gam'_{\phi'}$ correspond to counts of $\Rphi$ in $\Gam$, which we can control with Claim~\ref{claim:Nconc}. 
We handle vertex weights separately, and apply Claim~\ref{claim:Nconc} with subsets $U_z\subset V_z$ of vertices that receive weight $1$ in $\Gam'_{\phi'}$.
We then take a union bound over choices of partition to complete the lemma.

The notion of `well behaved' for vertex weights in $\Gam'_{\phi'}$ that we maintain is as follows. 
Recall that given a partition $\{V_j\}_{j\in J}$, we have $V(\Gam')$ partitioned into $\{V'_x\}_{x\in X}$ where $V'_x$ is a copy of the $V_j$ into which $x$ will be embedded. 
Since we view vertices in $Z$ as copies of vertices in $X$, we also write $V'_z$ for the part of $\Gam'$ into which $z$ should be embedded. 
Given $Y\subset X$ and $\phi$, $\phi'$ as above, let $\cQ_\phi$ be the density $k$-graph obtained from $\cQ$ by taking links of vertices in $\im\phi$. 
For a fixed suitable $R$ on vertex set $Z$, let $\cA_{Y,z}$ be the event that $\vnorm{V_z'}{\Gam_W}\ge (1-\eta)^{\pi(z)}q_Y(z)$, where $\pi(z) = \abs[\big]{\{y\in Y : \{y,z\}\in H\}}\le \Delta(H)$, and let $\cA_Y$ be the intersection of $\cA_{Y,z}$ for all $z\in Z$. 
The event $\cA_\emptyset$ holds with probability $1$ because $\Gam$ gives weight $1$ to all vertices, and by avoiding bad vertices we will maintain $\cA_Y$ as we embed.

We are now ready to give the main proof, supposing that the initial segment $Y\subset X$ has been embedded, we have the associated partite maps $\psi$ and $\psi'$ from $Y$ to $V(\Gam)$ and $V(\Gam')$ respectively, we count copies of suitable $R$ in $\Gam'$. 

Given $\cA_Y$, since we have by assumption $(1-\eta)^\Delta \ge 1/2$, $\abs{V_z}\ge n/\log n$, and $q_Y(z)\ge p^d$, we can apply Claim~\ref{claim:Nconc} for every collection of $U_z$ such that $U_z\subset V_z$ is of size exactly $n_1$. 
There are $\prod_{z\in Z}\binom{\abs{V_z}}{n_1} = e^{O(n)}$ choices of collection, hence by Claim~\ref{claim:Nconc} and a union bound over collections, conditioned on $\cA_Y$ for all $z\in Z$, with probability at least $1-\exp\big(-O(n^{1+\eps})\big)$, the $N(\phi,R,U_Z)$ counts are close to their expectation for all such $U_Z$. 
In particular, the $N(\phi,R,U_Z)$ are `correct' for the collections where every $u\in U_z$ receives weight $1$ as a vertex in $\Gam'_{\phi'}$.
The count $N(\phi,R,U_Z)$ deals with edges of $\Rphi$ of size at least $2$, hence by the above argument and averaging over sets $U_z$ of vertices that receive weight $1$ in $\Gam'_{\phi'}$, we obtain that with high probability the count $\Gam'_{\phi'}(R)$ is close to its expectation. 

More precisely, by a union bound over the constant number of complexes $R$ to consider, and by averaging over the choice of collection $\{U_z\}_{z\in Z}$, we have, conditioned on $\cA_Y$, with probability at least $1-\exp\big(-O(n^{1+\eps})\big)$,
\begin{equation}\label{eq:Gam'Wconc}
\Gam'_{\phi'}(R) = \big(1\pm\tfrac{1}{\log n}\big)\Big(\prod_{e\in \Rphi^{(\ge2)}}q(e)\Big)\prod_{z\in Z}\vnorm{V_z'}{\Gam'_{\phi'}}\,,
\end{equation}
for all suitable $k$-complexes $R$.

Let $x$ be the next vertex to embed. 
To prove the lemma it now suffices to show that there is a subset $\tilde{V_x'}\subset V_x'$ with $\vnorm{\tilde{V_x'}}{\Gam'_{\phi'}}\ge (1-\eta)\vnorm{V_x'}{\Gam'_{\phi'}}$ such that $\cA_{Y\cup\{x\}}$ holds. 
Then the above argument after $x$ has been embedded, and a union bound over the number of vertices to embed (at most $n$) gives the result.

Suppose that we embed $x$ to $w\in V_x'$. 
Since $H$ has maximum degree $\Delta$, there are at most $\Delta$ vertices $z\in \rho(Z)$ with $\ind{\Gam'_{\phi'\cup\{x\mapsto w\}}}{V_z'}\ne \ind{\Gam'_{\phi'}}{V_z'}$. 
Let $Z'$ be the set of these vertices. 
The counts~\eqref{eq:Gam'Wconc} imply that for $z\in Z'$,
\begin{align}
\abs{V_x'}^{-1}\sum_{u\in V_x}\gam'_{\phi'}(u) &= \big(1\pm\tfrac{1}{\log n}\big)\vnorm{V_x'}{\Gam'_{\phi'}}\,,\\
\abs{V_x'}^{-1}\abs{V_z'}^{-1}\sum_{uv\in V_{xz}'}\gam'_{\phi'}(u)\gam'_{\phi'}(v)\gam'_{\phi'}(u,v) &= \big(1\pm\tfrac{1}{\log n}\big)q_Y(x,z)\vnorm{V_z}{\Gam'_{\phi'}}\cdot\vnorm{V_x'}{\Gam'_{\phi'}}\,,\\
\abs{V_x'}^{-1}\sum_{u\in V_x'}\gam'_{\phi'}(u)\Big(\abs{V_z'}^{-1}\sum_{v\in V_z'}\gam'_{\phi'}(v)\gam'_{\phi'}(u,v)\Big)^2 &= \big(1\pm\tfrac{1}{\log n}\big)q_Y(x,z)^2\vnorm{V_z'}{\Gam'_{\phi'}}^2\cdot\vnorm{V_x'}{\Gam'_{\phi'}}\,
\end{align}
hence we may apply Corollary~\ref{cor:ECSconc} with $\eps\subref{cor:ECSconc}=4/\log n$ and $d\subref{cor:ECSconc}=q_Y(x,z)\vnorm{V_z}{\Gam'_{\phi'}}$ to obtain the following.

For each $z\in Z'$ there is a set $B_z\subset V_x'$ with $\vnorm{B_z}{\Gam'_{\phi'}} \le 8(\log n)^{-1/4}\vnorm{V_z'}{\Gam'_{\phi'}}$ such that for all $w\in V_x'\setminus B_z$, if $x$ is embedded to $w$ we have 
\[
\vnorm{V_z'}{\Gam'_{\phi'\cup\{x\mapsto w\}}} = \Big(1\pm\frac{4}{(\log n)^{1/4}}\Big)q_Y(x,z)\vnorm{V_z'}{\Gam'_{\phi'}}\,.
\]
Set $\tilde V_x'=V_x'\setminus \bigcup_{z\in Z'}B_z$, so that
\[
\vnorm{\tilde{V_x'}}{\Gam'_{\phi'}} \ge \Big(1-\frac{8\Delta}{(\log n)^{1/4}}\Big)\vnorm{V_x'}{\Gam'_W}\,,
\]
which is at least $(1-\eta)\vnorm{V_x'}{\Gam'_{\phi'}}$ for large enough $n$.
Then given $\cA_Y$ and the counts~\eqref{eq:Gam'Wconc}, we have a small `bad set' which, if avoided when embedding $x$, implies $\cA_{Y\cup\{x\}}$ holds deterministically. 
So we can maintain well-behaved vertex weights throughout the embedding, and we may repeat the probabilistic argument above to control the counting properties~\eqref{eq:Gam'Wconc} after each vertex is embedded. 
There are at most $n$ embeddings, and hence with probability at least $1-\exp\big(-O(n^{1+\eps})\big)$ the partition $\{V_j\}_{j\in J}$ yields a $\Gam'$ with the required properties. 
To complete the proof we take a union bound over the $e^{O(n)}$ possible partitions.
\end{proof}

\section{A sparse hypergraph regularity lemma}\label{sec:reg}

There are several approaches to generalising Szemer\'edi's regularity lemma to hypergraphs (e.g.\ \cite{RSreg,Ghypergraph}). 
Recall that the main idea is to partition a hypergraph into a bounded number pieces, almost all of which are regular. 
Difficulties arise in giving a precise formulation of regularity that is both weak enough to be found by a regularity lemma and strong enough to support a counting lemma. 
We use a notion of octahedron minimality as our regularity condition (Definition~\ref{def:reg}), and in this section we describe how existing results imply that we can partition arbitrary hypergraphs into pieces which have the necessary structure. 

In dense hypergraphs, the combined use of (strong) regularity lemmas with compatible counting lemmas constitute the standard hypergraph regularity method~\cite{RNSSKregmethod,RSregmethod}. 
Our Theorem~\ref{thm:counting} is essentially a version of the counting lemma of~\cite{NRScount} for use with our definition of regularity. 
In the following subsection we show how to derive the setup of Theorem~\ref{thm:counting} from the regularity lemma of~\cite{RSreg}, allowing our Theorem~\ref{thm:counting} to be a drop-in replacement in many applications of the standard hypergraph regularity method. 

Versions of these tools for sparse graphs are less well-developed, but notably the weak regularity lemma and accompanying counting lemma of Conlon, Fox, and Zhao~\cite{CFZrelative} give a general technique for transferring results for dense hypergraphs to a sparse setting. 
We show how combined use of the regularity methods of~\cite{CFZrelative} and~\cite{RSreg,NRScount} can yield the setup of Theorems~\ref{thm:counting} and~\ref{thm:embedding}. In particular, we will state a sparse hypergraph regularity lemma, namely a sparse version of the R\"odl-Schacht regularity lemma~\cite{RSreg2}. We derive this from the dense version using the Conlon-Fox-Zhao weak regularity lemma.

We should point out that the proof strategy works with only trivial changes to obtain sparse versions of the R\"odl--Skokan regularity lemma~\cite{RSreg}, and the regular slice lemma~\cite[Lemma~10, parts (a) and (b)]{ABCM}\footnote{The dense version has a part (c), but this part is false in the sparse setting.}. We should point out that it has been well known in the area for some years that these regularity lemmas hold, but to the best of our knowledge no-one actually wrote them down with proofs.

\subsection{Sparse hypergraphs after Conlon, Fox, and Zhao}
We say, following~\cite{CFZrelative} (but with slightly different notation) that two weighted $k$-uniform hypergraphs $G$ and $H$ on a vertex set $V$ are a \emph{$\gamma$-discrepancy pair} in the following situation. For any $(k-1)$-uniform unweighted graphs $F_1,\dots,F_k$ on $V$, let $S$ be the collection of $k$-sets in $V$ whose $(k-1)$-sets can be labelled using each label $1$,\dots, $k$ exactly once, such that the label $i$ subset is in $F_i$. We say the edges of $S$ are \emph{rainbow for $F_1,\dots,F_k$}. If for any choice of the $F_i$, we have
\[\Big|\sum_{e\in S}\big(g(e)-h(e)\big)\Big|\le\gamma|V|^k\]
then $(G,G')$ is a $\gamma$-discrepancy pair. Note that this concept is interesting only when $\sum_{e\in\binom{V}{k}}g(e)$ is much larger than $\gamma |V|^k$. Since we want to work with sparse hypergraphs, in order to talk about discrepancy pairs (and in general to apply the machinery of~\cite{CFZrelative}) we will need to scale our weight functions in order that the majorising hypergraph has density about $1$. Going with this, we say a $k$-uniform hypergraph $G$ on $V$ is \emph{upper $\eta$-regular} if for any $(k-1)$-uniform unweighted graphs $F_1,\dots,F_k$ on $V$, letting $S$ be the set of rainbow edges we have
\[\sum_{e\in S}(g(e)-1)\le\eta|V|^k\,.\]

Conlon, Fox, and Zhao~\cite[Lemma~2.15]{CFZrelative} proved that if $\Gam$ satisfies the `linear forms condition' then it (and trivially all its subgraphs) are upper $o(1)$-regular. More concretely, they proved the following.
\begin{lemma}\label{lem:CFZupper}
 Given $\eta>0$ and $k$, there exists $\eta'>0$ such that the following holds. Suppose that $\Gam$ is a $k$-uniform $n$-vertex weighted hypergraph, and for each unweighted $k$-uniform hypergraph $H$ on at most $2k$ vertices we have
$G(H)=1\pm\eta'$.
 Then $\Gam$, and all its subgraphs, are upper $\eta$-regular.\hfill\qed
\end{lemma}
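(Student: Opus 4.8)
The plan is to run the Cauchy--Schwarz (``Gowers box norm'') argument underlying the relative counting lemma of Conlon, Fox and Zhao. First, the statement for all subgraphs reduces to the statement for $\Gam$ itself: if $G\subset\Gam$ then $g(e)\le\gam(e)$, hence $g(e)-1\le\gam(e)-1$ pointwise, and since the rainbow set $S$ depends only on $F_1,\dots,F_k$ we get $\sum_{e\in S}(g(e)-1)\le\sum_{e\in S}(\gam(e)-1)$ for every choice of the $F_i$. So it suffices to show $\Gam$ is upper $\eta$-regular, and from now on I write $h:=\gam-1$. Note $h\ge-1$, but --- crucially for the sparse regime --- $h$ is \emph{not} pointwise bounded (it can be of order $1/p$ on edges of $\Gam$), so the rescaling ensures only that $h$ averages to $\pm\eta'$.

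Second, I would remove the ``can be labelled'' quantifier. Writing $f_i\in\{0,1\}$ for the indicator of $F_i$ on $(k-1)$-sets and $e^{(j)}$ for the $j$-th face of a $k$-set $e$ in some fixed order on $V$, inclusion--exclusion (equivalently, expanding $\indicator{e\in S}=1-\prod_{\sigma}(1-\prod_{j}f_{\sigma(j)}(e^{(j)}))$ over bijections $\sigma$ from the $k$ faces to $\{1,\dots,k\}$) writes $\indicator{e\in S}$ as a $\pm1$ combination of at most $2^{k!}$ terms, each of the form $\prod_{j=1}^k h_j(e^{(j)})$ with every $h_j$ a $\{0,1\}$-valued function on $(k-1)$-sets. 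Hence it is enough to prove that for arbitrary $[0,1]$-valued functions $h_1,\dots,h_k$ on the $(k-1)$-subsets of $V$,
\[
\abs[\Big]{\tfrac{1}{n^k}\textstyle\sum_{e}\bigl(\textstyle\prod_{j}h_j(e^{(j)})\bigr)h(e)}\le 2^{-k!}\eta\,.
\]

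The core step is $k$ successive applications of the Cauchy--Schwarz inequality, one per coordinate $v_1,\dots,v_k$ of $e$, each ``doubling'' the current coordinate into a pair $v_i,v_i'$. Since $h_j$ does not involve $v_j$, every factor this process produces from the $h_j$ is bounded by $1$ (because $0\le h_j\le1$), and what survives is
\[
\abs[\Big]{\tfrac{1}{n^k}\textstyle\sum_{e}\bigl(\textstyle\prod_{j}h_j(e^{(j)})\bigr)h(e)}^{2^k}\le\Lambda(h)\,,\qquad
\Lambda(h):=\Ex[\big]{\textstyle\prod_{\omega\in\{0,1\}^k}h\bigl(v_1^{\omega_1},\dots,v_k^{\omega_k}\bigr)}\,,
\]
where $v_i^{0}=v_i$, $v_i^{1}=v_i'$, and the expectation is over independent uniform $v_1,v_1',\dots,v_k,v_k'\in V$; thus $\Lambda(h)$ is the box-sum of $h$ over the octahedron $\oct{k}{\vec 2^k}$. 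It remains to bound $\Lambda(h)$. Expanding $\prod_{\omega}(\gam(e_\omega)-1)$ by multilinearity, $\Lambda(h)$ becomes a fixed integer combination (with coefficients depending only on $k$) of homomorphism counts $G(H')$ of $k$-uniform hypergraphs $H'$ on at most $2k$ vertices --- the subhypergraphs of $\oct{k}{\vec2^k}$ and their quotients arising from coinciding coordinates. By hypothesis each such count is $1\pm\eta'$, whereas the same integer combination with every count replaced by its model value $1$ (the value when $\gam\equiv1$, where $\Lambda$ vanishes) equals $0$; hence $\abs{\Lambda(h)}\le C(k)\eta'+o(1)$, and taking $\eta'$ small enough (and $n$ large enough to absorb the $o(1)$) gives $\Lambda(h)^{1/2^k}\le 2^{-k!}\eta$, as required.

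The step I expect to be the main obstacle is the bookkeeping in these two expansions, and in particular the degenerate configurations in $\Lambda(h)$ where two copies $v_i,v_i'$ of a coordinate, or vertices from different coordinates, collide: these collapse the octahedron to a hypergraph on fewer than $2k$ vertices, and because $h=\gam-1$ can be pointwise as large as $\approx1/p$ one cannot simply discard them as lower order in the sparse setting. Making the identification of every such term with an honest count $G(H')$ on at most $2k$ vertices precise, and checking that the telescoping really does annihilate the main part, is exactly where the precise shape of the linear-forms hypothesis is used; it is the technically delicate heart of the argument, and the point at which this otherwise dense-type Cauchy--Schwarz reasoning becomes genuinely more involved here than in the dense case.
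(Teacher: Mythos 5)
The paper does not actually prove Lemma~\ref{lem:CFZupper}: it is stated with a \qed{} and attributed to \cite[Lemma~2.15]{CFZrelative}, so what you are reconstructing is the Conlon--Fox--Zhao argument. Your strategy is indeed theirs: reduce to $\Gam$ itself by monotonicity of the one-sided discrepancy, eliminate the rainbow quantifier by inclusion--exclusion over the $k!$ bijections so that it suffices to treat integrands of the form $\prod_j h_j(e^{(j)})\,(\gam(e)-1)$ with $h_j$ taking values in $[0,1]$, and then apply Cauchy--Schwarz $k$ times, using boundedness only of the $h_j$ (you are right that no pointwise bound on $\gam-1$ is needed in this chain), to arrive at the box average $\Lambda(\gam-1)$ over $\oct{k}{\vec{2}^k}$. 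These steps are correct as you present them (the ``$j$-th face in a fixed order on $V$'' device is a little awkward; indexing faces of an ordered tuple by the omitted coordinate is cleaner, but this is cosmetic).

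The gap is in the final step, and you flag it yourself without closing it. As written, $\abs{\Lambda(h)}\le C(k)\eta'+o(1)$ is not a proof: the lemma has no ``$n$ sufficiently large'' hypothesis to absorb an $o(1)$, and, as you note, the collision configurations carry products of weights that can be of order $p^{-1}$ each, so they are not lower order; moreover they cannot be ``identified with an honest count $G(H')$ on at most $2k$ vertices'' at all, since they involve repeated edges, i.e.\ squared and higher powers of weights, which counts of simple hypergraphs do not control. The missing idea is that these terms never need to be split off. If the hypothesis is read the way the cited linear forms condition of \cite{CFZrelative} is phrased --- as averages over independent uniform (not necessarily distinct) vertex choices --- then expanding $\prod_{\omega\in\{0,1\}^k}\big(\gam(x^\omega)-1\big)$ multilinearly writes $\Lambda(\gam-1)$ \emph{exactly} as an alternating sum of $2^{2^k}$ hypothesized counts, one for each subset $S$ of the faces of $\oct{k}{\vec{2}^k}$, always on the same $2k$ labelled vertices (isolated vertices are harmless); the model values cancel exactly and one gets $\abs{\Lambda(\gam-1)}\le 2^{2^k}\eta'$ with no quotients, no diagonal analysis, and no lower bound on $n$ --- which is why none appears in the statement. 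If instead you insist on the paper's homomorphism-density reading of $\Gam(H)$ (maps injective on each edge), then the mismatch with the independent-vertices average consists precisely of the collision terms you worry about, and closing it genuinely needs extra input (bounded weights or largeness of $n$); so under that reading your proof is incomplete at exactly the point you identify, and the clean fix is to work with the independent-sampling form of the counting hypothesis throughout.
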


Conlon, Fox, and Zhao also proved the following sparse `weak regularity lemma' which applies to upper-regular graphs. We should point out that it is well known that from such a regularity lemma one can fairly easily, by iteration, prove a sparse strong regularity lemma, so in some sense this lemma does all the work. We state a somewhat weaker version (the original allows for directed hypergraphs and parts of different sizes, and gives a bound on the `complexity' of $\tilde{G}$ as one would need for iteration to a strong regularity lemma). 
\begin{theorem}[{\cite[Theorem~2.16]{CFZrelative}}]\label{thm:CFZweak}
 For any $\gamma>0$ and $k$-uniform weighted hypergraph $G$ on $V$ which is upper $\eta$-regular with $\eta\le2^{-80k/\gamma^2}$, there exists a $k$-uniform weighted hypergraph $\tilde{G}$ on $V$, such that $0\le\tilde{g}(e)\le 1$ for each $e\in\binom{[n]}{k}$ and such that $G$ and $\tilde{G}$ form a $\gamma$-discrepancy pair.
\end{theorem}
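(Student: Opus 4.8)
This is (a form of) the sparse weak regularity lemma, and I would prove it exactly as Conlon, Fox and Zhao do in~\cite{CFZrelative}: by an energy-increment argument adapting the proof of the weak (Frieze--Kannan) regularity lemma, with the twist --- common to all \emph{dense model theorems} --- that the model $\tilde G$ is kept $[0,1]$-valued even though $G$ is only bounded \emph{in the cut sense} (via upper-regularity). Throughout write $n=\abs{V}$, normalise the inner product as $\langle f,h\rangle=n^{-k}\sum_{e\in\binom{V}{k}}f(e)h(e)$, and for a tuple $\mathbf F=(F_1,\dots,F_k)$ of $(k-1)$-uniform graphs on $V$ let $r_{\mathbf F}=\mathbf 1_{S(\mathbf F)}$ be the indicator of the set of rainbow edges, so that the rainbow (cut) seminorm is $\|D\|_\square=\sup_{\mathbf F}\abs{\langle D,r_{\mathbf F}\rangle}$. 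In this language ``$G$ and $\tilde G$ form a $\gamma$-discrepancy pair'' says exactly $\|g-\tilde g\|_\square\le\gamma$, and ``$G$ is upper $\eta$-regular'' says exactly $\langle g-\mathbf 1,r_{\mathbf F}\rangle\le\eta$ for all $\mathbf F$.

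The plan is to build $\tilde g$ iteratively. Start from $\tilde g_0\equiv 0$. Given $\tilde g_i\in[0,1]^{\binom{V}{k}}$, if $\|g-\tilde g_i\|_\square\le\gamma$ stop and output $\tilde g:=\tilde g_i$; otherwise pick $\mathbf F_i$ with $\abs{\langle g-\tilde g_i,r_i\rangle}>\gamma$ (writing $r_i=r_{\mathbf F_i}$, $S_i=S(\mathbf F_i)$), let $\sigma_i=\pm1$ be the sign of $\langle g-\tilde g_i,r_i\rangle$, and set $\tilde g_{i+1}:=\Pi_{[0,1]}\big(\tilde g_i+c\,\sigma_i\,r_i\big)$, where $\Pi_{[0,1]}$ is coordinatewise truncation to $[0,1]$ and $c=c(\gamma)>0$ is a small multiple of $\gamma$. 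The output is $[0,1]$-valued by construction, and --- since each $r_i$ is a rainbow indicator --- $\tilde g$ is the truncation of a sum of $O(\gamma^{-2})$ rainbow indicators, which is the bounded ``complexity'' one needs in order to later iterate this into a strong regularity lemma.

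The heart of the matter is a potential $P(\tilde g)\in[0,1]$ that increases by $\Omega(\gamma^2)$ at each step, forcing the process to stop after $O(\gamma^{-2})$ steps. For the ``$\sigma_i=+1$'' steps take $P(\tilde g)=n^{-k}\sum_e\big(1-(1-\tilde g(e))^2\big)=\langle 2\tilde g-\tilde g^2,\mathbf 1\rangle\in[0,1]$; a one-line computation gives $P(\tilde g_{i+1})-P(\tilde g_i)=\langle\delta,\,2(\mathbf 1-\tilde g_i)-\delta\rangle$ with $\delta=\tilde g_{i+1}-\tilde g_i\ge0$ supported on $S_i$, and discarding the edges where truncation binds and using $\langle\mathbf 1-\tilde g_i,r_i\rangle\ge\langle g-\tilde g_i,r_i\rangle-\eta>\gamma-\eta$ --- the one place upper-regularity enters, trading the unbounded $g$ for the bounded $\mathbf 1$ --- yields $P(\tilde g_{i+1})-P(\tilde g_i)\ge c(\gamma-\eta-c)=\Omega(\gamma^2)$ for suitable $c$. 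The ``$\sigma_i=-1$'' steps admit an analogous elementary argument with the potential $n^{-k}\sum_e(1-\tilde g(e)^2)$ (using only $g\ge0$), but these two potentials are not simultaneously monotone, so combining both directions cleanly is best done through the Hahn--Banach/duality formulation of the dense model theorem (Green--Tao, Tao--Ziegler, Gowers, Reingold--Trevisan--Tulsiani--Vadhan), which produces a separating rainbow-structured functional directly and treats both discrepancy directions on the same footing. The quantitative hypothesis $\eta\le2^{-80k/\gamma^2}$ in the statement is precisely what is needed to absorb the errors accumulated over the $O(\gamma^{-2})$ truncation steps, each of which costs an amount depending on the complexity of the rainbow structures involved (hence the $k$-dependence).

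The main obstacle is exactly the sparse subtlety just flagged: $g$ need not be bounded pointwise, so the textbook dense energy increment (which tracks $\langle\tilde g,\tilde g\rangle$ or $-\|g-\tilde g\|_2^2$) fails --- the latter potential is not bounded below in terms of $\gamma$ alone. One must instead track an intrinsically bounded potential and control \emph{every} inner product of $g$ against a rainbow-structured test function using upper-regularity; fitting the update rule, the truncation and the potential together (and, in particular, handling both discrepancy directions uniformly) is the only genuinely nontrivial part, and is the content of the dense model theorems underlying~\cite{CFZrelative}. Since Theorem~\ref{thm:CFZweak} is stated there, in the write-up I would simply cite it, recording the above as the method of proof.
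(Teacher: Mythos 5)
The paper does not prove this result: it quotes Theorem~2.16 of Conlon, Fox, and Zhao~\cite{CFZrelative} verbatim (in slightly weakened form) and relies on that paper for the proof, and you correctly identify this and propose to do the same. Your sketch of the CFZ mechanism --- the energy-increment/dense-model viewpoint, the use of upper-regularity to trade the unbounded weight function for a pointwise-bounded test function, and the role of the $\eta\le 2^{-80k/\gamma^2}$ hypothesis in absorbing the truncation errors accumulated over $O(\gamma^{-2})$ steps --- is an accurate account of how that proof goes, but it is not needed here.
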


Finally, Conlon, Fox, and Zhao proved a counting lemma going with this concept of regularity. We will need only the special case of their result counting octahedra, which is the following.
\begin{theorem}[{\cite[Theorem~2.17]{CFZrelative}}]\label{thm:CFZcount}
 For every $\delta>0$ and $k$ there exist $\eps>0$ and $\eta>0$ such that the following holds. Suppose $\Gam$ is a $k$-uniform $n$-vertex weighted hypergraph, and for each unweighted $k$-uniform hypergraph $H$ on at most $4k$ vertices we have
$\Gam(H)=1\pm\eta$.
 Suppose that $G$ is a subgraph of $\Gam$, and $\tilde{G}$ is a $k$-uniform weighted hypergraph on $n$ such that $0\le\tilde{g}(e)\le 1$ for each $e\in\binom{[n]}{k}$, and suppose that $G$ and $\tilde{G}$ form an $\eps$-discrepancy pair. Then we have
 \[G\big(\oct{k}{\vec{2}^k}\big)=\tilde{G}\big(\oct{k}{\vec{2}^k}\big)\pm\delta\,.\]
\end{theorem}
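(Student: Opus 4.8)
This statement is quoted verbatim from~\cite{CFZrelative}, so strictly speaking it only needs to be cited; but let me outline how I would recover the octahedron case of the Conlon--Fox--Zhao counting lemma, since that is the only instance used here. The plan is a hybrid (telescoping) argument over the $2^k$ crossing $k$-edges of $O:=\oct{k}{\vec 2^k}$. Enumerate these edges $f_1,\dots,f_{2^k}$, and for $0\le i\le 2^k$ set
\[
c_i:=\Ex[\Big]{\prod_{j\le i}g\big(\phi(f_j)\big)\prod_{j>i}\tilde g\big(\phi(f_j)\big)}\,,
\]
where $\phi$ is a uniformly random partite homomorphism from $O$ into $2k$ copies of $V$; since $\Gam$, $G$ and $\tilde G$ carry weights only on $k$-sets, only the $k$-edges of $O$ contribute to these expectations, so $c_0=\tilde G(O)$ and $c_{2^k}=G(O)$. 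It then suffices to prove $|c_i-c_{i-1}|\le \delta2^{-k}$ for each $i$, with $\eps$ taken small in terms of $\delta$ and $k$ and then $\eta$ small in terms of $\eps$, $\delta$ and $k$.

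Fixing $i$ and relabelling vertices within parts so that $f_i=\{v_1^0,\dots,v_k^0\}$, one has $c_i-c_{i-1}=\Ex{(g-\tilde g)\big(\phi(f_i)\big)\,W(\phi)}$, where $W(\phi)$ is the product of the weights ($g$ or $\tilde g$) of the remaining $2^k-1$ edges of $O$. If $W$ were a product $\prod_{l=1}^k W_l\big(\phi(f_i\setminus\{v_l^0\})\big)$ of $[0,1]$-valued weightings of the $(k-1)$-subsets of $f_i$, then the defining property of an $\eps$-discrepancy pair (extended from indicator to $[0,1]$-valued $F_l$ by averaging over thresholds in each coordinate) would bound this term by $O(\eps)$, and we would be done. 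The obstruction is that the edges of $O$ at Hamming distance at least $2$ from $f_i$ meet $f_i$ in fewer than $k-1$ vertices, so $W$ does not factor through the $(k-1)$-subsets of $f_i$. I would repair this with the Cauchy--Schwarz inequality: duplicating the parts of $O$ one at a time and applying Cauchy--Schwarz removes the offending cross-terms, at the cost of replacing octahedron counts by counts of ``doubled'' octahedra on at most $2\cdot 2k=4k$ vertices. These are exactly the counts controlled, up to a $1\pm O(\eta)$ factor, by the hypothesis $\Gam(H)=1\pm\eta$ for all $k$-uniform $H$ on at most $4k$ vertices; and after the doublings the surviving weight does factor through the $(k-1)$-subsets of the one remaining special edge, so the discrepancy hypothesis finishes the estimate.

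The hard part, I expect, is organising these Cauchy--Schwarz doublings correctly: one must choose the order of duplications so that afterwards the residual weight genuinely factors through the $(k-1)$-subsets of a single edge and can be fed into the discrepancy condition, while keeping track that the multiplicative errors of size $1\pm O(\eta)$ introduced each time a true count is replaced by its value in $\Gam$ accumulate to a quantity still negligible against $\delta$ --- which is precisely what forces $\eta$ to be chosen much smaller than both $\eps$ and $\delta$.
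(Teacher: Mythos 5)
The paper quotes this statement verbatim from Conlon--Fox--Zhao and supplies no proof of its own, only a citation; so the "paper's proof" is a reference to~\cite{CFZrelative}. Your sketch correctly captures the skeleton of the CFZ argument: telescope the replacement of $g$ by $\tilde g$ across the $2^k$ edges of the octahedron, bound each increment using the discrepancy hypothesis, and use iterated Cauchy--Schwarz together with the linear forms condition on $\Gam$ to justify the reduction.

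One place where your account is slightly misleading, and which hides where the real work lies, is the statement that the obstruction is edges of $O$ at Hamming distance $\ge 2$ from $f_i$ failing to ``factor through the $(k-1)$-subsets of $f_i$.'' A function of a $(k-2)$-subset of $f_i$ does trivially factor through a $(k-1)$-subset, so that is not in itself an obstruction. The two genuine obstructions are different: first, after conditioning on $\phi(f_i)$ one must average the product $W$ over the outer vertices $v_1^1,\dots,v_k^1$, and since several edges $f_a$ with $a\neq 0$ share outer vertices, the averaged $\bar W(\phi(f_i))$ is a coupled sum over a hypergraph of patterns rather than a product over $(k-1)$-subsets; and second, and more seriously, the factors $g(\phi(f_j))$ for $j<i$ are bounded only by $\gam$, which after normalisation can be as large as $p^{-1}$ pointwise, so they are not $[0,1]$-valued and the thresholding/averaging trick you invoke applies only to the $\tilde g$-factors. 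The role of the Cauchy--Schwarz doublings in CFZ is therefore not only to decouple outer vertices but, crucially, to eliminate the unbounded $g$-factors, replacing them by $\Gam$-counts of doubled hypergraphs on at most $4k$ vertices which the linear forms hypothesis controls up to $1\pm O(\eta)$; only after this is the discrepancy bound on a single special edge applicable. You flag that organising the doublings is the hard part, which is fair, but the sketch should be explicit that the squaring is forced by the unboundedness of $g$ and not merely by the geometry of the octahedron.
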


We should note that the requirement of~\cite{CFZrelative} for a large number of vertices disappears in our setting since we made constant choices explicit rather than using $o(1)$ notation.

\subsection{Hypergraph regularity lemmas}

In this subsection we will derive sparse hypergraph regularity lemmas in which the concept of regularity is octahedron-minimality from existing regularity lemmas in which the concept of regularity is different. For clarity, in this section we use the word \emph{oct-regular} for the concept we earlier defined as simply \emph{regular}, and we refer to the different concept of the original forms as \emph{disc-regular}. We give only the bare bones definitions needed to state our lemmas; for more intuition or for notation which is required to actually work with the resulting regular partitions, the reader should consult~\cite{RSreg} or~\cite{RSreg2}.

Given $k$ and a vertex set $V$ with a \emph{ground partition} $\mathcal{P}$, we say the parts of $\mathcal{P}$ are \emph{$1$-cells}. A \emph{$(k-1)$-family of partitions} $\mathcal{P}^*$ on $V$ consists of $\mathcal{P}$ of $V$, together with, for each $2\le j\le k-1$ and each $j$-set $J$ of $1$-cells, a \emph{supporting} partition of the $j$-sets of $V$ with one vertex in each member of $J$ into \emph{$j$-cells}. We say a partition is supporting if, for each $j$-cell of $\mathcal{P}^*$, there are $j$ $(j-1)$-cells such that each edge of the given $j$-cell is rainbow for the chosen $(j-1)$-cells.

We will talk about a \emph{$j$-polyad} in such a partition, by which we mean a choice of $j$ $1$-cells, $\binom{j}{2}$ $2$-cells, and so on up to $\binom{j}{j-1}$ $(j-1)$-cells, which are supporting in the above sense.

Given a $j$-polyad $\mathcal{J}$ and a weighted $j$-uniform hypergraph $G$, we say $G$ is \emph{$(\eps,d,r)$-disc-regular} with respect to $\mathcal{J}$ if there is some $d$ such that the following holds. Let $\mathbf{P}$ denote the $j$-sets supported by the union of the $(j-1)$-cells of $j$. Let $H_1,\dots,H_r$ be any unweighted sub-$(j-1)$-graphs of the union of the $(j-1)$-cells in $\mathcal{J}$. Let $Q_i$ denote the $j$-sets supported by $H_i$, for each $1\le i\le r$, and let $\mathbf{Q}$ be the union of the $Q_i$. Suppose that $|\mathbf{Q}|\ge\eps|\mathbf{P}|$. Then we have
\[\sum_{e\in\binom{V(G)}{j}}g(e)\mathbf{q}(e)=(d\pm\eps)|\mathbf{Q}|\,.\]
If there exists $d$ such that $G$ is $(\eps,d,r)$-disc-regular with respect to $\mathcal{J}$, we say $G$ is $(\eps,r)$-disc-regular with respect to $\mathcal{J}$.

We say a $k$-uniform hypergraph $G$ is \emph{$(\eps,r)$-disc-regular} with respect to a $(k-1)$-family of partitions $\mathcal{P}^*$ if the following holds. Choose uniformly at random a $k$-set of vertices intersecting each part of the ground partition in at most one vertex, and choose the $k$-polyad containing this set. Then with probability at least $1-\eps$, $G$ is $(\eps,r)$-disc-regular with respect to the chosen polyad. We say similarly that $G$ is \emph{$\eps$-oct-regular} with respect to $\mathcal{P}^*$ if the same holds replacing $(\eps,r)$-disc-regularity with $\eps$-oct-regularity.

Finally, we say a $(k-1)$-family of partitions is \emph{$(t_0,t_1,\eps)$-oct-equitable} if the following hold. There are between $t_0$ and $t_1$ parts of the ground partition, and these parts differ in size by at most one. There are numbers $d_2,\dots,d_{k-1}$ such that $1/d_i$ is an integer at most $t_1$ for each $2\le i\le k-1$, and for each $2\le j\le k-1$, each $j$-polyad in $\mathcal{P}^*$, and each $j$-cell supported by that polyad, the $j$-cell is $d_j$-oct-regular with respect to the polyad. We say $\mathcal{P}^*$ is \emph{$(t_0,t_1,\eps)$-disc-equitable} if we replace $\eps$-oct-regularity with $(\eps,1)$-disc-regularity, and impose the stronger condition that every part of the ground partition has exactly the same size.

Probably the most commonly used form of hypergraph regularity is the following, due to R\"odl and Schacht.

\begin{lemma}[{\cite[Lemma~23]{RSreg2}}]\label{lem:RSchRL}
Let $k \geq 3$ be a fixed integer. For all positive integers $q$, $t_0$ and $s$,
positive~$\eps_k$ and functions $r: \NATS \rightarrow \NATS$ 
and $\eps: \NATS \rightarrow (0,1]$, there exist integers~$t_1$ 
and~$n_0$ such that the following holds for all $n \ge n_0$ which are divisible 
by~$t_1!$. Let $V$ be a vertex set of size $n$, and suppose that~$G_1,
\dots, G_s$ are edge-disjoint $k$-uniform hypergraphs on $V$, and that $\mathcal{Q}$ is a partition
of $V$ into at most $q$ parts of equal size. Then there exists a
$(k-1)$-family of partitions $\mathcal{P}^*$ on~$V$ such that
\begin{enumerate}[label=\abc]
\item the ground partition of $\mathcal{P}^*$ refines $\mathcal{Q}$,
\item $\mathcal{P}^*$ is $(t_0,t_1, \eps(t_1))$-disc-equitable, and 
\item for each $1 \leq i \leq s$, $G_i$ is $(\eps_k,r(t_1))$-disc-regular with
respect to~$\mathcal{P}^*$.
\end{enumerate}
\end{lemma}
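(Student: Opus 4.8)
The plan is essentially to do nothing new here: this is the dense R\"odl--Schacht regularity lemma, reproduced verbatim as Lemma~23 of \cite{RSreg2}, so we simply quote it, and the actual work of this subsection lies in deriving from it — by feeding $k$-uniform hypergraphs into the Conlon--Fox--Zhao machinery of the previous subsection — the corresponding \emph{sparse}, \emph{oct}-regular statement. For completeness I sketch the strategy of \cite{RSreg2}, since the sparse derivations are modelled on it.

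First I would attach to each $(k-1)$-family of partitions $\mathcal{P}^*$ the usual \emph{index} (mean-square density, ``energy'') functional: for each $2\le j\le k$ one averages the squares of the relative $j$-densities of the $G_i$ over the $j$-polyads of $\mathcal{P}^*$, and these per-level contributions are combined with a fixed decreasing hierarchy of weights, chosen so that a refinement performed at one level cannot be undone by what happens at higher levels. This functional takes values in a fixed bounded interval, which is what forces the iteration to terminate.

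The core step is a \emph{defect/increment} lemma: if some $G_i$ fails to be $(\eps_k, r(t_1))$-disc-regular with respect to $\mathcal{P}^*$, then a layer-by-layer Cauchy--Schwarz argument — starting from the $(j-1)$-graph witnesses of irregularity and pulling them back to a refinement of the $(j-1)$-cells, then of the $(j-2)$-cells, and so on down to the ground partition, all the while preserving the supporting condition — produces a refinement of $\mathcal{P}^*$ whose index exceeds that of $\mathcal{P}^*$ by at least some absolute $\delta = \delta(\eps_k, k) > 0$, at the cost of multiplying the number of cells by a bounded (tower-type) factor. Iterating from the trivial family refining $\mathcal{Q}$: each step spends at least $\delta$ of the bounded index, so after $O(\delta^{-1})$ steps every $G_i$ is $(\eps_k, r(t_1))$-disc-regular; tracking how the cell count grows through the iteration yields the claimed $t_1 = t_1(q, t_0, s, \eps_k, r, \eps)$ and $n_0$. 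A final rounding step arranges the equitability in~(b) — equal part sizes, prescribed densities $d_2,\dots,d_{k-1}$ with each $1/d_j$ a bounded integer, and divisibility of $n$ by $t_1!$ — and one checks along the way that refinements never destroy the property of refining $\mathcal{Q}$.

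The genuinely delicate point, and the reason we quote rather than reprove, is that multi-level Cauchy--Schwarz step: one must refine the lower-level cells finely enough to localise the irregularity witnessed among $j$-sets, while keeping the family supporting and preventing the intermediate complexity from exploding, and the weight bookkeeping has to guarantee that the index gain survives into the final functional. This balancing act is the technical heart of \cite{RSreg2} (and of \cite{RSreg, RNSSKregmethod}), and we have nothing to add to it here.
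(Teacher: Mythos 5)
Your treatment matches the paper exactly: Lemma~\ref{lem:RSchRL} is stated there purely as a quotation of \cite[Lemma~23]{RSreg2}, with no proof supplied, and the real work of the section is the derivation of the sparse oct-regular version from it. Your sketch of the underlying index-increment argument is accurate background but, as you say yourself, nothing beyond the citation is needed.
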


It is often convenient to have the initial partition $\mathcal{Q}$ of the vertex set which is refined. In fact R\"odl and Schacht even allow for an initial family of partitions which is refined (in an appropriate sense), and we could similarly allow for such an initial family of partitions in our forthcoming sparse version Lemma~\ref{lem:RSchreglem}; but for clarity we prefer this version.

A counting lemma going with this, taken from~\cite[Lemma~27]{ABCM} (but which is derived from~\cite{RSchCount} via~\cite{CFKO}), is the following (which we state specifically for counting octahedra in one polyad; this follows from the version of~\cite[Lemma~27]{ABCM} by the standard construction).

\begin{lemma}\label{lem:RegCount}
  Let $k,s,r,m_0$ be positive integers, and let
  $d_2,\ldots,d_{k-1}$, $\eps$, $\eps_k$, $\beta$ be positive constants such that $1/d_i
  \in\mathbb{N}$ for any $2 \leq i \leq k-1$ and
  \[\frac{1}{m_0}\ll \frac{1}{r}, \eps \ll
  \eps_k,d_2,\ldots,d_{k-1}\quad\text{and}\quad \eps_k \ll
  \beta\,.\]
  Then the following holds for all
  integers $m\ge m_0$. 
Let $\mathcal{J}$ be a $k$-polyad with $k$
clusters $V_1, \dots, V_k$ each of size $m$, and suppose that for each $2\le j\le k-1$, each $j$-cell of $\mathcal{J}$ is $(\eps,d_j,1)$-disc-regular with respect to its supporting $(j-1)$-polyad. Let $G$ be an unweighted $k$-uniform hypergraph on $\bigcup_{i \in [k]} V_i$ which is supported
on $\mathcal{J}$ and is $(\eps_k,d,r)$-disc-regular with respect to $\mathcal{J}$, and write $\cG$ for the unweighted $k$-complex obtained from $\mathcal{J}$ by adding all edges of $G$.
Then we have
\[\cG\big(\oct{k}{\vec{2}^k}\big)=\left(d^{2^k} \pm \beta \right)\prod_{j
= 2}^{k-1} d_j^{2^j\binom{k}{j}}\,.\]
\end{lemma}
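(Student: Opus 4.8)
The plan is to derive Lemma~\ref{lem:RegCount} directly from the general dense hypergraph counting lemma \cite[Lemma~27]{ABCM} (which in turn goes back to \cite{RSchCount} via \cite{CFKO}); that lemma already counts copies of an arbitrary fixed $k$-complex inside a regular $(k-1)$-family of partitions, so the only thing to do is to package the octahedron $\oct{k}{\vec{2}^k}$ and the single polyad $\mathcal{J}$ into the form its hypotheses require. The device for this is precisely the standard construction of Definition~\ref{def:standard}.

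Concretely, I would regard $\oct{k}{\vec{2}^k}$ as a $[k]$-partite $k$-complex with two vertices in each part, and apply the standard construction with index set $[k]$, index complex $\oct{k}{\vec{2}^k}$, and host the complex $\cG$ built from $\mathcal{J}$ together with the edges of $G$. This duplicates each cluster $V_i$ (and, with it, every cell of $\mathcal{J}$ spanning directions through $i$), producing a $2k$-partite complex $\cG'$ with $2k$ clusters of size $m$, exactly one octahedron-vertex per cluster, and with the property that the partite homomorphism count of the single top crossing edge in $\cG'$ equals $\cG(\oct{k}{\vec{2}^k})$. One then records that the duplication carries the regularity hypotheses over verbatim: each $j$-cell ($2\le j\le k-1$) of $\cG'$ is a copy of a $j$-cell of $\mathcal{J}$ and hence still $(\eps,d_j,1)$-disc-regular with respect to its supporting $(j-1)$-polyad, while the top-level graph inherited from $G$ is still $(\eps_k,d,r)$-disc-regular with respect to the blown-up $k$-polyad. (This is the same routine bookkeeping used to pass Lemmas~\ref{lem:GPEcount} and~\ref{lem:GPEemb} to the multiple-vertices-per-part setting.)

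Applying \cite[Lemma~27]{ABCM} to $\cG'$, with the hierarchy $1/m_0\ll 1/r,\eps\ll\eps_k,d_2,\dots,d_{k-1}$ and $\eps_k\ll\beta$ translated into the hypotheses of that lemma, yields
\[
\cG\big(\oct{k}{\vec{2}^k}\big)=\big(d^{2^k}\pm\beta\big)\prod_{j=2}^{k-1}d_j^{2^j\binom{k}{j}}\,,
\]
since the octahedron has $2^j\binom{k}{j}$ crossing $j$-sets (that is, $\binom{k}{j}$ choices of directions times $2^j$ vertices) for $2\le j\le k-1$ and $2^k$ crossing $k$-sets, while the $1$-cells contribute nothing; the $\pm\beta$ term comes from the $\eps_k$-disc-regularity of $G$ together with the choice $\eps_k\ll\beta$. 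I expect the only real obstacle to be bookkeeping: matching the normalisation and the complexity/parameter conventions of \cite[Lemma~27]{ABCM} (stated for a regular slice) against this single-polyad, octahedron-only formulation, and checking that the error terms survive the standard construction. There is no new mathematical content beyond the cited counting lemma.
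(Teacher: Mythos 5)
Your proposal is correct and is exactly the route the paper takes: the paper does not prove Lemma~\ref{lem:RegCount} at all, but states that it "follows from the version of \cite[Lemma~27]{ABCM} by the standard construction", which is precisely your reduction (duplicate each cluster via Definition~\ref{def:standard}, note that disc-regularity of the cells and of $G$ is preserved verbatim, and read off the exponents $2^j\binom{k}{j}$ and $2^k$ from the crossing sets of the octahedron). Your write-up simply makes explicit the bookkeeping the paper leaves implicit.
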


We now state our sparse regularity lemma for hypergraphs. For convenience of use, we remove the divisibility condition on $n$ (which is trivial); this motivates the slight difference between the definitions of disc-equitable and oct-equitable.

\begin{lemma}\label{lem:RSchreglem}
Let $k \geq 3$ be a fixed integer. For all positive integers $q$, $t_0$, and $s$,
positive~$\eps_k$ and functions $\eps: \NATS \rightarrow (0,1]$, there exist integers~$t_1$ 
and~$n_0$, and an $\eta^*>0$, such that the following holds for all $n \ge n_0$. Let $V$ be a vertex set of size $n$, let $\Gam$ be a $k$-uniform hypergraph on $[n]$, and suppose that~$G_1,
\dots, G_s$ are edge-disjoint $k$-uniform subgraphs of $\Gam$, and that $\mathcal{Q}$ is a partition
of $V$ into at most $q$ parts whose sizes differ by at most one. Suppose furthermore that there is some $p>0$ such that for any $k$-uniform unweighted hypergraph $H$ on at most $4k$ vertices we have $\Gam(H)=(1\pm \eta^*)p^{e(H)}$. Then there exists a
$(k-1)$-family of partitions $\mathcal{P}^*$ on~$V$ such that
\begin{enumerate}[label=\abc]
\item\label{RL:a} the ground partition of $\mathcal{P}^*$ refines $\mathcal{Q}$,
\item\label{RL:b} $\mathcal{P}^*$ is $(t_0,t_1, \eps(t_1))$-oct-equitable, and 
\item\label{RL:c} for each $1 \leq i \leq s$, $G_i$ is $\eps_k$-oct-regular with
respect to~$\mathcal{P}^*$.
\end{enumerate}
\end{lemma}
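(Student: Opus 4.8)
The plan is to reduce part~\ref{RL:c} to the \emph{dense} hypergraph regularity lemma (Lemma~\ref{lem:RSchRL}) by replacing each $G_i$ with a bounded-density model hypergraph furnished by the Conlon--Fox--Zhao weak regularity lemma, and then to transfer both the equitability of the resulting partition and the regularity of the $G_i$ from the disc-regular setting back to the oct-regular setting. Concretely, first rescale: let $\Gam'$ be the weighted $k$-graph giving weight $p^{-1}$ to each edge of $\Gam$ (and $0$ elsewhere), and set $G_i':=p^{-1}G_i\le\Gam'$. The hypothesis $\Gam(H)=(1\pm\eta^*)p^{e(H)}$ then reads $\Gam'(H)=1\pm\eta^*$ for every $H$ on at most $4k$ vertices, so Lemma~\ref{lem:CFZupper} shows $\Gam'$, and hence every $G_i'$, is upper $\eta$-regular with $\eta$ as small as we wish once $\eta^*$ is small. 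Applying Theorem~\ref{thm:CFZweak} to each $G_i'$ yields weighted $k$-graphs $\tilde G_i$ with $0\le\tilde g_i(e)\le1$ such that $(G_i',\tilde G_i)$ is a $\gamma$-discrepancy pair, for a $\gamma$ we fix later (requiring $\eta\le 2^{-80k/\gamma^2}$). Finally we apply the natural weighted, not-necessarily-edge-disjoint form of Lemma~\ref{lem:RSchRL} --- whose energy-increment proof is insensitive to both generalisations --- to $\tilde G_1,\dots,\tilde G_s$ with initial partition $\cQ$, obtaining a $(k-1)$-family of partitions $\mathcal{P}^*$ that refines $\cQ$, is $(t_0,t_1,\eps(t_1))$-disc-equitable, and with respect to which every $\tilde G_i$ is $(\eps_k',r(t_1))$-disc-regular. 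The order of choices is: from the target $\eps_k$ and function $\eps(\cdot)$, pick the accuracy $\eps_k'$ and the function fed to Lemma~\ref{lem:RSchRL}, obtaining $t_1$; then $\gamma$ small compared with $t_1$ and $\eps_k$; then $\eta$; then $\eta^*$ small enough for Lemma~\ref{lem:CFZupper} and for the counting below. There is no circularity since $t_1$ does not depend on $\gamma$.

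Two routine clean-ups follow. The divisibility requirement ``$t_1!\mid n$'' and the equal-parts requirement on $\cQ$ in Lemma~\ref{lem:RSchRL} are removed by the standard padding argument: delete a bounded number of vertices so that the remainder has the needed divisibility and $\cQ$ restricts to a partition into equal parts, apply the lemma there, and reinsert the deleted vertices into arbitrary $1$-cells; this perturbs every relevant density by $o(1)$ and leaves every ground-partition part with one of two consecutive sizes --- precisely why the conclusion is \emph{oct}-equitable rather than \emph{disc}-equitable. For part~\ref{RL:b} it remains to upgrade the disc-regularity of the $j$-cells ($2\le j\le k-1$) to oct-regularity with respect to their polyads; this is a purely dense matter not involving the $\tilde G_i$. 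Each such $j$-cell is $(\eps(t_1),1)$-disc-regular with some density $d_j$ with respect to a polyad built from disc-regular cells, so the dense counting lemma (Lemma~\ref{lem:RegCount} and its $j$-uniform analogues) shows its octahedron density equals $d_j^{2^j}$ times that of the polyad up to an $o(1)$ error, which is tiny compared with $d_j\ge 1/t_1$; with the density estimate this is exactly $d_j$-oct-regularity in the sense of Definition~\ref{def:reg}, provided $\eps(\cdot)$ was chosen to decay fast enough in $t_1$. As this holds for every $j$-cell and $j$-polyad, $\mathcal{P}^*$ is oct-equitable.

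The substantive step is part~\ref{RL:c}. Fix $i$ and a $k$-polyad $\mathcal{J}$ of $\mathcal{P}^*$ with respect to which $\tilde G_i$ is $(\eps_k',d,r(t_1))$-disc-regular --- this holds for all but an $\eps_k'$-fraction of polyads. The $k$-sets supported by $\mathcal{J}$ form a rainbow set, and rainbow sets are closed under intersection with the level-sets used to define disc-regularity, so $(G_i',\tilde G_i)$ restricted to $\mathcal{J}$ is still a $\gamma'$-discrepancy pair with $\gamma'=\gamma/\prod_j d_j^{\binom{k}{j}}$ small; reading off densities, $G_i'$ is $(\eps_k'',d,r)$-disc-regular with respect to $\mathcal{J}$, with the \emph{same} relative density $d$ and $\eps_k''$ not much larger than $\eps_k'+\gamma'/\eps_k'$. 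To turn this into octahedron-minimality, first note that $\Gam$ restricted to a typical polyad $\mathcal{J}$ is itself pseudorandom: the linear-forms hypothesis on $\Gam$ together with the dense counting lemma applied to the disc-regular lower cells of $\mathcal{J}$ shows that the scaled restriction of $\Gam'$ to $\mathcal{J}$ satisfies the small-subgraph count bounds needed to run Theorem~\ref{thm:CFZcount} relative to $\mathcal{J}$. Applying that theorem (relativised to $\mathcal{J}$) to $(G_i',\tilde G_i)$ shows the octahedron count of $G_i'$ within $\mathcal{J}$ matches that of $\tilde G_i$ within $\mathcal{J}$ up to $\pm\delta$, and the latter is at most $\big(d^{2^k}+o(1)\big)$ times the octahedron count of $\mathcal{J}$ by the (weighted) dense counting lemma. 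Undoing the $p^{-1}$ scaling --- which multiplies octahedron counts by $p^{2^k}$ --- this says exactly that $G_i$ is $\eps_k$-oct-regular with respect to $\mathcal{J}$, once $\eps_k$ is taken to absorb $\delta$, $\gamma'$, and the $o(1)$ terms. Since all but an $\eps_k$-fraction of $k$-polyads are of this type, $G_i$ is $\eps_k$-oct-regular with respect to $\mathcal{P}^*$, giving~\ref{RL:c}.

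The main obstacle is this last step, and specifically the \emph{relativisation} of the Conlon--Fox--Zhao discrepancy and counting machinery --- Theorems~\ref{thm:CFZweak} and~\ref{thm:CFZcount} are stated globally, with respect to the complete $k$-graph --- to an individual polyad of $\mathcal{P}^*$: one must check carefully that a typical polyad, together with its disc-regular lower cells, serves as a ``complete'' host for $\Gam$ in the sense those theorems demand, i.e.\ that $\Gam$ restricted to the polyad still satisfies a linear-forms-type condition with the right error. The remaining ingredients --- the rescaling, the padding away of the divisibility and equal-parts hypotheses, the dense equivalence of disc- and oct-regularity at the lower levels, and the bookkeeping of the constant hierarchy --- are routine.
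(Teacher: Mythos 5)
Your overall strategy (rescale by $p^{-1}$, use Lemma~\ref{lem:CFZupper} and Theorem~\ref{thm:CFZweak} to get bounded dense models, regularise those with the dense machinery, and transfer regularity back through the discrepancy pairs and Theorem~\ref{thm:CFZcount}) is the right one, but the step you yourself flag as the main obstacle is a genuine gap, and it is an unnecessary one. You propose to \emph{relativise} Theorem~\ref{thm:CFZcount} to an individual polyad $\mathcal{J}$, which forces you to show that $\Gam$ restricted to $\mathcal{J}$ still satisfies a linear-forms condition with small multiplicative error. This does not follow from the hypotheses plus the disc-regularity of the lower cells: counting small $k$-graphs of the sparse $\Gam$ \emph{relative to} the cells of $\mathcal{J}$ is itself a sparse relative counting statement (of the kind this paper spends Sections~\ref{sec:count}--\ref{sec:inherit} proving), and moreover $\mathcal{P}^*$ was constructed from the $G_i\subset\Gam$, so one cannot treat $\Gam$ as typical with respect to the polyad without also regularising $\Gam$ and proving such a relative counting lemma from scratch. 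The actual proof avoids relativisation entirely: one restricts $G_i$ and its dense model to the edges supported on $\mathcal{J}$, observes that the restricted pair is still a (globally normalised) $2k!\gamma$-discrepancy pair --- because the test graphs $F_1,\dots,F_k$ in the discrepancy definition may simply be intersected with the $(k-1)$-cells of $\mathcal{J}$ --- and then applies Theorem~\ref{thm:CFZcount} \emph{globally}, with the original $\Gam$ as majorant, choosing $\delta$ of order $t_1^{-2^k}\eps_k$ so that the resulting additive error in the global octahedron count is negligible compared with the octahedron count supported on a single polyad. No pseudorandomness of $\Gam$ relative to $\mathcal{J}$ is ever needed.

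A second, smaller deviation: you feed the weighted, non-edge-disjoint models $\tilde G_1,\dots,\tilde G_s$ directly into ``the natural weighted form'' of Lemma~\ref{lem:RSchRL}. No such form is available as stated, and asserting that the energy-increment proof is insensitive to both generalisations is exactly the kind of claim the lemma is trying not to rely on. The paper's route is to scale the models by $s^{-1}$ and then randomly round: each $k$-set is placed in at most one of $s$ unweighted hypergraphs $G''_i$, with probability $s^{-1}\tilde g_i(e)$, and a Chernoff plus union bound shows that with high probability each $(G''_i,\tilde G_i)$ remains a $\gamma$-discrepancy pair; the $G''_i$ are then edge-disjoint and unweighted, so the dense unweighted case (which is Lemma~\ref{lem:RSchRL} plus the padding argument and Lemma~\ref{lem:RegCount}, much as in your second paragraph) applies verbatim. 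Your treatment of part~(b), the padding away of the divisibility and equal-parts hypotheses, and the order of constants are all fine; the proof stands or falls on the two points above, and as written the first of them is an acknowledged hole where the argument would need a substantially different (and avoidable) ingredient.
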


Lemma~\ref{lem:RegCount} shows that Lemma~\ref{lem:RSchRL} immediately implies the dense, unweighted case of this statement. Specifically, what we now prove is the case that $p=1$ and $\Gam$ is the complete unweighted $k$-uniform hypergraph on $[n]$, and each $G_i$ is an unweighted $k$-uniform hypergraph on $n$ (that is, its weight function has range $\{0,1\}$ ). We defer the general case to later.  We should note that, as proved in~\cite{DHNRcharacterising}, the dense unweighted case of Lemma~\ref{lem:RSchreglem} is formally weaker than Lemma~\ref{lem:RSchRL}.

\begin{proof}[Proof of Lemma~\ref{lem:RSchreglem}, dense unweighted case]
 Given $k\ge 3$, positive integers $q$, $t_0$ and $s$, positive $\eps_k$ and a function $\eps:\NATS\rightarrow(0,1]$, we let $\eps'_k<\eps_k$ be small enough for Lemma~\ref{lem:RegCount} with input $\beta=\tfrac12\eps_k$. We choose functions $r,\eps'$ of $t_1$ which tend to infinity and zero respectively fast enough for Lemma~\ref{lem:RegCount} to apply provided all densities $d_2,\dots,d_{k-1}$ are at least $1/t_1$. In addition we insist $\eps'(t_1)$ is small enough for application of Lemma~\ref{lem:RegCount} with input $\beta=\tfrac12\eps(t_1)$ to count octahedra of uniformity between $2$ and $k-1$ inclusive. Let $m_0$ be large enough for all these applications of Lemma~\ref{lem:RegCount}.
 
 Let $t_1$ and $n'_0$ be returned by Lemma~\ref{lem:RSchRL} for input $k,q,t_0,s,\eps'_k,r,\eps'$. If necessary, we increase $t_1$ such that $q$ divides $t_1!$. Let $n_0\ge n'_0$ be such that $n_0\ge m_0t_1$ is sufficiently large for the following calculations. Given any $n\ge n_0$, let $G_1,\dots,G_s$ be edge-disjoint $k$-uniform unweighted hypergraphs on $[n]$. We add a set $N$ of $\lceil\tfrac{n}{t_1!}\rceil t_1!-n$ new vertices to the vertex set of each $G_i$ (and no new edges) to obtain $k$-graphs $G'_1,\dots,G'_s$ on $n'$ vertices, where by construction $n'$ is divisible by $t_1!$. We extend $\mathcal{Q}$ to a partition $\mathcal{Q}'$ on $[n']$ by adding the new vertices to the parts of $\mathcal{Q}$ such that the part sizes of $\mathcal{Q}'$ are equal (which is possible since $q$ divides $t_1!$ divides $n'$). We now apply Lemma~\ref{lem:RSchRL}, with the given inputs, to the $G'_1,\dots,G'_s$. The result is a family of partitions $\mathcal{R}^*$ which satisfies the conclusion of Lemma~\ref{lem:RSchRL}. Removing the added vertices $N$, we obtain a family of partitions $\mathcal{P}^*$, which we claim has the desired properties.
 
 The property~\ref{RL:a} follows from the construction and the fact that the ground partition of $\mathcal{R}^*$ refines $\mathcal{Q}'$. The partition $\mathcal{R}^*$ is $(t_0,t_1,\eps'(t_1))$-disc-equitable, so by Lemma~\ref{lem:RegCount} it follows that it is also $\mathcal{R}^*$ is $(t_0,t_1,\tfrac12\eps(t_1))$-oct-equitable, and whenever some $G'_i$ is $(\eps'_k,r)$-disc-regular with respect to a $k$-polyad of $\mathcal{R}^*$, it is also $\tfrac12\eps_k$-oct-regular with respect to that polyad. Removing the set $N$ of at most $t_1!$ extra vertices reduces the size of any part by at most $t_1!$, and hence changes the number of $j$-edges using that part by at most $t_1!n^{j-1}$, and the number of copies of $\oct{j}{\vec{2}^j}$ using that part by at most $(t_1!)^2n^{2j-2}$. By choice of $n_0$ and by Lemma~\ref{lem:RegCount}, these numbers are tiny compared to respectively the number of edges and octahedra supported in any cell or polyad of $\mathcal{R}^*$. Consequently $\mathcal{P}^*$ is $\eps_k$-oct-equitable, giving~\ref{RL:b}, and whenever $G'_i$ is $(\eps'_k,r)$-disc-regular with respect to some polyad of $\mathcal{R}^*$ also $G_i$ is $\eps_k$-oct-regular with respect to the corresponding polyad of $\mathcal{P}^*$, giving~\ref{RL:c}.
\end{proof}

We next use Theorems~\ref{thm:CFZweak} and~\ref{thm:CFZcount} to derive the general case of Lemma~\ref{lem:RSchreglem} from the dense unweighted case proved above. We should note that, although this proof involves two applications of a regularity lemma, the bounds on constants we obtain are essentially the same as in Lemma~\ref{lem:RSchRL}. The application of weak regularity causes all these bounds to increase by less than a double exponential; this is inconsequential given the bounds of Lemma~\ref{lem:RSchRL} cannot (even for $2$-graphs) be better than tower-type.

\begin{proof}[Proof of Lemma~\ref{lem:RSchreglem}, general case]
Given $k$, $q$, $t_0$, $s$ and $\eps_k$, and a function $\eps:\mathbb{N}\to(0,1]$, let $t_1$ and $n_0$ be returned by the dense unweighted case of Lemma~\ref{lem:RSchreglem} for input as above but with $\tfrac1{2s}\eps_k$ replacing $\eps_k$.

We choose $\gamma$ such that $2k!\gamma$ is small enough for Theorem~\ref{thm:CFZcount} with input $\delta=\tfrac1{4s}t_1^{-2^k}\eps_k$. We let $\eta>0$ be small enough for Theorem~\ref{thm:CFZweak} with input $\gamma$. We let $\eta'>0$ be small enough for Lemma~\ref{lem:CFZupper} with input $\eta$, and we let $\eta^*$ be small enough for the applications of Theorem~\ref{thm:CFZcount} and Lemma~\ref{lem:CFZupper}.
Given an initial partition $\mathcal{Q}$ and $k$-uniform hypergraphs $\Gam$, $G_1,\dots,G_s$ satisfying the conditions of the lemma, we proceed as follows.
 
 In order to apply the machinery of Conlon, Fox, and Zhao~\cite{CFZrelative} we need to scale the weight function of our majorising $k$-uniform hypergraph $\Gam$ by $p^{-1}$. We write $p^{-1}\Gam$ for the $k$-uniform hypergraph we obtain by this scaling, and similarly for the $G_i$. Given an unweighted $k$-uniform hypergraph $H$, we (slightly abusing notation) think of $\Gam$ and $H$ as complexes where all edges of size less than $k$ are present with weight $1$, and write $\Gam(H)$ for the corresponding homomorphism density. Our counting condition states that for any $H$ with at most $4k$ vertices, we have $\Gam(H)=(1\pm\eta')p^{e(H)}$, and thus $(p^{-1}\Gam)(H)=1\pm\eta'$. By Lemma~\ref{lem:CFZupper} it follows that $p^{-1}\Gam$, and its subgraphs $p^{-1}G_i$, are upper $\eta$-regular. Applying Theorem~\ref{thm:CFZweak}, with input $\gamma$ separately to each $p^{-1}G_i$, we obtain weighted $k$-graphs $G'_i$ on $[n]$, whose weights are in $[0,1]$, such that $(p^{-1}G_i,G'_i)$ is a $\gamma$-discrepancy pair for each $i$. It follows that $(p^{-1}s^{-1}G_i,s^{-1}G'_i)$ is also a $\gamma$-discrepancy pair for each $i$.
 
 We now create unweighted $k$-graphs $G''_i$ by, for each $e\in\binom{[n]}{k}$ independently, choosing to put $e$ into either exactly one of the $G''_i$, or into none of them, choosing to put $e$ in $G''_i$ with probability $s^{-1}g'_i(e)$. Since $0\le g'_i(e)\le 1$ for each $i$, we have $0\le \sum_{i\in[s]}g'_i(e)\le s$, so that the distribution we just described is as required a probability distribution.
 
 We claim that with high probability $(G''_i,G'_i)$ is a $\gamma$-discrepancy pair for each $i$. Indeed, suppose $i$ and unweighted $(k-1)$-graphs $F_1,\dots,F_k$ on $[n]$ are fixed before the sampling of the $G''_i$. The expected number of edges of $G''_i$ which are rainbow for $F_1,\dots,F_k$ is exactly equal to the sum of $g'_i(e)$ over $e$ rainbow for $F_1,\dots,F_k$. By the Chernoff bound, the probability of an additive error of $\gamma n^k$ is $o(2^{-kn^{k-1}})$. In other words, a given $F_1,\dots,F_k$ and $i$ witness the failure of our claim with probability $o(2^{-kn^{k-1}})$. Taking the union bound over the at most $s2^{kn^{k-1}}$ choices of $F_1,\dots,F_k$ and $i$, our claim fails with probability $o(1)$ as desired.
 
 Putting this together, we see that $(p^{-1}s^{-1}G_i,G''_i)$ is a $2\gamma$-discrepancy pair for each $i$. We now apply the dense unweighted case of Lemma~\ref{lem:RSchreglem}, with input $\tfrac{1}{2s}\eps_k$ replacing $\eps_k$ but otherwise with the same inputs as given to the general case, to the graphs $G''_1,\dots,G''_s$, obtaining a family $\mathcal{P}^*$ of partitions. We claim that this is the desired family of partitions, for which we only need check condition~\ref{RL:c} holds for the $G_i$ as well as the $G''_i$.
 
 To that end, suppose we have a polyad $\mathcal{J}$ of $\mathcal{P}^*$ and an $i$ such that $G''_i$ is $\tfrac{1}{2s}\eps_k$-oct-regular with respect to $\mathcal{J}$. It is enough to show that $G_i$ is also $\eps_k$-oct-regular with respect to $\mathcal{J}$. We obtain graphs $H$ and $\tilde{H}$ by deleting (i.e.\ setting to weight zero) all edges of respectively $G_i$ and $G''_i$ which are not supported by $\mathcal{J}$. Trivially $H$ is still a subgraph of $\Gam$, and we claim that $(p^{-1}s^{-1}H,\tilde{H})$ is a $2k!\gam$-discrepancy pair. Indeed, given any $(k-1)$-uniform unweighted hypergraphs $F_1,\dots,F_k$, we consider the intersections of each $F_j$ with the $(k-1)$-cells of $\mathcal{J}$. A $k$-set which is rainbow for the $F_i$ can only have non-zero weight in either $H$ or $\tilde{H}$ if its $(k-1)$-subsets are also contained in, and rainbow for, the $(k-1)$-cells of $\mathcal{J}$, so it suffices to look at the $k!$ different rainbow intersections of the $F_i$ with the $(k-1)$-cells of $\mathcal{J}$. For each such intersection, the weights of rainbow $k$-sets in $H$ and in $\tilde{H}$ are unchanged from those in $G_i$ and $G''_i$. Since the latter form a $2\gamma$-discrepancy pair, the contribution to the discrepancy of $p^{-1}s^{-1}H$ and $\tilde{H}$ by any given rainbow intersection is at most $2\gamma$, as required.
 
 Applying Theorem~\ref{thm:CFZcount}, we have
 \[(p^{-1}s^{-1}H)(\oct{k}{\vec{2}^k})=\tilde{H}(\oct{k}{\vec{2}^k})\pm\delta \,,\]
 and by choice of $\delta$ we conclude that $G_i$ is $\eps_k$-oct-regular with respect to $\mathcal{J}$, as desired.
\end{proof}

\subsection{Relating GPEs and regularity}

As usual, when one has a family of partitions and wishes to embed a complex $H$ into it, it is necessary to choose the cells of the family of partitions to which we embed edges of $H$ of all sizes, and (if the embedding is to be done by regularity) we need that the $k$-polyads to which we want to embed $k$-edges are regular (and relatively dense if we want to obtain an embedding with large weight). Making this choice can be quite technically difficult; the Regular Slice Lemma of~\cite{ABCM}, which as mentioned has a sparse counterpart in our setting, can help.

Once the choice is made, Theorems~\ref{thm:counting} and~\ref{thm:embedding} give respectively a two-sided counting lemma for small hypergraphs, and a one-sided counting lemma for potentially large hypergraphs. We now explain how to prove these two theorems. What we need to do is explain how to construct a stack of candidate graphs such that the trivial partial embedding, in which no vertices are embedded, is a good partial embedding. One should usually think of $\Gam$ as being a random graph, so that its density graph has all edges of uniformity less than $k$ of weight $1$, and all edges of uniformity $k$ of weight $p$ for some $p\in(0,1]$, and $\cG$ as being the subgraph consisting of the chosen cells of the family of partitions into which we want to embed, together with the supported $k$-edges in the resulting $k$-polyads (though we will not actually use this idea, and the following lemma is true in more generality).

\begin{lemma}\label{lem:getGPE}
 For all $k\ge 2$, finite sets $J$, and $J$-partite $k$-complexes $H$, given parameters $\eta_k$, $\eta_0$ and $\eps_\ell$, $d_\ell$ for $1\le\ell\le k$ such that  $0<\eta_0\ll d_1,\dotsc,d_k,\,\eta_k$, and for all $\ell$ we have $0<\eps_{\ell}\ll d_{\ell},\dotsc,d_k,\,\eta_k$,
the following holds.

Let $c^*=\max\{2v(H)-1, 4k^2+k\}$. 
Suppose we are given any $J$-partite weighted $k$-graphs $\cG\subset\Gam$ and density graphs $\cD$, $\cP$, where $\Gam$ is an $(\eta_0,c^*)$-THC graph for $H$ with density graph $\cP$, and where for each $e\subset J$ of size $1\le\ell\le k$, the graph $\ind{\cG}{V_e}$ is $\eps_\ell$-regular with relative density $d(e)\ge d_\ell$ with respect to the graph obtained from $\ind{\cG}{V_e}$ by replacing layer $\ell$ with $\Gam$. We define a stack of candidate graphs $\cC^{(0)},\dots,\cC^{(k)}$ as follows. To begin with, we apply the standard construction to $\Gam$ and $\cP$ in order to obtain a $v(H)$-partite graph $\cC^{(0)}$ with density graph $\cD^{(0)}$, and to $\cG$ to obtain a $v(H)$-partite graph $\cC^{(k)}$. We then let $\cC^{(\ell)}$ consist of all edges of $\cC^{(k)}$ of uniformity less than or equal to $\ell$, and all edges of $\cC^{(k)}$ of uniformity greater than $\ell$. For each $1\le\ell\le k$, we let $\cD^{(\ell)}$ have weight one on all edges of uniformity not equal to $\ell$, and weight equal to that of $\cD$ on edges of uniformity $\ell$. Then the trivial partial embedding of $H$ is a $k$-GPE.
\end{lemma}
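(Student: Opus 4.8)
The plan is to produce a valid ensemble of parameters tailored to the data of the lemma and then verify \ref{gpe:c0}, \ref{gpe:reg} and \ref{gpe:dens} directly for the trivial partial embedding $\phi$ (so $\dom\phi=\emptyset$) together with the stack $\cC\tz,\dots,\cC\tk$ of the statement. Write $v=v(H)$, $\Delta=\Delta(H)$, $\Delta'=\vdeg(H)$, take $c^*=\max\{2v-1,4k^2+k\}$ as in the statement and $h^*:=k(4k+1)+k\Delta'$, and — the one non-routine choice — take the minimum relative densities to be $\delta_\ell:=d_\ell^{2^{v}}$ (not $d_\ell$ itself). I would then run the top-down construction of a valid ensemble described just after Definition~\ref{def:ensemble}, starting from the given $\eta_k$ and this $\delta_k$; the only adjustment to that recipe is that at each level $\ell$ I place the supplied parameter $\eps_\ell$ into the slot $\eps_{\ell,\ell,0}$ (the remaining level-$\ell$ regularity parameters are then chosen in a $\gg$-chain around it, and the counting accuracy $\eta_{\ell-1}$ is computed as usual), and at the bottom of the stack I take $\eta_0$ to be the value the procedure outputs. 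The hypotheses $0<\eta_0\ll d_1,\dots,d_k,\eta_k$ and $0<\eps_\ell\ll d_\ell,\dots,d_k,\eta_k$ are exactly what makes all of this legal, with the $\ll$ convention read so that the implicit function may also depend on the already-fixed $H$ and $k$ (in particular on $v$ and $\Delta$), which is needed because $\delta_\ell=d_\ell^{2^{v}}$ and $c^*$ involve $v$.

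Granting the ensemble, \ref{gpe:c0} is essentially a tautology: $\cC\tz$ together with $\cD\tz$ is by construction the output of the standard construction applied to $\Gam$ and $\cP$, and the phrase ``$\Gam$ is an $(\eta_0,c^*)$-THC graph for $H$ with density graph $\cP$'' was defined to mean precisely that this output is an $(\eta_0,c^*)$-THC graph; since $\dom\phi=\emptyset$ we have $\cD\tz_\phi=\cD\tz$. For \ref{gpe:dens} I would observe that $\cD\tl$ has weight $1$ on every edge of uniformity different from $\ell'$, so $\prod_{f\supset e}d\tl(f)$ reduces to a product of at most $\binom{v}{\ell'}\le 2^{v}$ factors, each either $1$ or a relative density $d(f)\ge d_{\ell'}$ (and all equal to $1$ when $e\notin H$); as $d_{\ell'}\le 1$, this product is at least $d_{\ell'}^{2^{v}}=\delta_{\ell'}$. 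This is the whole reason $\delta_\ell$ had to be taken exponentially small in $v$. I would also record here that $\cC\tz\ge\dots\ge\cC\tk$ — so we genuinely have a stack of candidate graphs — since consecutive levels differ only on edges of uniformity $\ell$, where $\cC\tl$ carries $\cG$-weights and $\cC^{(\ell-1)}$ the pointwise-larger $\Gam$-weights, using $\cG\subset\Gam$.

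The substance is \ref{gpe:reg}, and this is where the bookkeeping gets fiddly; I expect it to be the main (though not deep) obstacle. Fix $1\le\ell'\le k$ and $\emptyset\ne e\subset V(H)$; since $\phi$ is trivial, $\pi_\phi(e)=0$ and $d\tl_\phi(e)=d\tl(e)$, which equals $d(e)$ when $|e|=\ell'$ and $1$ otherwise, so I must show $\tcC\tl(e)$ is $\bigl(\eps_{\ell',|e|,0},\,d\tl_\phi(e)\bigr)$-regular with respect to $\ocC^{(\ell'-1)}(e)$. The key point is that $\cC\tl$ and $\cC^{(\ell'-1)}$ differ only on edges of uniformity exactly $\ell'$. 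Hence if $|e|\ne\ell'$ the graph $\tcC\tl(e)$ — which is $\ocC^{(\ell'-1)}(e)$ with its size-$|e|$ weights overwritten by those of $\cC\tl(e)$ — is literally equal to $\ocC^{(\ell'-1)}(e)$, and a graph is $(\eps,1)$-regular with respect to itself for any $\eps>0$, matching $d\tl_\phi(e)=1$; the case $e\notin H$ is subsumed here since all weights are then $1$. If $|e|=\ell'$, then $\ocC^{(\ell'-1)}(e)$ is the standard-construction image of the graph obtained from $\ind{\cG}{V_e}$ by replacing layer $\ell'$ with $\Gam$, while $\tcC\tl(e)$ is the standard-construction image of $\ind{\cG}{V_e}$ itself; the hypothesis says the latter is $\eps_{\ell'}$-regular with relative density $d(e)\ge d_{\ell'}$ over the former, and since regularity (Definition~\ref{def:reg}) is expressed through octahedron homomorphism counts, which the standard construction preserves — this is the same fact invoked in the remark after Lemma~\ref{lem:GPEemb}, and the one spot needing care when some $J$-part of $H$ contains several vertices — this regularity transfers verbatim, giving $\bigl(\eps_{\ell'},d(e)\bigr)=\bigl(\eps_{\ell',\ell',0},d\tl_\phi(e)\bigr)$-regularity as required. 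Running this for every $\ell'\le k$ completes the check that $(\phi,\cC\tz,\dots,\cC\tk)$ is a $k$-GPE.

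In short, there is essentially no hard estimate to prove: the two points that must be handled attentively are (i) choosing $\delta_\ell$ exponentially small in $v(H)$ so that \ref{gpe:dens} survives the product over all supersets, which is legitimate only because $H$ is quantified before the parameters, and (ii) the standard-construction bookkeeping transporting regularity and the THC property from the $J$-partite hypotheses to the $V(H)$-partite stack, which the surrounding text treats as routine but which is the part one actually has to write down.
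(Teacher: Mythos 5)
Your proposal is correct and follows essentially the same route as the paper: fix $h^*$, build a valid ensemble as described after Definition~\ref{def:ensemble} with the $\delta_\ell$ chosen to make \ref{gpe:dens} automatic, read \ref{gpe:c0} off the THC hypothesis, and verify \ref{gpe:reg} by the case split ``levels agree on $V_e$ (trivial) versus $|e|=\ell'$ (the hypothesis), transported through the standard construction''. The one divergence is at the top layer: for edges of uniformity $k$ the paper declares the check non-trivial and invokes Lemma~\ref{lem:slicing}, whereas you observe that the stated hypothesis --- $\ind{\cG}{V_e}$ regular with respect to $\ind{\cG}{V_e}$ with layer $|e|$ replaced by $\Gam$ --- is, after the standard construction, literally the statement $\tcC^{(k)}(e)$ is regular with respect to $\ocC^{(k-1)}(e)$; under the hypothesis as written your direct reading is legitimate, and the slicing detour would only be needed if the $k$-layer regularity were instead supplied relative to $\Gam$ with $\Gam$'s own lower layers. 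Two small remarks: your $\delta_\ell=d_\ell^{2^{v(H)}}$ is valid for the lemma as quantified (with $H$ fixed before the parameters), but it is coarser than necessary --- since $\cD^{(\ell)}$ is $1$ off uniformity $\ell$ and at most $\Delta(H)$ edges of $H$ of size $\ell$ contain a given vertex of $e$, $\delta_\ell=d_\ell^{\Delta}$ already satisfies \ref{gpe:dens}, and that $v(H)$-free choice is what one wants if the same ensemble is to feed Theorem~\ref{thm:embedding}; and for $|e|=\ell'$ with $e\notin H$ the required relative density is $1$ (the standard construction puts weight $1$ on $V_e$ in every level and $\cD^{(\ell')}$ must be read as giving non-edges weight $1$), which your ``subsumed'' remark uses implicitly and is fine.
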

\begin{proof}
 We set $h^*=k(4k+1)+\vdeg(H)$. Recall (after Definition~\ref{def:ensemble} that we can construct a valid ensemble of parameters by choosing them in the order given there, using minimum relative densities $\delta_1,\dots,\delta_k$ calculated to satisfy~\ref{gpe:dens} (using the supplied $d_1,\dots,d_k$), and the counting accuracy $\eta_k$; and we can further place, if necessary, upper bounds on the $\eta_\ell$ for $1\le\ell\le k-1$ in terms of the parameters for larger $\ell$. Suppose that such a valid ensemble of parameters is given, with $\eps_\ell$ being the best-case regularity at each level $\ell$.
 
 By assumption on $\Gam$, we have~\ref{gpe:c0}, and by assumption~\ref{gpe:dens}. It remains to verify~\ref{gpe:reg}. For a given $1\le\ell\le k$, it is trivial to verify the regularity statement for an edge $e$ of uniformity less than $k$: either $\cC^{(\ell)}(e)$ is equal to $\cC^{(\ell-1)}(e)$ (in which case regularity is automatic) or $|e|=\ell$ in which case by assumption $\cC^{(\ell)}(e)$ is a sufficiently regular subgraph of $\cC^{(\ell-1)}(e)$.
 
 It remains to verify~\ref{gpe:reg} for edges of size $k$. This is not trivially true; we need to show that a subgraph of $\Gam$ is regular with respect to a certain regular subgraph. However it follows immediately from Lemma~\ref{lem:slicing}.
\end{proof}

Given Lemma~\ref{lem:getGPE}, Theorems~\ref{thm:counting} and~\ref{thm:embedding} are immediate corollaries of Lemmas~\ref{lem:GPEcount} and~\ref{lem:GPEemb} respectively.

\section{Counting and embedding for GPEs}\label{sec:count}

In this section we prove Lemmas~\ref{lem:onestep}, \ref{lem:GPEcount}, and~\ref{lem:GPEemb}. 
As mentioned above, we prove the first two lemmas together, by induction on $\ell$ in each lemma. We begin by assuming Lemma~\ref{lem:GPEcount} for $\ell'<\ell$ in order to prove the bound on $B_\ell(x)$ claimed in Lemma~\ref{lem:onestep}. 
We will use Lemma~\ref{lem:GPEcount} to show that the various counting conditions for Lemma~\ref{lem:k-inherit} are met; the rest is simply bookkeeping.

\begin{proof}[Proof of Lemma~\ref{lem:onestep} for $\ell\ge 1$]
If a vertex $v\in V_x$ is in $B_\ell(x)\setminus B_{\ell-1}(x)$, then by definition there is a failure of regularity in the graph $\cC^{(\ell)}_{x\mapsto v}$ (obtained by applying the update rule to $\cC^{(\ell)}$ ). 
Specifically, there is some $e\subset H\setminus\big(\dom(\phi)\cup\{x\}\big)$ such that, although $\tcC^{(\ell)}(e)$ is $\big(\eps_{\ell,|e|,\pi_{\phi}(e)},d\big)$-regular (with $d$ as given in~\ref{gpe:reg}) with respect to $\ocC^{(\ell-1)}(e)$, the graph $\tcC^{(\ell)}_{x\mapsto v}(e)$ is not $\big(\eps_{\ell,|e|,\pi_{\phi\cup\{x\to v\}}(e)},d_x\big)$-regular (with $d_x$ as given in~\ref{gpe:reg}) with respect to $\ocC^{(\ell-1)}_{x\mapsto v}(e)$.

First, observe that if $\pi_{\phi}(e)=\pi_{\phi\cup\{x\mapsto v\}}(e)$, then this failure of regularity is impossible: 
we have $\tcC^{(\ell)}(e)=\tcC^{(\ell)}_{x\mapsto v}(e)$ and $\ocC^{(\ell-1)}(e)=\ocC^{(\ell-1)}_{x\mapsto v}(e)$. 
Thus there is an edge of $H$ which both contains $x$ and some non-empty subset of $e$; since there are at most $\Delta$ edges of $F$ containing $x$, each of whose at most $k-1$ other vertices are in at most $\Delta-1$ different edges of $H$, there are in total at most $\Delta+(k-1)\Delta(\Delta-1)\le k\Delta^2$ choices of $e$.

Thus, in order to prove the $\ell$ case of Lemma~\ref{lem:onestep}, it suffices to show that for any given non-empty $e\subset H\setminus\big(\dom(\phi)\cup\{x\}\big)$, the total weight of vertices $v$ in $\cC^{(\ell)}(x)$ such that $\tcC^{(\ell)}_{x\mapsto v}(e)$ is not $\big(\eps_{\ell,|e|,\pi_{\phi}(e)+1},d_x\big)$-regular, with respect to $\ocC^{(\ell-1)}_{x\mapsto v}(e)$, is at most $\eps'_\ell\vnorm{V_x}{\cC^{(\ell-1)}(x)}$. The idea is that Lemma~\ref{lem:k-inherit} should provide this desired bound.

To that end, let $V=V_x\cup\bigcup_{y\in e}V_y$, let $\Gam:=\cC^{(\ell-1)}[V]$ with the inherited vertex partition, and let $\cG$ be obtained from $\Gam$ by replacing the edges in $V_e$ and $V_{\{x\}\cup e}$ with those from $\cC^{(\ell)}$. 
Now by~\ref{gpe:reg} the graphs $\cG\cap V_e$ and $\cG\cap V_{e\cup\{x\}}$ are both $\eps:=\eps_{\ell,|e|,\pi_\phi(e)}$-regular with densities $d,d'\ge \delta_\ell$ with respect to $\Gam$. 
With $\eps':=\eps_{\ell,|e|,\pi_\phi(e)+1}$, $d_x=dd'$, and the update rule; by definition failure of regularity in the sense of an $\ell$-GPE coincides with failure to inherit regularity in Lemma~\ref{lem:k-inherit}. 
The conditions~\ref{ve:reginh} and~\ref{ve:sizeorder} state that the constants above are compatible with Lemma~\ref{lem:k-inherit} in this case, and the conclusion of Lemma~\ref{lem:k-inherit} is the desired bound. 
It only remains to show that all the conditions of Lemma~\ref{lem:k-inherit} are met. By construction, we have~\ref{inh:GGam}, while~\ref{inh:regJ} and~\ref{inh:regf} are given by~\ref{gpe:reg}. Thus to complete the proof of Lemma~\ref{lem:onestep} we only need to show that the counting condition~\ref{inh:count} holds.

Write $s=\abs{e}$ and $e'=\{x\}\cup e$. Note that we have $0<s\le k$. 
We now justify that for any given $k$-complex $R$ of the form $\oct{s+1}{\vec a}$ or $+2\oct{s}{0,\vec b}$ with $\vec a\in\{0,1,2\}^{e'}$ and $\vec b\in\{0,1,2\}^e$, we can accurately count $R$ in $\Gam$. This verifies~\ref{inh:count}.

We separate two cases. First, if $\ell=1$ then $\Gam$ is an induced subgraph of $\cC^{(0)}$. By~\ref{gpe:c0}, $\cC^{(0)}$ is an $(\eta_0, c^*)$-THC graph, and thus by~\ref{thc:count}, $c^*\ge 4k+1$, and~\ref{ve:count}, we obtain the required count immediately.

The second, slightly more difficult case is $\ell>1$. Here we aim to deduce the required count of $R$ from the $\ell-1$ case of Lemma~\ref{lem:GPEcount} (which is valid by induction). We obtain a stack of candidate graphs by applying the standard construction (with the $e'$-partite $k$-complex $R$) to the graphs $\cC^{(i)}[V]$ for $i=0,\dots,k$. Now the required count follows immediately from Lemma~\ref{lem:GPEcount} and condition~\ref{ve:count} on $\eta_{\ell-1}$, provided that we can justify that the trivial partial embedding of $R$ (in which no vertices are embedded) together with this stack of candidate graphs forms an $(\ell-1)$-GPE. To do this we need to specify the valid ensemble of parameters we use. These are identical to the valid ensemble we are provided with, \emph{except} that we shift the indices for hits in the regularity parameters, that is, we use $\eps_{\ell',r,h}$ with $h_0\le h\le h^*$ where $h_0 = \max\{\pi_\phi(f): \emptyset\ne f\subset e'\}$. 
Recall that we have $h_0 \le \vdeg(H)$.

By construction the property~\ref{gpe:c0} for the trivial embedding of $R$ is implied by \ref{gpe:c0} for $\phi$ and $H$. 
For~\ref{gpe:reg}, we use the assumption that $\phi$ is an $(\ell-1)$-GPE, so for each edge $f$ and each $1\le\ell'\le k-1$, we have that $\tcC^{(\ell')}(f)$ is $\big(\eps_{\ell',|f|,\pi_{\phi}(f)},d_f\big)$-regular (with $d_f$ as given in~\ref{gpe:reg}) with respect to $\ocC^{(\ell'-1)}(f)$. Since $\pi_{\phi}(f)\le h_0$ we have $\eps_{\ell',|f|,\pi_{\phi}(f)}\le\eps_{\ell',|f|,h_0}$, and so indeed the trivial partial embedding of $R$ with the given stack of candidate graphs satisfies the conditions~\ref{gpe:reg} for $1\le\ell'\le\ell-1$ and the shifted regularity parameters. 

Finally we must verify that the shifted ensemble is valid and suitable for use in Lemma~\ref{lem:GPEcount}.
The `length' of the sequences of shifted regularity parameters is $h_0^*:=h^*-h_0\le h^*$, hence~\ref{ve:worst} and~\ref{ve:count} are implied by the same conditions for the unshifted ensemble. 
The property~\ref{ve:reginh} is unchanged by shifting, and~\ref{ve:sizeorder} holds because we have $\eps_{\ell,r+1,h_0^*}\le\eps_{\ell,r+1,h_0}\le\eps_{\ell,r,0}\le\eps_{\ell,r,h_0}$.
For counting $R$ with the height $\ell-1$ case of Lemma~\ref{lem:GPEcount} we need $c^*\ge\max\{8k+1, (\ell-1)(4k+1)\}$, 
\[
h_0^*\ge h^*-\Delta'\ge \ell(4k+1) \ge (\ell-1)(4k+1)+\vdeg(R)\,,
\]
and $(4k+1)\eta_\ell\le 1/2$, which hold for this case by the assumptions of Lemma~\ref{lem:onestep} because $\ell\ge 2$.
\end{proof}

The second part of the intertwined induction is a proof of Lemma~\ref{lem:GPEcount}. 
We first give a proof of Lemma~\ref{lem:GPEemb} which assumes Lemma~\ref{lem:onestep} (for $\ell\le k$), because it serves as a good introduction to aspects of the method without the notation necessary for the induction on $\ell$, or calculations involving bad vertices. 

\begin{proof}[Proof of Lemma~\ref{lem:GPEemb}]
We prove Lemma~\ref{lem:GPEemb} by induction on $r=v(H)-\abs{\dom\phi}$, assuming the $\ell\le k$ cases of Lemma~\ref{lem:onestep}.

The statement for $r=0$ is a tautology, since then $F-\dom\phi=\emptyset$ and the empty set appears identically on both sides of the required count.

For $r=1$, the statement follows directly from the definition of a GPE, without the need to apply Lemma~\ref{lem:onestep}. 
The empty set is dealt with explicitly, so here we consider consider the weights $\cC\tl(x)$ as functions on $V_x$, and by the \emph{density} of $\cC\tl(x)$ we mean $\vnorm{V_x}{\cC\tl(x)}$. 
Let $V(H)\setminus\dom\phi=\{x\}$, and note that by \ref{gpe:c0} we know that $\cC\tz(x)$ has density 
  \[
  \vnorm{V_x}{\cC\tz(x)}=(1\pm\eta_0)d\tz_\phi(x)\,,
  \]
   and by \ref{gpe:reg}, for each $1\le\ell'\le\ell$, the graph $\cC^{(\ell')}(x)$ is a subgraph (in the sense of a weighted $1$-graph) of $\cC^{(\ell'-1)}(x)$ of relative density
   \[
   d^{(\ell')}_\phi(x)\pm\eps_{\ell'}'\,.
   \] 
   Thus $\cC\tk$ has density
  \begin{align}\label{eq:GPEemb:cldens}
  (1\pm\eta_0)d\tz_\phi(x)\prod_{\ell\in[k]}\left(1\pm\frac{\eps'_{\ell}}{\delta_{\ell}}\right)d^{(\ell)}_\phi(x)
&= (1\pm\eta_k)\prod_{0\le\ell\le k} d^{(\ell)}_\phi(x)\,,
  \end{align}
  because we have a valid ensemble of parameters ensuring for $\ell\in[k]$ that
  $
  \eta_0,\, \eps_{\ell}' \ll \delta_{\ell},\, \eta_{k},\,k
  $
  by \ref{ve:worst}.  
  Multiplied by the weight $c\tk(\emptyset)$, this is the desired expression for $\cC\tl(F-\dom\phi)$ in the case $r=1$.

  For $r\ge2$, fix any $x\in V(H)\setminus\dom\phi$. 
  We will consider embedding $x$ to some $v\in V_x$ and use induction on $r$ to count the contribution from good choices of $v$. 
  The key observation is that the update rule implies
  \[
  \cC\tl(H-\dom\phi) = \Ex[\big]{\cC\tl_{x\mapsto v}\big(H-\dom\phi-\{x\}\big)}\,,
  \]
  where the expectation is over a uniformly random choice of $v\in V_x$.
  We separate three types of density term in the desired counting statement: $d\tl(\emptyset)$ terms, $d\tl(x)$ terms, and the remaining terms for which we write
\begin{equation}\label{eq:xi}
\xi(\ell)
  :=\frac{\cD\tl_{\phi}\big(H-\dom\phi\big)}{d\tl_{\phi}(\emptyset)d\tl_{\phi}(x)}
=\frac{\cD\tl_{\phi\cup\{x\mapsto v\}}\big(H-\dom\phi-x\big)}{d\tl_{\phi\cup\{x\mapsto v\}}(\emptyset)}\,,
\end{equation} 
where the second expression for $\xi(\ell)$ comes from the update rule. Note that despite the appearance of $v$ in the notation on the right-hand side, as an expected density $\xi$ does not depend on the choice of $v$.
The weight of the empty set is dealt with explicitly, analysing the choice of $v\in V_x$ gives the $d\tl_\phi(x)$ terms, and the $\xi(\ell)$ terms are found by induction on $r$.
  We can afford to ignore bad vertices for a lower bound, we merely need to estimate $\vnorm{V_x\setminus B_k(x)}{\cC\tk(x)}$ with Lemma~\ref{lem:onestep}.
  
  For $B_0(x)$ we have 
  \begin{equation}\label{eq:GPEemb:B0}
  \vnorm{B_0(x)}{\cC\tk(x)}\le\vnorm{B_0(x)}{\cC\tz(x)}\le\eta_0\vnorm{V_x}{\cC\tz(x)}\le2\eta_0d\tz_\phi(x)
  \end{equation}
  by \ref{gpe:c0} and the condition~\ref{thc:hered} of a THC-graph.
  The same argument as for~\eqref{eq:GPEemb:cldens} gives that the density of $\cC^{(\ell')}(x)$ satisfies
  \begin{equation}\label{eq:GPEemb:sizeVx}
  \vnorm{V_x}{\cC^{(\ell')}(x)}=(1\pm\eta_0)d\tz_\phi(x)\prod_{\ell''\in[\ell']}\Big(1\pm\frac{\eps'_{\ell''}}{\delta_{\ell''}}\Big)d^{(\ell'')}_\phi(x)
= (1\pm\eta_{\ell'})\prod_{0\le\ell''\le\ell} d^{(\ell'')}_\phi(x)\,.
  \end{equation}
  Then by Lemma~\ref{lem:onestep} and \eqref{eq:GPEemb:sizeVx}, we calculate for $1\le\ell\le k$ the bound
  \begin{align}
   \vnorm{B_{\ell}(x)\setminus B_{\ell-1}(x)}{\cC^{(k)}(x)} &\le \vnorm{B_{\ell}(x)\setminus B_{\ell-1}(x)}{\cC^{(\ell-1)}(x)} 
   \\&\le k\Delta^2\eps'_{\ell}\cdot\vnorm{V_x}{\cC^{(\ell-1)}(x)}
   \\&\le 2k\Delta^2\eps'_{\ell}\prod_{0\le\ell'<\ell} d^{(\ell')}_\phi(x)\,.\label{eq:GPEemb:badx}
  \end{align}
  We next give a short calculation which shows that
  \begin{equation}\label{eq:GPEemb:goodvs}
  \vnorm{V_x\setminus B_k(x)}{\cC\tk(x)} \ge (1-\eta_k)\prod_{0\le\ell\le k} d^{(\ell)}_\phi(x)\,,
  \end{equation}
  by a careful collection of density terms and `compensating' error terms from lower levels of the stack. 
  We have
  \begin{align}
  \vnorm{V_x\setminus B_k(x)}{\cC\tk(x)}
    &\ge \vnorm{V_x}{\cC\tk(x)} - \vnorm{B_0(x)}{\cC\tz(x)} - \sum_{\ell\in[k]} \vnorm{B_{\ell}(x)\setminus B_{\ell-1}(x)}{\cC^{(k)}(x)}
  \\&\ge \left( (1-\eta_0)\prod_{\ell'\in[k]}\left(1-\frac{\eps'_{\ell'}}{\delta_{\ell'}}\right)-\frac{2\eta_0}{\prod_{\ell'\in[k]}\delta_{\ell'}}-\sum_{\ell\in[k]}\frac{2k\Delta^2\eps_\ell'}{\prod_{\ell'=\ell}^k\delta_{\ell'}}\right)\prod_{0\le\ell\le k}d\tl_\phi(x)
  \\&\ge (1-\eta_k)\prod_{0\le\ell\le k}d\tl_\phi(x)\,.
  \end{align}
  The $\delta_{\ell'}$ terms in the denominators of the second line correspond to `missing densities' lost because we can only account for failure of a regularity condition in level $\ell'$ of the stack with the regularity properties of that level. 
  We can afford to write $\delta_{\ell'}$ terms instead of $d_\phi\tlp(x)$ because we have $\delta_{\ell'}\le d\tlp_\phi(x)$ by~\ref{gpe:dens}. 
  For $B_0(x)$ the missing densities are for levels $\ell'\in[k]$ but there is a very small $\eta_0$ to compensate, and for $B_{\ell}(x)\setminus B_{\ell-1}(x)$ we have a product of missing densities from levels $\ell$ to $k$ of the stack, but a comparatively small $\eps_\ell'$ to compensate. 
  
  With \eqref{eq:GPEemb:goodvs} in hand, we finish the proof with the induction on $r$. 
  For any $v\in V_x\setminus B_k(x)$, note the applying the induction hypothesis is valid as the required lower bounds on $c^*$, $h^*$ still hold, and we have 
  \begin{align}
  \cC\tk_{x\mapsto v}(H-\dom\phi-x) 
    &\ge (1-\eta_k)^{r-1}\tfrac{c\tk_{x\mapsto v}(\emptyset)}{\prod_{0\le\ell\le k}d_{\phi\cup\{x\mapsto v\}}^{(\ell)}(\emptyset)}\prod_{0\le\ell\le k}\cD\tl_{\phi\cup\{x\mapsto v\}}\big(H-\dom\phi-x\big)
  \\&=   (1-\eta_k)^{r-1}c\tk_{x\mapsto v}(\emptyset)\prod_{0\le\ell\le k} \xi(\ell)\,,
  \end{align}
  where we have separated out the only term $c\tk_{x\mapsto v}(\emptyset)$ which depends on $v$, so that the remaining product over $\ell$ is independent of $v$. 
  By the update rule we have $c\tk_{x\mapsto v}(\emptyset)=c\tk(\emptyset)c\tk(v)$, which gives 
  \begin{align}
  \cC\tk(H-\dom\phi) 
    &=   \Ex[\Big]{\cC\tk_{x\mapsto v}\big(H-\dom\phi-\{x\}\big)}[v\in V_x]
  \\&\ge (1-\eta_k)^{r-1}c\tk(\emptyset)\vnorm{V_x\setminus B_k(x)}{\cC\tk(x)}\prod_{0\le\ell\le k}\xi(\ell)  \\&=   (1-\eta_k)^r\tfrac{c\tk(\emptyset)}{\prod_{0\le\ell\le k}d_\phi^{(\ell)}(\emptyset)}\prod_{0\le\ell\le k}\cD\tl_\phi(H-\dom\phi)\,,
  \end{align}
  where for the last line we observe that density terms involving $x$ are taken care of by $\vnorm{V_x\setminus B_k(x)}{\cC\tk(x)}$ via \eqref{eq:GPEemb:goodvs}, and the other terms are given by $\xi$ via~\eqref{eq:xi}.
  \end{proof}

The proof for Lemma~\ref{lem:GPEcount} is similar, but we must proceed by induction on the height $\ell$ of the GPE and handle bad vertices more carefully. 
For the latter consideration, we use the following consequence of the Cauchy--Schwarz inequality,
which we prove along with several related tools in Section~\ref{sec:CS}.

\begin{restatable}{lemma}{ECSdist}\label{lem:ECSdist}
Let $W$, $X$, and $Y$ be discrete random variables such that $W$ takes values in $[0,1]$, $X$ takes values in the non-negative reals, and $Y$ is real-valued. 
Suppose also that for $0\le\eps\le 1$ and $d\ge0$ we have 
\begin{align}
\Ex{XY}&=(1\pm\eps)d\cdot\Ex{X}&
&\text{and}&
\Ex{XY^2}&\le(1+\eps)d^2\cdot\Ex{X}\,. 
\end{align}
Then
\begin{align}
\Ex{WXY} &= \left(1-\eps\pm 2\sqrt{\frac{\eps\Ex{X}}{\Ex{WX}}}\,\right)d\cdot \Ex{WX}\,,
\shortintertext{and}
\Ex{WXY^2} &= \left(1-2\eps\pm7\sqrt{\eps}\frac{\Ex{X}}{\Ex{WX}}\right)d^2\cdot \Ex{WX}\,.
\end{align}
\end{restatable}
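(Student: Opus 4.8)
\textbf{Proof plan for Lemma~\ref{lem:ECSdist}.}
The plan is to treat the two statements as applications of the Cauchy--Schwarz inequality in the probability space with the `tilted' measure obtained by weighting by $X$ (equivalently, by the sub-probability $WX/\Ex{X}$). First I would record the two elementary consequences of the hypotheses that do the real work: the lower bound $\Ex{WXY}\ge\Ex{XY}-\Ex{(1-W)XY}$, and an upper bound on the `error' quantity $\Ex{(1-W)XY}$. The point is that $0\le 1-W\le 1$, so by Cauchy--Schwarz (applied with respect to the measure with density $X$, i.e.\ $\Ex{fg}\le\sqrt{\Ex{f^2X}\Ex{g^2X}/X}$ type manipulation, written out with $X$ kept explicit), $\Ex{(1-W)XY}^2\le\Ex{(1-W)X}\cdot\Ex{(1-W)XY^2}\le\Ex{X}\cdot\Ex{XY^2}$. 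Combined with the hypothesis $\Ex{XY^2}\le(1+\eps)d^2\Ex{X}$ this gives $\Ex{(1-W)XY}\le d\Ex{X}\sqrt{1+\eps}$, which is too crude on its own; the refinement is to notice we actually want the error measured against $\Ex{WX}$, so we instead bound $\Ex{(1-W)XY}^2\le\Ex{(1-W)X}\Ex{XY^2}$ and separately observe $\Ex{(1-W)X}=\Ex{X}-\Ex{WX}$. Feeding the hypothesis on $\Ex{XY}$ in, one gets $\Ex{(1-W)XY}=\Ex{XY}-\Ex{WXY}$, and after rearranging, $\Ex{WXY}=\Ex{XY}-\Ex{(1-W)XY}\ge (1-\eps)d\Ex{X}-d\sqrt{(1+\eps)\Ex{X}(\Ex{X}-\Ex{WX})}$; a symmetric manipulation gives the matching upper bound, and dividing through by $d\Ex{WX}$ and bounding $\sqrt{(1+\eps)\Ex{X}(\Ex{X}-\Ex{WX})}/\Ex{WX}\le\sqrt{\Ex{X}/\Ex{WX}}\cdot(1+o(1))$ turns the error into the claimed $2\sqrt{\eps\Ex{X}/\Ex{WX}}$ after also absorbing the $(1-\eps)$ term. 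I would be careful here to check that the $\sqrt{1+\eps}$ and the $\eps$ from $\Ex{XY}=(1\pm\eps)d\Ex{X}$ combine to give exactly the constant $2$ in front of the square root for $0\le\eps\le1$; this is the one place where a genuine (if short) estimate is needed rather than pure algebra.

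For the second identity I would run the same argument one degree higher. Write $\Ex{WXY^2}=\Ex{XY^2}-\Ex{(1-W)XY^2}$, so I need both a lower and an upper bound on $\Ex{(1-W)XY^2}$. The upper bound is immediate: $\Ex{(1-W)XY^2}\le\Ex{XY^2}\le(1+\eps)d^2\Ex{X}$, but again this must be re-expressed in terms of $\Ex{WX}$. For the lower bound on $\Ex{WXY^2}$ (equivalently upper bound on $\Ex{(1-W)XY^2}$ that is sharp when $\Ex{WX}$ is close to $\Ex{X}$) I would use Cauchy--Schwarz in the form $\Ex{(1-W)XY^2}\le\sqrt{\Ex{(1-W)X}\cdot\Ex{(1-W)XY^4}}$ --- but $Y^4$ is not controlled by hypothesis, so instead the right move is to interpolate: $\Ex{(1-W)XY^2}=\Ex{\big((1-W)X\big)^{1/2}\cdot\big((1-W)X Y^4\big)^{1/2}}$ is the wrong split; the correct one is $\Ex{(1-W)XY^2}\le\big(\Ex{(1-W)X}\big)^{1/2}\big(\Ex{(1-W)XY^4}\big)^{1/2}$ fails, so I fall back to the cruder but sufficient route of bounding $\Ex{(1-W)XY^2}$ directly by $\Ex{XY^2}$ where $W$ is small and using the first identity to control where $W$ is close to $1$. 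Concretely: from the first identity and its proof we already know $\Ex{(1-W)X}=\Ex{X}-\Ex{WX}$ is small relative to $\Ex{X}$ precisely when $\sqrt{\eps\Ex{X}/\Ex{WX}}$ is small, and combining $\Ex{(1-W)XY^2}\le\Ex{(1-W)X}^{1/2}\Ex{XY^2}^{1/2}\cdot\Ex{XY^2}^{1/2}$... the clean statement I will actually prove is $\Ex{(1-W)XY^2}\le\sqrt{\Ex{(1-W)X}\,\Ex{XY^2}}\cdot\sqrt{\Ex{XY^2}/\Ex{X}}\le d\sqrt{(1+\eps)}\cdot d\sqrt{(1+\eps)}\sqrt{\Ex{(1-W)X}/\Ex{X}}\cdot\Ex{X}$, i.e.\ $\Ex{(1-W)XY^2}\le(1+\eps)d^2\sqrt{\Ex{X}\big(\Ex{X}-\Ex{WX}\big)}$, which after dividing by $d^2\Ex{WX}$ produces the error term $\le(1+\eps)\sqrt{\Ex{X}/\Ex{WX}}\cdot\sqrt{\Ex{X}/\Ex{WX}-1}$, and together with $\Ex{XY^2}=(1+\eps)d^2\Ex{X}$ minus (or plus, for the other direction) this gives the claimed $\big(1-2\eps\pm 7\sqrt{\eps}\,\Ex{X}/\Ex{WX}\big)d^2\Ex{WX}$ once I check the numerology of the constants $2$ and $7$ for $0\le\eps\le1$.

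The main obstacle I anticipate is getting the second identity's error term into exactly the stated shape $7\sqrt{\eps}\,\Ex{X}/\Ex{WX}$ (rather than something with a square root of $\Ex{X}/\Ex{WX}$, as the naive Cauchy--Schwarz gives). The resolution I will use is that $\Ex{XY^2}$ only controls the second moment of $Y$ under the $X$-weighted measure, and the hypothesis $\Ex{XY}=(1\pm\eps)d\Ex{X}$ together with $\Ex{XY^2}\le(1+\eps)d^2\Ex{X}$ forces $Y$ to be tightly concentrated around $d$ under that measure --- indeed $\Ex{X(Y-d)^2}=\Ex{XY^2}-2d\Ex{XY}+d^2\Ex{X}\le 4\eps d^2\Ex{X}$ --- and it is this variance bound, not the raw second moment, that should be combined with Cauchy--Schwarz to cut the exponent on $\Ex{X}/\Ex{WX}$ from $1/2$ to $1$. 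So the actual chain will be: $\Ex{(1-W)X(Y^2-d^2)}=\Ex{(1-W)X(Y-d)(Y+d)}\le\sqrt{\Ex{(1-W)X(Y-d)^2}}\sqrt{\Ex{(1-W)X(Y+d)^2}}$, bound the first factor by $\sqrt{4\eps d^2\Ex{X}}=2\sqrt{\eps}\,d\sqrt{\Ex{X}}$, bound the second by $\sqrt{\Ex{X(Y+d)^2}}=O(d\sqrt{\Ex{X}})$, giving $\Ex{(1-W)X(Y^2-d^2)}=O(\sqrt{\eps}\,d^2\Ex{X})$, and then divide by $d^2\Ex{WX}$; the remaining $\Ex{X}/\Ex{WX}$ factor appears because we normalise by $\Ex{WX}$ rather than $\Ex{X}$. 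Tracking the absolute constants through $\Ex{X(Y+d)^2}\le 2\Ex{XY^2}+2d^2\Ex{X}\le(4+2\eps)d^2\Ex{X}\le 6d^2\Ex{X}$ and $\sqrt{4}\cdot\sqrt{6}<7$ should land the constant $7$, and the $-2\eps$ comes from keeping the exact value $\Ex{XY^2}=(1\pm\eps)(1\pm\eps)d^2\Ex{WX}\cdot(\text{stuff})$... the cleanest bookkeeping is to first establish $\Ex{WX(Y^2-d^2)}=\Ex{(Y^2-d^2)X}-\Ex{(1-W)X(Y^2-d^2)}$ and estimate each piece, then divide by $\Ex{WX}$ at the very end. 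I will present it in that order.
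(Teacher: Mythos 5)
Your plan for the first estimate has a structural gap, not a constant-chasing one. You propose to write $\Ex{WXY}=\Ex{XY}-\Ex{(1-W)XY}$ and bound $\Ex{(1-W)XY}^2\le\Ex{(1-W)X}\,\Ex{XY^2}\le(1+\eps)d^2\Ex{X}\Ex{(1-W)X}$. This error carries no factor of $\eps$ at all: already at $\eps=0$ the hypotheses force $\Ex{X(Y-d)^2}=0$, hence $\Ex{WXY}=d\cdot\Ex{WX}$ exactly, while your bound only yields $\Ex{WXY}=d\Ex{WX}\pm d\sqrt{\Ex{X}\Ex{(1-W)X}}$ --- far weaker than the claimed absolute error $2\sqrt{\eps}\,d\sqrt{\Ex{X}\Ex{WX}}$, which vanishes with $\eps$ and even shrinks with $\Ex{WX}$. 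So the step you flag as ``check that the $\sqrt{1+\eps}$ and the $\eps$ combine to give the constant $2$'' cannot be made to work. The ingredient you are missing here is exactly the variance bound $\Ex{X(Y-d)^2}\le 3\eps d^2\Ex{X}$ that you deploy only for the second estimate, and it must be applied on the $W$-side, e.g.\ $\abs{\Ex{WX(Y-d)}}\le\sqrt{\Ex{WX}\,\Ex{X(Y-d)^2}}$, to get the crucial $\sqrt{\Ex{WX}}$ scaling; bounding the complement and discarding $\Ex{(1-W)X}\le\Ex{X}$ leaves an error of order $\sqrt{\eps}\,d\,\Ex{X}$, which is too large precisely in the intended regime $\Ex{WX}\ll\Ex{X}$ (small bad sets). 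Even this direct Cauchy--Schwarz does not recover the stated constants for the whole range $0\le\eps\le1$; the paper instead runs a defect Cauchy--Schwarz argument: from $(1+\eps)d^2\Ex{X}\ge\Ex{WXY^2}+\Ex{(1-W)XY^2}\ge\frac{\Ex{WXY}^2}{\Ex{WX}}+\frac{\Ex{(1-W)XY}^2}{\Ex{(1-W)X}}$, it writes $\Ex{WXY}=(1-\eps+c)d\Ex{WX}$, uses the hypothesis on $\Ex{XY}$ to tie the two numerators together, and shows the quadratic excess forces $c^2\le 4\eps\Ex{X}/\Ex{WX}$. You should either adopt that argument or verify very carefully that a $W$-side variance argument really lands in the stated interval for all $\eps\le1$.

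For the second estimate your route is essentially sound and genuinely different from the paper's: you expand $\Ex{(1-W)X(Y^2-d^2)}$ via Cauchy--Schwarz against $\Ex{X(Y-d)^2}\le3\eps d^2\Ex{X}$ and $\Ex{X(Y+d)^2}\le 7d^2\Ex{X}$, which produces an absolute error $O(\sqrt{\eps})\,d^2\Ex{X}$, matching the claimed $7\sqrt{\eps}\,d^2\Ex{X}$; note you must use the sharp constant $3\eps$ (not $4\eps$) and treat the asymmetric centre $1-2\eps$ with some care to make the numerology close for all $\eps\le1$ and all ratios $\Ex{X}/\Ex{WX}\ge1$. The paper instead derives the second estimate from the first, by lower-bounding both $\Ex{WXY^2}$ and $\Ex{(1-W)XY^2}$ via Cauchy--Schwarz and subtracting from the hypothesis $(1+\eps)d^2\Ex{X}$, which is why its error comes out in the form it does. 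As written, though, the proposal does not prove the first estimate, so the lemma is not established.
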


\begin{proof}[Proof of Lemma~\ref{lem:GPEcount} for $\ell\ge 1$]
Given $\ell$, we prove the height $\ell$ case of Lemma~\ref{lem:GPEcount} by induction on $r=v(H)-\abs{\dom\phi}$, assuming the $\ell'\le\ell$ cases of Lemma~\ref{lem:onestep} and $\ell'<\ell$ cases of Lemma~\ref{lem:GPEcount}.
 
  As in the previous proof, the case $r=0$ is a tautology, and the statement for $r=1$ follows directly from the definition of $\ell$-GPE. 
The same applications of properties~\ref{gpe:c0}, \ref{gpe:reg}, and~\ref{ve:worst} as for~\eqref{eq:GPEemb:cldens} and~\eqref{eq:GPEemb:sizeVx} give again
  \begin{equation}\label{eq:GPEcount:sizeVx}
  \vnorm{V_x}{\cC^{(\ell')}(x)}=(1\pm\eta_0)d\tz_\phi(x)\prod_{\ell''\in[\ell']}\Big(1\pm\frac{\eps'_{\ell''}}{\delta_{\ell''}}\Big)d^{(\ell'')}_\phi(x)
= (1\pm\eta_{\ell'})\prod_{0\le\ell''\le\ell} d^{(\ell'')}_\phi(x)\,.
  \end{equation}
When $r=1$, with $\ell'=\ell$, and multiplied by the factor $c\tl(\emptyset)$, this is the desired statement.
  
  Now given $r\ge 2$, fix $x\in V(H)\setminus\dom\phi$. We use the statement of Lemma~\ref{lem:GPEcount} for heights $\ell'<\ell$ and with the complex $H-x$, and (the induction assumption in this proof) for height $\ell$. 
  We have a partition of $V_x$ into the bad vertices $B_0(x)$, and $B_{\ell'}(x)\setminus B_{\ell'-1}(x)$ for $\ell'\in[\ell]$, and the good vertices $V_x\setminus B_\ell(x)$. 
  As in the previous proof, we separately consider density terms for $\emptyset$, $x$, and the ones of the form $\xi(\ell')$
obtained via the induction on $r$.
From~\eqref{eq:xi} recall that $\xi$ is independent of $v$.
  The desired counting statement is then
  \[
  \cC\tl(H-\dom\phi) = (1\pm r\eta_\ell)c\tl(\emptyset)\prod_{0\le\ell'\le\ell}d^{(\ell')}_\phi(x)\xi(\ell')\,.
  \]
  
  As in the previous proof, by Lemma~\ref{lem:onestep}, \eqref{eq:GPEcount:sizeVx}, and the fact that in any valid ensemble we have $\eta_{\ell'-1}<1$ for all $\ell'$, we calculate for $1\le\ell'\le\ell$ the bound
  \begin{align}
   \vnorm{B_{\ell'}(x)\setminus B_{\ell'-1}(x)}{\cC^{(\ell')}(x)} &\le \vnorm{B_{\ell'}(x)\setminus B_{\ell'-1}(x)}{\cC^{(\ell'-1)}(x)} 
   \\&\le k\Delta^2\eps'_{\ell'}\cdot\vnorm{V_x}{\cC^{(\ell'-1)}(x)}
   \\&\le 2k\Delta^2\eps'_{\ell'}\prod_{0\le\ell''<\ell'} d^{(\ell'')}_\phi(x)\,.\label{eq:GPEcount:badx}
  \end{align}
  By definition, for each $v\in V_x\setminus B_{\ell'}(x)$, the partial embedding $\phi\cup\{x\to v\}$ together with the stack of candidate graphs $\cC^{(0)}_{x\mapsto v},\dots,\cC^{(\ell')}_{x\mapsto v}$ obtained by the update rule is an $\ell'$-GPE. 
  Applying for each $1\le\ell'\le\ell$ the $\ell'$ case of Lemma~\ref{lem:GPEcount}  with the partial embedding $\phi\cup\{x\mapsto v\}$ and updated candidate graphs (where we note that $c^*$ and $h^*$ are large enough and $\eta_{\ell'}$ small enough for this to be valid), it follows that for each such choice of $v$ we have
  \begin{align}
   \cC^{(\ell')}_{x\mapsto v}(H-\dom\phi-x)&=\big(1\pm(r-1)\eta_{\ell'}\big)\tfrac{c\tlp_{x\mapsto v}(\emptyset)}{\prod_{0\le\ell''\le\ell'}d_{\phi\cup\{x\mapsto v\}}^{(\ell'')}(\emptyset)}\prod_{0\le\ell''\le\ell'}\cD^{(\ell'')}_{\phi\cup\{x\mapsto v\}}(H-\dom\phi-x)\\
   &=\big(1\pm(r-1)\eta_{\ell'}\big)c^{(\ell')}(\emptyset)c^{(\ell')}(v)\cdot\prod_{0\le\ell''\le\ell'}\xi(\ell'')\,,
  \end{align}
  where the second line follows from the update rule and definition of $\xi$.
  We carefully account for the empty set in level $\ell$ and not below.
  Then by the fact that $\cC^{(\ell')}\le\cC\tl$, for each $1\le\ell'\le\ell$ and $v\in V_x\setminus B_{\ell'}(x)$ we have
  \begin{equation}\label{eq:GPEcount:countell}
  \begin{aligned}
   \cC^{(\ell)}_{x\mapsto v}(H-\dom\phi-x)
     &\le\tfrac{c\tl(\emptyset)}{c^{(\ell')}(\emptyset)}\cdot\cC^{(\ell')}_{x\mapsto v}(H-\dom\phi-x)
   \\&=\big(1\pm(r-1)\eta_{\ell'}\big)c^{(\ell)}(\emptyset)c^{(\ell')}(v)\cdot\prod_{0\le\ell''\le\ell'}\xi(\ell'')\,.
  \end{aligned}
  \end{equation}

  Putting \eqref{eq:GPEcount:sizeVx}, \eqref{eq:GPEcount:badx}, and \eqref{eq:GPEcount:countell} together will give us the required lower bound on $\cC^{(\ell)}(F)$, but for the upper bound we still need to show that the contribution made by $v\in B_0(x)$ is small. 
  Letting $H'$ be the $k$-complex on $r':=2r-1\le c^*$ vertices obtained by taking two disjoint copies of $H$ and identifying each vertex in $\dom\phi\cup\{x\}$ with the corresponding vertex in the other copy, we have the following counts in the bottom level of the stack by \ref{gpe:c0},
  \begin{align}
   \cC^{(0)}(H-\dom\phi)
     &= (1\pm r\eta_0)\tfrac{c\tz(\emptyset)}{d_\phi\tz(\emptyset)}\cD\tz_\phi(H-\dom\phi)\,,
   \\&= (1\pm r\eta_0)c\tz(\emptyset)d\tz_\phi(x)\xi(0)\,,
   \label{eq:GPE-EXYbd}\\
   \cC^{(0)}(H'-\dom\phi)
   &=(1\pm r'\eta_0)\tfrac{c\tz(\emptyset)}{d_\phi\tz(\emptyset)}\cD\tz_\phi(H'-\dom\phi)
   \\&=(1\pm r'\eta_0)c\tz(\emptyset)d\tz_\phi(x)\xi(0)^2\,.\label{eq:GPE-EXY2bd}
  \end{align}
  From this, apply Lemma~\ref{lem:ECSdist} to the experiment of choosing a uniform random $v\in V_x$, with 
  \begin{align}
    X&:=c\tz_{x\mapsto v}(\emptyset)\,,
  & Y&:=\cC\tz_{x\mapsto v}(H-\dom\phi -x)/c\tz_{x\mapsto v}(\emptyset)\,,
  & W&:=\mathbbm{1}_{v\in B_0(x)}\,.
  \end{align}
    Property~\ref{gpe:c0} gives $\Ex{X}=(1\pm\eta_0)c\tz(\emptyset)d\tz_\phi(x)$, and statements~\eqref{eq:GPE-EXYbd} and~\eqref{eq:GPE-EXY2bd} give bounds on $\Ex{XY}$ and $\Ex{XY^2}$.
  We also have $\Ex{WX}\le\eta_0\Ex{X}$ by \ref{gpe:c0} and condition~\ref{thc:hered}.
  Hence we conclude 
  \begin{equation}
  \Ex{WXY}\le 5r'\eta_0 \cdot c\tz(\emptyset) d\tz_\phi(x)\xi(0) \le 10\eta_0 c^*\cdot c\tz(\emptyset)d\tz_\phi(x)\xi(0)\,.
  \end{equation}
  Again, taking care to deal with the empty set in level $\ell$, we deduce the upper bound bound
  \begin{equation}\label{eq:GPEcount:b0bound}
  10\eta_0c^*\cdot c\tl(\emptyset)d\tz_\phi(x)\xi(0)
  \end{equation}
  on the contribution to $\cC\tl(H-\dom\phi)$ from vertices $v\in B_0(x)$.

To complete the proof we substitute these bounds into the expression
  \begin{align*}
   \cC^{(\ell)}(H-\dom\phi) 
   &= \Ex[\Big]{\indicator{v\notin B_{\ell}(x)}\cC\tl_{x\mapsto v}\big(H-\dom\phi-\{x\}\big)} 
 \\&\qquad\pm \sum_{\ell'\in[\ell]}\Ex[\Big]{\indicator{v\in B_{\ell'}(x)\setminus B_{\ell'-1}(x)}\cC^{(\ell)}_{x\mapsto v}\big(H-\dom\phi-\{x\}\big)}
 \\&\qquad\pm \Ex[\Big]{\indicator{v\in B_0(x)}\cC\tz_{x\mapsto v}\big(H-\dom\phi-\{x\}\big)}\,.
 \end{align*}
 Using~\eqref{eq:GPEcount:sizeVx},~\eqref{eq:GPEcount:badx}, \eqref{eq:GPEcount:countell}, and~\eqref{eq:GPEcount:b0bound}, we obtain
  \begin{align*}
   \cC^{(\ell)}(H-\dom\phi) 
   &= \big(1\pm(r-1)\eta_\ell\big)c\tl(\emptyset)\vnorm{V_x\setminus B_\ell(x)}{\cC\tl(x)} \cdot \prod_{0\le\ell''\le\ell}\xi(\ell'')
 \\&\qquad\pm \sum_{\ell'\in[\ell]}\big(1+(r-1)\eta_{\ell'}\big)c\tl(\emptyset)\vnorm{B_{\ell'}(x)\setminus B_{\ell'-1}(x)}{\cC^{(\ell')}(x)}\cdot \prod_{0\le\ell''\le\ell'}\xi(\ell')
 \\&\qquad\pm 10\eta_0c^*\cdot c\tl(\emptyset)d\tz_\phi(x)\xi(0)
 \\&= \big(1\pm(r-1)\eta_\ell\big)c\tl(\emptyset)\bigg(1\pm\frac{\vnorm{B_\ell(x)}{\cC\tl(x)}}{\vnorm{V_x}{\cC\tl(x)}}\bigg)(1\pm\eta_0)\bigg(\prod_{\ell''\in[\ell]}\Big(1+\frac{\eps'_{\ell''}}{\delta_{\ell''}}\Big)\bigg)\cdot \prod_{0\le\ell''\le\ell}d^{(\ell'')}_\phi(x)\xi(\ell'')
 \\&\qquad\pm \sum_{\ell'\in[\ell]}\big(1+(r-1)\eta_{\ell'}\big)c\tl(\emptyset)\cdot2k\Delta^2\frac{\eps'_{\ell'}}{d^{(\ell')}_\phi(x)}\cdot\prod_{0\le\ell''\le\ell'}d^{(\ell'')}_\phi(x)\xi(\ell'')
 \\&\qquad\pm 10\eta_0c^*\cdot c\tl(\emptyset)d\tz_\phi(x)\xi(0)\,.
  \end{align*}
  This is almost the desired statement. By collecting terms we have
  \[
  \cC\tl(H-\dom\phi) = (1\pm r\eta_\ell)c\tl(\emptyset)\prod_{0\le\ell''\le\ell}d^{(\ell'')}_\phi(x)\xi(\ell'')
\,,
  \]
  where the relative error is given by $r\eta_\ell$, provided the following holds:
  \begin{align}
  1+r\eta_\ell &\ge \big(1+(r-1)\eta_\ell\big)\bigg(1+\frac{\vnorm{B_\ell(x)}{\cC\tl(x)}}{\vnorm{V_x}{\cC\tl(x)}}\bigg)(1+\eta_0)\prod_{\ell''\in[\ell]}\Big(1+\frac{\eps'_{\ell''}}{\delta_{\ell''}}\Big)
  \\&\qquad + \sum_{\ell'\in[\ell]}\big(1+(r-1)\eta_{\ell'}\big) 2k\Delta^2\cdot \frac{\eps'_{\ell'}}{\delta_{\ell'}\prod_{\ell'<\ell''\le\ell} \delta_{\ell''}\xi(\ell'')}
  \\&\qquad + \frac{10\eta_0c^*}{\prod_{0<\ell''\le\ell} \delta_{\ell''}\xi(\ell'')}\,.
  \end{align}
  The definition of a valid ensemble is chosen to make this inequality hold. 
  Considering the right-hand side, the first line can be made at most $1+(r-2/3)\eta_\ell$.
   Each of the two remaining terms can be made at most $\eta_\ell/3$.   Essentially the point is that where we have products of `missing' minimum densities in the denominator of error terms, there is a $\eps_{\ell'}$ or $\eta_0$ to compensate in the numerator. 
  The $\eps_{\ell'}'$ parameters are chosen to be small enough to compensate for any product of minimum densities from the same level or higher, and $\eta_0$ is small enough to compensate for any densities in levels above $0$. 

  Here we require the upper bound on $r$, since it implies the $\xi(\ell')$ terms corresponding to edges remaining after $x$ is embedded cannot be too small.
  We give the required calculations below, relying on the facts that for all $\ell'\ge 1$, we have
\begin{align}\label{eq:GPEcount:densitylbs}
\delta_{\ell'}       &\le d^{(\ell')}_\phi(x)\,,&
\delta_{\ell'}^{c^*-1} &\le \xi(\ell')\,.
\end{align}
The first bound states the contribution to the final count at level $\ell'$ from embedding $x$ is at least $\delta_{\ell'}$, which holds by assumption: $\delta_{\ell'}$ is a minimum density. 
Then with $2r-1\le c^*$ the first inequality implies the second because $\xi(\ell')$ is a product over the remaining $r-1$ vertices of their contributions. 
The next claim deals with the smaller two error terms, and a subsequent claim deals with the main term.

  \begin{claim} \ref{ve:worst} implies both
  \begin{gather}
	\sum_{\ell'\in[\ell]}\big(1+(r-1)\eta_{\ell'}\big) 2k\Delta^2\cdot \frac{\eps'_{\ell'}}{\delta_{\ell'}\prod_{\ell'<\ell''\le\ell} \delta_{\ell''}\xi(\ell'')}
 \le \frac{\eta_\ell}{3}\,,
 \shortintertext{and}
  \frac{10\eta_0c^*}{\prod_{0<\ell''\le\ell} \delta_{\ell''}\xi(\ell'')} \le \frac{10\eta_0c^*}{\prod_{0<\ell''\le\ell} \delta_{\ell''}^{c^*}} \le \frac{\eta_\ell}{3}\,.
  \end{gather}
  \end{claim}
  \begin{claimproof}
  For the first statement, since we have $(r-1)\eta_{\ell'}\le 1/2$ and \eqref{eq:GPEcount:densitylbs} it suffices to ensure that
  \[
  \eps_{\ell'}' \le \frac{\eta_\ell}{9k^2\Delta^2} \delta_{\ell'}\prod_{\ell'<\ell''\le\ell} \delta_{\ell''}^{c^*}  \]
  for each $\ell'\in[\ell]$, which holds by \ref{ve:worst}.
   In the second statement the first inequality holds by \eqref{eq:GPEcount:densitylbs}, and the second holds by \ref{ve:worst}.
  \end{claimproof} 
 
  \begin{claim} \ref{ve:worst} implies 
  \[
  \big(1+(r-1)\eta_\ell\big)\bigg(1+\frac{\vnorm{B_\ell(x)}{\cC\tl(x)}}{\vnorm{V_x}{\cC\tl(x)}}\bigg)(1+\eta_0)\prod_{\ell''\in[\ell]}\Big(1+\frac{\eps'_{\ell''}}{\delta_{\ell''}}\Big)
  \le 1+\Big(r-\frac{2}{3}\Big)\eta_\ell
  \]
  \end{claim}
  \begin{claimproof}
  First we bound $\vnorm{B_\ell(x)}{\cC\tl(x)}$. 
  By~\eqref{eq:GPEcount:sizeVx}, \eqref{eq:GPEcount:badx}, and $\vnorm{B_0(x)}{\cC\tz(x)}\le\eta_0\vnorm{V_x}{\cC\tz(x)}$, we have
  \begin{align}
  \vnorm{B_\ell(x)}{\cC\tl(x)} 
  &\le \vnorm{B_0(x)}{\cC\tz(x)}+\sum_{\ell'\in[\ell]}\vnorm{B_{\ell'}(x)\setminus B_{\ell'-1}(x)}{\cC^{(\ell')}(x)}
\\&\le 2\eta_0d\tz_\phi(x) + \sum_{\ell'\in[\ell]}2k\Delta^2\eps_{\ell'}'\prod_{0\le\ell''<\ell'}d^{(\ell'')}_\phi(x)\,.
  \end{align}
  Hence (using that $\eta_\ell<1/2$), we have
  \begin{align}
  \frac{\vnorm{B_\ell(x)}{\cC\tl(x)}}{\vnorm{V_x}{\cC\tl(x)}} 
  &\le \frac{4\eta_0}{\prod_{\ell'\in[\ell]}d^{(\ell')}(x)}+ \sum_{\ell'\in[\ell]}4k\Delta^2\frac{\eps_{\ell'}'}{\prod_{\ell'\le\ell''\le\ell}d^{(\ell'')}(x)}
\\&\le \frac{4\eta_0}{\prod_{\ell'\in[\ell]}\delta_{\ell'}}+ \sum_{\ell'\in[\ell]}4k\Delta^2\frac{\eps_{\ell'}'}{\prod_{\ell'\le\ell''\le\ell}\delta_{\ell''}}\,.
  \end{align}
  For the claim, by $r\eta_\ell<1/2$ it now suffices to show
  \[
  \Bigg(1+\frac{4\eta_0}{\prod_{\ell'\in[\ell]}\delta_{\ell'}}+ \sum_{\ell'\in[\ell]}4k\Delta^2\frac{\eps_{\ell'}'}{\prod_{\ell'\le\ell''\le\ell}\delta_{\ell''}}\Bigg)(1+\eta_0)\prod_{\ell''\in[\ell]}\Big(1+\frac{\eps'_{\ell''}}{\delta_{\ell''}}\Big) \le 1+\frac{\eta_\ell}{9} \le \frac{1+(r-2/3)\eta_\ell}{1+(r-1)\eta_\ell}\,.
  \]
  We use that $\eta_\ell \ll k,\, 1$. 
  The first bracketed term and $(1+\eta_0)$ are each at most $1+\eta_\ell/36$ by \ref{ve:worst}, and similarly we have
  \[
  1+\frac{\eps_{\ell''}'}{\delta_{\ell''}} \le  1+\frac{\eta_\ell}{72k} \le \Big(1+\frac{\eta_\ell}{36}\Big)^{1/k}\,,  \]
  which shows the product over $[\ell]$ is also at most $1+\eta_\ell/36$. It follows that $(1+\eta_\ell/36)^3\le 1+\eta_\ell/9$ as required.
  \end{claimproof}

This completes the proof of Lemma~\ref{lem:GPEcount}.
\end{proof}

\section{Homomorphism counts and the Cauchy--Schwarz inequality}\label{sec:CS}

We can always consider a $k$-complex on $t$ vertices as $t$-partite with parts of size one, and in this case we represent the sum giving $\cH(F)$ by an expectation as follows. 
A partite homomorphism $\phi:F\to\cH$ must map vertex $j$ of $F$ to a vertex $x_j\in V_j$ of $\cH$, so for $\abs{J}=t$ indexing the vertex sets, a partite homomorphism $\psi$ from $F$ to $\cH$ is equivalent to a vector of vertices $x_J\in V_J$, and we have
\[
\cH(F) = \Ex[\Big]{\prod_{e\in F}h(x_e)}[x_J\in V_J]\,,
\]
where the expectation is over the uniform distribution on vectors $x_J\in V_J$, and we write $x_e$ for the natural projection of $x_e$ onto $V_e$. 

Let $\vec{a}\in\{0,1,2\}^J$. 
We use the following notation for the count of octahedra such as $\oct{k}{\vec{a}}$ in $\cH$.  
Suppose that for $j\in J$ and $i\in[\vec a_j]$, vertices $x_j^{(i)}$ are chosen uniformly at random (with replacement) from $V_j$. 
For $e\subset J$ and $\omega\in\prod_{j\in J}[\vec a_j]$ we write $x_e^{(\omega)}$ for the vector indexed by $j\in e$ of vertices $x_j^{(\omega_j)}$.
Then we have the notation 
\[
\cH\big(F(\vec{a})\big) = \Ex[\Big]{\;\,\prod_{\substack{e\in F\\\mathclap{\omega:\omega_i\in[\vec a_i]}}}\;\,h(x_e^{(\omega)})}[x_{j}^{(i)}\in V_{j}\text{ for each $j\in J$ and $i\in[\vec a_j]$}]\,,
\]
for the expected weight of a uniformly random partite homomorphism from $\oct{k}{\vec{a}}$ to $\cH$.
With this notation in place, we turn to the main tool of the paper.

\subsection{The Cauchy--Schwarz inequality and related results}

We make extensive use of the Cauchy--Schwarz inequality in the form $\Ex{XY}^2\leq \Ex{X}\Ex{XY^2}$, where we take care to ensure that $X$ takes non-negative values throughout. 
First, we restate Lemma~\ref{lem:ECSdist} and give a proof.

\ECSdist*

\begin{proof}
For the first statement, observe that $X$, $W$, and $1-W$ are all non-negative random variables. 
Then we have 
\begin{align}
  (1+\eps)d^2\cdot\Ex{X} &\ge \Ex{XY^2} =\Ex{WXY^2}+\Ex{(1-W)XY^2}
\\&\ge \frac{\Ex{WXY}^2}{\Ex{WX}}+\frac{\Ex{(1-W)XY}^2}{\Ex{(1-W)X}}\,,\label{eq:CSdist:1}
\end{align}
where the second inequality is by two applications of Cauchy--Schwarz. 
Given fixed $\Ex{WXY}$ the right hand side is minimised when $\Ex{XY}=(1-\eps)d\cdot\Ex{X}$, so we may assume $\Ex{(1-W)XY}=(1-\eps)d\cdot\Ex{X}-\Ex{WXY}$.
  
Let $\Ex{WXY}=(1-\eps+c)d\cdot\Ex{WX}$. Then from~\eqref{eq:CSdist:1} we have
\begin{align}
  (1+\eps)\Ex{X} &\ge (1-\eps+c)^2\Ex{WX}+\frac{\big((1-\eps)\Ex{X}-(1-\eps+c)\Ex{WX}\big)^2}{\Ex{(1-W)X}}
\\&= (1-\eps+c)^2\Ex{WX}+\frac{\big((1-\eps)\Ex{(1-W)X}-c\Ex{WX}\big)^2}{\Ex{(1-W)X}}
\\&=(1-\eps)^2\Ex{X}+\frac{c^2\Ex{X}\Ex{WX}}{\Ex{(1-W)X}}\,,
\end{align}
and so
\[
3\eps -\eps^2 \ge c^2\frac{\Ex{WX}}{\Ex{(1-W)X}} \ge c^2\frac{\Ex{WX}}{\Ex{X}}\,,
\]
which is a contradiction if $c^2\ge 4\eps \Ex{X}/\Ex{WX}$, as required.

For the second statement, we have by Cauchy--Schwarz and the first part,
\begin{align}
\Ex{WXY^2} \ge \frac{\Ex{WXY}^2}{\Ex{WX}} &\ge \left(1-\eps- 2\sqrt{\frac{\eps\Ex{X}}{\Ex{WX}}}\,\right)^2d^2\cdot \Ex{WX}
\\&\ge\left(1-2\eps- 4\sqrt{\frac{\eps\Ex{X}}{\Ex{WX}}}\,\right)d^2\cdot \Ex{WX}\,,
\end{align}
and similarly
\begin{align}
\Ex{(1-W)XY^2} &\ge \frac{\Ex{(1-W)XY}^2}{\Ex{(1-W)X}} 
\\&\ge \frac{\left((1-\eps)\Ex{(1-W)X}-2\sqrt{\eps\Ex{X}\Ex{WX}}\right)^2}{\Ex{(1-W)X}}d^2
\\&\ge\left((1-2\eps)\Ex{(1-W)X}-4\sqrt{\eps\Ex{X}\Ex{WX}}\right)d^2\,,
\end{align}
so that
\begin{align*}
\Ex{WXY^2} &\le (1+\eps)d^2\cdot\Ex{X}-\left((1-2\eps)\Ex{(1-W)X}-4\sqrt{\eps\Ex{X}\Ex{WX}}\right)d^2
\\&\le\left(1-2\eps+7\sqrt{\eps}\frac{\Ex{X}}{\Ex{WX}}\right)d^2\cdot \Ex{WX}\,.\qedhere
\end{align*}
\end{proof}

\begin{corollary}\label{cor:ECSconc}
Let $X$ and $Y$ be random variables such that $X$ takes values in the non-negative reals and $Y$ is real-valued. 
Suppose also that for $0\le\eps\le 1$ and $d\ge0$ we have 
\begin{align}
\Ex{XY}&=(1\pm\eps)d\cdot\Ex{X}&
&\text{and}&
\Ex{XY^2}&\le(1+\eps)d^2\cdot\Ex{X}\,. 
\end{align}
Let $W$ be the indicator of the event that $Y=(1\pm2\eps^{1/4})d$. Then 
\[
\Ex{WX}\ge (1-4\eps^{1/4})\Ex{X}\,.
\] 
\end{corollary}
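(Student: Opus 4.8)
The plan is to derive Corollary~\ref{cor:ECSconc} as a direct consequence of Lemma~\ref{lem:ECSdist} applied with the indicator $W$ of the \emph{bad} event $\{Y \ne (1\pm 2\eps^{1/4})d\}$, and then to turn a lower bound on $\Ex{(1-W)X}$ into the claimed lower bound on the weight of the good set. So first I would set $\tilde W := 1 - W$, i.e.\ $\tilde W$ is the indicator of the good event $\{Y = (1\pm 2\eps^{1/4})d\}$, and note that the hypotheses of Corollary~\ref{cor:ECSconc} are precisely the hypotheses of Lemma~\ref{lem:ECSdist} (with the roles of $W$ there played by $W$ here), so both conclusions of Lemma~\ref{lem:ECSdist} are available for \emph{any} $[0,1]$-valued random variable; in particular we may use them with the specific $W$ above.

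The core of the argument is a variance-type estimate. Writing $Z := Y - d$, the two hypotheses give $\Ex{XZ} = \pm\eps d\cdot\Ex{X}$ and $\Ex{XZ^2} = \Ex{XY^2} - 2d\Ex{XY} + d^2\Ex{X} \le (1+\eps)d^2\Ex{X} - 2d(1-\eps)d\Ex{X} + d^2\Ex{X} = 4\eps d^2\Ex{X}$. On the bad event we have $|Z| > 2\eps^{1/4}d$, hence $WXZ^2 \ge 4\eps^{1/2}d^2\, WX$ pointwise, and taking expectations gives
\[
4\eps^{1/2}d^2\,\Ex{WX} \le \Ex{WXZ^2} \le \Ex{XZ^2} \le 4\eps d^2\,\Ex{X}\,,
\]
so $\Ex{WX} \le \eps^{1/2}\Ex{X}$ (this is vacuous, hence harmless, when $d=0$, since then the good event is a.s.\ trivial or one can treat that degenerate case separately). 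Therefore $\Ex{\tilde W X} = \Ex{X} - \Ex{WX} \ge (1-\eps^{1/2})\Ex{X} \ge (1-4\eps^{1/4})\Ex{X}$, which is the desired conclusion.

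Strictly speaking this short route does not even need the full strength of Lemma~\ref{lem:ECSdist}; the Chebyshev-type bound above already suffices and is cleaner. If one prefers to stay inside the stated machinery, one can instead invoke the first display of Lemma~\ref{lem:ECSdist} with this $W$: it gives $\Ex{WXY} = (1-\eps \pm 2\sqrt{\eps\Ex{X}/\Ex{WX}})d\cdot\Ex{WX}$, and combined with the pointwise lower bound on $WXZ^2$ and the second conclusion one again extracts $\Ex{WX}/\Ex{X} \le O(\eps^{1/2})$; but this is a longer path to the same place. I would present the direct variance computation, noting explicitly where non-negativity of $X$, $W$ and $X-WX$ is used so that the inequality $\Ex{WXZ^2}\le\Ex{XZ^2}$ is legitimate.

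The main (and really only) obstacle is bookkeeping of the constants: one must be a little careful that the crude bound $\eps^{1/2} \le 4\eps^{1/4}$ holds on the relevant range $0 \le \eps \le 1$ (it does, since $\eps^{1/2}\le\eps^{1/4}$ there), and that the degenerate case $d = 0$ or $\Ex{WX}=0$ does not cause a division-by-zero when one phrases the argument via Lemma~\ref{lem:ECSdist} rather than via the pointwise inequality. Using the pointwise inequality $WXZ^2 \ge 4\eps^{1/2}d^2 WX$ avoids all division entirely, which is why I would favour that formulation.
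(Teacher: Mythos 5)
Your proposal is correct, and it takes a genuinely different (and cleaner) route than the paper. The paper proves Corollary~\ref{cor:ECSconc} by applying Lemma~\ref{lem:ECSdist} twice, once with $Z$ indicating the upper tail $Y>(1+2\eps^{1/4})d$ and once with the lower tail, and in each case rearranging the inequality $(1\pm\eps)d\Ex{X}\gtrless\Ex{(1-Z)XY}+\Ex{ZXY}$ to extract $\Ex{ZX}\le 2\eps^{1/4}\Ex{X}$. You instead make a direct Chebyshev-type calculation on $Z:=Y-d$: from the hypotheses one gets a second-moment bound $\Ex{X(Y-d)^2}\le 3\eps d^2\Ex{X}$ (you wrote $4\eps$, a harmless overcount: $(1+\eps)-2(1-\eps)+1=3\eps$), and then the pointwise inequality $WXZ^2\ge 4\eps^{1/2}d^2\,WX$ on the bad event, together with non-negativity of $X$, gives $\Ex{WX}\le\eps^{1/2}\Ex{X}$ after dividing by $4\eps^{1/2}d^2$ when $d>0$; the degenerate $d=0$ case is easy. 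This yields the strictly stronger conclusion $\Ex{(1-W)X}\ge(1-\eps^{1/2})\Ex{X}$, which implies the stated $(1-4\eps^{1/4})\Ex{X}$ since $\eps^{1/2}\le\eps^{1/4}\le 4\eps^{1/4}$ for $\eps\le 1$. What your approach buys is simplicity and a sharper constant; what the paper's route buys is uniformity of technique (it stays inside the defect Cauchy--Schwarz machinery of Lemma~\ref{lem:ECSdist}, and the two-tailed estimate pattern is reused elsewhere, e.g.\ inside Corollary~\ref{cor:higherECSconc} and the proof of Theorem~\ref{thm:GaTHC}). You are right to discard the initial plan of invoking Lemma~\ref{lem:ECSdist} with the bad-event indicator and instead present the direct variance argument, and right to flag that avoiding division by $\Ex{WX}$ sidesteps the degenerate cases that a Lemma-based route would have to handle.
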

\begin{proof}
Write $\eps'=2\eps^{1/4}$ and let $Z$ indicate the event that $Y>(1+\eps')d$. 
Then using Lemma~\ref{lem:ECSdist} with weight $1-Z$ we have
\begin{align}
  (1+\eps)d\cdot\Ex{X} &\ge \Ex{XY} = \Ex{(1-Z)XY}+\Ex{ZXY}
\\&\ge\left(1-\eps-2\sqrt{\frac{\eps\Ex{X}}{\Ex{(1-Z)X}}}\,\right)d\cdot\Ex{(1-Z)X}+(1+\eps')d\cdot\Ex{ZX}
\\&\ge\left(1-\eps-2\sqrt{\eps}+\eps'\frac{\Ex{ZX}}{\Ex{X}}\right)d\cdot\Ex{X}\,,
\end{align}
which implies that $\Ex{ZX}\le 2\eps^{1/4}\Ex{X}$. 

With a similar argument we deal with the event that $Y<(1-\eps')d$, now using the letter $Z$ for this event we calculate
\begin{align}
  (1-\eps)d\cdot\Ex{X} &\le \Ex{XY} = \Ex{(1-Z)XY}+\Ex{ZXY}
\\&\le\left(1-\eps+2\sqrt{\frac{\eps\Ex{X}}{\Ex{(1-Z)X}}}\,\right)d\cdot\Ex{(1-Z)X}+(1-\eps')d\cdot\Ex{ZX}
\\&\le\left(1+2\sqrt{\eps}-\eps'\frac{\Ex{ZX}}{\Ex{X}}\right)d\cdot\Ex{X}\,,
\end{align}
which implies that $\Ex{ZX}\le 2\eps^{1/4}\Ex{X}$. 
Together, the two arguments prove that $\Ex{WX}\geq (1-4\eps^{1/4})\Ex{X}$ as required.
\end{proof}

\begin{corollary}\label{cor:higherECSconc}
Let $X$ and $Y$ be random variables such that $X$ takes values in the non-negative reals and $Y$ is real-valued. 
Suppose also that for a natural number $t\ge2$, and reals $0\le\eps<2^{2-2t}$ and $d\ge0$ we have 
\begin{align}
\Ex{XY}&=(1\pm\eps)d\cdot\Ex{X}&
&\text{and}&
\Ex{XY^{2^t}}&\le(1+\eps)d^{2^t}\cdot\Ex{X}\,. 
\end{align}
Let $W$ be the indicator of the event that $Y=(1\pm2\eps^{1/8})d$. Then 
\[
\Ex{WX}\ge (1-4\eps^{1/8})\Ex{X}\,.
\] 
\end{corollary}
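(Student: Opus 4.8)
The plan is to deduce Corollary~\ref{cor:higherECSconc} from Corollary~\ref{cor:ECSconc} by a bootstrapping argument, using the hypothesis on $\Ex{XY^{2^t}}$ to first pin down $\Ex{XY^{2}}$, then $\Ex{XY^4}$, and so on up to $\Ex{XY^{2^{t-1}}}$, at which point Corollary~\ref{cor:ECSconc} applies directly. The key observation is that once we know $\Ex{XY}=(1\pm\eps)d\cdot\Ex{X}$ and an upper bound $\Ex{XY^{2^s}}\le(1+\eps_s)d^{2^s}\cdot\Ex{X}$ for some $s$, we can interpolate: by Cauchy--Schwarz applied to the pair $X^{1/2}Y$ and $X^{1/2}Y^{2^{s-1}}$ (all with the non-negative weight $X$), $\Ex{XY^{2^{s-1}+\text{something}}}$ can be controlled, but more usefully we run the argument in the other direction. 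Specifically, for any intermediate even power, a lower bound follows from Cauchy--Schwarz ($\Ex{XY^{2^{s-1}}}^2\ge\Ex{X}^{-1}\cdot\text{nothing}$ is the wrong way, so instead $\Ex{XY^{2^{s}}}\ge\Ex{X}^{-1}\Ex{XY^{2^{s-1}}}^2$), which combined with the assumed upper bound on $\Ex{XY^{2^s}}$ gives $\Ex{XY^{2^{s-1}}}\le(1+\eps_s)^{1/2}d^{2^{s-1}}\cdot\Ex{X}^{1/2}\cdot\big(\Ex{XY^{2^s}}/d^{2^s}\Ex{X}\big)^{1/2}$... this needs care, so let me state the clean version: from $\Ex{XY^{2^{s-1}}}^2\le\Ex{X}\cdot\Ex{XY^{2^s}}\le(1+\eps)d^{2^s}\Ex{X}^2$ we get directly $\Ex{XY^{2^{s-1}}}\le(1+\eps)^{1/2}d^{2^{s-1}}\Ex{X}\le(1+\eps)d^{2^{s-1}}\Ex{X}$.

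So the real content is just iterated Cauchy--Schwarz downward: starting from $\Ex{XY^{2^t}}\le(1+\eps)d^{2^t}\Ex{X}$, apply $\Ex{XY^{2^{s-1}}}^2\le\Ex{X}\cdot\Ex{XY^{2^s}}$ repeatedly for $s=t,t-1,\dots,2$ to conclude $\Ex{XY^{2}}\le(1+\eps)d^{2}\Ex{X}$ (the factor $(1+\eps)$ only ever gets a square root taken, so it never grows; one should track that $(1+\eps)^{1/2^{j}}\le 1+\eps$ throughout, which is immediate). Then Corollary~\ref{cor:ECSconc} applies with the same $\eps$, $d$, giving that the indicator $W'$ of the event $Y=(1\pm 2\eps^{1/4})d$ satisfies $\Ex{W'X}\ge(1-4\eps^{1/4})\Ex{X}$. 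Since $\eps<2^{2-2t}\le1$ we have $2\eps^{1/4}\le 2\eps^{1/8}$ and $4\eps^{1/4}\le 4\eps^{1/8}$, so the event $\{Y=(1\pm 2\eps^{1/8})d\}$ contains $\{Y=(1\pm 2\eps^{1/4})d\}$, hence $W\ge W'$ pointwise and $\Ex{WX}\ge\Ex{W'X}\ge(1-4\eps^{1/4})\Ex{X}\ge(1-4\eps^{1/8})\Ex{X}$, which is the claim. In fact the weaker exponent $1/8$ in the statement means we have slack; I would simply invoke Corollary~\ref{cor:ECSconc} and then weaken the conclusion.

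Here is the proof I would write.

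\begin{proof}
We first claim that $\Ex{XY^2}\le(1+\eps)d^2\cdot\Ex{X}$. More generally we show by downward induction on $s\in\{1,\dotsc,t\}$ that
\[
\Ex{XY^{2^s}}\le(1+\eps)d^{2^s}\cdot\Ex{X}\,.
\]
The case $s=t$ is the hypothesis. Suppose the bound holds for some $s$ with $2\le s\le t$. Since $X$ is non-negative, the Cauchy--Schwarz inequality in the form $\Ex{UV}^2\le\Ex{U}\Ex{V}$ applied with $U=X$ and $V=XY^{2^s}$ (writing $XY^{2^{s-1}}=X^{1/2}\cdot X^{1/2}Y^{2^{s-1}}$) gives
\[
\Ex{XY^{2^{s-1}}}^2\le\Ex{X}\cdot\Ex{XY^{2^s}}\le(1+\eps)d^{2^s}\cdot\Ex{X}^2\,,
\]
and taking square roots, using $(1+\eps)^{1/2}\le 1+\eps$, yields $\Ex{XY^{2^{s-1}}}\le(1+\eps)d^{2^{s-1}}\cdot\Ex{X}$, completing the induction. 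In particular $\Ex{XY^2}\le(1+\eps)d^2\cdot\Ex{X}$.

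Now the hypotheses of Corollary~\ref{cor:ECSconc} are satisfied with this $\eps$ and $d$. Let $W'$ be the indicator of the event that $Y=(1\pm 2\eps^{1/4})d$. Corollary~\ref{cor:ECSconc} gives $\Ex{W'X}\ge(1-4\eps^{1/4})\Ex{X}$. Since $0\le\eps<2^{2-2t}\le 1$, we have $\eps^{1/4}\le\eps^{1/8}$, so $2\eps^{1/4}\le 2\eps^{1/8}$ and hence the event $\{Y=(1\pm 2\eps^{1/8})d\}$ contains the event $\{Y=(1\pm 2\eps^{1/4})d\}$; that is, $W\ge W'$ pointwise. As $X\ge0$ we obtain
\[
\Ex{WX}\ge\Ex{W'X}\ge(1-4\eps^{1/4})\Ex{X}\ge(1-4\eps^{1/8})\Ex{X}\,,
\]
as required.
\end{proof}

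The only mild subtlety — and the step I would double-check — is the bookkeeping on the $(1+\eps)$ factor through the iterated square roots, but since each step only takes a square root of a number that is already $\le 1+\eps$, it never increases, so there is no real obstacle; the whole argument is a short two-step reduction.
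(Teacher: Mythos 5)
Your proof is correct, and it takes a genuinely different (if closely related) route from the paper's. The paper substitutes $Z=Y^{2^{t-1}}$, $\tilde d=d^{2^{t-1}}$, gets an upper bound on $\Ex{XZ}$ from one Cauchy--Schwarz application against $\Ex{XY^{2^t}}$ and a lower bound $\Ex{XZ}\ge\Ex{X}^{1-2^{t-1}}\Ex{XY}^{2^{t-1}}$ from $t-1$ applications in the other direction, absorbs the resulting $(1-\eps)^{2^{t-1}}$ loss into $\tilde\eps=\eps^{1/2}$ (this is exactly where the hypothesis $\eps<2^{2-2t}$ is used), applies Corollary~\ref{cor:ECSconc} to the pair $(X,Z)$ with parameters $(\tilde\eps,\tilde d)$, and finally translates the event $Z=(1\pm2\tilde\eps^{1/4})\tilde d$ back into the event about $Y$ --- which is how the exponent $1/8$ arises. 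You instead cascade Cauchy--Schwarz downward on the moment side only, turning the $2^t$-th moment bound into $\Ex{XY^2}\le(1+\eps)d^2\Ex{X}$, and then apply Corollary~\ref{cor:ECSconc} to $(X,Y)$ directly with the original $\eps$ and $d$. Your route buys a few things: it never uses $\eps<2^{2-2t}$ (only $\eps\le1$), it yields the stronger concentration window $2\eps^{1/4}$ before weakening to the stated $2\eps^{1/8}$, and it sidesteps the step of deducing $Y=(1\pm2\eps^{1/8})d$ from $Y^{2^{t-1}}=(1\pm2\eps^{1/8})d^{2^{t-1}}$, which for merely real-valued $Y$ requires a word about the sign of $Y$ (with $2^{t-1}$ even this only pins down $\abs{Y}$). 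The paper's route is what you would reach for if you wanted the statement phrased at the level of $Z$, but for this corollary your argument is shorter and marginally more general. One cosmetic remark: your parenthetical ``applied with $U=X$ and $V=XY^{2^s}$'' does not match the inequality $\Ex{UV}^2\le\Ex{U}\Ex{V}$ you quote; what you actually use (correctly, as your display shows) is the paper's standing form $\Ex{XY'}^2\le\Ex{X}\Ex{X(Y')^2}$ with $Y'=Y^{2^{s-1}}$, so state it that way.
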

\begin{proof}
Let $Z=Y^{2^{t-1}}$ and $\tilde d=d^{2^{t-1}}$. 
Then by the Cauchy--Schwarz inequality we have
\[
\Ex{XZ}^2\le \Ex{X}\Ex{XZ^2} = \Ex{X}\Ex{XY^{2^t}}\le (1+\eps)\tilde d^2 \cdot \Ex{X}^2\,.
\]
By $t-1$ further applications of the Cauchy--Schwarz inequality we also have
\[
\Ex{XZ}\ge \Ex{X}^{1-2^{t-1}}\Ex{XY}^{2^{t-1}} \ge (1-\eps)^{2^{t-1}}\tilde d\cdot\Ex{X} \ge (1-2^{t-1}\eps)\tilde d \cdot\Ex{X}\,.
\]
With $\tilde\eps=\eps^{1/2}\ge2^{t-1}\eps$ this implies
\begin{align}
\Ex{XZ} &= (1\pm\tilde\eps)\tilde d\cdot\Ex{X}\,,&
\Ex{XZ^2} &\le (1+\tilde\eps)\tilde d^2\cdot\Ex{X}\,.
\end{align}
The result now follows from Corollary~\ref{cor:ECSconc}. 
Note that $Z=(1\pm2\tilde\eps^{1/4})\tilde d$ implies the event $Y=(1\pm2\eps^{1/8})d$ which is indicated by $W$, hence by Corollary~\ref{cor:ECSconc} we obtain $\Ex{WX}\ge (1-4\eps^{1/8})\Ex{X}$. 
\end{proof}

\subsection{Lower bounds on octahedra}\label{sec:lbocts}

The common theme in the following results is an application of the Cauchy--Schwarz inequality to the expectation in a normalised homomorphism count. 

\begin{lemma}\label{lem:CSoctlow}
For every natural number $k\ge 2$, vertex set $J$ of size $k$, index $i\in J$ and vectors $\vec{a},\,\vec{b},\,\vec{c}\in\{1,2\}^J$ which satisfy $\vec{a}_j=\vec{b}_j=\vec{c}_j$ for all $j\in J\setminus\{i\}$ and $\vec{a}_i=0$, $\vec{b}_i=1$, $\vec{c}_i=2$ the following holds.
Let $\cH$ be a $k$-partite $k$-graph on vertex set $\{V_j\}_{j\in J}$. Then 
\[
\cH\big(\oct{k}{\vec{c}}\big)\ge\frac{\cH\big(\oct{k}{\vec{b}}\big)^2}{\cH\big(\oct{k-1}{\vec{a}}\big)}\,.
\]
\end{lemma}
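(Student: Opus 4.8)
The statement is a Cauchy--Schwarz inequality relating three octahedron-type counts which differ only in the multiplicity of vertices in part $i$: $\vec a$ has none, $\vec b$ has one, $\vec c$ has two. The plan is to write each of the three counts as an expectation over the relevant random vertices and recognise the right-hand side as the conclusion of the Cauchy--Schwarz inequality $\Ex{XY}^2\le\Ex{X}\Ex{XY^2}$ applied to a suitable pair $(X,Y)$ built from the link at part $i$.

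First I would set up notation: let $J'=J\setminus\{i\}$, and for each $j\in J'$ let $x_j^{(1)},\dots,x_j^{(\vec a_j)}$ be chosen uniformly and independently from $V_j$ (recalling $\vec a_j=\vec b_j=\vec c_j$ on $J'$). These choices determine the `base' octahedron on the parts $V_j$, $j\in J'$, and I would define
\[
X := \prod_{\substack{e\in \oct{k}{\vec a}\\ i\notin e}} h\big(x_e^{(\omega)}\big),
\]
the product over all edges of the octahedron that avoid part $i$; thus $\Ex{X}=\cH(\oct{k-1}{\vec a})$ by definition (here $\oct{k-1}{\vec a}$ is the octahedron on $J'$). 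Next, introduce one further independent uniform vertex $v\in V_i$, and let $Y=Y(v)$ be the product of $h$ over all edges of $\oct{k}{\vec b}$ that \emph{do} contain the single copy of part $i$, i.e.\ edges of the form $\{v\}\cup e'$ with $e'$ a crossing subset of the base octahedron. Then $\Ex{XY}$ (expectation over the $x_j^{(\cdot)}$ and $v$) is exactly $\cH(\oct{k}{\vec b})$, since every edge of $\oct{k}{\vec b}$ either avoids part $i$ (contributing to $X$) or contains $v$ (contributing to $Y$). The key combinatorial observation is that $\Ex{XY^2}=\cH(\oct{k}{\vec c})$: squaring $Y$ corresponds to taking two independent copies $v,v'$ of the part-$i$ vertex attached to the \emph{same} base octahedron, which is precisely the structure of $\oct{k}{\vec c}$ (two vertices in part $i$, each joined to the common sub-octahedron on $J'$, with no edge between the two copies of $i$ -- consistent with $\oct{k}{\vec c}$ containing all crossing edges, none of which has two vertices in part $i$). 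One must check that conditioned on the base choices, the two factors $Y(v)$ and $Y(v')$ are independent and identically distributed, which holds because $v$ and $v'$ are independent and the edge sets $\{v\}\cup e'$ and $\{v'\}\cup e'$ involve disjoint randomness once the base is fixed; this gives $\Ex{Y^2\mid \text{base}}=\Ex{Y\mid\text{base}}^2\cdot(\text{correction})$ -- actually more simply $\Ex{XY^2}=\Ex{X\cdot\Ex{Y(v)Y(v')\mid\text{base}}}$ and $\Ex{Y(v)Y(v')\mid\text{base}}$ unpacks directly into the product defining the $\oct{k}{\vec c}$-count.

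Finally, since $h$ is non-negative, $X\ge 0$, and the Cauchy--Schwarz inequality gives $\cH(\oct{k}{\vec b})^2=\Ex{XY}^2\le\Ex{X}\Ex{XY^2}=\cH(\oct{k-1}{\vec a})\cdot\cH(\oct{k}{\vec c})$, which rearranges to the claim (if $\cH(\oct{k-1}{\vec a})=0$ then $\Ex{XY}=0$ as well and the inequality is trivial). The only real care needed -- and the step I expect to be the main (if modest) obstacle -- is the bookkeeping showing that the edge set of $\oct{k}{\vec b}$ partitions cleanly into ``edges avoiding $i$'' (giving $X$) and ``edges through the copy of $i$'' (giving $Y$), and that doubling the latter reproduces \emph{exactly} the edge set of $\oct{k}{\vec c}$ with the correct multiplicities; this is where the hypothesis $\vec a_i=0,\vec b_i=1,\vec c_i=2$ with agreement elsewhere is used, and one should state it carefully rather than wave at it.
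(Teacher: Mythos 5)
Your plan identifies exactly the right decomposition and is essentially the same route the paper takes: write the three counts as moments of a pair $(X,Y)$, with $X$ the weight of the base copy of $\oct{k-1}{\vec a}$, and apply $\Ex{XY}^2\le\Ex{X}\Ex{XY^2}$. But there is a genuine flaw in how you define $Y$. You set $Y=Y(v)$ to be the weight of the extension edges through a \emph{random} $v\in V_i$ and apply Cauchy--Schwarz over the joint probability space of base and $v$. With that choice $\Ex{XY^2}=\Ex[X\cdot Y(v)^2]$, and this is \emph{not} $\cH\big(\oct{k}{\vec c}\big)$: the latter equals $\Ex[X\cdot Y(v)Y(v')]$ for two \emph{independent} $v,v'\in V_i$, which by (conditional) Jensen is $\le\Ex[X\cdot Y(v)^2]$, with equality only if $Y(v)$ is a.s.\ constant given the base. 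Your parenthetical remark that ``$\Ex{XY^2}=\Ex{X\cdot\Ex{Y(v)Y(v')\mid\text{base}}}$'' is false: for $Y=Y(v)$, the symbol $Y^2$ means $Y(v)^2$, not $Y(v)Y(v')$. Consequently your Cauchy--Schwarz step yields
\[
\cH\big(\oct{k}{\vec b}\big)^2\le\cH\big(\oct{k-1}{\vec a}\big)\cdot\Ex\big[X\,Y(v)^2\big]\,,
\]
which is strictly weaker than the lemma's conclusion, since $\Ex[XY(v)^2]\ge\cH\big(\oct{k}{\vec c}\big)$.

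The fix is small and is exactly what the paper does: define $Y$ as the \emph{conditional expectation} over $v\in V_i$ of the extension weight, so $Y$ is a deterministic function of the base choices alone (not of $v$). Then $\Ex{XY}=\cH\big(\oct{k}{\vec b}\big)$ by Fubini, and
$\Ex{XY^2}=\Ex\big[X\cdot(\Ex_v[Y(v)])^2\big]=\Ex\big[X\cdot\Ex_{v,v'}[Y(v)Y(v')]\big]=\cH\big(\oct{k}{\vec c}\big)$,
because squaring the conditional expectation manufactures the second independent copy $v'$, which is precisely how $\oct{k}{\vec c}$ differs from $\oct{k}{\vec b}$. Cauchy--Schwarz for this $(X,Y)$ over the base alone then gives the claim, with the degenerate case $\Ex{X}=0$ handled as you noted.
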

\begin{proof}
We prove the case $J=\{0,1,\dotsc,k-1\}$ and $i=0$, writing $f=[k-1]$ for the indices on which $\vec{a}$, $\vec{b}$, and $\vec{c}$ agree. 
The other cases follow by relabelling indices. 

Observe that a copy of $\oct{k}{\vec{c}}$ simply consists of two copies of $\oct{k}{\vec{b}}$ agreeing on a copy of $\oct{k-1}{\vec{a}}$. Let $X$ be the random variable giving the weight of a uniform random copy of $\oct{k-1}{\vec{a}}$, and $Y$ be the random variable which, given a uniform random copy of $\oct{k-1}{\vec{a}}$, returns the total weight of the ways to extend it to a copy of $\oct{k}{\vec{b}}$. More concretely, we choose uniformly at random (with replacement) vertices $x_j^{(i)}\in V_j$ for each $i\in [\vec a_j]$, and let
\begin{align}
X&:=\prod_{\substack{e\subset f,\\\mathclap{\omega:\omega_i\in[\vec a_i]}}}\;g(x_e\tw)&
&\text{and}&
Y&:=\Ex[\Big]{\;\,\prod_{\substack{e\subset f,\\\mathclap{\omega:\omega_i\in[\vec a_i]}}}\;\,g(x_0,x_e\tw)}[x_0\in V_0]\,.
\end{align}
Thus we have $\Ex{X}=\cH\big(\oct{k-1}{\vec{a}}\big)$, $\Ex{XY}=\cH\big(\oct{k}{\vec{b}}\big)$, and $\Ex{XY^2}=\cH\big(\oct{k}{\vec{c}}\big)$. Since $X$ is a nonnegative random variable, the Cauchy-Schwarz inequality $\Ex{XY}^2\le\Ex{X}\Ex{XY^2}$ gives the required statement.
\end{proof}

Lemma~\ref{lem:CSoctlow} justifies the term `minimal' used in the following definition.

\begin{definition}\label{def:minimal}
Let $\cH$ be a $k$-partite $k$-graph and $\eta\ge0$. Then we say that $\cH$ is \emph{$\eta$-minimal} if, for every $i\in [k]$ and for every $\vec{a},\vec{b},\vec{c}\in\{0,1,2\}^k$ which satisfy $\vec{a}_j=\vec{b}_j=\vec{c}_j$ for all $j\in [k]\setminus\{i\}$ and $\vec{a}_i=0$, $\vec{b}_i=1$, $\vec{c}_i=2$, we have
\[
\cH\big(\oct{k}{\vec{c}}\big) \le 
(1+\eta)\frac{\cH(\oct{k}{\vec{b}})^2}{\cH(\oct{k-1}{\vec{a}})}\,.
\]
\end{definition}

Suppose $\Gam$ and $\cG$ are $k$-partite $k$-graphs, and $\cG$ agrees with $\Gam$ on edges of size $k-1$ and less. Suppose furthermore that the density of $\cG$ relative to $\Gam$ is $d$. If $\Gam$ is a complete graph, then it is well known that $\cG$ has at least $d^{2^k}$ times as many octahedra as $\Gam$. For general $\Gam$ this statement is false, but we will now show that if $\Gam$ is $\eta$-minimal it is approximately true (and generalise it).

\begin{corollary}\label{cor:relCSoctlow}
For all natural numbers $k\geq 2$ and vectors $\vec{s},\,\vec{s'}\in\{1,2\}^{k}$ with $\vec{s}\ge\vec{s'}$ pointwise and such that $\vec{s}$ has $t$ more $2$ entries than $\vec{s'}$, the following holds.  
Suppose that $\cG$ and $\Gam$ are $k$-partite $k$-graphs on the same partite vertex set, with $g(e)=\gam(e)$ for all $e$ with $|e|<k$, and suppose $\cG\big(\oct{k}{\vec{s'}}\big)=d\cdot \Gam\big(\oct{k}{\vec{s'}}\big)$.
Moreover suppose that $\Gam$ is $\eta$-minimal.
Then
\[
\cG\big(\oct{k}{\vec{s}}\big) \ge \frac{d^{2^t}}{(1+\eta)^{2^t-1}} \Gam\big(\oct{k}{\vec{s}}\big)\,.
\]
\end{corollary}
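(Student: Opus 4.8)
The plan is to induct on $t$, the number of coordinates in which $\vec{s}$ and $\vec{s'}$ differ; since $\vec{s}\ge\vec{s'}$ pointwise and both lie in $\{1,2\}^k$, these are exactly the $t$ coordinates where $\vec{s'}$ has entry $1$ and $\vec{s}$ has entry $2$. The base case $t=0$ is immediate: then $\vec{s}=\vec{s'}$, the factor $(1+\eta)^{2^0-1}$ equals $1$, and the claimed bound is the hypothesis $\cG(\oct{k}{\vec{s'}})=d\cdot\Gam(\oct{k}{\vec{s'}})$ itself. For the inductive step, fix $t\ge1$ and an index $i$ with $\vec{s}_i=2$, $\vec{s'}_i=1$. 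Let $\vec{c}=\vec{s}$, let $\vec{b}\in\{1,2\}^k$ agree with $\vec{s}$ off coordinate $i$ with $\vec{b}_i=1$, and let $\vec{a}$ agree with $\vec{s}$ off coordinate $i$ with $\vec{a}_i=0$. Then $\vec{b}\ge\vec{s'}$ pointwise and $\vec{b}$ has exactly $t-1$ more $2$-entries than $\vec{s'}$, so the induction hypothesis applies (with the same $\cG$, $\Gam$, $\vec{s'}$, $d$) to the target $\vec{b}$ and gives $\cG(\oct{k}{\vec{b}})\ge d^{2^{t-1}}(1+\eta)^{-(2^{t-1}-1)}\Gam(\oct{k}{\vec{b}})$.

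Now I would combine three facts. First, Lemma~\ref{lem:CSoctlow} applied to $\cG$ with the triple $(\vec{a},\vec{b},\vec{c})$ (note $\vec{b},\vec{c}\in\{1,2\}^k$ and $\vec{a}$ has its only non-$\{1,2\}$ entry, a $0$, at coordinate $i$) yields
\[
\cG\big(\oct{k}{\vec{s}}\big)=\cG\big(\oct{k}{\vec{c}}\big)\ge\frac{\cG(\oct{k}{\vec{b}})^2}{\cG(\oct{k-1}{\vec{a}})}\,.
\]
Second — and this is the only point at which the hypothesis $g(e)=\gam(e)$ for $|e|<k$ is used — every edge of the complex $\oct{k-1}{\vec{a}}$, which lives on the $k-1$ parts other than $i$, has size at most $k-1$, so $\cG(\oct{k-1}{\vec{a}})=\Gam(\oct{k-1}{\vec{a}})$. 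Third, substituting the induction hypothesis for $\cG(\oct{k}{\vec{b}})$ (squaring is legitimate since both sides are non-negative) and then invoking the $\eta$-minimality of $\Gam$ from Definition~\ref{def:minimal}, rewritten as $\Gam(\oct{k}{\vec{b}})^2/\Gam(\oct{k-1}{\vec{a}})\ge(1+\eta)^{-1}\Gam(\oct{k}{\vec{c}})$, I obtain
\[
\cG\big(\oct{k}{\vec{s}}\big)\ge\frac{d^{2^{t}}}{(1+\eta)^{2^{t}-2}}\cdot\frac{\Gam(\oct{k}{\vec{b}})^2}{\Gam(\oct{k-1}{\vec{a}})}\ge\frac{d^{2^{t}}}{(1+\eta)^{2^{t}-1}}\,\Gam\big(\oct{k}{\vec{s}}\big)\,,
\]
the exponent arithmetic being just $2(2^{t-1}-1)+1=2^{t}-1$. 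This closes the induction and proves the corollary.

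The argument is essentially routine, and I do not anticipate a real obstacle; the only genuine care needed is the bookkeeping of the $(1+\eta)$ tower together with the observation that it is consistent to apply the Cauchy--Schwarz lower bound (Lemma~\ref{lem:CSoctlow}) to $\cG$ while using minimality only for $\Gam$, the bridge being $\cG(\oct{k-1}{\vec{a}})=\Gam(\oct{k-1}{\vec{a}})$. One should also dispatch the degenerate case $\Gam(\oct{k-1}{\vec{a}})=0$, where the quotients above are undefined: there the restriction of $\oct{k}{\vec{c}}$ to the parts other than $i$ is exactly $\oct{k-1}{\vec{a}}$, so the weight of any partite homomorphism $\oct{k}{\vec{c}}\to\Gam$ is a non-negative multiple of the weight of its restriction; hence $\Gam(\oct{k}{\vec{s}})=0$ and the claimed inequality holds trivially (and similarly one may assume $\Gam(\oct{k}{\vec{s'}})>0$, so that $d$ is well defined).
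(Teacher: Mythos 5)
Your proof is correct and takes essentially the same approach as the paper: at each of the $t$ coordinate steps you apply Lemma~\ref{lem:CSoctlow} to $\cG$ and $\eta$-minimality to $\Gam$, identifying the two denominators because $\cG$ and $\Gam$ agree on edges of size less than $k$, exactly as in the paper's argument, which merely organises the same step as a chain of ratio inequalities after relabelling coordinates instead of an induction on $t$. The exponent bookkeeping agrees, and your explicit handling of the zero-denominator case is a harmless extra not spelled out in the paper.
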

\begin{proof}
We prove the case $\vec{s}=(\vec{2}^{t+a},\vec{1}^{k-t-a})$ where $a\ge 0$ is an integer; the other cases follow by relabelling indices. Letting $\vec{s}^{(i)}:=(\vec{2}^{t+a-i},\vec{1}^{k+i-t-a})$ and $\vec{r}^{(i)}:=(\vec{2}^{t+a-i-1},0,\vec{1}^{k+i-t-a})$, we have $\vec{s}^{(0)}=\vec{s}$ and $\vec{s}^{(t)}=\vec{s'}$. By Lemma~\ref{lem:CSoctlow} and $\eta$-minimality of $\Gam$ respectively, for each $0\le i\le t-1$ we have
\begin{align}
\cG\big(\oct{k}{\vec{s}^{(i)}}\big)&\ge\frac{\cG\big(\oct{k}{\vec{s}^{(i+1)}}\big)^2}{\cG\big(\oct{k-1}{\vec{r}^{(i+1)}}\big)}&
&\text{and}&
\Gam\big(\oct{k}{\vec{s}^{(i)}}\big) &\le(1+\eta)\frac{\Gam\big(\oct{k}{\vec{s}^{(i+1)}}\big)^2}{\Gam\big(\oct{k-1}{\vec{r}^{(i+1)}}\big)}\,.
\end{align}
Note that since $\cG$ and $\Gam$ agree on edges of size less than $k$, the denominators in both fractions are equal, so for each $0\le i\le t-1$ we have
\[\frac{\cG\big(\oct{k}{\vec{s}^{(i)}}\big)}{\Gam\big(\oct{k}{\vec{s}^{(i)}}\big)}\ge \frac{1}{1+\eta}\bigg(\frac{\cG\big(\oct{k}{\vec{s}^{(i+1)}}\big)}{\Gam\big(\oct{k}{\vec{s}^{(i+1)}}\big)}\bigg)^2\,,\]
and thus
\[\frac{\cG\big(\oct{k}{\vec{s}^{(0)}}\big)}{\Gam\big(\oct{k}{\vec{s}^{(0)}}\big)}\ge \frac{1}{(1+\eta)^{2^t-1}}\bigg(\frac{\cG\big(\oct{k}{\vec{s}^{(t)}}\big)}{\Gam\big(\oct{k}{\vec{s}^{(t)}}\big)}\bigg)^{2^t}=\frac{d^{2^t}}{(1+\eta)^{2^t-1}}\,,\]
as desired.
\end{proof}

In particular, it follows that if $\cG$ is $(\eps,d)$-regular with respect to the $\eta$-minimal $\Gam$, then $\cG$ is itself $\eps'$-minimal, where $\eps'$ is small provided $\eta$ is sufficiently small and $\eps$ is small enough compared to $d$.

\begin{corollary}\label{cor:subregular}
Given $\eps',\, d>0$, then for $\eps$, $\eta$ small enough that
\[
\eps'\ge\max\Big\{1-\tfrac{(1-\eps/d)^{2^k}}{(1+\eta)^{2^k-1}},\,\big(1+\eps d^{-2^k}\big)(1+\eta)^{2^k-1}-1\Big\}
\,,\]
the following holds.
Let $\Gam$ and $\cG$ be $k$-partite $k$-graphs on the same partite vertex set, such that $\gam(e)=g(e)$ whenever $|e|< k$. Suppose that $\cG$ is $(\eps,d)$-regular with respect to $\Gam$, and that $\Gam$ is $\eta$-minimal. Then $\cG$ is $\eps'$-minimal, and for each $\vec{c}\in\{1,2\}^k$ we have $\cG\big(\oct{k}{\vec{c}}\big)=(1\pm\eps')d^r\Gam\big(\oct{k}{\vec{c}}\big)$ with $r=\prod_{i\in [k]}\vec c_i$.

Moreover we note that if the above inequality for $\eps'$ is tight, we have
\[
\eps'\le 2^{2^k}\big(\eps d^{-2^k}+\eta\big)\,.
\]
\end{corollary}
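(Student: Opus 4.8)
The plan is to split \charef{cor:subregular} into two parts that are proved in turn: first the quantitative count statement $\cG(\oct{k}{\vec{c}})=(1\pm\eps')d^{r}\Gam(\oct{k}{\vec{c}})$ for all $\vec c\in\{1,2\}^k$, and then the $\eps'$-minimality of $\cG$. The count statement I would obtain from two ``chain'' applications of the Cauchy--Schwarz inequality (one for the lower bound, one for the upper bound), and minimality I would then deduce from the count statement together with the $\eta$-minimality of $\Gam$.

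The structural fact underlying both chains is a cancellation. Write $\tilde d:=\cG(\oct{k}{\vec{1}^k})/\Gam(\oct{k}{\vec{1}^k})$, so $(\eps,d)$-regularity gives $\tilde d=d\pm\eps$, and for $\vec s\in\{1,2\}^k$ with $t$ entries equal to $2$ note $r(\vec s):=\prod_i\vec s_i=2^{t}$. If one walks from $\vec{1}^k$ up to $\vec s$ (or from $\vec s$ up to $\vec{2}^k$) by flipping coordinates from $1$ to $2$ one at a time, then at each step \charef{lem:CSoctlow} applied to $\cG$ and the $\eta$-minimality of $\Gam$ give, respectively, a lower and an upper bound for the larger octahedron in terms of the smaller, \emph{with the same $\oct{k-1}{\cdot}$ correction factor}: any octahedron with a coordinate equal to $0$ only involves edges of size $<k$, on which $\cG$ and $\Gam$ agree, so it has the same weight in both. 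Consequently, in the ratio $\cG(\oct{k}{\cdot})/\Gam(\oct{k}{\cdot})$ taken along the chain, every $\oct{k-1}{\cdot}$ term cancels, leaving only the two endpoint contributions and a power of $(1+\eta)$. For the lower bound I would simply quote \charef{cor:relCSoctlow} with $\vec s'=\vec{1}^k$, $\vec s=\vec c$ (and the value $\tilde d\ge d-\eps$ in place of its ``$d$''), getting $\cG(\oct{k}{\vec c})\ge\tilde d^{2^{t}}(1+\eta)^{-(2^{t}-1)}\Gam(\oct{k}{\vec c})$; since $(1-\eps/d)^{2^{t}}$ is decreasing and $(1+\eta)^{2^{t}-1}$ increasing in $t$, and $t\le k$, the first term of the maximum defining $\eps'$ yields $\cG(\oct{k}{\vec c})\ge(1-\eps')d^{2^{t}}\Gam(\oct{k}{\vec c})$. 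For the upper bound I would run the dual chain from $\vec c$ up to $\vec{2}^k$: inverting \charef{lem:CSoctlow} for $\cG$ bounds $\cG(\oct{k}{\vec c})$ above by a geometric mean of $\cG(\oct{k}{\vec{2}^k})$ and $\oct{k-1}{\cdot}$ terms, while running the $\eta$-minimality of $\Gam$ backwards along the same chain bounds $\Gam(\oct{k}{\vec c})$ below by the analogous expression with a factor $(1+\eta)^{1/2}$ lost per step; dividing, cancelling the $\oct{k-1}{\cdot}$ terms, and inserting $\cG(\oct{k}{\vec{2}^k})\le(d^{2^k}+\eps)\Gam(\oct{k}{\vec{2}^k})$ from regularity gives $\cG(\oct{k}{\vec c})\le(1+\eta)^{1-2^{-(k-t)}}(1+\eps d^{-2^k})^{2^{-(k-t)}}d^{2^{t}}\Gam(\oct{k}{\vec c})$, and the second term of the maximum defining $\eps'$ turns this into $\cG(\oct{k}{\vec c})\le(1+\eps')d^{2^{t}}\Gam(\oct{k}{\vec c})$.

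For $\eps'$-minimality, fix $i$ and $\vec a,\vec b,\vec c$ as in \charef{def:minimal}. First reduce: if some $j\ne i$ has $\vec b_j=0$ then that part is empty in all three octahedra, so one may delete it; thus we may assume $\vec b,\vec c\in\{1,2\}^k$ with $\vec c=\vec b$ except $\vec b_i=1$, $\vec c_i=2$, whence $r(\vec c)=2r(\vec b)$ and $\cG(\oct{k-1}{\vec a})=\Gam(\oct{k-1}{\vec a})$. Substituting the count estimate for $\cG(\oct{k}{\vec c})$ and $\cG(\oct{k}{\vec b})$ into $\cG(\oct{k}{\vec c})\cG(\oct{k-1}{\vec a})/\cG(\oct{k}{\vec b})^2$ and then applying $\eta$-minimality of $\Gam$ to the surviving ratio $\Gam(\oct{k}{\vec c})\Gam(\oct{k-1}{\vec a})/\Gam(\oct{k}{\vec b})^2\le1+\eta$ bounds the quantity by $1+\eta$ times a ratio of the $\cG$-densities that the count estimate controls; one then has to check, using \emph{both} parts of the hypothesis on $\eps'$, that the result is at most $1+\eps'$. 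This last computation --- matching the $\tilde d$ in the count lower bound against the $d$ in the count upper bound --- is the delicate point and is the step I would expect to be the main obstacle; keeping the sharp (chain) forms of the count bounds, so that the powers of $(1+\eta)$ and of $(1\pm\eps/d)$ stay coupled to the exponents $2^{-(k-t)}$ and $2^{t}$ that appear, is what makes the two endpoint conditions in the maximum suffice (and if they do not quite, one replaces $\eps'$ on the right of the minimality inequality by $O(\eps')$, which does not affect any downstream use). Finally, the ``moreover'' is a routine estimate: by Bernoulli's inequality $(1-\eps/d)^{2^k}(1+\eta)^{-(2^k-1)}\ge1-2^k(\eps/d+\eta)$ and $(1+\eta)^{2^k-1}\le1+2^{2^k}\eta$ for $0<\eta\le1$, so, using $d\le1$ (hence $\eps/d\le\eps d^{-2^k}$), each of the two terms of the maximum is at most $2^{2^k}(\eps d^{-2^k}+\eta)$.
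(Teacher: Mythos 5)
Your treatment of the counting statement is essentially the paper's route (the paper establishes the upper bound by assuming $\cG\big(\oct{k}{\vec c}\big)>(1+\eps')d^r\Gam\big(\oct{k}{\vec c}\big)$ and propagating up to $\vec2^k$ via Corollary~\ref{cor:relCSoctlow} to contradict regularity; your direct chain is just the contrapositive, and your sharp exponents $2^{-(k-t)}$ and $1-2^{-(k-t)}$ match). For the minimality statement, however, the two arguments genuinely diverge, and yours does not close. Writing $X:=(1+\eta)^{2^k-1}$, $Y:=1+\eps d^{-2^k}$, $Z:=(1-\eps/d)^{-2^k}$, the hypothesis on $\eps'$ reads $XY\le1+\eps'$ and $XZ\le(1-\eps')^{-1}$. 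Your substitution of the count upper bound for $\cG\big(\oct{k}{\vec c}\big)$ (which carries a $Y$-type loss) and the count lower bound for $\cG\big(\oct{k}{\vec b}\big)^2$ (which carries a $Z$-type loss, from $\tilde d\ge d-\eps$) into $\cG\big(\oct{k}{\vec c}\big)\cG\big(\oct{k-1}{\vec a}\big)/\cG\big(\oct{k}{\vec b}\big)^2$, even with the sharp chain exponents, produces a quantity of the form $X^{\alpha}Y^{\beta}Z^{\gamma}$ whose only available bound from the two hypothesis inequalities is (after the favourable exponent simplifications you identify) $\big(\tfrac{1+\eps'}{1-\eps'}\big)^{2^{-(k-t-1)}}\le\tfrac{1+\eps'}{1-\eps'}$, not $1+\eps'$: you use regularity twice (once at $\vec1^k$, once at $\vec2^k$), so two copies of $X$ appear and the losses compound rather than cancel. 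You anticipate this with the $O(\eps')$ fallback, and you are right that a constant-factor loss is harmless for every downstream use (Lemma~\ref{lem:slicing}, Lemma~\ref{lem:k-inherit}), so as a \emph{result} your argument is adequate. But it is worth knowing the paper gets the clean $1+\eps'$ by a different route: assume $\vec a,\vec b,\vec c$ witness failure of $\eps'$-minimality of $\cG$, note $\vec b$ has no zero entries (else the same triple witnesses failure of $\eta$-minimality of $\Gam$, since the counts agree), then feed Corollary~\ref{cor:relCSoctlow} for $\cG\big(\oct{k}{\vec b}\big)$ and the assumed failure into $\eta$-minimality of $\Gam$ to get $\cG\big(\oct{k}{\vec c}\big)>\frac{(1+\eps')\tilde d^{\,2^{t+1}}}{(1+\eta)^{2^{t+1}-1}}\Gam\big(\oct{k}{\vec c}\big)$, and propagate this up to $\vec2^k$ exactly as in the count upper bound, contradicting regularity there — so regularity is invoked only once, and only the second condition in the max is used, with no compounding. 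The remaining details (the reduction at the start, the ``moreover'') are as you describe.
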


\begin{proof}
We begin with the second statement, comparing $\cG\big(\oct{k}{\vec{c}}\big)$ to $\Gam\big(\oct{k}{\vec{c}}\big)$.
Corollary~\ref{cor:relCSoctlow} with $\vec s=\vec c$ and $\vec s'=\vec1^k$ and the regularity bound on $\cG\big(\oct{k}{\vec1^k}\big)$ give the required lower bound, since for any $1\le r\le 2^k$ we have by choice of $\eps$ and $\eta$,
\[
\frac{(1-\eps/d)^{r}}{(1+\eta)^{r-1}}\ge\frac{(1-\eps/d)^{2^k}}{(1+\eta)^{2^k-1}}\ge 1-\eps'\,.
\]
To obtain the upper bound, suppose for contradiction that $\cG\big(\oct{k}{\vec{c}}\big)>(1+\eps')d^r\Gam\big(\oct{k}{\vec{c}}\big)$, 
where $r=\prod_{i\in [k]}\vec c_i$.
Then applying Corollary~\ref{cor:relCSoctlow} with $\vec s= \vec 2^k$ and $\vec s'= \vec c$, we have

\begin{align}
\cG\big(\oct{k}{\vec{2}^k}\big)
  &>   \frac{\big((1+\eps')d^r\big)^{2^k/r}}{(1+\eta)^{2^k/r-1}}\Gam\big(\oct{k}{\vec{2}^k}\big) 
   \ge \frac{(1+\eps')^{2^k/r}}{(1+\eta)^{2^k-1}}d^{2^k}\Gam\big(\oct{k}{\vec{2}^k}\big)
\\&\ge \big(d^{2^k}+\eps\big)\Gam\big(\oct{k}{\vec{2}^k}\big)\,,
\end{align}
by $1\le r\le 2^k$ and choice of $\eps$, $\eta$. 
This contradicts the $(\eps, d)$-regularity of $\cG$ with respect to $\Gam$.

The minimality argument is essentially identical. Suppose for contradiction that $\cG$ is not $\eps'$-minimal, and let $\vec{a},\,\vec{b},\,\vec{c}\in\{0,1,2\}^k$ be vectors witnessing this.
That is, these vectors agree on $[k]\setminus\{j\}$ for some $j\in[k]$ and we have $\vec a_j=0$, $\vec b_j=1$, $\vec c_j=2$, and
\begin{equation}\label{eq:subregular:fail}
 \cG\big(\oct{k}{\vec{c}}\big)>(1+\eps')\frac{\cG\big(\oct{k}{\vec{b}}\big)^2}{\cG\big(\oct{k-1}{\vec{a}}\big)}\,.
\end{equation} 
Observe that $\vec{b}$ cannot contain any zero entries, since otherwise the three octahedron counts are the same as in $\Gam$, and since $\eps'\ge \eta$ the three vectors then witness that $\Gam$ is not $\eta$-minimal. Let $t$ be the number of $2$ entries in $\vec{b}$. 
By Corollary~\ref{cor:relCSoctlow}, we have $\cG\big(\oct{k}{\vec{b}}\big)\ge\frac{d^{2^t}}{(1+\eta)^{2^t-1}}\Gam\big(\oct{k}{\vec{b}}\big)$, so since $\cG$ and $\Gam$ agree on edges of size at most $k-1$, we have
\[
\cG\big(\oct{k}{\vec{c}}\big)\gByRef{eq:subregular:fail}(1+\eps')\frac{d^{2^{t+1}}}{(1+\eta)^{2^{t+1}-2}}\frac{\Gam\big(\oct{k}{\vec{b}}\big)^2}{\Gam\big(\oct{k-1}{\vec{a}}\big)}\ge \frac{(1+\eps')d^{2^{t+1}}}{(1+\eta)^{2^{t+1}-1}}\Gam\big(\oct{k}{\vec{c}}\big)\,,
\]
where the second inequality uses the $\eta$-minimality of $\Gam$. 
Applying Corollary~\ref{cor:relCSoctlow} with $\vec s= \vec 2^k$ and $\vec s'= \vec c$, we obtain
\begin{align}
\cG\big(\oct{k}{\vec{2}^k}\big)
  &>(1+\eta)^{2^{k-t-1}-1}\Bigg(\frac{(1+\eps')d^{2^{t+1}}}{(1+\eta)^{2^{t+1}-1}}\Bigg)^{2^{k-t-1}}\Gam\big(\oct{k}{\vec{2}^k}\big)
\\&=\frac{(1+\eps')^{2^{k-t-1}}}{(1+\eta)^{2^k-1}}d^{2^k}\Gam\big(\oct{k}{\vec{2}^k}\big)\,.
\end{align}
Since $t\le k-1$, and since $(1+\eps')(1+\eta)^{1-2^k}d^{2^k}\ge d^{2^k}+\eps$, this is the desired contradiction to the $(\eps,d)$-regularity of $\cG$ with respect to $\Gam$.
\end{proof}

\subsection{Regular subgraphs of regular graphs}

In this section we show that, given an $\eta$-minimal graph $\Gam$, if we replace the $\ell$-edges $V_{[\ell]}$ by a subgraph which is relatively dense and regular with respect to $\Gam[V_1,\dots,V_\ell]$ then the result is still $\eta'$-minimal. This is a generalisation of the `Slicing Lemma' for $2$-graphs, which says that large subsets of a regular pair induce a regular pair; in other words, replacing $1$-edges with a relatively dense subgraph preserves regularity of the $2$-edges. Note that regularity is a trivial condition for $1$-graphs.

\begin{lemma}\label{lem:slicing}
Given $\eps',\, d>0$, then for $\eps$, $\eta$ small enough that
\begin{align}
\min\{\eps',1/2\}\ge\eta+\max\Big\{
  & 2^7k^3\Big(1-\tfrac{(1-\eps/d)^{2^{k-1}}}{(1+\eta)^{2^{k-1}-1}}\Big),\,
\\& 2^7k^3\left(\big(1+\eps d^{-2^{k-1}}\big)(1+\eta)^{2^{k-1}-1}-1\right),\,
\\& 2^9k^3\left(\big(1+100\sqrt{\eta}d^{-2^{k-1}}\big)(1+2\eta)-1\right)\Big\}
\,,
\end{align}
the following holds.
Let $\Gam$ be an $\eta$-minimal $k$-partite $k$-graph with parts $V_1,\dots,V_k$, and let $\cG$ be a subgraph on the same vertex set, which agrees with $\Gam$ except on $V_{[\ell]}$ for some $\ell<k$, and which has the property that $\cG[V_1,\dots,V_\ell]$ is $(\eps,d)$-regular with respect to $\Gam[V_1,\dots,V_\ell]$. 
Then $\cG$ is $\eps'$-minimal, and for each $\vec{s}\in\{0,1,2\}^k$ we have $\cG\big(\oct{k}{\vec{s}}\big)=(1\pm\eps')d^r\Gam\big(\oct{k}{\vec{s}}\big)$ with $r:=\prod_{i\in[\ell]}\vec{s}_i$.

Moreover, we note that when $\eps'<1/2$ and the above inequality for $\eps'$ is tight, we have
\[
\eps' \le 2^{2^{k-1}+18}k^3(\eps +\sqrt\eta)d^{-2^{k-1}}\,.
\]
\end{lemma}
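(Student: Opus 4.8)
\textbf{Plan for the proof of Lemma~\ref{lem:slicing}.}

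The plan is to reduce the assertion to Corollary~\ref{cor:subregular} by a standard tensor/averaging trick. The key observation is that an octahedron $\oct{k}{\vec{s}}$ can be built in two stages: first choose the vertices in parts $V_{\ell+1},\dots,V_k$ (each $V_j$ contributing $\vec{s}_j$ vertices), and then, conditioned on that choice, the count of ways to complete to a full copy of $\oct{k}{\vec{s}}$ in $\cG$ (respectively $\Gam$) is exactly a weighted octahedron count $\cG'(\oct{\ell}{(\vec s_1,\dots,\vec s_\ell)})$ in an auxiliary $\ell$-partite $\ell$-graph $\cG'$ on $V_1,\dots,V_\ell$ whose edge weights on $V_f$ ($f\subset[\ell]$) are obtained by multiplying the original weights of $\cG$ on $V_f$ by the product, over all choices of vertices $x_j^{(i)}$ in the "outer" parts, of the weights of all edges of $\oct{k}{\vec s}$ that meet $\{V_{\ell+1},\dots,V_k\}$ and whose intersection with $[\ell]$ equals $f$. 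Since $\cG$ and $\Gam$ agree off $V_{[\ell]}$, the same multiplicative factor appears for both, so the auxiliary graphs $\cG'$ and $\Gam'$ agree off $V_{[\ell]}$ and $\cG'[V_{[\ell-1]}]$-type layers all coincide. First I would set this up carefully, checking that $\cG'[V_1,\dots,V_\ell]$ is $(\eps,d)$-regular with respect to $\Gam'[V_1,\dots,V_\ell]$: the octahedron counts $\cG'(\oct{\ell}{\vec1^\ell})$, $\cG'(\oct{\ell}{\vec2^\ell})$ are, after re-expanding the definition, precisely $\cG(\oct{k}{\vec1^\ell\,\vec s'})$-type counts where $\vec s'$ places $2$'s on the outer parts, and these are controlled by the hypothesis that $\cG[V_1,\dots,V_\ell]$ is $(\eps,d)$-regular over $\Gam[V_1,\dots,V_\ell]$ together with $\eta$-minimality of $\Gam$ (via Corollary~\ref{cor:relCSoctlow}), which is why the cube exponents drop from $2^k$ to $2^{k-1}$ in the quantitative bound.

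Having done this, the second step is: apply Corollary~\ref{cor:subregular} (with $k$ replaced by $\ell$) to the pair $\cG'\subset\Gam'$ to conclude that $\cG'$ is $\eps''$-minimal for a suitable $\eps''$, and that $\cG'(\oct{\ell}{\vec t})=(1\pm\eps'')d^{\prod_i\vec t_i}\Gam'(\oct{\ell}{\vec t})$ for all $\vec t\in\{0,1,2\}^\ell$. Then I would translate these two conclusions back. The density statement $\cG(\oct{k}{\vec s})=(1\pm\eps')d^r\Gam(\oct{k}{\vec s})$ with $r=\prod_{i\in[\ell]}\vec s_i$ follows by averaging the identity $\cG'(\oct{\ell}{(\vec s_1,\dots,\vec s_\ell)})=(1\pm\eps'')d^r\Gam'(\oct{\ell}{(\vec s_1,\dots,\vec s_\ell)})$ over the outer vertices (the outer weights are nonnegative so the $(1\pm\eps'')$ survives averaging). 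The $\eps'$-minimality of $\cG$ is slightly more delicate because minimality is a statement about a ratio $\cH(\oct{k}{\vec c})\cH(\oct{k-1}{\vec a})$ versus $\cH(\oct{k}{\vec b})^2$ with the distinguished coordinate $j$ possibly lying outside $[\ell]$. If $j\in[\ell]$, the three relevant counts all refer to the same outer configuration fibrewise and minimality of $\cG'$ transfers after averaging (here one needs a Cauchy--Schwarz/convexity step: $\Ex{A C}\ge \Ex{AB}^2/\Ex{B}$-type inequalities, so averaging a fibrewise near-minimality bound gives a near-minimality bound for the averages — this is where the extra factors of $k^3$ and the $100\sqrt\eta$ term, coming from Lemma~\ref{lem:ECSdist}/Corollary~\ref{cor:ECSconc}, enter). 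If $j\notin[\ell]$, then doubling coordinate $j$ affects only the outer edges, and the required inequality is just $\eta$-minimality of $\Gam$ combined with the already-established density comparison between $\cG$ and $\Gam$; one checks the error stays below $\eps'$.

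The main obstacle I expect is the bookkeeping in the $j\notin[\ell]$ case of minimality and, more seriously, controlling the error propagation when one doubles an outer coordinate: doing so changes $\cG'$ into a \emph{different} auxiliary graph $\cG''$ (the outer-weight factors get squared in the appropriate sense), and one must argue that $\cG''$ is still close to the corresponding $\Gam''$ with a comparable regularity/minimality error — essentially one needs that the map "restrict to a fibre" commutes well enough with "double an outer vertex". Quantitatively this is where the various constants $2^7k^3$, $2^9k^3$, $2^{2^{k-1}+18}k^3$ are forced, and getting the clean final bound $\eps'\le 2^{2^{k-1}+18}k^3(\eps+\sqrt\eta)d^{-2^{k-1}}$ will require patiently tracking how the $\eps''$ from Corollary~\ref{cor:subregular} (which is $\le 2^{2^{k-1}}(\eps d^{-2^{k-1}}+\eta)$, up to the $\sqrt\eta$ losses from the distributional Cauchy--Schwarz) gets inflated by the $O(k^3)$ union-bound-type factors over the $O(k)$ choices of distinguished coordinate and the $O(k)$ pairs of coordinates involved. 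I would handle the outer-coordinate doubling by a direct Cauchy--Schwarz argument on the fibrewise counts (treating the fibre count as the random variable $X$ in Lemma~\ref{lem:ECSdist}), rather than trying to iterate Corollary~\ref{cor:subregular}, since the latter is designed for doubling coordinates in $[\ell]$ only.
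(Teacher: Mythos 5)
There is a genuine gap at the foundation of your plan: the auxiliary graph $\cG'$ you want to feed into Corollary~\ref{cor:subregular} either does not exist or does not have the regularity you need. A single global $\ell$-graph on $V_1,\dots,V_\ell$ whose octahedron counts reproduce $\cG\big(\oct{k}{\vec s}\big)$ cannot be defined, because in $\oct{k}{\vec s}$ the outer vertices are \emph{shared} among all the inner transversal edges; the contribution of the mixed edges does not factorise as a product over traces $f\subset[\ell]$ (already for $k=2$, $\ell=1$ the quantity $\Ex{\gam(x_1x_2)\gam(x_1'x_2)}[x_2\in V_2]$ depends jointly on $(x_1,x_1')$, so no vertex-weighting of $V_1$ encodes the $C_4$ count). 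The honest version of your construction is fibrewise (fix the outer vertices), but then the claim that $\cG'[V_1,\dots,V_\ell]$ is $(\eps,d)$-regular with respect to $\Gam'[V_1,\dots,V_\ell]$ is exactly a regularity-inheritance statement: it is not given by the hypotheses, it is the content of Lemma~\ref{lem:k-inherit}, it needs counting assumptions on $\Gam$ beyond $\eta$-minimality, and it holds only for \emph{typical} fibres with degraded parameters (using it here would also be circular, since the paper's proof of Lemma~\ref{lem:k-inherit} uses Lemma~\ref{lem:slicing}). Note also that Corollary~\ref{cor:relCSoctlow}, which you invoke to control the counts of $\cG'$, gives only \emph{lower} bounds on $\cG$-octahedron counts, whereas certifying regularity of $\cG'$ requires two-sided density control and an upper octahedron bound. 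Finally, even granting fibrewise near-minimality, your averaging step goes the wrong way: from $A_i\le(1+\eps)B_i^2/C_i$ one gets $\sum A_i\le(1+\eps)\sum B_i^2/C_i$, and Cauchy--Schwarz gives $\sum B_i^2/C_i\ge(\sum B_i)^2/\sum C_i$, which is the wrong direction unless the fibre densities are essentially constant --- again an inheritance-type fact you do not have.

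The paper's proof avoids fibrewise control entirely. It proves one counting claim, $\cG\big(\oct{k}{\vec s}\big)=(1\pm q\xi)d^r\Gam\big(\oct{k}{\vec s}\big)$, by induction on the number of nonzero outer coordinates: the base case (no outer vertices) is your Corollary~\ref{cor:subregular} application at level $\ell$, and each induction step adds one outer coordinate by applying the defect Cauchy--Schwarz Lemma~\ref{lem:ECSdist}, with $X$ the $\Gam$-weight of the partial octahedron, $W$ the relative $\cG$-weight (so $\Ex{WX}$ is controlled by the previous step of the induction, an \emph{aggregate} count, not a fibrewise one), and the second-moment hypothesis $\Ex{XY^2}\le(1+\eta)(d')^2\Ex{X}$ supplied by the $\eta$-minimality of $\Gam$; the output of Lemma~\ref{lem:ECSdist} is two-sided, which is what gives upper as well as lower bounds on the new counts. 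Minimality of $\cG$ is then deduced from this counting claim together with $\eta$-minimality of $\Gam$ by a three-line ratio computation. Your closing remark --- handling outer-coordinate doubling by a direct application of Lemma~\ref{lem:ECSdist} with the fibre count as $X$ --- points towards exactly this mechanism, but as written your plan routes the argument through a fibrewise regularity statement that is unavailable, so the main steps would not go through.
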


\begin{proof}
Let $\xi$ be maximal such that $\big(\frac{1+2k\xi}{1-2k\xi}\big)^2(1+\eta)\le1+\eps'$, noting that this gives $2^{-7}k^{-3}(\eps'-\eta)\le\xi\le (4k)^{-1}\eps'$. 
The choice of $\eps$, $\eta$ ensure that when Corollary~\ref{cor:subregular} is applied (e.g.\ to $\ind{\cG}{V_1,\dotsc,V_\ell}$ and $\ind{\Gam}{V_1,\dotsc,V_\ell}$) with $k\subref{cor:subregular}=\ell$ and $\eps$, $\eta$ as in this lemma, the resulting $\eps'\subref{cor:subregular}$ is at most $\xi$, and that
\begin{equation}\label{eq:slicing:xibd}
\big(1+100\sqrt{\eta}d^{-2^{k-1}}\big)(1+2\eta)\le1+\tfrac14\xi\,.
\end{equation}

 The following claim, and choice of $\xi$, gives the desired counting in $\cG$.
 \begin{claim}\label{claim:slicing}
  Given $\vec{s}\in\{0,1,2\}^k$, let $q:=\sum_{i\in[k]}\vec s_i$ and $r:=\prod_{i\in[\ell]}s_i$. Then we have
  \begin{equation}\label{eq:slice:G}
   \cG\big(\oct{k}{\vec{s}}\big)=(1\pm q\xi)d^r\Gam\big(\oct{k}{\vec{s}}\big)\,.
   \end{equation}
 \end{claim} 
 The required counting statements in $\cG$ follow because $q\le 2k$ and $\xi\le(4k)^{-1}\eps'$.
 The desired $\eps'$-minimality follows directly from this claim and $\eta$-minimality of $\Gam$. 
 Indeed, let $\vec{a}$, $\vec{b}$ and $\vec{c}$ be vectors in $\{0,1,2\}^k$ agreeing at all indices except $j$, and with $\vec a_j=0$, $\vec b_j=1$, $\vec c_j=2$. Let $t_{\vec{a}}=\prod_{i\in[\ell]}a_i$, and define similarly $t_{\vec{b}}$ and $t_{\vec{c}}$. Note that either $j\in[\ell]$ and we have $t_{\vec{a}}=0$ and $t_{\vec{c}}=2t_{\vec{b}}$, or $j\not\in[\ell]$ and all three are equal. Since $\sum_{i\in[k]}\vec{c}_i\le 2k$, we have by the claim,
 \begin{align*}
  \cG\big(\oct{k}{\vec{c}}\big)
    &\le(1+2k\xi)d^{t_{\vec{c}}}\Gam\big(\oct{k}{\vec{c}}\big)
  \\&\le(1+2k\xi)(1+\eta)d^{t_{\vec{c}}}\frac{\Gam\big(\oct{k}{\vec{b}}\big)^2}{\Gam\big(\oct{k}{\vec{a}}\big)}
  \\&\le(1+2k\xi)(1+\eta)d^{t_{\vec{c}}}\frac{(1-2k\xi)^{-2}d^{-2t_{\vec{b}}}\cG\big(\oct{k}{\vec{b}}\big)^2}{(1+2k\xi)^{-1}d^{-t_{\vec{a}}}\cG\big(\oct{k}{\vec{a}}\big)}=\Bigg(\frac{1+2k\xi}{1-2k\xi}\Bigg)^2(1+\eta)\frac{\cG\big(\oct{k}{\vec{b}}\big)^2}{\cG\big(\oct{k}{\vec{a}}\big)}\,,
 \end{align*}
 as desired. It remains only to prove the claim, which we now do by induction on the number of zeroes in $\vec{s}$ outside $[\ell]$.

\begin{claimproof}[Proof of Claim~\ref{claim:slicing}]
The base case is that all entries of $\vec{s}$ outside $[\ell]$ are equal to zero. Note that if $\vec{s}=\vec{0}^k$ then the claim is trivial, so we assume this is not the case, and hence $q=\sum_{i\in[k]}\vec{s}_i\ge 1$. 
Write $\vec s'\in\{0,1,2\}^\ell$ for the first $\ell$ entries of $\vec s$, which for the base case are the only entries which may be non-zero, giving $\cG\big(\oct{k}{\vec{s}}\big)=\cG[V_1,\dots,V_\ell]\big(\oct{\ell}{\vec{s}'}\big)$.
 Since $\cG[V_1,\dots,V_\ell]$ is $(\eps,d)$-regular with respect to $\Gam[V_1,\dots,V_\ell]$, which is $\eta$-minimal, by Corollary~\ref{cor:subregular} and choice of $\eps$, $\eta$, the claim statement follows.
 
For the induction step, suppose that $j\notin[\ell]$ is such that $s_j\neq 0$. For $i=0,1,2$, let $\vec s^{(i)}$ be the vector equal to $\vec s$ at all entries except the $j$th, and with $\vec s^{(i)}_j=i$. By induction, the claim statement holds for $\vec s^{(0)}$. 
Again, write $\vec s'\in\{0,1,2\}^\ell$ for the first $\ell$ entries of $\vec s$.
We define random variables $W$, $X$, $Y$ as follows. The random experiment we perform is to choose, for each $i\in[\ell]$, uniformly at random (with replacement) $\vec s\tz_i$ vertices in $V_i$. 
We let $X$ be the weight of the copy of $\oct{k-1}{\vec s\tz}$ in $\Gam$ on these vertices, $WX$ be the weight of the copy of $\oct{k-1}{\vec s\tz}$ in $\cG$ on these vertices, and $XY$ be the expected weight, over a uniformly random choice of vertex in $V_j$, of the copy of $\oct{k}{\vec{s}^{(1)}}$ in $\Gam$. Note that since $\cG$ is a subgraph of $\Gam$, we always have $0\le W\le 1$. 
More formally (and dealing with the trivial exceptional case $X=0$), let $x_i^{(m)}\in V_i$ be chosen independently, uniformly at random for each $i\in[k]\setminus\{j\}$ and $m\in[\vec s\tz_i]$. 
Write $\Omega = \prod_{i\in[k]\setminus\{j\}}[\vec s_i]$ and $\Omega' = \prod_{i\in[k]}[\vec s^{(1)}_i]$, and define
 \begin{align*}
  X&= \prod_{e\subset[k]\setminus\{j\}}\prod_{\omega\in\Omega}\gam\big(x_e^{(\vec{\omega})}\big)  \,,\\
  Y&=\Ex[\Big]{\prod_{e\subset[k],\,j\in e}\prod_{\omega\in\Omega'}\gam\big(x_e^{(\vec{\omega})}\big)}[x_j^{(1)}\in V_j]\,,\quad\text{and}\\
  W&=\begin{cases} \tfrac{1}{X}\prod_{e\subset[k]\setminus\{j\}}\prod_{\omega\in\Omega} g\big(x_e^{(\vec{\omega})}\big)&\text{ if }X>0 \\ 1 & \text{ if }X=0  \end{cases}\,.
 \end{align*}
The key feature of these definitions is that $\Ex{XY^i}=\Gam\big(\oct{k}{\vec{s}^{(i)}}\big)$ for each $i=0,1,2$, and similarly $\Ex{WXY^i}=\cG\big(\oct{k}{\vec{s}^{(i)}}\big)$.
If $\Ex{X}=0$ then trivially the claim holds, since $\Gam\big(\oct{k}{\vec{s}}\big)=0$. So we may assume $\Ex{X}>0$, and let $d'$ be such that $\Ex{XY}=d'\Ex{X}$. 
By Lemma~\ref{lem:CSoctlow} and the $\eta$-minimality of $\Gam$, we have $\Ex{XY^2}= (1\pm\eta)\frac{\Ex{XY}^2}{\Ex{X}}=(1\pm\eta)(d')^2\Ex{X}$. We are thus in a position to apply Lemma~\ref{lem:ECSdist}, with $\eps\subref{lem:ECSdist}=\eta$. We obtain
\begin{align*}
 \Ex{WXY}&=\Big(1-\eta\pm2\sqrt{\tfrac{\eta\Ex{X}}{\Ex{WX}}}\Big)d'\cdot\Ex{WX}\quad\text{and}\\
 \Ex{WXY^2}&=\Big(1-2\eta\pm7\sqrt{\eta}\frac{\Ex{X}}{\Ex{WX}}\Big)(d')^2\Ex{WX}\,.
\end{align*}
Recall that from the induction hypothesis with $q=\sum_{i\in[k]}\vec s\tz_i$ and $r=\prod_{i\in[\ell]}\vec s\tz_i$ we have 
\[
\Ex{WX}=\cG\big(\oct{k-1}{\vec s\tz}\big)=(1\pm q\xi)d^r\Gam\big(\oct{k}{\vec{s}^{(0)}}\big)=(1\pm q\xi)d^r\Ex{X}\,.
\]
This gives
\begin{align*}
 \cG\big(\oct{k}{\vec{s}^{(1)}}\big)&=\Big(1-\eta\pm2\sqrt{\eta(1+ 2q\xi)d^{-r}}\Big)\cdot(1\pm q\xi)d^r\Gam\big(\oct{k}{\vec{s}^{(1)}}\big)\quad\text{and}\\
 \cG\big(\oct{k}{\vec{s}^{(2)}}\big)&=\Big(1-2\eta\pm7\sqrt{\eta}(1+2q\xi)d^{-r}\Big)\cdot(1\pm q\xi)(1\pm2\eta)d^r\Gam\big(\oct{k}{\vec{s}^{(2)}}\big)\,,
\end{align*} 
where we use the $\eta$-minimality of $\Gam$ in obtaining the second statement. 
By choice of $\xi$ and~\eqref{eq:slicing:xibd}, this proves the claim for $\vec{s}^{(i)}$ with $i=1,2$, and in particular for $\vec{s}$, as desired. 
\end{claimproof}
The proof of Claim~\ref{claim:slicing} completes the proof of Lemma~\ref{lem:slicing}.
\end{proof}

\section{Inheritance of regularity}\label{sec:inherit}

Our goal in this section is to prove Lemma~\ref{lem:k-inherit}. 
Note that in proving the counting and embedding lemmas (see Section~\ref{sec:count}) for $k$-graphs, we must apply Lemma~\ref{lem:k-inherit} for $k\subref{lem:k-inherit}$-graphs where $k\subref{lem:k-inherit}$ takes values up to $k$, which means we must mention $(k+1)$-partite $(k+1)$-graphs in the proof below. 
Our definitions mean that in Section~\ref{sec:count}, whenever we are applying Lemma~\ref{lem:k-inherit} to a $(k+1)$-partite $(k+1)$-graph, the graphs are trivial and equal to $1$ on edges of size $k+1$. 
This feature is visible in the graph case: when $k=2$ the inheritance lemmas of~\cite{CFZextremal,ABSSregularity} involve a $3$-partite graph, and to deduce similar results from our inheritance lemma one must form a $3$-graph from this $3$-partite graph by giving edges of size $3$ weight $1$. 

We with a brief outline of the method for proving Lemma~\ref{lem:k-inherit}. 
First, let $\cH$ be the $k$-graph on $V_0,\dotsc, V_k$ with edge weights
\[
h(e):=
\begin{cases}
\gam(e) &\quad e\not\in V_{[k]} \\
g(e)    &\quad e\in V_{[k]}\,.
\end{cases}
\]
By Lemma~\ref{lem:slicing} and~\ref{inh:regJ}, $\cG$ is regular with respect to $\cH$, and by~\ref{inh:regf}, $\ind{\cH}{V_1,\dotsc,V_k}$ is regular with respect to $\ind{\Gam}{V_1,\dotsc,V_k}$. 
This, together with Corollary~\ref{cor:subregular}, in particular allows us to estimate $\cG\big(\oct{k+1}{\vec{1}^{k+1}}\big)$ and $\cG\big(\oct{k+1}{1,\vec{2}^{k}}\big)$ accurately.

These two quantities are by definition equal to the averages, over $v\in V_0$, of $\cG_v\big(\oct{k}{\vec{1}^{k}}\big)$ and $\cG_v\big(\oct{k}{\vec{2}^{k}}\big)$ respectively. Using~\ref{inh:count}, we conclude that on average the relative density of $\cG_v$ with respect to $\Gam_v$ is about $dd'$, and the number of octahedra it contains is about $(dd')^{2^k}$ times the number of octahedra in $\Gam_v$.

However, we can also give a lower bound on the average number of octahedra in $\cG_v$ using its density relative to $\Gam_v$ and Corollary~\ref{cor:relCSoctlow}, whenever $\Gam_v$ satisfies the counting conditions of that lemma. The assumption~\ref{inh:count} implies that these counting conditions are typically satisfied, and the few atypical vertices do not much affect the argument. Using the defect Cauchy--Schwarz inequality and the fact that we know the average density of $\cG_v$ relative to $\Gam_v$, we conclude that the only way this lower bound does not contradict the previous estimate is if typically $\cG_v$ has density about $dd'$ relative to $\Gam_v$ and number of octahedra about $(dd')^{2^k}$ times the number in $\Gam_v$. In other words, $\cG_v$ is typically $(\eps',dd')$-regular with respect to $\Gam_v$, as desired.

\begin{proof}[Proof of Lemma~\ref{lem:k-inherit}]

We use the letter $v$ for a vertex in $V_0$ to draw attention to the special role of the set $V_0$, but use $x_j$ for a vertex in $V_j$ when $j\in [k]$. 
As in the proof of Lemmas~\ref{lem:GPEcount} and~\ref{lem:GPEemb}, we use the correspondence between copies of $\oct{k+1}{1,\vec{a}}$ in $\cG$ or $\Gam$, and the average of the counts of $\oct{k}{\vec{a}}$ in the graphs $\cG_v$ or $\Gam_v$ over $v\in V_0$. 
More precisely, we have for any $\vec{a}\in\{0,1,2\}^k$,
\begin{align}
\cG\big(\oct{k+1}{1,\vec{a}}\big) &= \Ex[\big]{\cG_{v}\big(\oct{k}{\vec{a}}\big)}[v\in V_0]\,,\label{eq:GF1acor}\\
\Gam\big(\oct{k+1}{1,\vec{a}}\big) &= \Ex[\big]{\Gam_{v}\big(\oct{k}{\vec{a}}\big)}[v\in V_0]\,.\label{eq:GamF1acor}
\end{align} 
When $\Gam_v$ is well-behaved (in a way we make precise below) we are able to count carefully in $\cG_v$ but when $\Gam_v$ is not well-behaved, we can bound weights in $\cG_v$ from above by those in $\Gam_v$. 

Though in general it is difficult to control the weight of the empty set, in this proof we only embed a single vertex into $V_0$, hence there is not much to control. 
Instead of the usual sprinkling of weights involving the empty set, for this proof we can assume without loss of generality that $\gam(\emptyset)=g(\emptyset)=p(\emptyset)=1$ and avoid most of these factors. 
We will have similar correcting factors when counting in $\cG_v$ and $\Gam_v$, however.

The first step of the proof is to use the counting conditions in $\Gam$ to establish the existence of $U\subset V_0$ such that for each $v\in U$, $\Gam_v$ is well-behaved. 
We also give additional properties of $\Gam$ and $U$ that are useful later. 
Property~\ref{inh:count} specifies a kind of pseudorandomness for $\Gam$, and the natural definition of a well-behaved vertex $v\in V_0$ is that its link $\Gam_v$ is similarly pseudorandom, so our definition of $U$ will involve control of the counts of $\oct{k}{\vec{a}}$ in links. 
As ever, we must deal carefully with the weight of the empty set in these links, but for edges of size greater than one, we will see that~\ref{inh:count} implies concentration of the edge weights by the Cauchy--Schwarz inequality. 
We state the definition in terms of $\cP$ rather than $\cP_v$ for more convenient use later.

Write $\eta'=2^{3/2}\eta^{1/4}$ (so we have $\eta'<1/2$), and let $U\subset V_0$ be those vertices $v\in V_0$ such that for any $\vec{a}\in\{0,1,2\}^k\setminus\{\vec0^k\}$ we have 
\begin{equation}\label{eq:Udef}
\Gam_v\big(\oct{k}{\vec{a}}\big)=(1\pm \eta')\frac{\gam(v)}{p(0)}\cP\big(\oct{k+1}{1,\vec{a}}\big)\,.
\end{equation}
The counting assumptions~\ref{inh:count} are a form of pseudorandomness which suggests that $U$ will be a large subset of $V_0$, which we prove in the necessary weighted setting below. 

\begin{claim}\label{clm:U}
\hfill
\begin{enumerate}
\item\label{itm:Gammin}
$\Gam$ is $16\eta$-minimal.
\item\label{itm:Gamvmin}
For $v\in U$, $\Gam_v$ is $16\eta'$-minimal.
\item\label{itm:Ularge}
The contribution to $\Gam\big(\oct{k+1}{\vec1^{k+1}}\big)$ from homomorphisms which use a vertex in $V_0\setminus U$ is at most $3^{k+3}\eta'\cP\big(\oct{k+1}{\vec1^{k+1}}\big)$. 
\end{enumerate}
\end{claim}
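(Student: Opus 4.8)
The plan is to prove the three parts of Claim~\ref{clm:U} essentially independently, with parts~\ref{itm:Gammin} and~\ref{itm:Gamvmin} being direct applications of the counting hypotheses together with the Cauchy--Schwarz machinery of Section~\ref{sec:CS}, and part~\ref{itm:Ularge} being a defect-Cauchy--Schwarz argument.

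For part~\ref{itm:Gammin}, I would verify $\eta$-minimality of $\Gam$ directly from Definition~\ref{def:minimal}. Given vectors $\vec a$, $\vec b$, $\vec c\in\{0,1,2\}^{k+1}$ agreeing outside one coordinate with entries $0,1,2$ there, both $\oct{k+1}{\vec c}$ and $\oct{k+1}{\vec b}$ and $\oct{k}{\vec a}$ are complexes of the form $\oct{k+1}{\vec{b}}$ (an octahedron, hence covered by~\ref{inh:count} with the second option $\oct{k+1}{\vec b}$). So~\ref{inh:count} gives $\Gam(R)=(1\pm\eta)\cP(R)$ for each of the three relevant $R$ (recall we have normalised $\gam(\emptyset)=p(\emptyset)=1$). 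Since $\cP$ is a genuine density graph (a $k$-graph on $\{0,\dots,k\}$), Lemma~\ref{lem:CSoctlow} applied to $\cP$ gives $\cP\big(\oct{k+1}{\vec c}\big)\ge \cP\big(\oct{k+1}{\vec b}\big)^2/\cP\big(\oct{k}{\vec a}\big)$; combining with the three estimates yields
\[
\Gam\big(\oct{k+1}{\vec c}\big)\le(1+\eta)\cP\big(\oct{k+1}{\vec c}\big)\le\frac{(1+\eta)}{(1-\eta)^2}\cdot\frac{\Gam\big(\oct{k+1}{\vec b}\big)^2}{\Gam\big(\oct{k}{\vec a}\big)}\le(1+16\eta)\frac{\Gam\big(\oct{k+1}{\vec b}\big)^2}{\Gam\big(\oct{k}{\vec a}\big)}\,,
\]
using $\eta$ small. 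For part~\ref{itm:Gamvmin}, the same argument runs with $\Gam_v$ in place of $\Gam$: for $v\in U$ the three relevant octahedron counts in $\Gam_v$ are each $(1\pm\eta')\tfrac{\gam(v)}{p(0)}\cP\big(\oct{k+1}{1,\cdot}\big)$ by~\eqref{eq:Udef}, the $\tfrac{\gam(v)}{p(0)}$ factors and one power of $\cP$ cancel in the Cauchy--Schwarz ratio, and the analogue of Lemma~\ref{lem:CSoctlow} applied to $\cP_v$ (equivalently to $\cP$ via the identity $\cP\big(\oct{k+1}{1,\vec a}\big)=\Ex{\cP_v(\oct{k}{\vec a})}$, or just directly since the minimality of $\cP$ as a density graph is what is needed) closes the loop to give $16\eta'$-minimality.

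For part~\ref{itm:Ularge}, I would set up a defect Cauchy--Schwarz estimate for each fixed $\vec{a}\in\{0,1,2\}^k\setminus\{\vec0^k\}$ bounding the total weight of $v\in V_0\setminus U$ weighted by $\Gam_v\big(\oct{k}{\vec a}\big)$, then observe that $\Gam\big(\oct{k+1}{\vec1^{k+1}}\big)=\Ex{\Gam_v(\oct{k}{\vec1^k})}$ and a homomorphism using a vertex of $V_0\setminus U$ contributes to this average only through such $v$. Concretely, for fixed $\vec a$, apply Corollary~\ref{cor:ECSconc} (or Corollary~\ref{cor:higherECSconc}, taking $Y=\Gam_v(\oct{k}{\vec a})/\big(\tfrac{\gam(v)}{p(0)}\cP(\oct{k+1}{1,\vec a})\big)$ appropriately normalised and $X$ the vertex weight, using that~\ref{inh:count} controls $\Ex{XY}$, $\Ex{XY^2}$ — and higher powers if $\vec a$ has a $2$ entry — via the $+2\oct{k}{\vec a}$-type and $\oct{k+1}{\cdot}$-type counts) to conclude that the set of $v$ for which~\eqref{eq:Udef} fails for this particular $\vec a$ has small total $X$-weight, on the order of $\eta^{1/4}$. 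Summing the (at most $3^k$) bad sets over all choices of $\vec a$ and translating back via~\eqref{eq:GamF1acor} with $\vec a=\vec1^k$, the contribution to $\Gam\big(\oct{k+1}{\vec1^{k+1}}\big)$ from $V_0\setminus U$ is at most some absolute constant times $3^k\eta'\cP\big(\oct{k+1}{\vec1^{k+1}}\big)$; being slightly careful with the constant gives the stated $3^{k+3}\eta'$.

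The main obstacle I anticipate is part~\ref{itm:Ularge}: one must be careful that for each $\vec a$ the Cauchy--Schwarz input really is available from~\ref{inh:count} — in particular when $\vec a$ has entries equal to $2$ one needs the $\oct{k+1}{1,2\vec a'}$-type counts and must invoke Corollary~\ref{cor:higherECSconc} with the right power of $2$, checking the hypothesis $\eps<2^{2-2t}$ holds with room to spare given $\eta$ small — and that the various normalising factors $\tfrac{\gam(v)}{p(0)}$ and powers of $\cP$ cancel cleanly so that the defect bound is genuinely in terms of $\eta$ and not polluted by the (possibly tiny) densities $p(e)$. The bookkeeping of which octahedron-like complex in~\ref{inh:count} supplies which moment, and verifying each has at most the allowed number of vertices in each part and fits the $+2\oct{k}{\vec a}$ or $\oct{k+1}{\vec b}$ template, is the fiddly part; everything else is a routine consequence of Lemmas~\ref{lem:CSoctlow} and~\ref{lem:ECSdist} and their corollaries.
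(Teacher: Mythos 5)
Your strategy is the paper's own: parts~\ref{itm:Gammin} and~\ref{itm:Gamvmin} from \ref{inh:count} (respectively from the definition \eqref{eq:Udef} of $U$) together with the multiplicative structure of $\cP$, and part~\ref{itm:Ularge} by applying Corollary~\ref{cor:ECSconc} for each $\vec a$ to bound $\vnorm{V_0\setminus U}{\Gam}$ and then a weighted (defect) Cauchy--Schwarz step to bound the actual contribution. Two points need repair, though neither changes the outcome. First, in parts~\ref{itm:Gammin} and~\ref{itm:Gamvmin} you cite Lemma~\ref{lem:CSoctlow} for $\cP$, but that lemma gives $\cP\big(\oct{k+1}{\vec c}\big)\ge\cP\big(\oct{k+1}{\vec b}\big)^2/\cP\big(\oct{k+1}{\vec a}\big)$, which is the wrong direction for your chain: you need an upper bound on $\cP\big(\oct{k+1}{\vec c}\big)$. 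What makes the chain valid --- and what the paper actually uses --- is that $\cP$ is a density graph, so each $\cP\big(\oct{k+1}{\vec x}\big)$ is an exact product of powers of the $p(e)$, and one has the identity $\cP\big(\oct{k+1}{\vec a}\big)\cP\big(\oct{k+1}{\vec c}\big)=\cP\big(\oct{k+1}{\vec b}\big)^2$; state this (one line) rather than the one-sided Cauchy--Schwarz. In part~\ref{itm:Gamvmin} you should also treat $\vec a=\vec 0^k$ separately, since \eqref{eq:Udef} is only required for $\vec a\neq\vec 0^k$; that case is trivial because for $1$-graphs the minimality inequality is an identity.

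Second, in part~\ref{itm:Ularge} no higher moments are ever needed: with $X=\gam(v)$ and $Y=\Gam_v\big(\oct{k}{\vec a}\big)/\gam(v)$, hypothesis \ref{inh:count} supplies exactly $\Ex{X}$, $\Ex{XY}$ (via $\oct{k+1}{1,\vec a}$) and $\Ex{XY^2}$ (via the $+2\oct{k}{\vec a}$ counts) for every $\vec a\in\{0,1,2\}^k$ --- the `$2$' entries of $\vec a$ are encoded inside the octahedron, not in the moment order --- so Corollary~\ref{cor:ECSconc} applies uniformly. Your fallback via Corollary~\ref{cor:higherECSconc} would require counts of complexes with more than two vertices per part, which \ref{inh:count} does not provide, so drop it. Finally, make sure the ``translating back'' step is the weighted estimate you announce at the outset: bound $\Ex[\big]{\Gam_v\big(\oct{k}{\vec1^k}\big)\indicator{v\in V_0\setminus U}}$ by Lemma~\ref{lem:ECSdist} with weight $W=\indicator{v\in V_0\setminus U}$, feeding in $\vnorm{V_0\setminus U}{\Gam}\le 3^{k+1}\eta'\vnorm{V_0}{\Gam}$; simply multiplying the small measure of $V_0\setminus U$ by an average is not enough, since a light set of vertices could a priori carry a disproportionate share of extensions.
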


\begin{claimproof}
To see \ref{itm:Gammin}, we use \ref{inh:count}. 
Let $j\in \{0\}\cup[k]$, and vectors $\vec{a},\vec{b},\vec{c}\in\{0,1,2\}^{k+1}$ be equal on $\{0\}\cup[k]\setminus\{j\}$ and satisfy $\vec{a}_j=0$, $\vec{b}_j=1$, $\vec{c}_j=2$. Then by \ref{inh:count} we have
\begin{align}
\Gam\big(\oct{k+1}{\vec{a}}\big)\Gam\big(\oct{k+1}{\vec{c}}\big) 
  &\le (1+\eta)^2\cdot\cP\big(\oct{k+1}{\vec{a}}\big)\cP\big(\oct{k+1}{\vec{c}}\big)
\\&= (1+\eta)^2\cdot\cP\big(\oct{k+1}{\vec{b}}\big)^2
\\&\le \frac{(1+\eta)^2}{(1-\eta)^2}\cdot\Gam\big(\oct{k+1}{\vec{b}}\big)^2\,,
\end{align}
which shows $\Gam$ is minimal with parameter $(1+\eta)^2(1-\eta)^{-2}-1\le 16\eta$. 

The proof of \ref{itm:Gamvmin} is similar but we use the definition of $U$. 
Let $j\in [k]$, and $\vec{a},\vec{b},\vec{c}\in\{0,1,2\}^k$ be equal on $[k]\setminus\{j\}$ and satisfy $\vec{a}_j=0$, $\vec{b}_j=1$, $\vec{c}_j=2$.
If $\vec a=\vec 0^k$ then the required bound is trivial, otherwise by the fact that $v\in U$ we have
\begin{align}
\Gam_v\big(\oct{k}{\vec{a}}\big)\Gam_v\big(\oct{k}{\vec{c}}\big) 
  &\le (1+\eta')^2\cdot\frac{\gam(v)^2}{p(0)^2}\cP\big(\oct{k+1}{1,\vec{a}}\big)\cP\big(\oct{k+1}{1,\vec{c}}\big)
\\&=   (1+\eta')^2\cdot\frac{\gam(v)^2}{p(0)^2}\cP\big(\oct{k+1}{1,\vec{b}}\big)^2
\\&\le \frac{(1+\eta')^2}{(1-\eta')^2}\cdot\Gam_v\big(\oct{k}{\vec{b}}\big)^2\,,
\end{align}
which shows that when $v\in U$, $\Gam_v$ is minimal with parameter $(1+\eta')^2(1-\eta')^{-2}-1\le 16\eta'$.

Part \ref{itm:Ularge} resembles a step in the proof of Lemma~\ref{lem:GPEcount} involving $\cC\tz$. 
We first establish a lower bound on $\vnorm{U}{\Gam}$. 
Fix $\vec{a}\in\{0,1,2\}^k$ and recall that $+2\oct{k+1}{0,\vec{a}}$ is the $(k+1)$-complex obtained by taking two vertex-disjoint copies of $\oct{k+1}{1,\vec{a}}$ and identifying their first vertices. 
Consider the experiment where $v\in V_0$ is chosen uniformly at random, and let 
\begin{align}
   X&:=\gam(v)\,,
&  Y&:=\frac{\Gam_v(\oct{k}{\vec{a}})}{\gam(v)}\,.
\end{align}
By \eqref{eq:GamF1acor} and \ref{inh:count} we have 
\begin{align}
\Ex{XY}
    &=\Ex[\big]{\Gam_v\big(\oct{k}{\vec{a}}\big)}
    =\Gam\big(\oct{k+1}{1,\vec{a}}\big)
    =(1\pm\eta)\cP\big(\oct{k+1}{1,\vec{a}}\big)
  \\&=(1\pm\eta)p(0)\cdot\frac{\cP\big(\oct{k+1}{1,\vec{a}}\big)}{p(0)}\,,
\\\Ex{XY^2}
    &=\Ex*{\frac{\Gam_v\big(\oct{k}{\vec{a}}\big)^2}{\gam(v)}}
    =\Gam\big({+2}\oct{k+1}{0,\vec{a}}\big)
    =(1\pm\eta)\cP\big({+2}\oct{k+1}{0,\vec{a}}\big)
  \\&=(1\pm\eta)p(0)\cdot\left(\frac{\cP\big(\oct{k+1}{1,\vec{a}}\big)}{p(0)}\right)^2\,.
\end{align}
Noting that $\Ex{X}=(1\pm\eta)p(0)$ by \ref{inh:count}, we can apply Lemma~\ref{lem:ECSdist} and Corollary~\ref{cor:ECSconc} in the arguments below with an appropriate $\vec a$, $\eps\subref{lem:ECSdist}=\eps\subref{cor:ECSconc}=4\eta$, and 
\[
d\subref{lem:ECSdist}=d\subref{cor:ECSconc}=\frac{\cP\big(\oct{k+1}{1,\vec{a}}\big)}{p(0)}\,.
\]
To bound $\vnorm{U}{\Gam}$, for $\vec{a}\in\{0,1,2\}^k\setminus\{\vec0^k\}$, let $U_{\vec{a}}\subset V_0$ be those vertices $v$ which satisfy
\[
\Gam_v\big(\oct{k}{\vec{a}}\big) = (1\pm \eta')\frac{\gam(v)}{p(0)}\cP\big(\oct{k+1}{1,\vec{a}}\big)\,,
\]
so that $U$ is the intersection of the $3^k-1$ different $U_{\vec{a}}$. 
By Corollary~\ref{cor:ECSconc} we have $\vnorm{U_\vec{a}}{\Gam}\ge (1-2\eta')\vnorm{V_0}{\Gam}$, and hence $\vnorm{U}{\Gam}\ge(1-3^{k+1}\eta')\vnorm{V_0}{\Gam}$, so that
\begin{equation}\label{eq:Ularge}
\vnorm{V_0\setminus U}{\Gam}\le 3^{k+1}\eta'\vnorm{V_0}{\Gam}\,.
\end{equation}
 
The contribution to $\Gam\big(\oct{k+1}{\vec1^{k+1}}\big)$ from homomorphisms that use a vertex in $V_0\setminus U$ can be written as
\begin{equation}
\Ex[\big]{\Gam_v\big(\oct{k}{\vec1^k}\big)\indicator{v\in V_0\setminus U}}\,,
\end{equation}
which is a weighting of $\Ex[\big]{\Gam_v\big(\oct{k}{\vec1^k}\big)}$ by $W:=\indicator{v\in V_0\setminus U}$. 
We apply Lemma~\ref{lem:ECSdist} with this weight $W$ and $X$, $Y$ as above with $\vec a=\vec1^k$ to obtain 
\begin{align}
  \Ex[\big]{\Gam_v\big(\oct{k}{\vec1^k}\big)\indicator{v\in V_0\setminus U}} &\leq \left(1-4\eta+4\sqrt{\frac{\eta\vnorm{V_0}{\Gam}}{\vnorm{V_0\setminus U}{\Gam}}}\right)  \frac{\cP\big(\oct{k+1}{\vec1^{k+1}}\big)}{p(0)}\cdot \vnorm{V_0\setminus U}{\Gam}
\\&\le (1+\eta)\big(3^{k+1}\eta'+4\sqrt{3^{k+1}\eta\eta'}\big)\cP\big(\oct{k+1}{\vec1^{k+1}}\big)\,,
\end{align}
and note that the coefficient of $\cP\big(\oct{k+1}{\vec1^{k+1}}\big)$ here is at most $3^{k+3}\eta'$.
\end{claimproof}

With the set $U$ understood, we proceed by counting $\oct{k+1}{1,\vec2^k}$ in $\cG$ two different ways.  
Firstly, we estimate counts of $\oct{k+1}{\vec1^{k+1}}$ and $\oct{k+1}{1,\vec2^k}$ in $\cG$ with Corollary~\ref{cor:subregular} and Lemma~\ref{lem:slicing}.
We give crude values of the constants that work in the argument, but make no effort to optimise them. 

Let $\cH$ have layer $k+1$ given by $\cG$, and lower layers given by $\Gam$.
Then by assumption~\ref{inh:regJ} $\cH$ is $(\eps,d')$-regular with respect to $\Gam$, and we obtain $\cG$ from $\cH$ by replacing weights on $V_{[k]}$ with those from $\cG$.
By Claim~\ref{clm:U}\ref{itm:Gammin}, and Corollary~\ref{cor:subregular} for $(k+1)$-graphs, $\cH$ is $\eps_m$-minimal where
\[
\eps_m = 2^{2^{k+1}}\Big(\eps(d')^{-2^{k+1}}+\eta\Big) > \max\Big\{1-\frac{(1-\eps/d')^{2^{k+1}}}{(1+\eta)^{2^{k+1}-1}},\,\big(1+\eps (d')^{-2^{k+1}}\big)(1+\eta)^{2^{k+1}-1}-1\Big\}\,.
\]
We can now apply Lemma~\ref{lem:slicing} for $(k+1)$-graphs to $\cG$ and $\cH$ to obtain the required counts in $\cG$. 
With $\eps\subref{lem:slicing}=\eps$, $\eta\subref{lem:slicing}=\eps_m$ as above, and $d\subref{lem:slicing}=d$, we obtain that for
\[
\eps_m' = 2^{2^k+22}k^3\big(\eps^{1/2}(d')^{-2^k}+\eta^{1/2}\big)d^{2^{-k}}\,,
\]
the $(k+1)$-graph $\cG$ is $\eps_m'$-minimal, and the remaining assertions of Corollary~\ref{cor:subregular} and Lemma~\ref{lem:slicing} give
\begin{align}
  \cG\big(\oct{k+1}{\vec1^{k+1}}\big) &= (1\pm \eps_m') d \cdot \cH\big(\oct{k+1}{\vec1^{k+1}}\big) = (1\pm \eps_m)(1\pm \eps_m')dd'\Gam\big(\oct{k+1}{\vec1^{k+1}}\big)
\\& = (1\pm\eps_m'') dd' \cdot \cP\big(\oct{k+1}{\vec1^{k+1}}\big)\,,\label{eq:G11k}
\\\cG(\oct{k+1}{1,\vec2^k}) &\le (1+\eps_m') d^{2^k}\cH\big(\oct{k+1}{1,\vec2^k}\big) \le (1+\eps_m)(1+\eps_m')(dd')^{2^k}\Gam\big(\oct{k+1}{1,\vec2^k}\big)
\\&\le(1+\eps_m'')(dd')^{2^k}\cdot\cP\big(\oct{k+1}{1,\vec2^k}\big)\,,\label{eq:G12k}
\end{align}
where
\[
\eps_m'' = 2^{2^k+25}k^3\big(\eps^{1/2}(d')^{-2^{k+1}}+\eta^{1/2}\big)d^{2^{-k}}\,.
\]

The second method for counting $\oct{k+1}{1,\vec2^k}$ involves counting $\oct{k}{\vec2^k}$ in the links of vertices $v\in V_0$. 
We have $\cG_v\le\Gam_v$ and since we do not try to control $\cG_v$ directly when $v\notin U$, we define
\begin{align}
d_v&=
\begin{cases}
\frac{\cG_v(\oct{k}{\vec1^k})}{\Gam_v(\oct{k}{\vec1^k})} & \text{if } v\in U\,,\\
0 & \text{otherwise}\,.
\end{cases}
\end{align}

\begin{claim}\label{clm:Edvbounds}
Writing
\[
\zeta:=\max\left\{\eps_m''+\eta'+\frac{3^{k+3}\eta'}{dd'},\,\eps_m''+2\eta'+2\eta'\eps_m'',\,\frac{(1+16\eta')^{2^k-1}}{1-16\eta'} (1+\eps_m'')-1\right\}
\]
we have
\begin{align}
\Ex{\gam(v)d_v}                 &=  (1\pm\zeta)dd'\cdot p(0)             \,,
&&\text{and}&
\Ex[\big]{\gam(v)d_v^{2^{k-1}}} &\le (1  +\zeta)(dd')^{2^{k-1}}\cdot p(0)\,.
\end{align}
Moreover, we note that a crude calculation gives
\[
\zeta \le 2^{2^{k+1}+50}k^3\big(\eps^{1/2} + \eta^{1/4}\big)(dd')^{-2^{k+1}}\,.
\]
\end{claim}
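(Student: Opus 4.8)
The identities~\eqref{eq:GF1acor} and~\eqref{eq:GamF1acor} rewrite octahedron counts in $\cG$ and $\Gam$ as expectations over $v\in V_0$ of octahedron counts in the links $\cG_v$ and $\Gam_v$. The plan is to run the experiment in which $v\in V_0$ is chosen uniformly at random, and to take
\[
X:=\gam(v)\,,\qquad Y:=\frac{\Gam_v\big(\oct{k}{\vec1^k}\big)}{\gam(v)}\,,\qquad W:=\indicator{v\in U}\,,
\]
exactly as in the proof of Claim~\ref{clm:U}\ref{itm:Ularge} (with $\vec a=\vec1^k$). Then $\Ex{X}=(1\pm\eta)p(0)$, $\Ex{XY}=(1\pm\eta)p(0)\,\tfrac{\cP(\oct{k+1}{\vec1^{k+1}})}{p(0)}$ and $\Ex{XY^2}=(1\pm\eta)p(0)\,\big(\tfrac{\cP(\oct{k+1}{\vec1^{k+1}})}{p(0)}\big)^2$ by \ref{inh:count} applied to $\oct{k+1}{\vec1^{k+1}}$ and $+2\oct{k+1}{0,\vec1^k}$, so Lemma~\ref{lem:ECSdist} applies with $\eps\subref{lem:ECSdist}=4\eta$ and $d\subref{lem:ECSdist}=\cP(\oct{k+1}{\vec1^{k+1}})/p(0)$.

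\textbf{Key steps.} First I would establish the first equality. Note that $\gam(v)d_v=\indicator{v\in U}\cG_v\big(\oct{k}{\vec1^k}\big)/Y$ is a weighting of $\Gam\big(\oct{k+1}{\vec1^{k+1}}\big)=\Ex{XY}$, but a cleaner route is to split $\Ex{\gam(v)d_v}=\Ex{\gam(v)}\!\cdot\!(\text{avg } d_v)$ into contributions from $U$ and its complement: the $V_0\setminus U$ part contributes $0$ by definition of $d_v$, and for $v\in U$ we have $\cG_v\big(\oct{k}{\vec1^k}\big)$ which I would reconstitute into $\cG\big(\oct{k+1}{\vec1^{k+1}}\big)$ using \eqref{eq:GF1acor}. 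Precisely:
\[
\Ex[\big]{\gam(v)d_v}=\Ex[\big]{\cG_v(\oct{k}{\vec1^k})}-\Ex[\big]{\cG_v(\oct{k}{\vec1^k})\indicator{v\notin U}}=\cG\big(\oct{k+1}{\vec1^{k+1}}\big)-\Ex[\big]{\cG_v(\oct{k}{\vec1^k})\indicator{v\notin U}}\,,
\]
using $\gam(v)d_v=\cG_v(\oct k{\vec1^k})$ for $v\in U$ (and this identity exploits $\Gam_v(\oct k{\vec1^k})=\gam(v)$ since $\Gam$ is complete below the top layer, as in the remark after Definition~\ref{def:reg}) — I should double check this uses only $|e|\le k$ completeness, which holds by~\ref{inh:count} giving $p(e)\approx1$ there; if not I fall back to writing $d_v=\cG_v(\oct k{\vec1^k})/\Gam_v(\oct k{\vec1^k})$ and using $v\in U$ to replace $\Gam_v(\oct k{\vec1^k})$ by $(1\pm\eta')\gam(v)\cP(\oct{k+1}{\vec1^{k+1}})/p(0)$, absorbing the $\eta'$ into $\zeta$. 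Now \eqref{eq:G11k} gives $\cG\big(\oct{k+1}{\vec1^{k+1}}\big)=(1\pm\eps_m'')dd'\cP(\oct{k+1}{\vec1^{k+1}})$, and Claim~\ref{clm:U}\ref{itm:Ularge} bounds the correction term by $3^{k+3}\eta'\cP(\oct{k+1}{\vec1^{k+1}})$; dividing through by $\cP(\oct{k+1}{\vec1^{k+1}})\approx p(0)$ (via~\ref{inh:count}) yields the claimed $\Ex{\gam(v)d_v}=(1\pm\zeta)dd'\,p(0)$ with the first candidate $\eps_m''+\eta'+3^{k+3}\eta'/(dd')$ in the max.

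\textbf{Second moment.} For $\Ex{\gam(v)d_v^{2^{k-1}}}$, I would use that for $v\in U$, $\Gam_v$ is $16\eta'$-minimal (Claim~\ref{clm:U}\ref{itm:Gamvmin}) and $\cG_v\le\Gam_v$ agrees with it below layer $k$, so Corollary~\ref{cor:relCSoctlow} applied with $\vec s=\vec2^k$, $\vec s'=\vec1^k$, $t=k$ gives $\cG_v\big(\oct k{\vec2^k}\big)\ge \dfrac{d_v^{2^k}}{(1+16\eta')^{2^k-1}}\Gam_v\big(\oct k{\vec2^k}\big)$; then averaging and using \eqref{eq:G12k} gives $\Ex[\big]{d_v^{2^k}\Gam_v(\oct k{\vec2^k})}\le(1+16\eta')^{2^k-1}(1+\eps_m'')(dd')^{2^k}\Gam(\oct{k+1}{1,\vec2^k})$. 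Separately, for $v\in U$, $\Gam_v(\oct k{\vec2^k})=(1\pm\eta')\tfrac{\gam(v)}{p(0)}\cP(\oct{k+1}{1,\vec2^k})$ by \eqref{eq:Udef}, and $\Gam(\oct{k+1}{1,\vec2^k})=(1\pm\eta)\cP(\oct{k+1}{1,\vec2^k})$ by~\ref{inh:count}; substituting the former into the left side and the latter into the right side turns the inequality into a bound on $\Ex{\gam(v)d_v^{2^k}}$, and then the Cauchy–Schwarz inequality $\Ex{\gam(v)d_v^{2^{k-1}}}^2\le\Ex{\gam(v)}\Ex{\gam(v)d_v^{2^k}}$ together with $\Ex{\gam(v)}=(1\pm\eta)p(0)$ downgrades this to the stated $\Ex{\gam(v)d_v^{2^{k-1}}}\le(1+\zeta)(dd')^{2^{k-1}}p(0)$, which is where the third candidate $\tfrac{(1+16\eta')^{2^k-1}}{1-16\eta'}(1+\eps_m'')-1$ in the max comes from; the middle candidate $\eps_m''+2\eta'+2\eta'\eps_m''$ is slack left over from combining the various $(1\pm\eta')$ and $(1\pm\eps_m'')$ factors.

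\textbf{Main obstacle.} The genuinely delicate point is the bookkeeping of constants: one must verify that every accumulated factor of the form $(1\pm\eta')$, $(1\pm\eps_m'')$, $(1+16\eta')^{2^k-1}$, and the $\cP/p(0)$ ratios coming from~\ref{inh:count}, collapses into one of the three expressions in the definition of $\zeta$, and then that the crude final estimate $\zeta\le 2^{2^{k+1}+50}k^3(\eps^{1/2}+\eta^{1/4})(dd')^{-2^{k+1}}$ holds — this just needs $\eps_m''\le 2^{2^k+25}k^3(\eps^{1/2}(d')^{-2^{k+1}}+\eta^{1/2})d^{2^{-k}}$, $\eta'=2^{3/2}\eta^{1/4}$, $dd'\le1$, and the standard inequality $(1+x)^m-1\le 2mx$ for $mx\le1$, all of which are routine but must be assembled with some care given the hypothesis $\min\{\eps',2^{-k}\}\ge 2^{2^{k+6}}k^3(\eps^{1/16}+\eta^{1/32})d_0^{-2^{k+1}}$ that ultimately controls the sizes. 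A subtler issue is ensuring $d_v$ is well-defined (i.e.\ $\Gam_v(\oct k{\vec1^k})\ne0$) for $v\in U$: since $v\in U$ forces $\Gam_v(\oct k{\vec1^k})=(1\pm\eta')\gam(v)\cP(\oct{k+1}{\vec1^{k+1}})/p(0)$, this is nonzero provided $\gam(v)>0$, and the total $\Gam$-weight of $v\in V_0$ with $\gam(v)=0$ is zero, so such $v$ contribute nothing to any of the expectations and can be placed in $V_0\setminus U$ harmlessly.
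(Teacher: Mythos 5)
Your proposal is correct and takes essentially the same approach as the paper: split $\Ex{\gam(v)d_v}$ via \eqref{eq:GF1acor} into the $U$ and $V_0\setminus U$ contributions, control the former via \eqref{eq:Udef} and \eqref{eq:G11k} and the latter via Claim~\ref{clm:U}\ref{itm:Ularge}, and for the higher moment apply Claim~\ref{clm:U}\ref{itm:Gamvmin} and Corollary~\ref{cor:relCSoctlow} to $\cG_v\le\Gam_v$, then average and compare with \eqref{eq:G12k}.

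Two minor notes. First, the identity $\gam(v)d_v=\cG_v(\oct{k}{\vec1^k})$ you first float would need $\Gam_v(\oct{k}{\vec1^k})=\gam(v)$, which fails in general since $\Gam$ is only assumed to satisfy the counting conditions~\ref{inh:count}, not to be complete below the top layer; your fallback, substituting $\Gam_v(\oct{k}{\vec1^k})=(1\pm\eta')\gam(v)\cP(\oct{k+1}{\vec1^{k+1}})/p(0)$ via \eqref{eq:Udef}, is exactly what the paper does. Second, the exponent $2^{k-1}$ in the claim statement appears to be a typo for $2^k$: the paper's own proof derives $\Ex{\gam(v)d_v^{2^k}}\le\frac{(1+16\eta')^{2^k-1}}{1-16\eta'}(1+\eps_m'')(dd')^{2^k}p(0)$, whose form matches the third term of $\zeta$, and the subsequent application of Corollary~\ref{cor:higherECSconc} with $X=\gam(v)$, $Y=d_v$ wants this $2^k$ moment. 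Your closing Cauchy--Schwarz downgrade $\Ex{\gam(v)d_v^{2^{k-1}}}^2\le\Ex{\gam(v)}\Ex{\gam(v)d_v^{2^k}}$ is valid (since $\zeta\gg\eta$) but is only needed to reconcile with the likely-misprinted statement, and is not in the paper.
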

\begin{claimproof}
First we bound $\Ex{\gam(v)d_v}$. 
By \eqref{eq:GF1acor} we have 
\begin{align}
\cG(\oct{k+1}{\vec1^{k+1}}) &= \Ex[\big]{\cG_v\big(\oct{k}{\vec1^k}\big)}\\
&\le \Ex[\big]{d_v\Gam_v\big(\oct{k}{\vec1^k}\big)} + \Ex[\big]{\Gam_v\big(\oct{k}{\vec1^k}\big)\indicator{v\in V_0\setminus U}}\,.
\end{align} 
By the definition \eqref{eq:Udef} of $U$, for the first expectation we have an upper bound on $\Gam_v(\oct{k}{\vec1^k})$ which depends only on $\gam(v)$, and by Claim~\ref{clm:U}\ref{itm:Ularge} we have a bound on the final expectation which represents copies of $\oct{k+1}{\vec1^{k+1}}$ using a vertex in $V_0\setminus U$.
We combine these facts with \eqref{eq:G11k} to obtain a lower bound on $\Ex{\gam_0(v)d_v}$.
That is, 
\begin{align}
(1-\eps_m'') dd' \cdot \cP\big(\oct{k+1}{\vec1^{k+1}}\big)
  &\le \cG\big(\oct{k+1}{\vec1^{k+1}}\big) 
\\&\le (1+\eta')\frac{\cP\big(\oct{k+1}{\vec1^{k+1}}\big)}{p(0)}\Ex{\gam(v)d_v} + 3^{k+3}\eta'\cP\big(\oct{k+1}{\vec1^{k+1}}\big)\,,
\end{align}
which yields the lower bound
\begin{equation}\label{eq:Edv_lb}
  \Ex{\gam(v)d_v} \ge \left(1-\eps_m''-\eta'-\frac{3^{k+3}\eta'}{dd'}\right) dd'\cdot p(0)\,.
\end{equation}
For a corresponding upper bound we have
\begin{align}
(1+\eps_m'') dd' \cdot \cP\big(\oct{k+1}{\vec1^{k+1}}\big) 
  &\ge \cG\big(\oct{k+1}{\vec1^{k+1}}\big)  
   \ge \Ex[\big]{d_v\Gam_v\big(\oct{k}{\vec1^k}\big)}
\\&\ge (1-\eta')\frac{\cP\big(\oct{k+1}{\vec1^{k+1}}\big)}{p(0)}\Ex{\gam(v)d_v}\,,
\end{align} 
by the definition of $U$ and \eqref{eq:G11k}. 
We conclude
\begin{equation}
\Ex{\gam(v) d_v} \le \left(1+\eps_m''+2\eta'+2\eta'\eps_m''\right) dd'\cdot p(0)\,.
\end{equation}

For the second statement, Claim~\ref{clm:U}\ref{itm:Gamvmin} means that when $v\in U$ we can apply Corollary~\ref{cor:relCSoctlow} to $\cG_v\le\Gam_v$ and obtain a lower bound on $\cG_v\big(\oct{k}{\vec2^k}\big)$,
\begin{align}
\cG_v\big(\oct{k}{\vec2^k}\big)
  &\ge \frac{d_v^{2^k}}{(1+16\eta')^{2^k-1}}\Gam_v\big(\oct{k}{\vec2^k}\big)\label{eq:GvOkb-lb}
\\&\geq \frac{1-\eta'}{(1+16\eta')^{2^k-1}}\frac{\cP\big(\oct{k+1}{1,\vec2^k}\big)}{p(0)} \cdot \gam(v)d_v^{2^k}\,.
\intertext{Then by \eqref{eq:GF1acor} again,}
p(0) \cG\big(\oct{k+1}{1,\vec2^k}\big)
&\ge \frac{1-\eta'}{(1+16\eta')^{2^k-1}}\cP\big(\oct{k+1}{1,\vec2^k}\big)\Ex[\big]{\gam(v)d_v^{2^k}}\,,
\end{align}
which together with \eqref{eq:G12k} implies the required upper bound 
\[
\Ex[\big]{\gam(v)d_v^{2^k}} \le \frac{(1+16\eta')^{2^k-1}}{1-16\eta'} (1+\eps_m'')(dd')^{2^k}\cdot p(0)\,.\qedhere
\]
\end{claimproof}

Claim~\ref{clm:Edvbounds} means that we have concentration of $d_v$ by Corollary~\ref{cor:higherECSconc} with $X=\gam(v)$ and $Y=d_v$. 
Writing $\Uconc\subset U$ for the vertices $v$ with $d_v= (1\pm2\zeta^{1/8})dd'$, we have
\begin{equation}\label{eq:Uconc}
\vnorm{\Uconc}{\Gam}\ge\big(1-4\zeta^{1/8}\big)p(0)\,.
\end{equation}

It remains to show that for almost all of the weight in $\Uconc$, $\cG_v$ is regular in the sense that the weight of $\oct{k}{\vec2^k}$ is close to minimal. 
Let $\Ureg\subset U$ be the vertices $v\in U$ with 
\[
\cG_v\big(\oct{k}{\vec2^k}\big)\le (d_v^{2^k}+\eps')\Gam_v\big(\oct{k}{\vec2^k}\big)\,.
\]
For all vertices $v\in U$ we have the lower bound~\eqref{eq:GvOkb-lb} on $\cG_v\big(\oct{k}{\vec2^k}\big)$, hence we are supposing that for $v\in U\setminus\Ureg$ we have an additive improvement on~\eqref{eq:GvOkb-lb} of at least $\eps'\Gam_v\big(\oct{k}{\vec2^k}\big)$. 
Then we have
\begin{align}
\cG\big(\oct{k+1}{1,\vec2^k}\big)
  &= \Ex[\big]{\cG_v\big(\oct{k}{\vec2^k}\big)}
\\&\ge \frac{1}{(1+16\eta')^{ 2^k-1}}\Ex[\Big]{\big(d_v^{ 2^k}+\eps'\indicator{v\in U\setminus\Ureg}\big)\Gam_v\big(\oct{k}{\vec2^k}\big)}
\\&\ge \frac{1-\eta'}{(1+16\eta')^{2^k-1}}\left(\Ex[\big]{\gam(v)d_v^{2^k}}+\eps'\vnorm{U\setminus\Ureg}{\Gam}\right)\cP\big(\oct{k+1}{1,\vec2^k}\big)/p(0)
\\&\ge \frac{1-\eta'}{(1+16\eta')^{2^k-1}}\left(\frac{\Ex{\gam_0(v)d_v}^{2^k}}{\vnorm{U}{\Gam}^{2^k-1}}+\eps'\vnorm{U\setminus\Ureg}{\Gam}\right)\cP\big(\oct{k+1}{1,\vec2^k}\big)/p(0)
\\&\ge \frac{(1-\eta')(1-\zeta)^{2^k}}{\big((1+16\eta')(1+\eta)\big)^{2^k-1}}\left( (dd')^{2^k}+\frac{\eps'\vnorm{U\setminus\Ureg}{\Gam}}{p(0)}\right)\cP\big(\oct{k+1}{1,\vec2^k}\big)\,,
\end{align}
where the fourth line is by the Cauchy--Schwarz inequality, and the fifth is by Claim~\ref{clm:Edvbounds}, and the fact that $\vnorm{U}{\Gam}\le\vnorm{V_0}{\Gam}\le (1+\eta)p(0)$.
With \eqref{eq:G12k} we have
\begin{align}\label{eq:Ureg}
\vnorm{U\setminus\Ureg}{\Gam}
  &\le \frac{1}{\eps'}\left(\frac{\big((1+16\eta')(1+\eta)\big)^{2^k-1}(1+\eps_m'')}{(1-\eta')(1-\zeta)^{2^k}}-1\right) (dd')^{2^k} p(0)
\\&\le \frac{1}{\eps'}2^{2^{k+2}+54}k^3\big(\eps^{1/2}+\eta^{1/4}\big)(dd')^{-2^k}p(0)
\\&\le 2^{2^{k+2}+54}k^3\big(\eps^{1/4}+\eta^{1/8}\big)(dd')^{-2^k}p(0)\,,
\end{align}
where for the last line we use that $\eps'\ge \max\{\eps^{1/4},\,\eta^{1/8}\}$.

Now, for Lemma~\ref{lem:k-inherit} we may take $V_0'=\Uconc\cap\Ureg$, since then for $v\in V_0'$ the link $\cG_v$ inherits both the desired relative density and regularity from $\cG$. 
Moreover, by~\eqref{eq:Uconc} and~\eqref{eq:Ureg} we have
\begin{align}
\vnorm{V_0}{\Gam} 
  &\ge \vnorm{\Uconc}{\Gam} - \vnorm{U\setminus\Ureg}{\Gam}
\\&\ge \left(1-4\zeta^{1/8}-2^{2^{k+2}+54}k^3\big(\eps^{1/4}+\eta^{1/8}\big)(dd')^{-2^k}\right)p(0)
\\&\ge \left(1-2^{2^{k+6}}k^3\big(\eps^{1/16}+\eta^{1/32}\big)(dd')^{-2^k}\right)\vnorm{V_0}{\Gam}\,,
\end{align}
where we again use \ref{inh:count} for the last line. 
To complete the proof, observe that we choose $\eps$, $\eta$ in terms of $\eps'$, $d$, $d'$, and $k$ to satisfy
\[
\min\{\eps', 2^{-k}\} \ge 2^{2^{k+6}}k^3\big(\eps^{1/16}+\eta^{1/32}\big)(dd')^{-2^k}\,.\qedhere
\]
\end{proof}

\section{Concluding remarks}

A feature of this paper is that we intend for the methods, in particular the definitions of THC and GPE, and the proofs that one can obtain these properties,  to be of more interest than the results we obtain in this paper with them.
Indeed, some of our theorems are similar to results that can be deduced from combinations of existing hypergraph regularity and counting methods in the literature.  
In this section we discuss methods presented in this paper and from the literature from the perspective of some possible applications, and highlight useful features of a number of different ways to prove hypergraph counting results.

\subsection{Counting in sparse hypergraphs}

Our THC and GPE methods (Theorems~\ref{thm:GaTHC}, \ref{thm:counting}, and~\ref{thm:embedding}), allow techniques which resemble those for working with the regularity setup of Rödl--Skokan~\cite{RSreg} to be used in sparse graphs. 
If one is interested (say) in a relative hypergraph removal lemma, then one could use Theorem~\ref{thm:counting} to give a somewhat direct proof which has the flavour of generalising the methods of the dense case (embedding vertex-by-vertex) to the sparse case. 
Alternatively, Conlon, Fox, and Zhao~\cite{CFZrelative} show that one can transfer the dense case result (used as a black box) to the sparse case. 
Their approach is certainly easier to write down, and in many cases requires slightly weaker pseudorandomness of the majorising hypergraph (our approach would be much better for removing large graphs with small maximum degree, theirs would be better for cliques), but we claim that having a direct proof may be useful in other applications of the method. 
A direct proof may be more amenable to modification for use in applications that require more precise control of embedded vertices, such as the blow-up lemma. 
A simple example of this is already present here: in Theorem~\ref{thm:embedding} we can exploit the maximum degree of the graph to be embedded to allow embedding large graphs in a way that does not follow easily from the methods of~\cite{CFZrelative}.

\subsection{Embedding in sparse hypergraphs}

As mentioned briefly in Section~\ref{sec:main}, we give two self-contained ways to prove embedding results in this paper. 
Given a THC-graph $\Gam$, the GPE methods yielding Theorem~\ref{thm:embedding} show that one can embed a bounded-degree but large (size growing with $v(\cG)$) complex $F$ into a regular subgraph $\cG\subset\Gam$.
But one can also use Theorem~\ref{thm:counting} (or similar results from the literature) to count \emph{fixed size} subgraphs in $\cG$, and apply Theorem~\ref{thm:GaTHC} to obtain that $\cG$ is itself a THC-graph.
An analogous embedding result for $F$ follows. 
For applications, the latter approach may save quite some effort: to describe $\cG$ as a THC-graph requires only a density graph and the constants $c^*$ and $\eta$, and one can ignore the majorising hypergraph. 
In contrast, a GPE has an ensemble of parameters and much more structure to consider. 
One does lose information going to a THC-graph, however, and one may wish to keep the GPE formalism to allow appealing to special properties of $\Gam$ such as when $\Gam$ is a random graph. A particular example to bear in mind is the result of~\cite{ABETbandwidth}, in which it is shown (among other things) that a triangle factor with $n-cp^{-2}$ vertices has with high probability local resilience $\tfrac13-o(1)$ in $\Gam=G(n,p)$ if $p$ is not too small. This number of vertices in the triangle factor is optimal, and it is possible to prove such a result only because one has access to the graph $\Gam$ (using a graph version of the GPE formalism, as made explicit in~\cite{ABHKPblow}). If one attempted to prove such a result using the THC formalism, without access to $\Gam$, then the best one could hope for would be to prove local resilience for an $\big(n-o(n)\big)$-vertex triangle factor.

We conclude by sketching a new result, showing that sparse pseudorandom hypergraphs have the Ramsey property for large bounded degree hypergraphs, which one can prove using the THC formalism.

\begin{theorem}\label{thm:THCexample}
 Given $\Delta,k,r\ge 2$ there exist $C,\eps>0$ such that the following holds for all sufficiently large $n$. Suppose that $\Gam$ is an $n$-vertex $k$-graph with $n$ vertices, such that for any $k$-graph $F$ with at most $C$ vertices we have $\Gam(F)=(1\pm\eps)p^{e(F)}$. Then however the $k$-edges of $\Gam$ are $r$-coloured, there is a colour $c$ with the following property. For each $k$-graph $H$ with $\Delta(H)\le\Delta$ and $v(H)\le\eps p^{\Delta} n$, there is a copy of $H$ in $\Gam$ all of whose edges have colour $r$.
\end{theorem}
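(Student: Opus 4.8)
The plan is to combine the sparse hypergraph regularity lemma of Section~\ref{sec:reg} with a hypergraph Ramsey argument to identify the colour $c$, and then embed $H$ with Theorem~\ref{thm:embedding}. First I would fix the constants in the order forced by these tools. Set $k':=(k-1)\Delta+1$, which bounds the maximum degree (indeed the chromatic number) of the $2$-section $H^{(2)}$ of any $k$-graph $H$ with $\Delta(H)\le\Delta$, so that every such $H$ is $k'$-partite with all edges crossing. Choose a counting accuracy $\eta_k$ and a valid ensemble of parameters suitable for Theorem~\ref{thm:embedding} applied to $k'$-partite complexes of maximum degree $\Delta$; this determines a regularity error $\eps_k$ and a THC-accuracy $\eta_0$, and I also fix a fast-decaying regularity function $\eps(\cdot)$ with $\eps(m)\ll 1/m$. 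Let $t_0$ be at least the $r$-colour Ramsey number of the complete $k$-uniform hypergraph on $k'$ vertices. Apply Lemma~\ref{lem:RSchreglem} with $q=1$, $s=r$, and the above $t_0$, $\eps_k$, $\eps(\cdot)$ to get integers $t_1$, $n_0$ and a threshold $\eta^*$. Finally choose $C$ large (at least the largest subgraph size required by Theorem~\ref{thm:GaTHC} for $c^*=4k^2+k$ and degeneracy $(k-1)\Delta$) and $\eps$ small (small enough for that application of Theorem~\ref{thm:GaTHC}, at most $\eta^*$, and at most $1/t_1$). Since $p=p(n)$ may tend to $0$, all quantitative control of the colour classes must be \emph{relative to $\Gam$}.

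Given the $r$-colouring, write $\Gam=G_1\dcup\dots\dcup G_r$ for the colour classes and apply Lemma~\ref{lem:RSchreglem} (in the regular-slice form, which gives a unique $k$-polyad per $k$-tuple of clusters and follows from the same methods, cf.\ Section~\ref{sec:reg}) to $\Gam$ and $G_1,\dots,G_r$. This produces a $(t_0,t_1,\eps(t_1))$-oct-equitable family of partitions on $t\in[t_0,t_1]$ clusters with respect to which each $G_i$ is $\eps_k$-oct-regular. The point I would need to draw from the Conlon--Fox--Zhao-based proof of Lemma~\ref{lem:RSchreglem}, rather than just its stated conclusion, is that on all but an $r\eps_k$-fraction of polyads each $G_i$ is in fact $\eps_k$-regular \emph{relative to $\Gam$}, with relative density equal to its share of $\Gam$ on that polyad: this is strictly stronger than absolute $\eps_k$-oct-regularity once $p\to 0$, but the discrepancy-pair argument already controls the relevant octahedron counts, and, together with the counting hypothesis (which forces $\Gam$ restricted to a typical polyad to be $O(\eta^*)$-minimal), it delivers the relative statement via Corollary~\ref{cor:subregular} and Lemma~\ref{lem:slicing}.

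On a typical polyad the $r$ relative densities of the colour classes sum to $1\pm o(1)$, so some colour has relative density at least $1/(2r)$ there; colour each edge of the (near-complete) reduced $k$-uniform hypergraph on the $t$ clusters by such a dominating colour and apply the hypergraph Ramsey theorem. As $t\ge t_0$ exceeds the relevant Ramsey number (a small thickening of $t_0$ absorbing the $o(1)$-fraction of missing reduced edges), there is a colour $c$ and a set of $k'$ clusters all of whose polyads are ``good for $c$'', meaning $G_c$ is $\eps_k$-regular relative to $\Gam$ with relative density $\ge 1/(2r)$ on each; this $c$ depends only on the colouring of $\Gam$. Now take any $H$ with $\Delta(H)\le\Delta$ and $v(H)\le\eps p^\Delta n$. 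Properly colour $H^{(2)}$ with $k'$ colours, making $H$ $k'$-partite with every edge crossing; each colour class of $H$ has at most $v(H)\le\eps p^\Delta n\le n/t_1$ vertices, so it fits into one of the chosen $k'$ clusters. Assemble $k'$-partite weighted $k$-graphs $\cG\subset\Gam^*$, where $\Gam^*$ is complete on edges of size below $k$ and equals $\Gam$ on $k$-edges whose cluster-type lies in $H$ (and is $1$ on the other $k$-edges), while $\cG$ carries the slice cells on the layers below $k$ and $G_c$ on layer $k$.

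To finish I would verify the hypotheses of Theorem~\ref{thm:embedding} for $\cG\subset\Gam^*$. The graph $\Gam^*$ is an $(\eta_0,4k^2+k)$-THC graph for $H$ by Theorem~\ref{thm:GaTHC}, using the counting hypothesis $\Gam(F)=(1\pm\eps)p^{e(F)}$ with $C$ large and $\eps$ small. For $e$ with $|e|=\ell<k$, the $\ell$-cell is $\eps_\ell:=\eps(t_1)$-regular with relative density $d_\ell\ge 1/t_1$ with respect to the complete $\ell$-graph over the $(\ell-1)$-cells, which is exactly oct-equitability, and $\eps(t_1)\ll d_\ell$ by our fast-decaying choice. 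For $|e|=k$, $\cG[V_e]$ is $\eps_k$-regular with relative density $\ge 1/(2r)$ with respect to $\cG[V_e]$ with its $k$-layer replaced by $\Gam^*$ (which is $\Gam$'s $k$-edges over the slice cells), i.e.\ the ``good for $c$'' property transferred down through the cells by Lemma~\ref{lem:slicing}. The ensemble was chosen precisely so that these errors are small against the relevant densities and $\eta_k$, so Theorem~\ref{thm:embedding} yields $\cG(H)>0$: a copy of $H$ all of whose edges lie in $G_c$. The main obstacle I expect is the middle step --- extracting \emph{relative} regularity of the colour classes from the sparse regularity lemma (unavoidable, since a colour class has density $\Theta(p/r)$, far below the fixed $\eps_k$ when $p\to0$) and then reconciling the cell-and-polyad output of the regularity lemma with the precise THC-plus-relative-regularity hypotheses of Theorem~\ref{thm:embedding}; by comparison the Ramsey pigeonholing and the proper colouring of $H^{(2)}$ are routine.
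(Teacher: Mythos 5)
Your proof is essentially correct and reaches the same conclusion by largely the same means (sparse regularity lemma, Ramsey on the reduced graph, proper colouring of $H^{(2)}$ to make $H$ partite), but the embedding step goes by a genuinely different route than the paper's. You apply the GPE-based embedding lemma, Theorem~\ref{thm:embedding}, directly to the pair $\cG\subset\Gam^*$, keeping the majorising hypergraph and the full ensemble of regularity parameters in scope throughout. The paper instead deliberately exits the GPE formalism as soon as possible: after building $\cG$, it applies Theorem~\ref{thm:counting} (which it needs only for graphs of at most $(\Delta+2)^2$ vertices) to verify that $\cG$ itself satisfies the counting hypotheses of Theorem~\ref{thm:GaTHC}, concludes that $\cG$ (and anything obtained from it by the standard construction) is an $(\eta',\Delta+2)$-THC graph with density graph $\cD'$, and then embeds $H$ vertex-by-vertex using nothing but the THC property---at each step avoiding the $\eta'$-fraction of weight whose links fail to be THC, and also the previously-used images. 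The paper's concluding remarks explain why one might prefer this: once $\cG$ is a THC graph, the only bookkeeping is one accuracy parameter $\eta'$ and a density graph, whereas your route carries along a full valid ensemble of $\eps_{\ell,r,h}$'s, $\delta_\ell$'s, and $\eta_\ell$'s for the whole embedding. Your route is perfectly legitimate but heavier; it also obtains only a lower bound on the homomorphism count from Theorem~\ref{thm:embedding}, so strictly speaking you still owe a short argument that injective homomorphisms dominate (cheap given the cluster sizes $\gtrsim n/t_1$ versus $v(H)\le\eps p^\Delta n$, but worth saying, whereas the paper's vertex-by-vertex embedding enforces injectivity directly by refusing previously-used images). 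The issue you rightly flag about needing \emph{relative} oct-regularity of the colour classes is common to both proofs and is what the discrepancy-pair scaling by $p^{-1}$ in the general case of Lemma~\ref{lem:RSchreglem} is for; it is not a gap in your argument any more than in the paper's.
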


Note that one would expect that one can actually allow $H$ to have up to $\eps n$ vertices. We expect it would not be very hard to prove this, but we prefer to give a clean illustration of how one can do embedding of moderately large graphs.

\begin{proof}[Sketch proof]
We choose $\Delta+2\ll C'\ll C$ and $0<\eps\ll\eta\ll\eta'$.

 Given a $k$-graph $\Gam$, and an $r$-colouring of its edges, we begin by applying the sparse hypergraph regularity lemma, Lemma~\ref{lem:RSchreglem} (with input $\eps_k$ much smaller than $\eta$ and much larger than $\eps$), to the $k$-graphs $G_1,\dots,G_r$, where $G_i$ consists of the colour-$i$ edges of $\Gam$. By a straightforward counting argument, we find a collection of $\ell=R_r^{(k)}(k\Delta)$ clusters $V_1,\dots,V_\ell$ in the resulting family of partitions, and a collection of $2$-cells between all pairs, $3$-cells between all triples, and so on, with the following properties. First, each $i$-cell is regular for $2\le i\le k-1$, and each $i$-cell is supported on the chosen $(i-1)$-cells for $3\le i\le k-1$. Second, for each colour $1\le i\le r$, the graph $G_i$ is regular with respect to each polyad on the chosen cells.
 
 We now assign a colour to each $k$-set in $[t]$ by choosing one of the densest colours in the corresponding $k$-polyad. By definition of $t$, we can choose a colour $1\le c\le r$ and a subset $V'_1,\dots,V'_{k\Delta}$ of clusters such that in each $k$-polyad the graph $G_c$ is regular and has density at least $1/r$. Define a complex $\cG$ on vertex set $V'_1\cup\dots V'_{k\Delta}$ by taking all the edges of the chosen cells on this vertex set, together with the edges of $G_c$ they support.
 
 Given a $k$-graph $H$ with maximum degree $\Delta$ and at most $\eps p^{\Delta} n$ vertices, suppose $V(H)=[n]$ and let $\cH$ be the complex obtained from $H$ by down-closure. Note that $\cH^{(2)}$ has maximum degree at most $(k-1)\Delta$, and hence there is a partition of $V(\cH)$ into $k\Delta$ parts such that no edge of $\cH^{(2)}$ lies in any one part. We fix such a partition, and assign vertices of $H$ to the $k\Delta$ clusters of $G$ according to the partition. Let $\cD$ be the corresponding relative density graph, with $\cD(e)$ being the relative density of the $|e|$-cell on clusters $e$ (if $|e|<k$) or of $G$ relative to the $k$-polyad on clusters $e$ (if $|e|=k$).
 
By Theorem~\ref{thm:GaTHC}, we see that $\Gam$ is a $(\eta,C')$-THC graph, and so is any graph obtained from $\Gam$ by the standard construction. So applying Theorem~\ref{thm:counting}, we obtain that counts of graphs on up to $(\Delta+2)^2$ vertices in $G$ are as one would expect for the density graph $\cD$ of $G$, where $\cD'$ is obtained from $\cD$ by keeping the weights of all edges the same, except for the $k$-edges whose weights are multiplied by $p$.

By Theorem~\ref{thm:GaTHC} again, we see that $G$, and any graph obtained from it by the standard construction, is an $(\eta',\Delta+2)$-THC graph. We apply the standard construction to $G$ to obtain a $v(H)$-partite graph $G_0$, with corresponding density graph $\mathcal{R}_0$ obtained by applying the standard construction to $\cD'$.
 
We now choose in order $1\le i\le n$ an image $v_i$ for the vertex $i$ of $V(H)$ in $X_i$
We do this as follows. First, we look at the vertices of $X_i$ in $G_{i-1}$. These vertices have weight either zero or one in $G_{i-1}$, and the total weight is (because $G_{i-1}$ is an $(\eta',\Delta+2)$-THC graph) equal to $(1\pm\eta')r_{i-1}(i)|X_i|$. Of the vertices with weight one, at most $\eta' r_{i-1}(i)|X_i|$ vertices $v$ are such that the link graph $\big(G_{i-1}\big)_v$ fails to be an $(\eta',c^*)$-THC graph with density graph $\big(\mathcal{R}_{i-1}\big)_i$. We choose a vertex $v_i$ which is not among these failing vertices, and which corresponds to a vertex of $G$ not previously used. We set $G_i:=\big(G_{i-1}\big)_{v_i}$ and $\mathcal{R}_i:=\big(\mathcal{R}_{i-1}\big)_i$.
  
  To see that this is always possible, it is enough to check that $v(H)<(1-2\eta')r_{i-1}(i)|X_i|$. This is true by choice of $\eps$ and because the product defining $r_{i-1}(i)$ contains at most $\Delta$ terms coming from the $k$-level of $\mathcal{R}_0$ (because $\Delta(H)\le\Delta$).
\end{proof}

\renewcommand*{\bibfont}{\small}
\printbibliography

\end{document}